\newtheoremstyle{remboldstyle}
  {}{}{\itshape}{}{\bfseries}{.}{.5em}{{\thmname{#1 }}{\thmnumber{#2}}{\thmnote{ (#3)}}}
\theoremstyle{remboldstyle}
\newtheorem{thm}{Theorem}[section]
\newtheorem{prop}[thm]{Proposition}
\newtheorem{lem}[thm]{Lemma}
\newtheorem{cor}[thm]{Corollary}
\newtheorem{thmx}{Theorem}
\newcommand{\etalchar}[1]{$^{#1}$}
\theoremstyle{definition}
\newtheorem{definition}[thm]{Definition}
\newtheorem{rem}[thm]{Remark}
\newtheorem{notation}[thm]{Notation}
\numberwithin{equation}{section}
\newdimen\cdsep
\def\cdstrut{\vrule height .6\cdsep width 0pt depth .4\cdsep}
\def\@cdstrut{{\advance\cdsep by 2em\cdstrut}}
\def\arrow#1#2{
  \ifx d#1
    \llap{$\scriptstyle#2$}\left\downarrow\cdstrut\right.\@cdstrut\fi
  \ifx u#1
    \llap{$\scriptstyle#2$}\left\uparrow\cdstrut\right.\@cdstrut\fi
  \ifx r#1
    \mathop{\hbox to \cdsep{\rightarrowfill}}\limits^{#2}\fi
  \ifx l#1
    \mathop{\hbox to \cdsep{\leftarrowfill}}\limits^{#2}\fi
}
\newcommand{\myvec}[1]{{\boldsymbol{#1}}}   % boldface
\def\re{{\rm{Re}}}
\def\im{{\rm{Im}}}
\def\uhp{{\mathbb H}}
\def\zbar{{\overline{z}}}
\def\Bbb{\mathbb}
\def\reals{\Bbb R}
\def\complex{\Bbb C}
\def\disk{\Bbb D}
\def\circle{\Bbb T}
\def\naturals{\Bbb N}
\newcommand{\Chat}{\widehat{\mathbb{C}}}
\DeclareMathOperator{\diam}{diam} % CB changed from 'diameter' 5-4-24
\DeclareMathOperator{\dist}{dist} % The distance.
\begin{document}

%%
%% The title of the paper goes here.  Edit to your title.
%%

\title[On the Shapes of Rational Lemniscates]{On the Shapes of Rational Lemniscates}

%%
%% Now edit the following to give your name and address:
%% 
%\author{Christopher J. Bishop and Kirill Lazebnik}

% The changes the MR classification year from 1991 ro 2020
% This is not necessary if the latest version of TeX is installed
%\makeatletter
%\@namedef{subjclassname@2020}{\textup{2020} Mathematics Subject Classification}
%\makeatother

\subjclass[2020]{Primary: 30C10, 30C62, 30E10,  Secondary: 41A20}
%\keywords{lemniscates, rational functions,  Hilbert's 
%16th problem, quasiregular maps, quasiconformal maps}
\author {Christopher J. Bishop}
\address{C.J. Bishop\\
         Mathematics Department\\
         Stony Brook University \\
         Stony Brook, NY 11794-3651}
\email {bishop@math.stonybrook.edu}
\author {Alexandre Eremenko}
\address{Alexandre Eremenko \\ 
Mathematics Department, Purdue University, West Lafayette, IN 47907}
\email {eremenko@purdue.edu}
\author{Kirill Lazebnik}
\address{Kirill Lazebnik\\
Mathematics Department \\
University of Texas at Dallas \\
Richardson, TX, 75080}
\email{Kirill.Lazebnik@utdallas.edu}
\thanks{\noindent The first author is supported in part by NSF grant DMS-2303987, and the third author is supported in part by NSF grant DMS-2452130.}

%%
%% If there is another author uncomment and edit the following.
%%

%\author{Second Author}
%\address{Department of Mathematics, University of South Carolina,
%Columbia, SC 29208}
%\email{second@math.sc.edu}
%\urladdr{www.math.sc.edu/$\sim$second}

%%
%% If there are three of more authors they are added in the obvious
%% way. 
%%

%%%
%%% The following is for the abstract.  The abstract is optional and
%%% if not used just delete, or comment out, the following.
%%%

\begin{abstract} A rational lemniscate is a level set of 
$|r|$ where $r: \Chat\rightarrow\Chat$ is rational. 
We prove that any planar Euler graph  
can be approximated, in a strong sense, by a  homeomorphic 
rational lemniscate.  This  generalizes Hilbert's 
lemniscate theorem; he proved  that any Jordan 
curve  can be approximated (in the same strong sense) by a 
polynomial lemniscate that is also a Jordan curve.
As consequences, we  obtain 
a sharp  quantitative  version of the classical 
Runge's theorem on rational approximation, and we give a new result 
on the approximation of planar continua by Julia sets 
of rational maps.
\end{abstract}

%%
%%  LaTeX will not make the title for the paper unless told to do so.
%%  This is done by uncommenting the following.
%%
\date{\today}

\maketitle

%%
%% LaTeX can automatically make a table of contents.  This is done by
%% uncommenting the following:
%%
%\tableofcontents

\section{Introduction}

%\subsection{Statement of The Main Result}
%\subsection{Rational lemniscates and  lemniscate graphs}
\subsection{Rational lemniscates and  Euler graphs}

 \begin{definition} A \emph{rational lemniscate} is a set of the form 
\begin{equation*}\label{lemn_defn} 
	L_r(c) :=\{z \in \Chat: |r(z)|=c\},
\end{equation*}
where $0< c<\infty$, $r$ is a rational function and $\Chat$ is
the Riemann sphere; in other words, a rational lemniscate
is a level set of $|r|$. The constant $c$ is often omitted from
the notation, since by rescaling $r$ we can always take $c=1$, 
and for brevity we will write $L_r = L_r(1)$. 
%If $r$ es a polynomial, $L_r$ is called a \emph{polynomial lemniscate}.
\end{definition}

\begin{definition}\label{embeddedgraph} 
A  \emph{lemniscate graph} is a set 
$G\subset \Chat$ so that there is a finite set 
$V\subset G$ (called the \emph{vertices} of $G$), so that:
\begin{enumerate}
\item $G\setminus V$ has finitely many components 
(these are called the \emph{edges} of $G$), 
each of which is either a (closed) Jordan curve, 
or else an (open) simple arc $\gamma$ satisfying
$\overline{\gamma}\setminus\gamma\subset V$. 
\item The degree of each vertex is even and at least four, 
where the \emph{degree} of a vertex $v$ is defined
as the number of edges $\gamma$ satisfying
$v\in\overline{\gamma}\setminus\gamma$, and we count 
an edge $\gamma$ twice if $\{v\}=\overline{\gamma}\setminus\gamma$. 
\end{enumerate}
\end{definition}

See Figure \ref{top_lem_coloring}.
It is not hard to prove that every rational lemniscate is 
a lemniscate graph (see Proposition \ref{toplemimpliesgraph}) 
and our main result will show that every lemniscate graph is 
homeomorphic to, and can be approximated by, a rational 
lemniscate.  Before stating the precise result, we need a 
few more definitions.

%\begin{rem}
%The terminology \emph{lemniscate graph} highlights the
%natural presence of vertices and edges, however a
A lemniscate graph need not be a graph in the usual sense,
since it can have 
%of the term, since lemniscate graphs can (and often do)
Jordan curve components with no vertices (see Figure \ref{top_lem_coloring}).
%; there are eight Jordan components in this example). 
%However, after removing all such Jordan curve components,
%	we are left with a planar  Euler  graph in the usual sense 
However, if we add a vertex (of degree two)  to each such curve component,
we create an Euler graph in the usual sense
(an Euler graph is one where every vertex has even degree; 
an Eulerian graph is a connected Euler graph). 
Thus as closed subsets of the plane (forgetting the vertex/edge
structure), lemniscate graphs and Euler graphs are the same.
In particular, the faces  of the graph (that is, the  connected components 
of the complement of the graph) are the same. 

\begin{figure}[htb]
\centerline{
\includegraphics[height=1.8in]{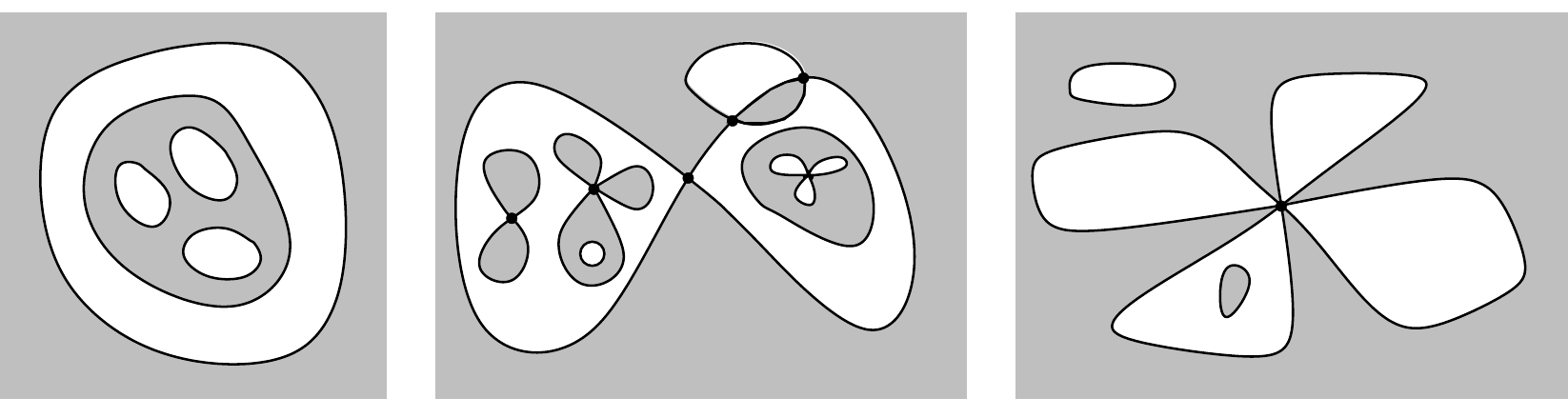}
}
\caption{Pictured are three examples of lemniscate graphs, together with 2-colorings of their faces. For each example, the unbounded face is colored grey. The leftmost example consists of five disjoint Jordan curves and has no vertices. The center example has six connected components and six vertices (two of degree six and four of degree four). The right hand example has three connected components and one vertex of degree eight.}
\label{top_lem_coloring}
\end{figure}

\begin{definition}\label{coloring_defn} 
Let $G$ be a lemniscate graph. 
%The connected components of $\Chat\setminus G$  are 
%called the \emph{faces} of $G$.
	A  {\it $2$-coloring} of the faces of $G$  assigns one of two 
colors to each face (we will use white and grey) so 
that any two faces sharing a common edge have different colors. 
\end{definition}

The fact that the faces of a planar Euler 
graph can be $2$-colored is well known in graph theory.
Moreover, there are exactly two such colorings, obtained 
from each other by swapping the colors. 
Similarly, the faces of a rational lemniscate  have  a natural 
2-coloring  where  the components of  $\{z\in\Chat: |r(z)|<1\}$
are colored white, and the components of 
$\{z \in \Chat: |r(z) | >1\}$ are 
colored grey (we can swap colors by replacing $r$ by $1/r$). 
Clearly the  
poles of $r$ must lie in the grey components. It turns out
that this is the only restriction  for the 
following type of approximation to hold.

\begin{definition} \label{epsilon homeo defn}
Let $\varepsilon>0$. Two sets $E$, $F\subset\Chat$ 
are said to be $\varepsilon$\emph{-homeomorphic} 
if there exists a homeomorphism 
%if and only if there exists a homeomorphism 
$\phi: \Chat\rightarrow\Chat$ satisfying
$\phi(E)=F$ and $\sup_{z\in\Chat}d(\phi(z),z)<\varepsilon$,
where $d(\cdot, \cdot)$ denotes the spherical metric on $\Chat$.
\end{definition}

\begin{thmx}\label{main_thm_2} 
Let $G$ be a lemniscate graph, let  $\varepsilon>0$,
fix a $2$-coloring of the faces of $G$, and suppose
that $P\subset\Chat$ contains exactly one point in each grey face of $G$.
Then there exists a rational mapping 
$r: \Chat\rightarrow\Chat$ so that $G$ and  $L_r$ 
are $\varepsilon$-homeomorphic and $r^{-1}(\infty)=P$.
\end{thmx}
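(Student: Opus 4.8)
The plan is to build $r$ by quasiconformal surgery: first construct a quasiregular \emph{model} $R\colon\Chat\to\Chat$, a branched covering with $R^{-1}(\partial\D)=G$ that is very nearly holomorphic, and then straighten $R$ to a rational map using the measurable Riemann mapping theorem. As a harmless first reduction, note that $G$ can be replaced by a homeomorphic lemniscate graph $G'$ all of whose edges are real-analytic arcs, via a homeomorphism $\chi$ of $\Chat$ with $\chi|_P=\mathrm{id}$ and $\sup_z d(\chi(z),z)<\varepsilon/2$; it then suffices to produce a rational $r$ with $r^{-1}(\infty)=P$ and $L_r$ being $(\varepsilon/2)$-homeomorphic to $G'$, since composing the two homeomorphisms gives the conclusion for $G$. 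We keep the inherited $2$-coloring, so $P$ meets each grey face $F$ of $G'$ in exactly one point $p_F$.

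To build the model, fix a small parameter $\eta>0$. On each white face $W$ of $G'$ choose a proper holomorphic map $f_W\colon W\to\D$, and on each grey face $F$ a proper holomorphic map $g_F\colon F\to\Chat\setminus\overline{\D}$; such maps exist on any finitely connected planar domain, and we arrange $g_F$ to have a single pole, at a point $q_F\in F$ that may be prescribed arbitrarily. The crucial requirement is that all of these maps be chosen with \emph{compatible boundary behavior} along $G'$: along each edge $e$ of $G'$ bordering a white face $W$ and a grey face $F$, the restrictions $f_W|_e$ and $g_F|_e$ should cover the same sub-arc of $\partial\D$ with the same multiplicity, the wrapping numbers along each boundary component should be consistent, and at each vertex of degree $2k$ the local pictures should agree with a degree-$k$ branch point; moreover, having fixed such combinatorial data, the maps can be taken so that $f_W|_e$ and $g_F|_e$ differ only by a $(1+\eta)$-quasisymmetric reparametrization. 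Granting this, define $R$ to equal $f_W$ on $W$ and $g_F$ on $F$ outside a thin collar $N$ of $G'$, and on $N$ let $R$ interpolate between the two boundary parametrizations; since the two sides differ by a $(1+\eta)$-quasisymmetry, this interpolation can be made $(1+C\eta)$-quasiregular, it keeps $R^{-1}(\partial\D)=G'$, and the resulting $R$ is a branched covering of $\Chat$ whose critical points over $\partial\D$ are exactly the vertices of $G'$ and with $R^{-1}(\infty)=\{q_F\}_F$, one point per grey face.

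Let $\mu$ be the pullback under $R$ of the trivial complex structure; it is supported on $N$ with $\|\mu\|_\infty\le C\eta$. By the measurable Riemann mapping theorem there is a quasiconformal $\psi\colon\Chat\to\Chat$ with $\mu_\psi=\mu$, normalized to fix three points, and $r:=R\circ\psi^{-1}$ is then holomorphic, hence rational, with $L_r=r^{-1}(\partial\D)=\psi(G')$ and $r^{-1}(\infty)=\psi(\{q_F\}_F)$. Because $\|\mu\|_\infty\le C\eta$, continuous dependence of the solution of the Beltrami equation on its coefficient gives $\psi\to\mathrm{id}$ uniformly on $\Chat$ as $\eta\to 0$; so for $\eta$ small, $\psi$ is within $\varepsilon/2$ of the identity and $\psi$ witnesses that $G'$ and $L_r$ are $(\varepsilon/2)$-homeomorphic. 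For the pole condition, observe that $g_F$, the collar interpolation, $\mu$, and hence $\psi=\psi_q$, can be arranged to depend continuously on $q=(q_F)_F$ as $q$ ranges over a small polydisc $\prod_F\overline{B}(p_F,\rho)$ sitting well inside the grey faces; choosing $\eta$ small enough that $\|\psi_q-\mathrm{id}\|<\rho$ there, a Brouwer fixed-point argument applied to $q\mapsto q-(\psi_q(q_F))_F+(p_F)_F$ produces $q^{\ast}$ with $\psi_{q^{\ast}}(q^{\ast}_F)=p_F$ for every $F$, i.e.\ $r^{-1}(\infty)=P$.

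The main obstacle is the construction of the model $R$, that is, the crucial compatibility requirement above: one must first solve a Riemann--Hurwitz bookkeeping problem to choose consistent combinatorial data for a planar Euler graph with a given $2$-coloring (which arc of $\partial\D$ each edge covers, the wrapping numbers along each boundary component, the branch orders at the vertices), and then realize that data by genuine proper holomorphic maps on the faces whose boundary parametrizations agree along $G'$ up to an arbitrarily small quasisymmetric error. An \emph{exact} match cannot in general be arranged --- for instance an ellipse is not a rational lemniscate, since its Riemann map is transcendental --- which is precisely why a quasiconformal correction is needed and why the conclusion is $\varepsilon$-homeomorphism rather than equality.
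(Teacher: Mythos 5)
Your strategy differs genuinely from the paper's. Where you propose building a quasiregular model face by face from proper holomorphic maps $f_W\colon W\to\D$ and $g_F\colon F\to\Chat\setminus\overline{\D}$ and gluing along a thin collar of $G'$, the paper constructs its model globally: it forms $u_{H,P}$, a sum of Green's functions over the grey faces of a vertex-free graph $H$ approximating $G$, discretizes the representing harmonic measures by $m$ point masses to get $u_m=\tfrac1m\log|r_m|$ for an explicit rational $r_m$, and then patches in a local model $\exp(m\cdot h)$ near each vertex to force a critical point of the correct order. Both routes end with the measurable Riemann mapping theorem to repair a small Beltrami coefficient and a Brouwer fixed-point argument to pin the poles, so those final two steps run in parallel.

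However, the ``main obstacle'' you flag in your last paragraph is not a side issue to be deferred --- it is essentially the entire analytic content of the theorem, and your proposal leaves it open. The claim that proper holomorphic maps on adjacent faces can be chosen so that their boundary parametrizations along each shared edge differ only by a $(1+\eta)$-quasisymmetry, uniformly over $G'$ and with $\eta$ arbitrarily small, is exactly as hard as the theorem you are trying to prove. The boundary derivative of a proper holomorphic map of a finitely connected domain onto $\D$ is governed by harmonic measure from its zeros; matching this density against the one seen from the grey side of the same edge requires coordinating the two proper maps across the edge, and the only apparent way to make the mismatch uniformly small is to take the degrees large and prove harmonic-measure estimates --- which is precisely the machinery the paper deploys (the measure $\mu_B=\sum_p(\omega_p-\delta_p)$ and its discretization simultaneously control the boundary behavior of $\log|r_m|$ seen from both colors). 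In addition, the Riemann--Hurwitz bookkeeping you invoke (degrees on each boundary component, branch orders at vertices, which arc of $\partial\D$ each edge covers) is genuinely nontrivial once faces are multiply connected or curves are nested, and you do not solve it; the paper avoids it altogether because it never needs to realize a prescribed branched covering, only to approximate a level set of a harmonic function. So while the quasiconformal-surgery skeleton is sound and the MRMT/Brouwer endgame matches the paper's, the central gluing lemma you assert without proof is where all the work lives, and as written the argument does not close.
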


\begin{rem}
We might have  called two sets $E$ and $F$  $\delta$-homeomorphic  
if there is homeomorphism $\phi: E \to F$ so that 
$\sup_{z \in E} |\phi(z)-z|< \delta$, but this  
is weaker than Definition \ref{epsilon homeo defn}.
%(in terms of homeomorphisms of the sphere to itself).
However, we shall prove in Section \ref{extending homeos sec} that 
if $G$ is a lemniscate graph and 
$ \phi: G \to G'$ is a  $\delta$-homeomorphism 
in this weaker sense, then it can be extended to 
an $\varepsilon$-homeomorphism of $\Chat$, 
assuming that $\delta$ is sufficiently  small depending 
on $G$ and $\varepsilon$ (this is false for more general sets).
Thus to prove Theorem \ref{main_thm_2},  it will  suffice to 
verify that for every $\delta>0$ there is a rational 
function $r$, so that $G$ and $L_r$ are $\delta$-homeomorphic in 
the weaker sense. 
\end{rem} 
%We define a {\it topological lemniscate} to be a homeomorphic 
%image of a rational lemniscate. Theorem \ref{main_thm_2} implies, 
%in particular, that  every lemniscate graph is a
%topological lemniscate.

\vskip.2in
% ----------------------------------------
\subsection{Hilbert's Lemniscate Theorem}

Hilbert  \cite{Hilbert1897} proved  that any closed Jordan 
curve is $\varepsilon$-homeomorphic to a  polynomial 
lemniscate. This is not how the result is usually stated, 
but it is an equivalent formulation, and it  makes it  easy
to see that Theorem  \ref{main_thm_2} generalizes Hilbert's result. 
More precisely, 
since a Jordan curve is a  lemniscate graph (one edge, no vertices)
with exactly two faces, we can choose the unbounded face to 
be colored grey, and  place the pole in Theorem \ref{main_thm_2}
at infinity. Thus the approximating rational lemniscate 
is actually a polynomial lemniscate, giving  Hilbert's theorem. 
%Hilbert's theorem was the original motivation for this work. 
More generally, 
any finite collection of disjoint Jordan curves 
that are not nested (no curve separates another one from infinity)
is  $\varepsilon$-homeomorphic to a polynomial lemniscate, since we can 
color the bounded faces white and the unbounded face grey. This case  
is due to Walsh and Russell \cite{MR1501732}, and  
generalizing their  result to arbitrary  families of 
disjoint  Jordan curves (i.e., allowing nesting) 
was the original motivation for the current paper.
In addition to recovering the theorems of Hilbert and Walsh-Russell,  
Theorem \ref{main_thm_2} also  gives the following  new
result about  polynomial lemniscates.

\begin{cor}
If $G$ is a  lemniscate graph that is the boundary of 
its unbounded face, then $G$ is  $\varepsilon$-homeomorphic to 
a polynomial lemniscate for every $\varepsilon>0$.
If a lemniscate graph  is not the boundary of its unbounded face,
then it is not  the image of a  polynomial lemniscate
under any homeomorphism of the plane.
\end{cor}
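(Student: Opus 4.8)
The plan is to deduce the corollary from Theorem \ref{main_thm_2}. For the first statement, suppose $G$ is a lemniscate graph equal to the boundary of its unbounded face $F_\infty$. I would first argue that in this situation there is a $2$-coloring of the faces of $G$ in which $F_\infty$ is the unique grey face. Indeed, since $G = \partial F_\infty$, every edge of $G$ lies on the boundary of $F_\infty$; hence in any valid $2$-coloring the colors of the faces adjacent to $F_\infty$ are all forced to differ from its color, and by connectivity of the faces-adjacency structure (walking across edges) one checks that $F_\infty$ is the only face of one color. Choosing that color to be grey, Theorem \ref{main_thm_2} applies with $P = \{\infty\} \subset F_\infty$: there is a rational $r$ with $r^{-1}(\infty) = \{\infty\}$ and $G$ $\varepsilon$-homeomorphic to $L_r$. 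Since the only pole of $r$ is at $\infty$ and it is simple or of higher order, $r$ is in fact a polynomial (a rational map whose only pole is at $\infty$ is a polynomial), so $L_r$ is a polynomial lemniscate, as desired.

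For the converse, suppose $G$ is a lemniscate graph that is \emph{not} the boundary of its unbounded face, and suppose for contradiction that $\phi$ is a homeomorphism of the plane with $\phi(G) = L_p$ for some polynomial $p$. The key topological observation is that a polynomial lemniscate $L_p = \{z : |p(z)| = 1\}$ always \emph{is} the boundary of its unbounded face: the set $\{|p| > 1\}$ is a neighborhood of $\infty$, it is connected (its complement $\{|p| \le 1\}$ is compact, and one shows $\{|p|>1\}$ is connected because each bounded component of $\{|p|<1\}$ has boundary meeting $\{|p|=1\}$, which in turn is approached from the unbounded side — more carefully, every component of $\C \setminus L_p$ other than the unbounded one is bounded and $|p| < 1$ there by the maximum principle, while $\{|p|>1\}$ is a single face since $p$ has a pole only at $\infty$), and every point of $L_p$ is a limit of points with $|p| > 1$. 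Hence $L_p = \partial(\text{unbounded face of } L_p)$.

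Now I would extract the contradiction: "being the boundary of the unbounded face" is a property preserved by homeomorphisms of $\Chat$ that fix $\infty$, but $\phi$ is only assumed to be a homeomorphism of the plane. The point is that a homeomorphism of $\C$ extends to a homeomorphism of $\Chat$ fixing $\infty$ (the one-point compactification is functorial for proper maps, and a self-homeomorphism of $\C$ is proper), so $\phi$ does send the unbounded face of $G$ to the unbounded face of $L_p$ and sends $\partial(\text{unbounded face of }G)$ to $\partial(\text{unbounded face of }L_p) = L_p = \phi(G)$. Therefore $\phi(\partial(\text{unbounded face of }G)) = \phi(G)$, and since $\phi$ is injective this forces $\partial(\text{unbounded face of }G) = G$, contradicting our assumption.

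The main obstacle I expect is the first, ``forward'' direction: one must know precisely when a lemniscate graph admits a $2$-coloring with the unbounded face as the sole grey face. This is the combinatorial heart of the argument, and it reduces to the claim that $G = \partial F_\infty$ iff the white-grey coloring is ``all bounded faces white, unbounded face grey'' — equivalently, iff no face other than $F_\infty$ touches $F_\infty$ only through... no, rather iff $G$ has no ``nested'' structure forcing a second grey face. Making this precise (and symmetric under color swap) requires a short argument about the dual graph / face-adjacency bipartite structure of a planar Euler graph, using that the two colorings differ only by a global swap (as noted in the text after Definition \ref{coloring_defn}). The remaining steps — that a pole-at-$\infty$-only rational map is a polynomial, and that a planar homeomorphism extends to $\Chat$ fixing $\infty$ — are standard.
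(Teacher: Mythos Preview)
Your proposal is correct and is exactly the argument the paper has in mind; the corollary is stated without proof there, as an immediate consequence of Theorem~\ref{main_thm_2}. Your closing paragraph of self-doubt is unwarranted: the combinatorial point is already settled by your first paragraph. If $G=\partial F_\infty$, then every edge of $G$ has $F_\infty$ on one side, so in the coloring with $F_\infty$ grey every other face is white; conversely, if $F_\infty$ is the unique grey face then every edge (which always separates a grey face from a white one) borders $F_\infty$, so $G\subset\partial F_\infty$ and hence $G=\partial F_\infty$. For the second part, your reasoning is right but the maximum-principle step is cleaner stated directly: a bounded component of $\{|p|>1\}$ would have $|p|=1$ on its boundary and $|p|>1$ in its interior, contradicting the maximum modulus principle, so $\{|p|>1\}$ is the single unbounded face and $L_p$ is its full boundary.
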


\begin{comment} % ##########################
To emphasize  the relation between Hilbert's theorem and
our Theorem \ref{main_thm_2},
we can informally  summarize these two results as follows:

\noindent
$\bullet$ 
\emph{A connected polynomial lemniscate without vertices  is a
Jordan curve, and any Jordan curve  is $\varepsilon$-homeomorphic 
to a connected polynomial lemniscate without vertices.}

\noindent
$\bullet$ \emph{A rational lemniscate is a lemniscate graph, and  any
lemniscate graph is $\varepsilon$-homeomorphic to a 
rational lemniscate (even one with prescribed poles in grey faces).}
\end{comment}% ##############################

Both  Hilbert's lemniscate theorem and Theorem \ref{main_thm_2} 
say that every topological version of some object 
(a Jordan curve or lemniscate graph) is $\varepsilon$-homeomorphic to 
an algebraic version of the same object.

\newpage
%------------------------------------------------------------
\subsection{Quantitative Approximation by Rational Functions}\label{approx_by_rat}

%Recall that a set $K\subset\Chat$ 
%is called finitely connected if $\Chat\setminus K$ has finitely many
%connected components.

\begin{thmx}\label{power_series_cor}
Let $K\subset\Chat$ be compact, % and finitely connected, 
let $P$ contain exactly one point from each component of 
$\Chat\setminus K$, and suppose $f$ is holomorphic in a 
neighborhood $U$ of $K$. Then there exist constants $ A,B
\in(1,\infty)$ and a sequence of rational mappings 
$R_n$ of degree $\leq n$ satisfying $R_n^{-1}(\infty) \subset P$ and
such that 
\begin{equation}\label{power_series_est} 
\sup_{z\in K}|f(z)-R_n(z)|\leq\frac{A}{B^n}  \, \, \textrm{ for all } n.
\end{equation}
\end{thmx}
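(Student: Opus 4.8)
The plan is to use Theorem~\ref{main_thm_2} to manufacture, once and for all, a single rational function $r$ whose lemniscate surrounds $K$ and whose poles lie in $P$, and then to run the classical Hermite--Walsh interpolation construction on a level curve of $|r|$. \textbf{Step 1 (a topological reduction).} If $K=\Chat$ then $f$ is holomorphic on $\Chat$, hence constant, and the statement is trivial; so assume $K\ne\Chat$. I first claim that only finitely many components of $\Chat\setminus K$, say $\tilde V_1,\dots,\tilde V_N$, fail to be contained in $U$: if such a component $\tilde V$ meets $\Chat\setminus U$ at a point $x$, then $d(x,K)\ge d_0:=\dist(K,\Chat\setminus U)>0$, and since $\partial\tilde V\subset K$ the ball $B(x,d_0)$ lies in $\tilde V$; thus these components have spherical area bounded below, and disjoint such sets are finite in number. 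For $j=1,\dots,N$ let $p_j\in\tilde V_j$ be the point of $P$ in $\tilde V_j$. As $\tilde V_j\cap(\Chat\setminus U)$ is a compact subset of the open connected set $\tilde V_j$, I may choose a connected open $V_j'$ with $\overline{V_j'}\subset\tilde V_j$ that contains $\{p_j\}\cup(\tilde V_j\cap(\Chat\setminus U))$ (joining components by thin tubes in $\tilde V_j$ if needed) and whose boundary is a finite disjoint union of Jordan curves. Set $\Omega:=\Chat\setminus\bigcup_{j=1}^N\overline{V_j'}$; then $K\subset\Omega$, $\overline\Omega\subset U$, and $G:=\partial\Omega=\bigcup_j\partial V_j'$ is a lemniscate graph with no vertices. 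Coloring the components of $\Omega$ white and the sets $V_j'$ grey gives a valid $2$-coloring of the faces of $G$, and $P_0:=\{p_1,\dots,p_N\}$ contains exactly one point in each grey face.

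\textbf{Step 2 (invoking Theorem~\ref{main_thm_2}).} Choose $\varepsilon>0$ small relative to $\dist(K,G)$, $\dist(P_0,G)$, and $\dist(\Chat\setminus U,G)$, and apply Theorem~\ref{main_thm_2} to $G$ with this coloring and the set $P_0$. This yields a non-constant rational $r$ with $r^{-1}(\infty)=P_0$ and a homeomorphism $\phi$ of $\Chat$ with $\phi(G)=L_r$ and $\sup_z d(\phi(z),z)<\varepsilon$. Since $\phi$ and $\phi^{-1}$ displace points by less than $\varepsilon$, any arc inside $\Chat\setminus L_r$ joining a point of $K$ to a point of $P_0\cup(\Chat\setminus U)$ would pull back under $\phi^{-1}$ to an arc inside $\Chat\setminus G$ with endpoints lying in a white face and a grey face respectively -- impossible; hence $L_r$ separates $K$ from $P_0$ and from $\Chat\setminus U$. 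Moreover $\phi$ carries the chosen coloring of $G$ to the natural coloring of $L_r$ (each pole $p_j$, and $\phi(p_j)$, lies in the grey region $\{|r|>1\}$ for $\varepsilon$ small), so the faces of $L_r$ meeting $K$ are white; their union $\mathcal O$ satisfies $K\subset\mathcal O\subset\{|r|<1\}$, $\overline{\mathcal O}\subset U$, and contains no pole of $r$. In particular $\rho:=\sup_K|r|<1$. Fix a regular value $R\in(\rho,1)$ of $|r|$ and let $\Omega^*$ be the union of those components of $\{|r|<R\}$ that meet $K$; then $K\subset\Omega^*$, $\overline{\Omega^*}\subset U$, $r$ has no pole on $\overline{\Omega^*}$, and $\partial\Omega^*\subset\{|r|=R\}$ is a finite disjoint union of analytic Jordan curves.

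\textbf{Step 3 (Hermite--Walsh interpolation).} Since $f$ is holomorphic near $\overline{\Omega^*}$ and $K\subset\Omega^*$, Cauchy's formula gives $f(z)=\frac1{2\pi i}\int_{\partial\Omega^*}\frac{f(w)}{w-z}\,dw$ for $z\in K$. Writing $m=\deg r$, for $n\ge m$ set $\ell+1=\lfloor n/m\rfloor$ and
\[
R_n(z):=\frac1{2\pi i}\int_{\partial\Omega^*}\frac{f(w)}{w-z}\Bigl(1-\frac{r(z)^{\ell+1}}{r(w)^{\ell+1}}\Bigr)\,dw ,
\]
with $R_n:=0$ for $n<m$. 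The $z$-dependence of the integrand equals $\dfrac{r(w)^{\ell+1}-r(z)^{\ell+1}}{(w-z)\,r(w)^{\ell+1}}$, which for fixed $w\in\partial\Omega^*$ is rational in $z$ of degree $\le(\ell+1)m$ with poles only in $r^{-1}(\infty)=P_0$ (the would-be pole at $z=w$ cancels); these lie in a fixed finite-dimensional space and depend continuously on $w$, so $R_n$ is rational of degree $\le(\ell+1)m\le n$ with $R_n^{-1}(\infty)\subset P_0\subset P$. Subtracting,
\[
f(z)-R_n(z)=\frac1{2\pi i}\int_{\partial\Omega^*}\frac{f(w)}{w-z}\Bigl(\frac{r(z)}{r(w)}\Bigr)^{\ell+1}dw ,\qquad z\in K ,
\]
and since $|r|\le\rho$ on $K$, $|r|=R$ on $\partial\Omega^*$, and $\dist(K,\partial\Omega^*)>0$, this is bounded by $C(\rho/R)^{\ell+1}$ with $C$ independent of $n$. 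As $\ell+1\ge n/m-1$ and $\rho/R<1$, the bound becomes $A/B^n$ with $B:=(R/\rho)^{1/m}>1$ and a suitable $A>1$ (enlarged to cover the finitely many $n<m$; if $\rho=0$ the error vanishes and any $B>1$ works), which is the desired estimate.

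\textbf{The main obstacle.} The routine part is Step~3 (classical interpolation and a geometric-series estimate). The substantive work is Step~1 together with the separation assertion in Step~2: one must realize a neighborhood of $K$ lying inside $U$ as a domain $\Omega$ whose boundary is a lemniscate graph admitting a $2$-coloring that places exactly the prescribed points of $P$ in the grey faces, and then verify that a sufficiently accurate lemniscate approximation $L_r$ of $\partial\Omega$ still traps $K$ on its white side, inside $U$, and away from the poles of $r$. It is precisely here that one uses the strong conclusion of Theorem~\ref{main_thm_2} -- an $\varepsilon$-homeomorphism of the entire sphere -- rather than a mere homeomorphism of the curves themselves.
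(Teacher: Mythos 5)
Your proof is essentially correct and uses the same core mechanism as the paper --- approximate $\partial K$ by a rational lemniscate $L_r$ with poles in $P$ via Theorem~\ref{main_thm_2}, then run the Hermite--Walsh construction with the modified Cauchy kernel $\frac{f(w)}{w-z}\bigl(r(z)/r(w)\bigr)^{\ell+1}$ --- but you route through a noticeably cleaner intermediate step. The paper applies Theorem~\ref{main_thm_2} to a Green's-function level curve $C_\rho$ and integrates over another such level curve $C_R$; to bound the ratio $|r(z)/r(\zeta)|$ on $K\times C_R$ it needs Lemma~\ref{first_app_th_lem}, a maximum-principle comparison between $\tfrac1d\log|r|$ and $u_{\partial K,P}$. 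You instead take the contour $\partial\Omega^*\subset\{|r|=R\}$ to be a level curve of $|r|$ itself, so the bound $|r(z)/r(w)|\le\rho/R$ on $K\times\partial\Omega^*$ is immediate and no Green's function for $\partial K$ is needed at all. The cost --- verifying that $\Omega^*$ traps $K$, sits inside $U$, and avoids the poles --- you pay correctly via the $\varepsilon$-homeomorphism and the uniqueness of the $2$-coloring of $L_r$'s faces, which is exactly the ``strong'' feature of Theorem~\ref{main_thm_2} that you single out at the end.

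Two small normalizations are missing and should be inserted. First, if $U=\Chat$ (equivalently, if every component of $\Chat\setminus K$ lies in $U$) then $N=0$, your $G$ is empty, and Theorem~\ref{main_thm_2} has nothing to act on; but then $f$ is constant and the statement is trivial, so this case just needs to be separated out --- note that $K\ne\Chat$ alone does not force $N\ge1$. Second, you invoke the Cauchy formula $f(z)=\frac1{2\pi i}\int_{\partial\Omega^*}\frac{f(w)}{w-z}\,dw$ without first arranging $\infty\notin\overline{\Omega^*}$; if $\infty\in\Omega^*$ the right side equals $f(z)-f(\infty)$, and if $\infty\in K$ the integrand is ill-posed at $z=\infty$. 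The paper handles this by applying a M\"obius transformation at the outset so that $\infty\notin K$ and $\infty\in P$; in your setup this forces $\infty\in P_0$, hence into a grey face of $L_r$ and out of $\Omega^*$. (Alternatively, absorb the constant $f(\infty)$ into $R_n$, which preserves rationality, degree, and pole placement.) Neither fix disturbs the rest of the argument.
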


Replacing the geometric rate of convergence  in (\ref{power_series_est})
with $o(1)$ is exactly Runge's classical approximation theorem. 
When the  neighborhood  $U$ in Theorem \ref{power_series_cor} is a disc, we 
can take $R_n$ to be the degree $n$  truncation of the  Taylor series for $f$ in $U$. 
We then see that Theorem \ref{power_series_cor} 
generalizes the well-known fact that the  Taylor series of an 
analytic function $f$ converges geometrically fast to $f$ on compact 
subsets of the disc of convergence.  
For more general open sets $U$,  we will first  choose a 
rational map $r$ whose 
lemniscate $L_r$ separates $K$ from $\partial U$, and then choose 
the maps $R_n$ so that their derivatives, $R_n^{(k)}$,  agree with 
the derivatives $f^{(k)}$ up to some order (depending on $n$)
at the zeros of $r$. 
See Section \ref{sec_cor_proof} for details.
To prove Theorem \ref{power_series_cor}, we will only need 
Theorem \ref{main_thm_2} in the case when the lemniscate
graph has no vertices, i.e., it  consists only of disjoint 
Jordan curves. We shall see that this case is much easier than 
the  general case of graphs with vertices. 

We will also show that the geometric decay in Theorem 
\ref{power_series_cor} is sharp for most functions: 
if there is a sequence of rational approximations 
that converge to $f$ faster than  geometrically 
along some subsequence of 
degrees, then $f$ extends holomorphically to $\Chat \setminus P$. 
See Section \ref{sharp_approx_sec} for the precise statement 
and proof.

In the case that $\Chat\setminus K$ is connected,
$\infty\in\Chat\setminus K$,
and $P=\{\infty\}$, Theorem \ref{power_series_cor} 
is exactly Theorem I of \cite{MR1501732}. % by Walsh and Russell.
%Despite the considerable literature on rational approximation, 
All other cases of Theorem \ref{power_series_cor} 
(namely, whenever $\Chat\setminus K$ is not connected) 
are new, to the best of our knowledge.

%-------------------------------------------------------
\subsection{Approximation by Julia Sets}
If a rational map $r$ has exactly two attracting 
cycles, and if the Fatou set $\mathcal{F}(r)$  is 
equal to the union of the two corresponding 
attracting basins,  $\mathcal{A}_1$ and  $\mathcal{A}_2$,
then the sphere decomposes as (here $\sqcup$ denotes
disjoint union) 
\[ \Chat=\mathcal{A}_1\sqcup\mathcal{A}_2\sqcup\mathcal{J}(r), \]
%$\mathcal{A}_1$, $\mathcal{A}_2$ are open, disjoint and satisfy
where $\mathcal{J}(r)$ is the Julia set of $r$, and  
$\partial\mathcal{A}_1=\partial\mathcal{A}_2=\mathcal{J}(r)$.
See, for instance, Corollary 4.12 of \cite{MilnorCDBook}.
Note that each attracting basin is an open set, but   need not be 
connected.
Our next result implies that any disjoint pair of open sets 
sharing a common boundary
can be approximated by a pair of  attracting basins for some 
rational map.

\begin{thmx}\label{julia_set_cor} 
Let $\varepsilon>0$ and $A_1$, $A_2\subset\Chat$ be open, non-empty, disjoint sets
with common boundary $J$ satisfying $\Chat=A_1\sqcup A_2\sqcup J$.
Then there is a hyperbolic rational map $r$ with two attracting basins,
$\mathcal{A}_1$ and  $\mathcal{A}_2$, sharing a common boundary 
$\mathcal{J}(r)$, satisfying
$\Chat=\mathcal{A}_1\sqcup\mathcal{A}_2\sqcup\mathcal{J}(r)$, so that
\begin{equation}
d_H(A_i, \mathcal{A}_i)<\varepsilon 
\textrm{ for } i=1,2 \textrm{ and } d_H(J, \mathcal{J}(r))<\varepsilon,
\end{equation}
where  $d_H$  denotes Hausdorff distance.
Moreover, if $A_1$ has finitely many components,  $P$ contains 
one point from each component of $A_1$, and $p \in P$, 
then we may choose $r$ so  that $r(p)=p$, $r^{-1}(p)=P$
and  $\mathcal{A}_1$ is the basin of attraction for $p$.
\end{thmx}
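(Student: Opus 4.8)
The plan is to reduce the statement to Theorem \ref{main_thm_2} together with standard tools from the iteration theory of rational maps, specifically the theory of hyperbolic (expanding) rational maps whose Fatou set consists of exactly two attracting basins. First I would observe that the hypothesis $\Chat = A_1 \sqcup A_2 \sqcup J$ with $J = \partial A_1 = \partial A_2$ forces $J$ to be a continuum-type set, but more importantly it gives essentially no topological rigidity: $A_1$ and $A_2$ need not have finitely many components, so the first step is an approximation step. I would exhaust $A_1$ and $A_2$ from inside by finite unions of smooth closed Jordan domains, and use the resulting finite collection of Jordan curves to build a lemniscate graph $G$ (with no vertices, just disjoint Jordan curves, or with vertices if the approximating domains are allowed to touch) that is within $\varepsilon/2$ of $J$ in Hausdorff distance and separates the shrunken versions of $A_1$ from those of $A_2$. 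The $2$-coloring is the obvious one: faces inside the $A_1$-approximation are white, faces inside the $A_2$-approximation are grey (or vice versa, matching the placement of $P$ in the "moreover" clause).

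Next I would apply Theorem \ref{main_thm_2} to $G$ (with $\varepsilon$ replaced by something small, and with the pole set $P'$ chosen to lie in the grey faces — one point per grey face) to obtain a rational map $r_0$ with $L_{r_0}$ $\varepsilon'$-homeomorphic to $G$ and $r_0^{-1}(\infty) = P'$. The lemniscate $L_{r_0}$ separates the plane into white components $\{|r_0| < 1\}$ and grey components $\{|r_0| > 1\}$. Now the key iteration-theoretic input: I would consider a map of the form $r = \mu \circ r_0$ where $\mu$ is a Blaschke-type or Möbius adjustment, or more robustly I would invoke the standard fact (as in Milnor's book, the reference already cited) that after post-composing $r_0$ with a suitable high iterate or a suitable polynomial in $r_0$, one obtains a hyperbolic rational map $r$ for which $\{|r| < 1\}$ lies in the immediate basin of an attracting fixed point (at $0$, say) and $\{|r| > 1\}$ lies in the basin of $\infty$. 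The cleanest route: replace $r_0$ by $r = r_0^d$ for large $d$, or compose with $z \mapsto z^2$; one checks that $0$ and $\infty$ become superattracting fixed points, that the critical points off $L_r$ are all captured, and that the Fatou set is exactly the union of the two basins — hence $r$ is hyperbolic with $\mathcal{J}(r) = \partial \mathcal{A}_1 = \partial \mathcal{A}_2$. The basins $\mathcal{A}_1 \supset \{|r|<1\}$, $\mathcal{A}_2 \supset \{|r|>1\}$, and $\mathcal{J}(r) \subset L_r$, so $\mathcal{J}(r)$ is within $\varepsilon'$ of $G$, hence within $\varepsilon$ of $J$; the Hausdorff estimates $d_H(A_i, \mathcal{A}_i) < \varepsilon$ follow since the $\mathcal{A}_i$ are squeezed between the inner approximations of $A_i$ and the $\varepsilon'$-neighborhood of $L_r$.

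For the "moreover" clause, when $A_1$ has finitely many components one can take the inner approximation to have exactly that many components (one Jordan domain per component), so $G$ has the same number of white faces. One point $p \in P$ is distinguished; I would arrange the construction so that $p$ is the fixed point: concretely, apply Theorem \ref{main_thm_2} with the roles arranged so the distinguished white face contains $p$, then choose the adjustment of $r_0$ so that $r(p) = p$ (possible since we have freedom to pre-compose with a Möbius map sending a zero of $r_0$ to $p$, or to post-compose appropriately) and so that $r^{-1}(p) = P$, i.e., $p$ is a totally ramified value or the finite fiber over $p$ is exactly the prescribed set — this is where the prescribed-zeros flexibility of the construction is used, dually to the prescribed-poles statement of Theorem \ref{main_thm_2} (apply that theorem to $1/r$). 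Then $\mathcal{A}_1$ is the immediate basin of $p$ as desired.

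The main obstacle I expect is the iteration-theoretic step: going from an arbitrary rational $r_0$ with a given lemniscate to a \emph{hyperbolic} map whose \emph{entire} Fatou set is the two basins, with no extra Fatou components (e.g., no stray attracting cycles from uncontrolled critical points, no Siegel discs or Herman rings). The honest way to handle this is to ensure all critical points of the modified map $r$ lie in $\{|r| < 1\} \cup \{|r| > 1\}$ and iterate into $\{p, \infty\}$ — a dimension/degree count on the critical points of $r_0^d$ (or of a Blaschke-product-type modification) together with the fact that $L_r = L_{r_0}$ avoids the critical set should give this. A secondary technical point is making the two nested approximation steps (first approximate $A_i$ by Jordan domains, then approximate $G$ by $L_r$, then cut down to $\mathcal{J}(r) \subset L_r$) fit inside a single $\varepsilon$, which is routine book-keeping with the triangle inequality for $d_H$.
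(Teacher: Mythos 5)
Your proposal is essentially the same as the paper's argument: approximate by a finite collection of disjoint Jordan curves, apply Theorem~\ref{main_thm_2} to get a rational $r_0$ with no critical points on $L_{r_0}$, and then take the power $r_0^n$ (not iterate) for large $n$ to create superattracting fixed points at $0$ and $\infty$ and capture all critical orbits. Two small contrasts worth noting. First, for the finite-topology reduction, the paper does not exhaust $A_1$, $A_2$ from inside; instead it observes that the \emph{outer} neighborhood $N_\varepsilon(A_2)$ automatically has finitely many components, and then places the approximating Jordan curves between $\partial N_\varepsilon(A_2)$ and $\overline{A_2}$. This avoids the extra book-keeping your inner-exhaustion step would need. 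Second, the step you flag as the main obstacle (showing that $\mathcal{F}(r^n)$ is exactly the two basins) is handled in the paper by exactly the mechanism you guess: no critical points of $r_0$ lie on $L_{r_0}$, so every critical value of $r_0^n$ is either of very large or very small modulus for large $n$, hence in one of the two basins; then Sullivan's no-wandering-domains theorem plus the fact that every periodic Fatou component has an associated critical orbit rules out any other Fatou components. You should make that last step explicit rather than leaving it at \emph{should give this}, but you have the right idea. The ``moreover'' clause is also handled as you suggest, just with the cleaner normalization $p=\infty$, $0\in A_2$, so that $P=r^{-1}(\infty)$ is supplied directly by Theorem~\ref{main_thm_2} with the grey faces containing the components of $A_1$; the detour through $1/r$ is unnecessary.
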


In the case when $A_1$ is connected, contains $\infty$ 
and $P=\{\infty\}$, this result is   essentially the same as
Theorem 1.2 of \cite{MR3955554} by Lindsey and Younsi.  
They  show that a compact set $K$  not separating the plane can be 
approximated by the filled Julia set of a polynomial.
Moreover, they show that such a polynomial approximation 
is possible only if the interior of $K$ does not separate the plane;
see Theorem 1.4 of \cite{MR3955554}. 
Thus when $A_1$ is not connected in Theorem \ref{julia_set_cor}, 
it is necessary to consider rational maps having finite poles.

Lindsey and Younsi give two proofs of their Theorem 1.2. The 
first argument %in \cite{MR3955554}
does not seem to  extend to the case that $A_1$ is disconnected,
but their second proof  is a short application of 
Hilbert's Lemniscate Theorem, and 
replacing Hilbert's result with our
Theorem  \ref{main_thm_2} yields a quick proof of
Theorem \ref{julia_set_cor}. 
See Section \ref{first_cor_proof} for details. 
Like  Theorem \ref{power_series_cor}, the proof of
Theorem \ref{julia_set_cor}  only requires
Theorem \ref{main_thm_2} in the easier case when the lemniscate
graph has no vertices.

%----------------------------------------------------------------
\subsection{Related Work.}

Hilbert's interest in lemniscates seems to have arisen from a study of
polynomial approximation \cite{Hilbert1897}, where he used his
lemniscate theorem to establish the special case of Theorem 
\ref{power_series_cor} in which $K$ has simply-connected complement.
Later, Walsh and Russell \cite{MR1501732} also studied lemniscates 
in the course of their proof of Theorem \ref{power_series_cor}
in the case that $K\subset\mathbb{C}$ has a connected complement. 

Hilbert's theorem has been generalized to higher complex 
dimensions by 
Bloom,  Levenberg and Lyubarskii
\cite{MR2473634}, % AUTHOR = {Bloom, Thomas and Levenberg, Norman and Lyubarskii, Yu.},
     %TITLE = {A Hilbert lemniscate theorem in {$\Bbb C^2$}},
and by Nivoche
\cite{MR2518003}. % AUTHOR = {Nivoche, St\'{e}phanie},
     %TITLE = {Polynomial convexity, special polynomial polyhedra and the
     %         pluricomplex {G}reen function for a compact set in {$\Bbb C^n$}},
Rashkovskii and Zakharyuta \cite{MR2838245} 
prove that every bounded polynomially convex poly-circular 
region in $\complex^n$ can be approximated by regions 
$\{|p_k(z)|<1, 1\leq k\leq n\}$, where
$p_k$ are homogeneous polynomials of the same degree with a
unique single common zero at 0.
Nagy and Totik consider placing a lemniscate between tangent 
curves in 
\cite{MR2177185}.  %AUTHOR = {Nagy, B\'{e}la and Totik, Vilmos},
     %TITLE = {Sharpening of Hilbert's lemniscate theorem},
The  rate of convergence in Hilbert's theorem  has been studied by Andrievskii 
\cite{MR1775150}, % AUTHOR = {Andrievskii, Vladimir},
     %TITLE = {On the approximation of a continuum by lemniscates},
\cite{MR3978145}, % AUTHOR = {Andrievskii, V. V.},
     %TITLE = {On Hilbert lemniscate theorem for a system of quasidisks},
and  Kosukhin \cite{MR2246962}. %AUTHOR = {Kosukhin, O. N.},
% TITLE = {On the rate of approximation of closed Jordan curves by lemniscates},
Results on generalized lemniscates for resolvents of operators  are 
surveyed by Putinar in \cite{MR2129650}.
Lemniscate approximations via Runge's theorem are given in 
\cite{MR428074} and \cite{MR4416767}, and applied  to various problems 
in functional analysis. 

There has also been recent interest in rational lemniscates and 
Hilbert's theorem stemming from work of Ebenfelt, Khavinson, 
and Shapiro \cite{MR2868587}, in which they propose coordinates
on  the space of Jordan curve polynomial lemniscates. By Hilbert's
Lemniscate Theorem, this lemniscate space is dense in the larger
space of smooth Jordan curves, and this larger space of smooth
Jordan curves is the central object of study in the approaches
of Kirillov \cite{MR0902292}, Sharon and Mumford \cite{Sharon-Mumford}
and others to computer vision and pattern recognition.
%. Therein, they draw a connection between lemniscates and the approaches of Kirillov \cite{MR0902292}, Sharon and Mumford \cite{Sharon-Mumford} and others to vision and pattern recognition. Namely, \cite{MR2868587} proposes natural coordinates on the space of Jordan curve polynomial lemniscates. By Hilbert's Lemniscate Theorem, this lemniscate space is dense in the larger space of smooth Jordan curves: the central object in the aforementioned approaches to vision and pattern recognition.
The results in \cite{MR2868587} led to a study of the conformal properties of
lemniscates by Fortier Bourque and  Younsi \cite{MR3296178}, by 
Younsi \cite{MR3447662}, and by Frolova,  Khavinson and Vasil'ev \cite{MR3784168}.

 A well-known question of Erd\H{o}s, Herzog, and Piranian \cite{MR101311}
asks what is the maximum length of 
a lemniscate of a monic polynomial: $z^n-1$ is conjectured to be the 
extreme case. This problem is still open, but 
related results are given by Borwein in 
\cite{MR1223265}, % AUTHOR = {Borwein, Peter},
by the second author and Hayman in 
\cite{MR1704189}, and by Nazarov and Fryntov in
\cite{MR2500509}.  %\cite{2008arXiv0808.0717F}. 
% AUTHOR = {Eremenko, Alexandre and Hayman, Walter},
Other recent work on lemniscates includes a study of the expected 
length of random rational lemniscates as considered by Lerario and Lundberg in
\cite{MR3356754}, \cite{MR3570241}. Random polynomial lemniscates
are considered by Lundberg and Ramachandran 
\cite{MR3742436}, and by Epstein, Hanin and Lundberg \cite{MR4245584}. 
There has been  work on the possible topologies of rational 
lemniscates, the tree structure of nested components, and the 
comparison of functions whose level sets are 
topologically equivalent, e.g.,  
\cite{MR1133876},
\cite{MR3922299},
\cite{MR3558373},
\cite{MR3590700},
\cite{MR825840}.
For a survey of recent  results on lemniscates and level sets, see \cite{MR4261771}.

% AUTHOR = {Lerario, Antonio and Lundberg, Erik},
%\cite{MR1974628}, % AUTHOR = {Jeong, Moonja and Taniguchi, Masahiko},
                 %TITLE = {Bell representations of finitely connected planar domains},
%AUTHOR = {Putinar, Mihai},{Notes on generalized lemniscates}, {Operator theory
The geometry of higher dimensional, real  polynomial lemniscates is 
a very active field,  e.g. the
``polynomial ham sandwich theorem'' of Stone and Tukey \cite{MR7036} 
gave  rise to the ``polynomial method''  in discrete geometry
and combinatorics, as described by Guth in \cite{MR3202645}. 
%\cite{MR2525780}, % Dvir finite field Kakeya 
%\cite{MR2746348}, % Guth endpoint
%\cite{MR3272924}, % Guth Nets Falconer distance set problem
%\cite{MR3202645}. % Guth expository survey
Recently, real variable polynomial lemniscates have been used 
to separate data points in the context of machine learning,  e.g., 
\cite{Korda2022} and  \cite{MR3964612}.

We also mention again the recent work of Lindsey and Younsi \cite{MR3955554}
explaining the connection between lemniscates and the approximation of 
continua by polynomial  Julia sets, a problem also studied  
by  Lindsey in \cite{MR3377290}, and  by the first author and
Pilgrim in \cite{MR3420484}.
%in the polynomial setting. 
Bok-Thaler
\cite{MR4375923} and Marti-Pete, Rempe and Waterman \cite{2022arXiv220411781M}
consider analogous problems in the setting of  transcendental 
(i.e., non-polynomial) entire functions. 

A slightly different, but related, problem is the study of pullbacks
$r^{-1}(\Gamma)$ where $\Gamma$ is a Jordan curve passing through 
the critical values of $r$ (in our case, $\Gamma$ is always a Euclidean 
circle, and need not contain all the critical values of $r$).
Such pullbacks are called \emph{nets}, and have been studied by
the second author and Gabrielov \cite{MR1888795},  Thurston 
\cite{38274}, Koch and Lei \cite{MR4205641},
Tomasini \cite{MR3422731}, 
%Nascimento \cite{2023arXiv230407207L}, AE says this paper is wrong
and the third author \cite{lazebnik2023analytic}.

% ----------------------------------------                      
\subsection{Proof Sketch.}
%We will give the proof of Theorem \ref{main_thm_2} 
%assuming a certain technical result whose proof is delayed to 
%the end of the paper.
In Section \ref{smoothing sec}, we show that any lemniscate 
graph is $\varepsilon$-homeomorphic to a graph with analytic edges 
making equal angles at each vertex. Thus Theorem \ref{main_thm_2} is 
reduced to this case, and this simplifies various arguments. 
In Section \ref{extending homeos sec} we show 
that a homeomorphism moving points 
of a lemniscate graph $G$ less than $\delta = \delta(\epsilon, G)$
can be extended to 
an $\epsilon$-homeomorphism of  the sphere; this fact 
further simplifies the proof of Theorem \ref{main_thm_2}.

In Section \ref{Alexs_proof}, 
we prove Theorem \ref{main_thm_2} in the case that
$G$ has no vertices (it is a union of disjoint Jordan curves). This case 
is much easier, but introduces several of the key ideas. Briefly, 
we consider Green's functions on the grey faces with poles at the  points 
of  $P$ (one point per grey face),  
and note that $G$ can be approximated by 
the level lines of  a  function $u$ that is the sum of these Green's functions.
This function $u$  can be  written as a convolution of the logarithmic kernel 
with the sum of harmonic measures for the grey faces, and negative point masses 
at the poles in $P$. The harmonic measures 
are then approximated by sums of point masses, and this leads to a rational 
function  $r$  so that $\log|r|$ approximates $u$ away from $G$; 
thus $L_r$ approximates
the given level line of $u$, and hence it also approximates $G$. 

The case when  $G$ has vertices is more difficult, and
requires some new ideas. Given a lemniscate graph $G$  with vertices, 
we claim that there is a graph $H$, without vertices, and a
corresponding function $u$ as above, 
%that the grey faces of $G$ are contained in the union of the
%grey faces of $H$, and there is a
so that a certain  level set of $u$   is $\varepsilon$-homeomorphic to $G$. 
The proof of this  claim is   postponed 
to Sections \ref{topological_section_app} and  \ref{harmonic_app_appendix}.
Assuming the claim  holds,  we prove  Theorem \ref{main_thm_2} 
in Sections \ref{graphs_with_verts_sec} 
and \ref{proof_of_main_thm}: in Section  \ref{graphs_with_verts_sec}
we  place the 
poles of the rational functions as close to $P$ as we wish, 
and in  Section \ref{proof_of_main_thm} we give 
a fixed point argument to  position them exactly on $P$.
Sections \ref{sec_cor_proof} and  \ref{first_cor_proof} are devoted
to deducing Theorems  \ref{power_series_cor} and
\ref{julia_set_cor}, and the sharpness of Theorem \ref{power_series_cor}
is proven in Section \ref{sharp_approx_sec}.

As noted above, the proof of Theorem \ref{main_thm_2} is much 
shorter when $G$ has no vertices, and this special case is 
sufficient for the proofs of Theorems 
\ref{power_series_cor} and \ref{julia_set_cor}.
The reader who is only interested in this case can skip 
the proof of Theorem \ref{eps homeo graphs}, 
and the proofs in Sections \ref{graphs_with_verts_sec}, 
\ref{proof_of_main_thm}, 
\ref{topological_section_app} and  \ref{harmonic_app_appendix}; 
this will omit about half of the paper.
We also note that only the first half of Section  \ref{harmonic_app_appendix} 
is needed when all vertices of $G$ have degree four.

% ----------------------------------------                      
\subsection{Notation.}
We  will generally use boldfaced symbols for
vectors, e.g., $\myvec{x} = (x_1, \dots, x_n)$ or
$\myvec{\theta} =(\theta_1, \dots, \theta_n)$.
An open  disk of radius $r$ centered at $z$ will be
denoted by $D(z,r)$.
When $A$ and $B$ are both quantities that depend on a common parameter, then
we use the usual notation $A=O(B)$ to mean that the ratio $A/B$ is bounded independent
of the parameter.
We write $A \simeq B$ if both  $A=O(B)$ and $B = O(A)$.  The notation $A=o(B)$ 
means $A/B \to 0$ as the parameter tends to infinity. 
We will use $G$  to denote general lemniscate graphs and $H$ 
to denote lemniscate graphs without vertices (finite unions of 
disjoint closed Jordan curves). A $G$ with a subscript, such 
as $G_W$, will denote the Green's function for the domain $W$, 
and we define such functions to be zero off of $W$.
In general, we use $A:=B$ to mean that $A$ is being defined 
in terms of $B$,  and $A=B$ to show equality between two (already)
defined quantities.

\subsection{Acknowledgements.}
Thanks to Fedja Nazarov for helpful discussions.

%=--------------------------------------------------------
\section{From topological graphs to analytic graphs} \label{smoothing sec}

In the introduction, 
we defined  lemniscate graphs so that the edges are just Jordan arcs; no smoothness
is assumed.
%It is  folklore in graph theory that every 
%planar graph can be  drawn using only  straight line edges, 
%but obviously not every graph is $\varepsilon$-homeomorphic to
%a planar straight line graph (PSLG).
 However, we will show every topological graph is 
$\varepsilon$-homeomorphic (for every $\varepsilon >0$) 
to an analytic version of itself, that is, a graph with analytic edges. 
This allows us to reduce the proof of Theorem \ref{main_thm_2}
to the case  when the lemniscate graph has smooth edges, and this 
simplifies some of the arguments.
We can think of this as a ``stepping stone'' to the main result 
of this paper, that every   lemniscate graph  
is $\varepsilon$-homeomorphic 
to an algebraic version of itself, i.e., a rational lemniscate. 

By an analytic edge $e$, we mean the image of the line segment 
$[0,1]$ under a locally injective holomorphic map defined on 
some neighborhood of $[0,1]$ (the map is conformal on a 
neighborhood of $[0,1]$ if the endpoints of $e$ are distinct). 
In particular, such an edge is a subset of a slightly longer 
analytic curve (the image of a longer segment
$[-\varepsilon, 1+\varepsilon]$ under
the same map), and hence it has a well defined direction at each 
endpoint, and it has uniformly bounded curvature. 
%We state our result as two theorems. 

\begin{thm}\label{eps homeo curves} 
Suppose that $H$ is a   closed Jordan curve. For 
any $\varepsilon >0$, $H$ is $\varepsilon$-homeomorphic 
to an analytic Jordan curve, and 
the homeomorphism may be taken to be the identity outside 
an $\varepsilon$-neighborhood of $H$. 
\end{thm}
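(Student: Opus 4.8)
The plan is to approximate the Jordan curve $H$ by an analytic curve using conformal maps of the complement. Let $W$ be the unbounded component of $\Chat \setminus H$, which is a simply connected domain containing $\infty$. By the Riemann mapping theorem there is a conformal map $\psi: \Delta^* \to W$, where $\Delta^* = \{|w| > 1\} \cup \{\infty\}$, normalized so that $\psi(\infty) = \infty$. By Carath\'eodory's theorem, since $\partial W = H$ is a Jordan curve, $\psi$ extends to a homeomorphism $\overline{\Delta^*} \to \overline{W}$. For $\rho > 1$ let $H_\rho := \psi(\{|w| = \rho\})$; this is an analytic Jordan curve (indeed $\psi$ is conformal on $\{|w| = \rho\}$), and as $\rho \to 1^+$ the curves $H_\rho$ converge uniformly to $H$ in the sense that $\psi(\{|w| = \rho\})$ lies in a shrinking neighborhood of $H$, because $\psi$ is uniformly continuous on $\overline{\Delta^*}$.

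Next I would build the homeomorphism $\phi$ of $\Chat$. First, restrict to a closed collar neighborhood $N$ of $H$; using $\psi$ on the $W$-side and a symmetric conformal map $\tau$ of the bounded complementary component $V$ on the other side (again extending to the boundary by Carath\'eodory), one obtains a parametrization of a two-sided collar of $H$ by an annulus $\{1 - s < |w| < 1 + s\}$ (with the two conformal maps agreeing on $|w| = 1$ via the identification both give of $H$ — more precisely, one pre-composes one of them with the boundary homeomorphism so that they match up). On this annulus, define $\phi$ in $w$-coordinates by a radial map that pushes the circle $|w| = 1$ to the circle $|w| = \rho$ while fixing $|w| = 1 \pm s$, e.g. a piecewise-linear-in-$|w|$ reparametrization; transport this back via the conformal maps. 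Extend $\phi$ by the identity outside the collar $N$. Choosing $N$ to have diameter (in the spherical metric) less than $\varepsilon$, and $\rho$ close enough to $1$, we get $\sup_z d(\phi(z), z) < \varepsilon$, $\phi$ is the identity off the $\varepsilon$-neighborhood of $H$, and $\phi(H) = H_\rho$ is an analytic Jordan curve.

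The main obstacle is checking that the map so constructed is genuinely a homeomorphism of $\Chat$ and that the displacement estimate holds with the stated $\varepsilon$. The potential issue is that the conformal maps $\psi$, $\tau$ are only continuous (not smooth or bi-Lipschitz) up to the boundary, so a ``radial push'' defined in $w$-coordinates could in principle move points of $\Chat$ by a lot even when it moves circles $|w| = \rho$ close to $|w| = 1$. The resolution is that we do not need the push to be uniformly small in $w$-coordinates: we fix a collar $N$ that is a priori contained in the $\varepsilon$-neighborhood of $H$ and has spherical diameter $< \varepsilon$, define $\phi$ to be the identity outside $N$ and to map $N$ into itself (the radial push in $w$-coordinates preserves the sub-annulus, hence preserves $N$), so automatically $d(\phi(z),z) \le \diam(N) < \varepsilon$ for $z \in N$ and $\phi(z) = z$ otherwise. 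Continuity of $\phi$ across $\partial N$ is immediate since $\phi$ is the identity there; continuity and injectivity inside follow from those of the conformal maps and of the radial push. One should also note that $\psi$ extends conformally (not merely continuously) across each circle $|w| = \rho$ with $\rho > 1$, so $H_\rho$ really is an analytic curve in the sense defined just above the theorem. Finally, I would remark that the same argument, applied to each edge separately together with a local model near each vertex, is the scheme to be used for the general graph case (Theorem \ref{eps homeo graphs}); for a single Jordan curve no vertex analysis is needed, which is why this case is isolated first.
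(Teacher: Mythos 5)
Your overall scheme (a Riemann map of a complementary component of $H$, a radial push near the unit circle in model coordinates, conjugated back and extended by the identity) is exactly the paper's; the paper obtains the two-sided extension directly from the Jordan--Sch\"onflies theorem rather than by gluing two boundary-extended Riemann maps via the welding homeomorphism, but that is a cosmetic difference (your gluing is essentially a hands-on proof of Jordan--Sch\"onflies, and you would still have to check continuity of the glued map across $H$).

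There is, however, a genuine gap in your displacement bound, and it is precisely at the step you singled out as ``the main obstacle.'' You propose to estimate $d(\phi(z),z)$ by the spherical diameter of the collar $N$, and you claim you can ``fix a collar $N$ that $\dots$ has spherical diameter $<\varepsilon$.'' This is impossible: $N$ contains $H$, so $\diam(N)\ge\diam(H)$, and a Jordan curve need not have small diameter. A collar of $H$ can be made thin, but not small. The implication you draw (that $\phi$ preserving $N$ forces $d(\phi(z),z)\le\diam(N)$) is correct, but the hypothesis on $\diam(N)$ cannot be achieved in general, so the conclusion does not follow. Ironically, the argument you explicitly discarded is the one that works: the glued parametrization $\Phi:\{1-s<|w|<1+s\}\to N$ \emph{is} uniformly continuous (it is continuous on a compact set), and the radial push in $w$-coordinates moves points by at most $O(\delta)$ as $\rho\to 1^+$, so by uniform continuity $\Phi\circ\rho_\delta\circ\Phi^{-1}$ moves points by at most the modulus of continuity of $\Phi$ at $O(\delta)$, which tends to $0$. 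This is exactly the estimate the paper makes after equation~(\ref{firsteq}). Replace the diameter argument with this uniform-continuity argument and the proof is sound.

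Two smaller remarks. First, the claim that pre-composing the inner Riemann map $\tau$ by the boundary welding yields, together with $\psi$, a single homeomorphism of the annulus onto $N$ is the substance of Jordan--Sch\"onflies (or of the Kerekj\'art\'o--Sch\"onflies theorem); if you use this route you should cite it rather than treat continuity across $|w|=1$ as automatic. Second, you should also make sure the collar $N$ is contained in the prescribed $\varepsilon$-neighborhood of $H$ so that the ``identity outside'' part of the statement holds; this is easy (shrink $s$), but it is a separate requirement from the displacement bound and should not be conflated with it as in your current write-up.
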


\begin{thm}\label{eps homeo graphs} 
Suppose that $G$ is a connected   lemniscate graph. Then for 
any $\varepsilon >0$, $G$ is $\varepsilon$-homeomorphic 
to a lemniscate graph whose edges are analytic, 
and so that  the edges meeting at any vertex form equal angles.
The homeomorphism may be taken to be the identity outside 
an $\varepsilon$-neighborhood of $G$. 
\end{thm}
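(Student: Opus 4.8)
The plan is to prove Theorem \ref{eps homeo graphs} by first establishing the simpler Theorem \ref{eps homeo curves} for Jordan curves, and then bootstrapping to the case of a connected lemniscate graph $G$ by treating a neighborhood of each vertex and each edge separately, using partitions of unity to patch the local modifications together into a global homeomorphism.

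\textbf{Step 1: The Jordan curve case.} To prove Theorem \ref{eps homeo curves}, I would use a smoothing-by-convolution argument. Parametrize $H$ by a homeomorphism $\gamma : \circle \to H$. One cannot simply mollify $\gamma$, since the result need not be injective. Instead I would cover $H$ by finitely many disks $D_i$ on each of which $H$ is a graph (in suitable coordinates) of a continuous function $f_i$; mollify each $f_i$ to an analytic (indeed real-analytic, then complexify) function $\tilde f_i$ with $\|f_i - \tilde f_i\|_\infty < \delta$; and then glue the local perturbations of $H$ using a partition of unity subordinate to the $D_i$. The gluing produces a closed curve $\delta$-close to $H$ which is piecewise analytic, and a further local perturbation near the finitely many ``transition'' points rounds the corners to make it genuinely analytic. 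For $\delta = \delta(\varepsilon, H)$ small this curve is a Jordan curve; one then invokes the Schoenflies theorem (or the extension lemma of Section \ref{extending homeos sec}, applied to a Jordan curve viewed as a lemniscate graph) to realize the correspondence as a homeomorphism of $\Chat$ that is the identity outside an $\varepsilon$-neighborhood of $H$. Alternatively, and more cleanly, one can build the homeomorphism directly as a composition of supported-near-$H$ pushes: write the analytic curve $H'$ as the image of $H$ under a map that in each coordinate chart is $(x,y)\mapsto (x, y + \beta_i(x,y)(\tilde f_i(x)-f_i(x)))$ with $\beta_i$ a bump function, and check these commute well enough to compose.

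\textbf{Step 2: Reducing the graph case to the curve case, vertex by vertex.} For a connected lemniscate graph $G$ with vertex set $V$, fix $\rho>0$ small enough that the disks $D(v,\rho)$, $v\in V$, are pairwise disjoint and each meets only those edges incident to $v$. Inside each such disk, $G$ looks like $2m_v \ge 4$ arcs emanating from $v$; by a local homeomorphism supported in $D(v,\rho)$ I would straighten these arcs to equally-spaced line segments through $v$, so that near each vertex $G$ becomes the ``standard star'' with equal angles $\pi/m_v$. This is again a Schoenflies-type statement in the disk, or can be obtained from the extension result of Section \ref{extending homeos sec} applied to the portion of $G$ inside $D(v,\rho)$. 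Away from the vertex disks, $G$ restricted to $\Chat \setminus \bigcup_v D(v,\rho/2)$ is a disjoint union of compact analytic-able arcs (sub-arcs of edges) plus the vertex-free Jordan-curve components, each of which is handled by (the arc/curve version of) Step 1, smoothing each arc while keeping its endpoints on the circles $\partial D(v,\rho)$ fixed and keeping its direction there matched to the straightened star. Patch all the local homeomorphisms together: since their supports near vertices are the disjoint disks $D(v,\rho)$ and their supports along edges are disjoint tubular neighborhoods of the edges meeting the vertex disks only in a controlled collar, the composition (in any order) is a well-defined homeomorphism of $\Chat$, equal to the identity off an $\varepsilon$-neighborhood of $G$, carrying $G$ to a graph with analytic edges meeting at equal angles at each vertex. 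Finally, check the displacement bound $\sup_{\Chat} d(\phi(z),z) < \varepsilon$ by choosing $\rho$ and the mollification parameters $\delta$ small enough; this is routine once all the supports are contained in the $\varepsilon$-neighborhood of $G$.

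\textbf{Main obstacle.} The genuinely delicate point is the interface between Steps 1 and 2: near a vertex $v$ the edges must be both analytic \emph{and} meet at prescribed equal angles, so the smoothing of each incident arc cannot be done freely but must be pinned, with matching first-order data, to the straightened star on $\partial D(v,\rho)$. Arranging an analytic arc that agrees to first order with a given direction at its endpoints while staying $\delta$-close to a prescribed Jordan arc requires a little care — e.g.\ reparametrizing so the endpoint tangents are fixed before mollifying the interior, then correcting with an analytic ``bridge'' near each endpoint — and one must verify that the resulting arc is still simple and disjoint from the other smoothed arcs. Everything else (existence of the coordinate charts, mollification estimates, the partition-of-unity gluing, invoking Schoenflies or the Section \ref{extending homeos sec} extension lemma) is standard; I expect the bulk of the write-up to consist of making the vertex-collar matching precise and bookkeeping the supports so that the composed homeomorphism is the identity outside an $\varepsilon$-neighborhood of $G$.
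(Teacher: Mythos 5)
There is a genuine gap at the heart of the proposal: gluing analytic pieces with a partition of unity does \emph{not} produce an analytic curve. If $\tilde f_i$ and $\tilde f_j$ are two local analytic graphs and $\beta_i,\beta_j$ are bump functions, the curve given locally by $\beta_i\tilde f_i+\beta_j\tilde f_j$ is $C^\infty$ in the transition region but analytic nowhere in it, because $\beta_i$ is not analytic. Calling the result ``piecewise analytic'' and saying a further local perturbation ``rounds the corners'' does not address this: there are no corners, and upgrading a smooth curve to an analytic one --- especially one that must hit a prescribed point with a prescribed tangent direction at the boundary of a vertex disk --- is precisely the hard content of the theorem, not a routine cleanup step. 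Your ``analytic bridge near each endpoint'' would have to be constructed by essentially the same conformal technique the paper already uses, so the mollification machinery buys nothing.

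The paper takes a genuinely different route that makes analyticity automatic. For a Jordan curve $H$ it conjugates a radial shrink by a Riemann map $\phi:\disk\to\Omega$ (extended to a homeomorphism of $\Chat$ by Jordan--Sch\"onflies); the image $\phi((1-\delta)\circle)$ is analytic because $\phi$ is conformal near $(1-\delta)\circle$, no smoothing needed. For a graph, after straightening near each vertex (Lemma \ref{WLOG_thm}), each edge $e$ is embedded in a Jordan curve $\gamma\subset G$, modified so that $\gamma$ contains straight segments centered at the endpoints of $e$; then a Riemann map $\phi$ onto a face of $\gamma$ extends analytically through $\pm 1$ by Schwarz reflection, and the new edge $e'=\phi(E_\delta\cap\uhp)$ (the $\phi$-image of half an ellipse internally tangent to $\circle$ at $\pm 1$) is automatically analytic \emph{and} tangent to $e$ at both endpoints. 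This is how the paper solves exactly the ``main obstacle'' you flagged --- it never smooths and patches, it pulls back analytic curves through a conformal map. You should also note that when both endpoints of $e$ coincide (a loop), $\gamma=e$ bounds no Jordan domain whose boundary the Riemann map treats nicely; the paper handles this by embedding $e$ in a figure-8 $\gamma=e\cup\gamma'$ and conjugating by a degree-2 rational map to reduce to the Jordan case --- a case your proposal does not address at all.

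Finally, Lemma \ref{WLOG_thm} in the paper matches your Step 2 straightening-near-vertices idea, and the edge-by-edge composition of homeomorphisms supported near disjoint (open) edges is also as you describe; those parts of your outline are sound. The irreparable part is the claim that mollification plus partition-of-unity gluing yields analytic edges.
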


Clearly, the first result is a special case of the second,
but in the proof it is convenient to first deal with the case 
when there are no vertices (and Theorem \ref{eps homeo graphs} 
is not needed for the proofs of Theorems  \ref{power_series_cor} and
\ref{julia_set_cor}).

For disconnected graphs, we can apply one of these results to 
each of the connected components. If  $\varepsilon$ is 
less than half the minimal distance between components, then
the composition of these maps gives an $\varepsilon$-homeomorphism 
of the entire graph to an analytic version of itself (each map in 
the composition only moves points that all the other maps fix). 
Our proof gives a new edge $e'$    that is 
disjoint  from  the edge $e$ it replaces (except for the endpoints), 
and can  place it inside either of the two faces of $G$ bounded by $e$.

\begin{proof} [Proof of Theorem  \ref{eps homeo curves}]
Let $\phi: \mathbb{D}\rightarrow 
\Omega$ denote a Riemann map onto one of the complementary 
	components of $H$. The map $\phi$ 
extends continuously to a map $\phi: \mathbb{T} \rightarrow H$,
and this extension further extends to a homeomorphism 
$\phi: \Chat\rightarrow \Chat$ by the Jordan-Sch\"onflies theorem.
% (\emph{note: in case one would like more justification that $\phi$
%indeed extends to a homeomorphism of the sphere, we can just use 
%the Beurling-Ahlfors extension; this is about extending a 
%homeomorphism of the circle to a homeomorphism of the disc 
%(it is more about regularity of the extension but that won't
%matter in what follows), but by composing appropriately with
%Riemann maps it also gives a homeomorphic extension in this setting.})
Let $\rho_\delta$ be an increasing homeomorphism from 
	$I_\delta = [1-2\delta, 1+2 \delta]$ 
to itself so that $\rho_\delta(1) = 1-\delta$. 
	Since $\rho_\delta$ is increasing, it fixes 
	both endpoints of $I_\delta$, and we can 
	extend $\rho_\delta$ to 
the complex plane by $\rho_\delta(z) = \rho_\delta(|z|) z/|z|$ 
if $ 1- 2 \delta < |z| < 1+2 \delta$, and as the identity otherwise.
We claim that the homeomorphism
\[ \psi_\delta:=\phi\circ  \rho_\delta \circ \phi^{-1}: 
	\Chat \rightarrow \Chat  
\]
satisfies the conclusions of the theorem  if $\delta >0$ is small enough.
Indeed, we have $\psi_\delta(H)=\phi((1-\delta) \mathbb{T})$ 
is an analytic Jordan curve, and
\begin{equation}\label{firsteq} 
	\dist(\psi_\delta(z), z)=
	\dist(\phi (\rho_\delta(\phi^{-1}(z))), z) \textrm{ for all } z\in\Chat,
\end{equation}
where we use $\dist$ to denote spherical distance. 
Since $\rho_\delta$ tends to the identity as $\delta \searrow 0$ 
and 
%Note that
%\begin{equation}\label{seceq} 
%\sup_{w\in\Chat}\dist(rw, w)\xrightarrow{r\rightarrow1^-}0.
%\end{equation}
  $\phi$ is uniformly continuous on $\Chat$ (since $\phi$ 
is continuous on the compact set $\Chat$), we conclude 
%from (\ref{firsteq}) and (\ref{seceq}) 
that $\psi_\delta$ is an $\varepsilon$-homeomorphism 
for $\delta$ close enough to $0$.
%$\phi(r\phi^{-1}(z))$ is as close enough to $\phi(\phi^{-1}(z))=z$ as we like, in other words $\psi_r$ is an $\varepsilon$-homeomorphism for $r$ close enough to $1$.
\end{proof}

To prove Theorem \ref{eps homeo graphs}, we first need  
to modify the graph near each vertex.

\begin{lem}\label{WLOG_thm} 
Let $G'$ be a lemniscate graph and $\varepsilon>0$. Then there 
exists an $\varepsilon$-homeomorphism of $G'$ onto a lemniscate
graph $G$ having the property that in a neighborhood of each 
vertex of degree $2n$, the graph $G$ is a union of $2n$ 
straight line segments each terminating at $v$, and making equal angles.
The homeomorphism may be taken to be the identity outside 
an $\varepsilon$-neighborhood of $G'$. 
\end{lem}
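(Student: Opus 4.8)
The plan is to work one vertex at a time, since the vertices are finite in number and we may choose $\varepsilon$ small enough (less than one third of the minimal distance between distinct vertices) that modifications near different vertices have disjoint supports; composing them then yields the global $\varepsilon$-homeomorphism. So fix a vertex $v$ of degree $2n$ and a small disk $D = D(v,\rho)$ such that $G' \cap D$ consists exactly of $2n$ Jordan arcs $\gamma_1, \dots, \gamma_{2n}$, each with one endpoint at $v$ and the other endpoint on $\partial D$, and otherwise disjoint from each other and from $\partial D$ (such a $\rho$ exists because $v$ is an isolated vertex and edges are Jordan arcs whose closures meet $V$ only at their endpoints; shrink $\rho$ further so $D$ contains no other vertex and $\rho < \varepsilon$). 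The cyclic order in which the $\gamma_i$ emanate from $v$ is well defined, and after relabeling we may assume $\gamma_1, \dots, \gamma_{2n}$ occur in this cyclic order.

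Next I would build a homeomorphism $h$ of $\overline{D}$ that is the identity on $\partial D$ and carries each $\gamma_i$ onto the straight segment from $v$ to a target point $w_i \in \partial D$, where $w_1, \dots, w_{2n}$ are the $2n$-th roots-of-unity-like points placed equally spaced around $\partial D$ in the correct cyclic order; outside $D$, $h$ is the identity. This is essentially a "radial/spoke straightening" Schoenflies-type argument: the $2n$ arcs, together with $v$ and $\partial D$, cut $\overline{D}$ into $2n$ closed Jordan regions (each a topological triangle with one vertex at $v$), and likewise the $2n$ straight spokes cut $\overline{D}$ into $2n$ sectors. One defines $h$ sector by sector by invoking the Jordan--Schoenflies theorem to get a homeomorphism between corresponding closed triangular regions that matches the prescribed boundary identification (arc $\to$ spoke on the two curvilinear sides, identity on the arc of $\partial D$, $v \mapsto v$), and checks that the pieces agree on the shared boundary arcs, which they do by construction since adjacent pieces share a $\gamma_i$ that both map to the same spoke. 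Finally, to make the edges \emph{straight line segments} (not merely radial curves) near $v$, I would precompose/postcompose with a further small homeomorphism supported in $D$ that fixes $\partial D$ and, in a smaller disk $D(v, \rho/2)$, is an ambient isotopy straightening each image arc to an actual line segment of length $\rho/2$ making angle $2\pi i/(2n)$ with the real axis — again a routine Schoenflies gluing on each sector, using that each image arc already goes from $v$ monotonically out to $\partial D$ inside its sector.

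It remains to verify that the resulting graph $G = h(G')$ is again a lemniscate graph and that $h$ is an $\varepsilon$-homeomorphism with the stated support. The combinatorial type (edges, vertex degrees, incidences) is unchanged because $h$ is a homeomorphism of $\Chat$ fixing everything outside the union of the disks $D(v,\rho)$, each of which meets $G'$ in arcs incident only to $v$; so condition (1) of Definition \ref{embeddedgraph} is preserved (edges are still Jordan arcs or curves, closures meet $V$ correctly) and condition (2) is preserved (degrees are unchanged and the new picture near $v$ is literally $2n$ equal-angle segments). Since every point moved by $h$ lies in some $D(v,\rho)$ with $\rho < \varepsilon$ and $h$ maps that disk to itself, $\sup_z d(h(z),z) < \varepsilon$ in the spherical metric, and $h$ is the identity off the union of these disks, which is contained in an $\varepsilon$-neighborhood of $G'$. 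I expect the main obstacle to be the careful bookkeeping in the sector-by-sector Schoenflies construction: ensuring the cyclic order of the arcs around $v$ is correctly matched to the cyclic order of the target spokes, and that the boundary data prescribed on the two curvilinear sides of each triangular region are consistent between adjacent regions so the glued map is a well-defined homeomorphism. This is standard but must be done with some care; everything else is soft topology.
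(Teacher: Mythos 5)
Your proposal follows the same route as the paper: modify $G'$ locally inside a small disk around each vertex, using the Jordan--Sch\"onflies theorem to extend a boundary homeomorphism throughout the disk, then compose the local modifications (which have disjoint supports when $\varepsilon$ is small). Your sector-by-sector gluing description is more detailed than the paper's one-line invocation of Sch\"onflies, but it is the same idea.

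There is, however, an internal inconsistency in your first step. You ask for $h$ to be the identity on $\partial D$ \emph{and} to carry each $\gamma_i$ onto the straight segment from $v$ to an equally-spaced point $w_i \in \partial D$. These two requirements are incompatible unless the endpoints of the $\gamma_i$ on $\partial D$ happen already to be equally spaced: if $h|_{\partial D}=\mathrm{id}$, then $h(\gamma_i)$ must end at the original endpoint $p_i$ of $\gamma_i$, not at $w_i$. The resolution is exactly what the lemma's conclusion (and the paper's proof) actually uses: the edges only need to be straight, equal-angle segments in a \emph{neighborhood} of $v$, not all the way out to $\partial D$. Your second step --- straightening the arcs only inside the smaller disk $D(v,\rho/2)$ to make them genuine line segments with equal angles at $v$, while letting them be arbitrary Jordan arcs from there out to $p_i$ on $\partial D$ --- is the correct move, and if you do that directly you can drop the (impossible) first step entirely. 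With that adjustment, the argument is essentially the paper's.
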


\begin{proof} 
Let $v$ be a vertex of $G'$ of degree $2n$, and $D$ a Jordan domain 
containing $v$ so that $G'$ intersects $\partial D$ at exactly $2n$ points.
We will define $G$ by replacing $G'$ in $D$ with $2n$ Jordan arcs which
start at the points $G'\cap\partial D$, and in a small ball around $v$
consist of straight lines terminating at $v$ and making equal angles.
Doing this for each vertex $v$ of $G'$ defines the graph $G$.
Suppose each $D$ has diameter $<\varepsilon$.  Define a homeomorphism
as the identity outside of each $D$, and inside each $D$   by 
first by mapping the Jordan arcs $G'\cap D$
onto the corresponding Jordan arcs in $G\cap D$,
and then extending by the Jordan-Sch\"onflies theorem throughout $D$
(e.g., Corollary 12.15  in \cite{MR4321146}). 
The result is an $\varepsilon$-homeomorphism of $\Chat $ mapping 
$G'$ to $G$.
\end{proof}

\begin{proof} [Proof of Theorem \ref{eps homeo graphs}]
We may assume that $G$ has the form given in 
Lemma \ref{WLOG_thm}, but that $G$ is not a Jordan curve, since
that case is covered by Theorem \ref{eps homeo curves}. 
Thus  $G$ has vertices and every vertex has even degree 
at least four.
 It suffices to replace each edge $e$ of $G$ by a 
new edge that is analytic (in the sense discussed earlier) 
and tangent to $e$ at their   common endpoints. 
There are two cases, depending on whether the
endpoints of $e$ are two distinct points ($e$ is an arc) 
or a single point ($e$ is a loop). 

For any edge $e$ in a lemniscate graph $G$, we claim that 
there is a Jordan 
curve $\gamma \subset G$ containing $e$. 
We can prove this by forming a directed  path in $G$ that starts with 
the edge $e$ (choose either orientation). Since every vertex has
even degree the path  can be continued until it returns to the 
initial vertex of $e$. For any vertex $v$ on this path,
we erase the sub-path between the first and last visits to $v$.
This gives a Jordan curve  $\gamma$ containing $e$.
If  $e$ has distinct endpoints, then  $\gamma$ strictly contains $e$. 
If both ends of $e$ are the same point, then we can take $\gamma =e$,
but for the proof below,  it 
will be more convenient to consider a different curve. If we remove $e$ from $G$, 
we still have an Eulerian graph, so there is a Jordan curve $\gamma'$  in $G$ 
containing $v$ but not  $e$. Then $\gamma = e \cup \gamma'$ is 
a ``figure 8'' contained in $G$. 

First assume the endpoints of $e$ are  two different points. 
Fix an  endpoint $v$ of $e$. The curve  $\gamma$  contains two line 
segments, $s_1$ and $s_2$, each with endpoint $v$. 
If these two segments  lie on the 
same line, then their union is a line segment with $v$ in its interior. 
In this case, we do nothing. If the two segments are not on the 
same line, then we modify $\gamma$  by extending
the segment $s_1$  a short distance past $v$, 
and then connect the end of this  extended segment 
by a circular arc (centered at $v$)  to  an interior point of $s_2$.
See  Figure \ref{ModifyGamma}. 
This gives a new Jordan curve that contains a segment with 
midpoint $v$, and that equals $\gamma$ outside a small
neighborhood of  $v$.  Do this replacement, if necessary, at
both endpoints of $e$, and  call the new curve $\gamma$ also.

\begin{figure}[htb]
\centerline{
\includegraphics[height=1.0in]{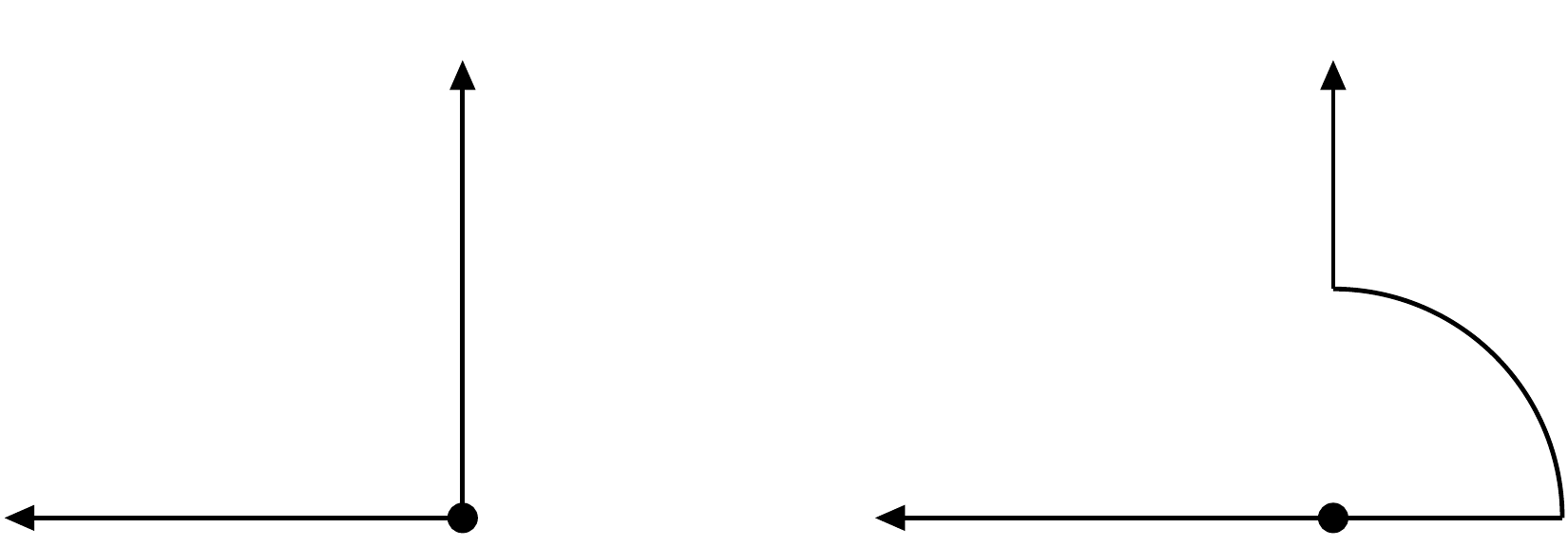}
}
\caption{
We modify $\gamma$ near each vertex $v$ so it contains 
a line segment centered at $v$.
}
\label{ModifyGamma}
\end{figure}

Let $\Omega$ denote a component of $\Chat\setminus\gamma$ 
(it does not matter which component we pick). 
Let $\phi: \mathbb{D}\rightarrow \Omega$ denote a Riemann map. 
Since $\partial \Omega$ is locally connected, 
$\phi$ extends to a homeomorphism $\phi: \mathbb{T}\rightarrow \gamma$.
We may choose $\phi$ so that the points $-1$ and $1$ map to the two endpoints 
of $e$, and so that $e= \phi(\Gamma_0)$, where $\Gamma_0$ is the 
upper half-circle, $\Gamma_0 = \circle \cap \uhp$.
Because $\gamma$ contains line segments  centered at each of 
the endpoints of $e$,  there is some $\eta>0$ so that 
$\phi$ extends analytically to a  $\eta$-neighborhood  around each of 
$-1$ and $1$ by Schwarz reflection.
Extend $\phi$  from $\disk \cup D(-1, \eta) \cup D(1, \eta)$ 
to a homeomorphism $\phi:\Chat\rightarrow\Chat$
by the Jordan-Sch\"onflies Theorem.

For $|\delta|$ small, consider the rational  map
$R_\delta(z) = (z +\delta/z)/(1+\delta)$.
%This is analytic on $\complex \setminus \{0\}$. 
On the unit circle $1/z = \overline{z}$,  so 
$$
R(x+iy) = (x+iy + \delta(x-iy))/(1+\delta) = 
x+i y /\mu ,
$$
 where $\mu = (1+\delta)/(1-\delta)$. Thus $R$ 
maps the unit circle onto the ellipse 
$$
E_\delta =\{(x,y)\in \reals^2  :
x^2 + \mu^2 y^2 =1\}.
$$
When $\delta \in (0,1)$, we have $  \mu>1$, which implies 
this ellipse  is contained in the 
open unit disk, except at the points
$-1,1$ where it is tangent to the unit circle. 
Thus $\phi$ is analytic on a neighborhood of $E_\delta$ and 
$\phi(E_\delta)$ is an analytic closed curve that is tangent to $e$ at 
both of its endpoints.  We define the  replacement edge  
to be $e' = \phi (\Gamma_\delta)$, 
where $\Gamma_\delta = E_\delta \cap \uhp$ is the upper half of $E_\delta$.

Let $G'$ be the lemniscate graph obtained from $G$ by replacing the edge $e$
by $e'$. We want to show these two graphs are $\varepsilon$-homeomorphic by 
a map that is the identity off a neighborhood of $e$ (recall that 
edges are open arcs and do not contain their endpoints). 
As above, let $\Gamma_t = E_t \cap \uhp  =  R_t(\Gamma_0)$, and 
let $U_\delta$ be the union of $\Gamma_t$  for 
$t \in  I = [-2\delta, 2 \delta]$.  See Figure \ref{Ellipses}. 
For $z \in U_\delta$, define $t(z)= s$ if $z \in \Gamma_s$.
Let $\rho: I  \to I$ be a homeomorphism that fixes each endpoint
and satisfies $\rho(0) = \delta$.
Then $ h_\delta (z) =  R_{\rho(t(z))}( R_{t(z)}^{-1} (z))$ is a homeomorphism of 
$U_\delta$ that is the identity on $\partial U_\delta$ and maps $\Gamma_0$ 
to $\Gamma_\delta$. It moves points in $U_\delta$ by at most $O(\delta)$. 
Set $W_\delta := \phi(U_\delta)$ and define
$ \psi_\delta(z) =   \phi(h_\delta(\phi^{-1}(z)))$  if 
$z\in W_\delta$
and  $\psi(z) =z$  elsewhere. For $\delta$ small enough, $\psi_\delta$ is a 
$\varepsilon$-homeomorphism of the sphere taking $G$ to $G'$, and it 
is the identity on every edge of $G$ except $e$. 
%See Figure \ref{Ellipses}. 
%Thus we may  proceed edge by edge, replacing edges which have not previously
%been replaced, and leaving  previously altered  edges fixed. 

\begin{figure}[htb]
\centerline{
\includegraphics[height=2.0in]{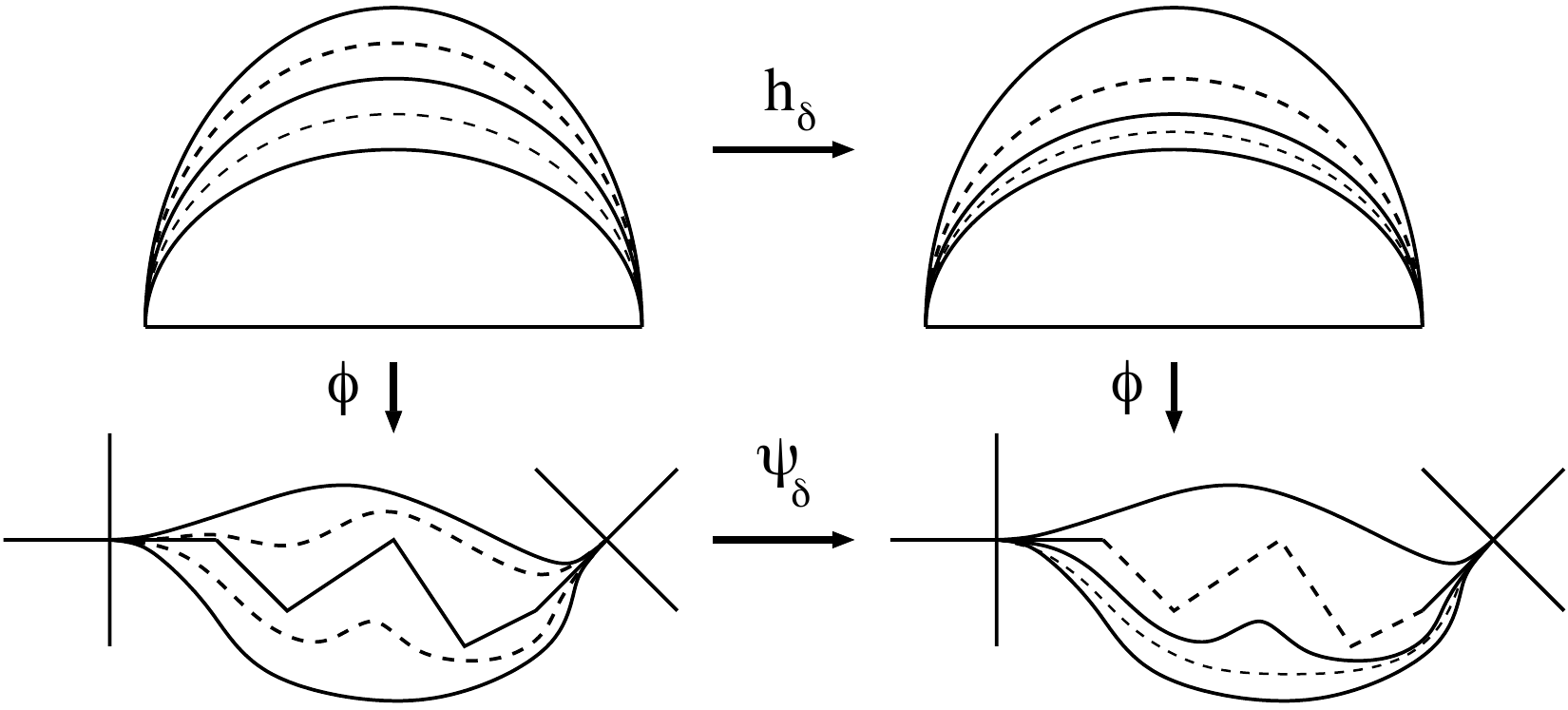}
}
\caption{
We define a neighborhood $U$  of the upper half circle as a union of 
elliptical arcs, and define a homeomorphism of $U$ 
to itself by shifting the arcs. This is the identity outside  $U$.
When conjugated by $\phi$  this becomes an $\varepsilon$-homeomorphism 
of $G$ to $G'$  taking $e$ to $e'$, that is the identity on $G \setminus e$.
}
\label{Ellipses}
\end{figure}

\begin{comment} % #############################
We are now left with the case when both endpoints of $e$ are the same point. 
If the two segments at the ends of $e$ are on the same line, the we can% 
take $\gamma = e$ and proof proceeds as before, except that the rational 
map $R_\delta$ is replaced by the linear lap $L_\delta(z) = (1-\delta)z + \delta$
that maps the unit circle to the  circle $T_\delta$ of radius 
$1-\delta$ that is tangent to the unit circle art $1$. Everything follows 
the previous argument and we end up mapping the edge $e$ to a subarc $e'$ of 
$\phi(T_\delta)$. The new graph $H'$ is $\varepsilon$-homeomorphic is $H$ 
by a map that is the identity off a neighborhood of the open arc $e$.
\end{comment} % ################################

Next, suppose the endpoints of $e$ 
are the same point $v$. This case can be reduced to the 
previous one.   
See Figures \ref{SquareRoot} and  \ref{OneEndpoint}. 
As noted earlier, in this case $e$ is part
of a ``figure 8'' curve $\gamma = e \cup \gamma' \subset G$. 
The curve $\gamma$ has three complementary components: one with 
boundary $e$, one with boundary $\gamma'$, and a third  
component $\Omega$ with boundary $\gamma$.
Choose points $a$ and $b$ in the first two, and 
let $\tau(z) = (z-a)/(z-b)$; this is a linear fractional transformation 
sending $a$ to $0$ and $b$ to $\infty$. 
Define $f(z)= \tau^{-1}(\tau(z)^2)$ (we could write the 
formula explicitly, but we don't need it). This is rational map of
degree $2$, and we can define a branch of $f^{-1}$ on  
$\Omega$ by taking a branch of $z^{1/2}$ on $\tau(\Omega)$
(which is simply connected and omits both  $0$ and $\infty$). 
Then $f^{-1}$  has two different values at $v$ and 
it maps  $\Omega$ to a Jordan region $\Omega'$, and 
$v$ corresponds to two points on the boundary of $\Omega'$.
See Figure \ref{SquareRoot}. 

\begin{figure}[htb]
\centerline{
\includegraphics[height=1.1in]{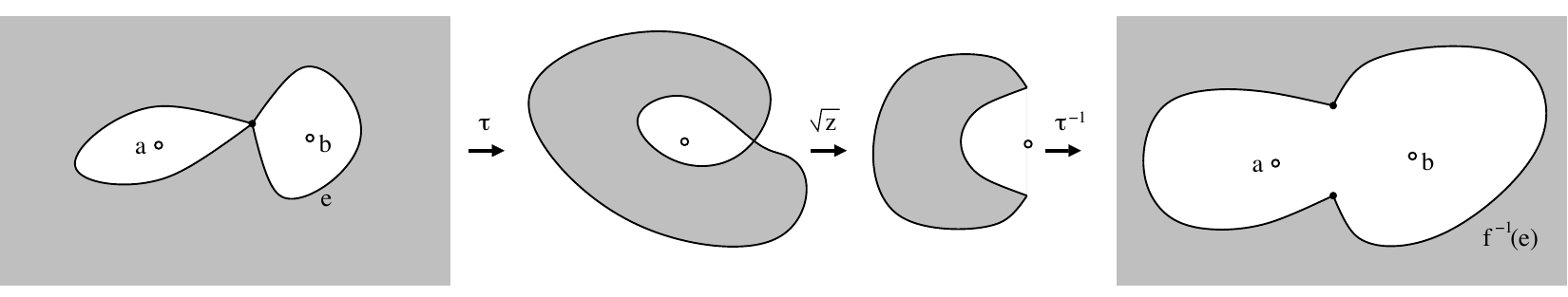}
}
\caption{
Using linear fractional transformations and a square root, 
we can map the non-Jordan face of $\gamma$ to a Jordan domain. 
The inverse of this map is a rational map. 
}
\label{SquareRoot}
\end{figure}

We repeat the earlier  construction for Jordan faces given 
above to find an analytic curve  $\sigma$ that approximates
$f^{-1}(e)$, and that is tangent 
to $f^{-1}(e)$ at its endpoints. Then apply the  
rational map $f$ to  $\sigma$  and obtain an 
analytic curve $e'$ approximating  $e$. 
See Figure \ref{OneEndpoint}.

\begin{figure}[htb]
\centerline{
\includegraphics[height=2.5in]{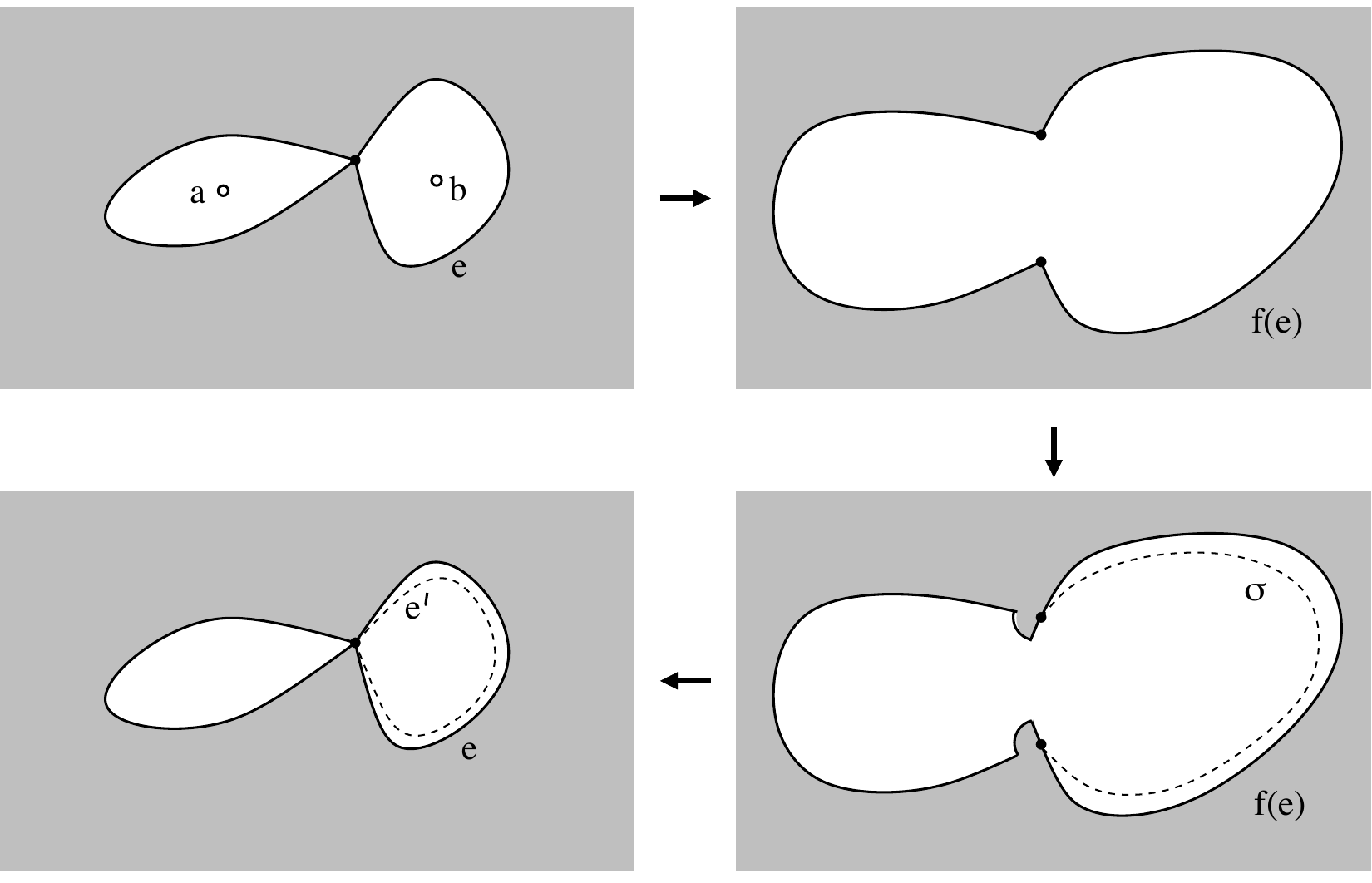}
}
\caption{
If both endpoints of $e$ are the same, we can 
find a ``figure 8'' curve $\gamma$ containing $e$
and use a branch of $ f^{-1}$ to map the 
non-Jordan face  of $\gamma$ to a Jordan 
domain. Then the earlier construction  gives
an analytic approximation to $f(e)$ and applying the 
the  rational map $f$ gives the an analytic approximation to $e$.
}
\label{OneEndpoint}
\end{figure}

Thus we may  proceed edge by edge, replacing edges which have not previously
been replaced, and leaving  previously altered  edges fixed. Each graph we 
obtain is $\varepsilon$-homeomorphic to the previous one, and since 
at most  one of these  homeomorphisms  moves any given point of $\Chat$, the 
composition is an $\varepsilon$-homeomorphism of $\Chat$, as desired.
\end{proof} 

% ###################################################3

The following result was promised in the introduction.
%Recall that a topological lemniscate is a homeomorphic image 
%of a rational lemniscate.

\begin{prop}\label{toplemimpliesgraph}
%Let $L$ be a topological lemniscate.
Let $L$ be a rational lemniscate.
	Then $L$ is a lemniscate graph.
\end{prop}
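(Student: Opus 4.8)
The plan is to show that $L=L_r=\{z:|r(z)|=1\}$ satisfies the two conditions in Definition \ref{embeddedgraph} by identifying the natural vertex set and analyzing $L$ locally. First I would replace $r$ by a suitable $r/c$ so that $c=1$ (as the definition permits), and set up notation: let $V$ be the (finite) set of $z\in L$ at which $r'(z)=0$ (equivalently, the critical points of $r$ lying on $L$, together with any critical points of $1/r$, i.e.\ multiple poles — but poles are not on $L$, so only zeros of $r'$ matter), intersected with $L$. Since $r$ is rational and nonconstant, $r'$ has finitely many zeros, so $V$ is finite. The key local fact is that near a point $z_0\in L$ with $r(z_0)=w_0$, $|w_0|=1$, the map $r$ is, up to precomposition and postcomposition with conformal maps, the map $z\mapsto z^k$ where $k=k(z_0)$ is the local degree (the multiplicity of $z_0$ as a solution of $r(z)=w_0$); and $k\ge 2$ exactly when $z_0\in V$. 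Then $L$ near $z_0$ is the preimage under this local model of the unit circle through $w_0$, which is $2k$ analytic arcs emanating from $z_0$ at equal angles $\pi/k$ when $k\ge 2$, and a single smooth arc through $z_0$ when $k=1$.

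With this local picture in hand, the verification is essentially bookkeeping. For condition (1): $L\setminus V$ is a one-dimensional real-analytic manifold (each point has a neighborhood in which $L\setminus V$ is a single analytic arc, by the $k=1$ local model and the implicit function theorem applied to $|r|^2-1$, whose gradient is nonzero off $V$). A closed subset of $\Chat$ that is a $1$-manifold has finitely many components, each homeomorphic to a circle or to an open arc; moreover since $L$ is compact, any component that is an open arc must have its two ends limiting into $V$ (the closure of each such component is compact, and an endpoint cannot be a smooth point of $L$ or the arc would extend, so $\overline\gamma\setminus\gamma\subset V$). The components that are closed arcs (circles) are Jordan curves. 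For condition (2): the degree of a vertex $v\in V$ in the graph sense is exactly the number $2k(v)$ of local arcs counted correctly (an edge-loop based at $v$ is counted twice, which matches counting the two arc-germs at $v$ separately), so the degree is $2k(v)$, which is even and $\ge 4$ since $k(v)\ge 2$. One should also check $V$ is genuinely the right vertex set: a point of $L$ with $k=1$ should not be a vertex, and indeed at such a point $L$ is locally a single arc, so removing it would disconnect that arc — it cannot be an isolated component vertex either since $L$ is a $1$-manifold there.

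There are a couple of small technical points to handle. One is the behavior at $\infty$: if $\infty\in L$ (i.e.\ $|r(\infty)|=1$, which can only happen if $r$ has a finite nonzero "value" at $\infty$, equivalently $\deg$(numerator)$=\deg$(denominator)), one works in the coordinate $w=1/z$ near $\infty$, where $r$ becomes another rational map, and the same local-normal-form argument applies; similarly zeros and poles of $r$ are never on $L$ since there $|r|\in\{0,\infty\}\ne 1$, so those points are irrelevant. Another is that one must know $L$ is nonempty and not all of $\Chat$ — but $|r|$ is a nonconstant continuous function $\Chat\to[0,\infty]$, so $\{|r|<1\}$ and $\{|r|>1\}$ are both nonempty open sets, hence $L=\partial\{|r|<1\}$ is a nonempty compact set with empty interior. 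Finally, one should record that $L\setminus V$ has only finitely many components: this follows because there are finitely many vertices each of finite degree, so finitely many "edge-germs" at vertices, hence finitely many edges with an endpoint in $V$; and any remaining components are compact $1$-manifolds (Jordan curves) — their number is finite because $L$ is covered by finitely many coordinate balls in each of which $L$ is a finite union of arcs.

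I expect the main obstacle to be the careful treatment of the local normal form and the matching of "graph-theoretic degree" with "$2\times$ local degree of $r$," particularly ensuring the loop-counted-twice convention lines up, and ruling out pathologies at $\infty$ and at zeros/poles cleanly; the rest is standard manifold topology. A clean way to organize it is: (i) reduce $c=1$; (ii) define $V=\{z\in L: r'(z)=0\}$ (with the understanding that this is interpreted in local coordinates at $\infty$ too), note $|V|<\infty$; (iii) establish the local model $z\mapsto z^{k}$ and deduce $L$ is an analytic $1$-manifold off $V$ and a "$2k$-pronged star" at each $v\in V$; (iv) conclude finiteness of components and the arc/loop dichotomy from compactness; (v) read off that vertex degrees are $2k(v)\ge 4$ and even. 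Each step is short once the normal form is stated.
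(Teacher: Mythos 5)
Your proposal takes essentially the same approach as the paper: define $V$ as the critical points of $r$ lying on $L$, invoke the local normal form $z\mapsto z^k$ for holomorphic maps near a point of local degree $k$, conclude that $L\setminus V$ is a $1$-manifold (so each component is a Jordan curve or a simple arc), and read off the even degree $2k(v)\ge 4$ at each vertex from the local model. The paper's proof is quite terse (citing Theorem 1.59 of Zakeri and Milnor's classification of $1$-manifolds, and leaving finiteness of components and the bookkeeping at $\infty$ implicit), whereas you spell out these points; your argument is correct and, if anything, a slightly more complete write-up of the same idea.
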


\begin{proof}
We will show $L$ satisfies Definition \ref{embeddedgraph}.
%It suffices to assume $L=L_r$ is a rational lemniscate, since
%homeomorphisms preserve lemniscate graphs.
We set $V:=\{z\in L_r: r'(z)=0\}$.
Recall the local normal form for holomorphic maps (see for
instance Theorem 1.59 in \cite{MR4422101}); namely that for
any $z_0\in\mathbb{C}$, up to holomorphic changes of coordinates
in the domain and co-domain, $r$ is locally given by $z\mapsto z^n$
near $z_0$, and $n=1$ if and only if $r'(z)\not=0$. Thus each
component of $L_r\setminus V$ is a connected $1$-manifold. The
only connected $1$-manifolds are (closed) Jordan curves or (open)
simple arcs (see for instance \cite{MR1487640}).
%, and in the latter case the simple arc must connect two vertices.
This shows that
$L$ satisfies condition (1) in Definition \ref{embeddedgraph}.
The fact that $L$ satisfies condition (2) in Definition \ref{embeddedgraph}
also follows from the local normal form for holomorphic mappings near critical points.
\end{proof}

% ------------------------------------------------------
\section{Extending $\varepsilon$-homeomorphisms}\label{extending homeos sec}

Next, we record a fact that we will use several times later
in the paper.

\begin{lem} \label{small move graphs}
%Suppose $G_1$ is a planar graph.
%Given any sufficiently small $\varepsilon >0$, there is a
%$\delta >0$  (depending on $\varepsilon$ and $G_1$)
%so that the following holds.
%If  $G_2$  is a  graph and there is a homeomorphism $h:  G_1 \to  G_2$
%so that $\sup_{z \in G_1} |h(z)-z| < \delta$, then
%$G_1$ and $G_2$ are $\varepsilon$-homeomorphic.
%This homeomorphism can be taken to agree with $h$ on $G_1$ and
%to be the identity outside an $\varepsilon$-neighborhood of $ G_1$.
%\end{lem}
Let $G$ be a lemniscate graph and $\varepsilon>0$. 
Then there exists $\delta>0$ so that if $h:G \rightarrow G'$ 
is any homeomorphism satisfying $\sup_{z\in G}|h(z)-z|<\delta$,
then $h$ admits an $\varepsilon$-homeomorphic
extension $h: \Chat\rightarrow\Chat$.
\end{lem}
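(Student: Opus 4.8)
The plan is to reduce the extension problem to a face-by-face construction, exactly in the spirit of the (commented-out) argument the authors sketched earlier in the excerpt. First I would dispose of the disconnected case: apply the lemma to each connected component of $G$, choosing $\delta$ smaller than one third of the minimal distance between components, so that the resulting component-wise extensions have disjoint supports and their composition is the desired $\varepsilon$-homeomorphism. So I may assume $G$ is connected, hence a connected lemniscate graph (an Eulerian graph), and each face $F$ of $G$ is simply connected with locally connected boundary.

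Next, for each face $F$ of $G$, fix a Riemann map $\phi_F\colon\D\to F$; since $\partial F$ is locally connected, $\phi_F$ extends continuously to $\overline\D$. Choose $r<1$ close to $1$ so that the $\phi_F$-images $\gamma_{F,\theta}=\phi_F([r,1]e^{i\theta})$ of the radial segments all have diameter $<\varepsilon$ (possible by uniform continuity of $\phi_F$ on $\overline\D$), and set $\Omega_F=\phi_F(D(0,r))$, a smooth Jordan subdomain compactly contained in $F$. Let $\delta$ be much smaller than $\varepsilon$ and than the minimal distance from $G$ to any $\partial\Omega_F$. Given $h\colon G\to G'$ with $\sup_{z\in G}|h(z)-z|<\delta$, each $\Omega_F$ still lies inside a unique face $F'$ of $G'$, and I define $h$ on the annulus $A_F=F\setminus\Omega_F$ by sending the geodesic $\gamma_{F,\theta}$ (joining $w_\theta\in\partial\Omega_F$ to $z_\theta\in\partial F$) to the hyperbolic geodesic in $F'$ joining $w_\theta$ to $h(z_\theta)$, parametrized to preserve hyperbolic length; declare $h$ to be the identity on each $\Omega_F$ and equal to the given $h$ on $G$. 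Patching these over all faces yields a homeomorphism of $\Chat$ extending the given $h$ and fixing every $\partial\Omega_F$.

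The main obstacle, and the step deserving real care, is showing this extension moves points by at most $O(\varepsilon)$ — equivalently, that each image geodesic $\sigma_\theta$ in $F'$ has diameter $O(\varepsilon)$, since $\sigma_\theta$ and $\gamma_{F,\theta}$ share the endpoint $w_\theta$. For this I would prove a geometric lemma: in a simply connected domain $W$, the hyperbolic geodesic joining $a\in W$ to $b\in\partial W$ has Euclidean diameter $O(\diam\sigma)$ for any connecting curve $\sigma\subset W$ from $a$ to $b$. This follows from a harmonic-measure argument — uniformizing to $\uhp$ with the geodesic going to a vertical segment, one sees the connecting curve together with its reflection across the imaginary axis ``encloses'' the segment, forcing harmonic measure of $\sigma$ from the geodesic to be $\geq 1/2$; combined with the Beurling projection theorem (Corollary III.9.3 of \cite{MR2450237}), which gives $\omega(z,E,W)=O(\sqrt{\diam E/\dist(z,E)})$, this bounds how far the geodesic can stray from $\sigma$. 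To apply this with $W=F'$, $a=w_\theta$, $b=h(z_\theta)$, I must exhibit a connecting curve in $F'$ of diameter $O(\varepsilon)$: follow $\gamma_{F,\theta}$ from $w_\theta$ until it first meets the circle $\partial D(z_\theta,2\delta)$ (which must also meet $\partial F'$ since $\delta$ is tiny), hop along that circle to a point $w=h(z)\in\partial F'$ with $|z-z_\theta|\le 3\delta$, then run ``parallel'' to $\partial F'$ — i.e. along the $h$-image of the boundary arc $J\subset\partial F$ from $z$ to $z_\theta$, whose diameter is $<\varepsilon$ once $\delta$ is small enough depending on $\varepsilon$ and the modulus of continuity of the Riemann map, perturbed slightly into $F'$. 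The total diameter of this curve is $O(\varepsilon+\delta)=O(\varepsilon)$, completing the proof. Finally I would remark that the choice of $\delta$ is legitimate: it depends only on $G$ and $\varepsilon$, since the finitely many Riemann maps $\phi_F$ and their moduli of continuity depend only on $G$.
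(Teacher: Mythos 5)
Your argument takes a genuinely different route from the published one, although both reduce to constructing an extension face by face using the Riemann map. The paper establishes uniform convergence of normalized Riemann maps $\phi_h \to \phi$ on $\overline{\D}$ via Carath\'eodory kernel convergence and uniform local connectedness (Lemma \ref{Pomm_lem}, Corollary \ref{Pomm_cor}), and then builds the extension inside each lemniscate domain by an explicit radial/logarithmic-coordinate interpolation between $\phi_h^{-1}\circ h\circ\phi$ on $\mathbb{T}$ and $\phi_h^{-1}\circ\phi$ on $r\mathbb{T}$ (Lemma \ref{sc_case}). You instead define the extension directly by mapping the hyperbolic geodesic foliation of $F\setminus\Omega_F$ to geodesics of $F'$, and control how far points move via the Beurling projection theorem. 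Your approach is more geometric and makes the quantitative smallness of the displacement transparent; the paper's construction buys a manifestly well-defined homeomorphism without having to examine a geodesic foliation.

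That difference is exactly where your argument has a gap. You assert, without proof, that sending $\gamma_{F,\theta}$ to the geodesic $\sigma_\theta\subset F'$ from $w_\theta$ to $h(z_\theta)$ defines a homeomorphism of $F\setminus\Omega_F$ onto $F'\setminus\Omega_F$. This needs the $\sigma_\theta$ to be pairwise disjoint, to sweep out $F'\setminus\Omega_F$ exactly once, and to avoid $\Omega_F$, and none of these is automatic: viewed through the Riemann map $\phi_{F'}$ of $F'$, a geodesic launched from $\partial\Omega_F$ can re-enter $\phi_{F'}^{-1}(\Omega_F)$ unless that set is nearly round, and two geodesics $\sigma_\theta,\sigma_{\theta'}$ can cross unless the boundary correspondence $w_\theta\mapsto h(z_\theta)$ is, in $\phi_{F'}^{-1}$-coordinates, close to the radial one. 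Both properties do hold once $\delta$ is small, but justifying that amounts to proving that $\phi_{F'}^{-1}\circ\phi_F\to\mathrm{id}$ uniformly as $\delta\to 0$ --- i.e.\ precisely Corollary \ref{Pomm_cor}, the tool the paper elects to use head-on. So your Beurling estimate correctly bounds how far each $\sigma_\theta$ strays, but the claim that the geodesic assignment is a homeomorphism still requires an input about the Riemann map of $F'$ that you have not supplied; importing Corollary \ref{Pomm_cor} (or an equivalent distortion statement) would close the gap.
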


This fact is probably well  known, but we  are not aware of a reference,
so we give a proof for completeness.
The main difficulty is not that the extension exists, but that
it can be chosen to move points very little.

\begin{definition}
We will call a domain $\Omega$ a \emph{lemniscate domain} 
iff $\Omega$ is simply connected and $\partial\Omega$ is a lemniscate graph.
\end{definition}

Lemniscate domains include Jordan domains, but also include
non-Jordan domains such as the unbounded complementary component
of a figure-8 curve in the plane.
Lemniscate graphs are locally connected, and so Carath\'eodory's
Theorem implies that any Riemann map $\phi: \mathbb{D} \rightarrow \Omega$ 
onto a lemniscate domain $\Omega$ extends continuously to $\mathbb{T}$.

\begin{lem}\label{Pomm_lem} 
Let $\Omega$ be a lemniscate domain, $z_0\in\Omega$ and set
$\gamma:=\partial\Omega$. Assume $(\delta_n)_{n=1}^\infty$ 
is a positive sequence converging to $0$, and 
$h_n: \gamma \rightarrow h_n(\gamma)$ is a sequence of 
$\delta_n$-homeomorphisms. Let $\Omega_n$ be the component 
of $\Chat\setminus h_n(\gamma)$ containing $z_0$. Denote by 
$\phi: \mathbb{D} \rightarrow \Omega$, 
$\phi_n: \mathbb{D}\rightarrow \Omega_n$ the Riemann maps 
normalized to map $0$ to $z_0$ and have positive derivative 
at $0$. Then $\phi_n\rightarrow\phi$ uniformly on 
$\overline{\mathbb{D}}$.
\end{lem}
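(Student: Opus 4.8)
The plan is to deduce uniform convergence of the normalized Riemann maps from the Carath\'eodory kernel theorem together with a uniform modulus-of-continuity estimate on $\overline{\mathbb{D}}$. First I would establish \emph{kernel convergence}: I claim that $\Omega_n \to \Omega$ in the sense of kernels with respect to the base point $z_0$. The inclusion-type half of this is automatic because, for any compact $K \subset \Omega$, the boundary $h_n(\gamma)$ lies within $\delta_n$ of $\gamma$, so once $\delta_n < \dist(K,\gamma)$ we have $K \subset \Omega_n$ (using that $z_0 \in \Omega_n$ and $K$ is connected to $z_0$ inside $\Omega$ avoiding $h_n(\gamma)$). For the other half, that no larger domain is the kernel, one uses that every point of $\gamma = \partial\Omega$ is a limit of points of $\Chat \setminus \overline{\Omega}$ (here is where I would invoke that $\gamma$ is a lemniscate graph, hence is the common boundary of both of its complementary "sides" near each of its points — more carefully, that $\Chat \setminus \Omega$ has no isolated points and accumulates on all of $\gamma$), and these points are moved by less than $\delta_n$ into $\Chat \setminus \Omega_n$; so any point outside $\overline{\Omega}$, and any boundary point of $\Omega$, fails to be an interior point of $\bigcap$-type kernels for $n$ large. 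Hence the kernel of $(\Omega_n)$ at $z_0$ is exactly $\Omega$, and the same holds for every subsequence, so by the Carath\'eodory kernel theorem $\phi_n \to \phi$ locally uniformly on $\mathbb{D}$.

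Next I would upgrade local uniform convergence on $\mathbb{D}$ to uniform convergence on $\overline{\mathbb{D}}$. Since $\Omega$ and all the $\Omega_n$ are lemniscate domains, they are John domains, or at least their boundaries are uniformly locally connected: the key point is that the $h_n$ are $\delta_n$-homeomorphisms of a fixed locally connected set $\gamma$, so the domains $\Omega_n$ satisfy a modulus-of-local-connectivity bound for $\partial\Omega_n$ that is uniform in $n$ (for every $\eta>0$ there is $\rho>0$, independent of $n$, so that any two points of $\partial\Omega_n$ within $\rho$ can be joined inside $\partial\Omega_n$ by a subarc of diameter $<\eta$; this descends from the analogous property of $\gamma$ together with the smallness of $\delta_n$). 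By a standard equicontinuity estimate for conformal maps onto domains with a uniform local-connectivity modulus (see Pommerenke, or Garnett--Marshall Chapter on boundary behavior), the family $\{\phi_n\}$ is equicontinuous on $\overline{\mathbb{D}}$. Combined with the local uniform convergence on $\mathbb{D}$ from the previous paragraph and the normalization $\phi_n(0)=z_0$, the Arzel\`a--Ascoli theorem forces $\phi_n \to \phi$ uniformly on all of $\overline{\mathbb{D}}$, with the limit necessarily being the normalized Riemann map of the kernel $\Omega$.

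The main obstacle I anticipate is the uniform equicontinuity up to the boundary — i.e., proving that the boundaries $\partial\Omega_n$ are uniformly locally connected. The subtlety is that a $\delta_n$-homeomorphism of $\gamma$ can in principle create thin "fjords" or pinch the domain $\Omega_n$ in ways that degrade the local connectivity modulus even as $\delta_n \to 0$; ruling this out requires using the specific structure of $\gamma$ as a lemniscate graph (finitely many analytic-or-rectifiable edges meeting at vertices) and the fact that $h_n$ is a genuine homeomorphism of $\gamma$, not merely a $\delta_n$-perturbation of the set. I would handle this by working face-by-face: for each face $F$ of $G$ with $z_0 \in F$ (the relevant component), $F$ is a lemniscate domain, and one parametrizes a neighborhood of $\partial F$ by the Riemann map $\phi$, transferring the problem to showing that $h_n$, pushed to $\mathbb{T}$ via $\phi$, is uniformly close to the identity in a way that controls the modulus of continuity of $\phi_n \circ \phi^{-1}$ — which is precisely the kind of estimate made available by Lemma \ref{small move graphs} and the arguments in Section \ref{extending homeos sec}. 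Everything else (the kernel computation, the final Arzel\`a--Ascoli step) is routine.
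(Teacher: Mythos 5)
Your overall strategy matches the paper's: establish kernel convergence, apply the Carath\'eodory convergence theorem to get local uniform convergence, and then upgrade to uniform convergence on $\overline{\mathbb{D}}$ by showing the complements $\Chat\setminus\Omega_n$ are uniformly locally connected (Pommerenke, Corollary 2.4 and Theorem 2.1). The kernel-convergence half you describe is essentially what the paper treats as routine, and citing Pommerenke's criterion versus running an Arzel\`a--Ascoli argument yourself are interchangeable.

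However, there are two problems with the part you correctly identify as the crux — uniform local connectivity of $\partial\Omega_n$. First, your proposed route invokes Lemma \ref{small move graphs} ``and the arguments in Section \ref{extending homeos sec}.'' That is circular: Lemma \ref{small move graphs} is deduced from Lemma \ref{sc_case}, which is deduced from Corollary \ref{Pomm_cor}, which is deduced from exactly the lemma you are trying to prove. You cannot use machinery that sits downstream of the statement. Second, the difficulty you anticipate — that a $\delta_n$-homeomorphism of $\gamma$ might ``create thin fjords or pinch $\Omega_n$'' so that the modulus degrades even as $\delta_n\to 0$ — is a phantom, and the John-domain framing and face-by-face Riemann-map analysis you propose to rule it out are unnecessary. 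The decisive observation, which your writeup does not actually reach, is that the uniform local connectivity of $h_n(\gamma)$ \emph{transfers directly} from that of $\gamma$: given $z,w\in h_n(\gamma)$ close together, their preimages $a=h_n^{-1}(z)$, $b=h_n^{-1}(w)$ are close (since $h_n$ moves points by less than $\delta_n$); local connectivity of the fixed graph $\gamma$ gives a small continuum $K\subset\gamma$ joining $a$ and $b$; and $h_n(K)$ is a continuum in $h_n(\gamma)$ joining $z$ and $w$ of diameter at most $\operatorname{diam}(K)+2\delta_n$. This one-paragraph argument uses only that $h_n$ is a homeomorphism of the compact set $\gamma$ with a modulus of local connectivity, nothing about edges, vertices, or conformal geometry. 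Pommerenke's Theorem 2.1 then converts uniform local connectivity of $h_n(\gamma)$ into that of $\Chat\setminus\Omega_n$, and you are done. So the gap is real: you flagged the right subtask, but the route you sketch is circular and misses the elementary pull-back/push-forward argument that makes it short.
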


\begin{proof} 
It is straightforward to check that the domains $\Omega_n$ 
converge to $\Omega$ in the \emph{Carath\'eodory kernel} sense
(see Section 1.4 of \cite{MR1217706}), and so Carath\'eodory's
convergence theorem implies that $\phi_n\rightarrow \phi$ 
uniformly on compact subsets of $\mathbb{D}$; in particular 
$\phi_n\rightarrow \phi$ pointwise on $\mathbb{D}$.

Thus, by Corollary 2.4 of \cite{MR1217706}, in order to conclude
that $\phi_n\rightarrow \phi$ uniformly on $\overline{\mathbb{D}}$,
it suffices to check that the sets $\Chat\setminus\Omega_n$ 
are \emph{uniformly locally connected} (see Section 2.2 of 
\cite{MR1217706}); that is, it suffices to check that for all
$\varepsilon>0$, there exists $\delta>0$ (independent of $n$)
so that any two points $a$, $b \in \Chat\setminus\Omega_n$ 
satisfying $|a-b|<\delta$ can be joined by a continuum 
$K\subset \Chat\setminus\Omega_n$ of diameter $<\varepsilon$.

Fix $\varepsilon>0$. Since $\gamma=\partial\Omega$ is a lemniscate
graph, $\gamma$ is locally connected and hence there exists 
$\delta_\gamma>0$ so that any two points $a$, $b\in\gamma$ 
satisfying $|a-b|<\delta_\gamma$ can be joined by a continuum 
$K\subset\gamma$ of diameter $<\varepsilon/2$. Set
$\delta=\delta_\gamma/2$. Let $z, w \in h_n(\gamma)$. For 
$n$ large enough so that $\delta_n<\delta_\gamma/4$, we have
that $|z-w|<\delta$ implies that $a=h_n^{-1}(z)$, 
$b=h_n^{-1}(w) \in \gamma$ satisfy $|a-b|<\delta_\gamma$.
Thus (by our choice of $\delta_\gamma$) there is a continuum
$K\subset\gamma$ of diameter $<\varepsilon/2$ joining $a$, 
$b$, and so $h_n(K)\subset h_n(\gamma)$ is a continuum of 
diameter $<\varepsilon/2+2\delta_n$ joining $z$, $w$. Since 
$\varepsilon/2+2\delta_n<\varepsilon$ for large $n$, we have
demonstrated that the $(h_n(\gamma))_{n=1}^\infty$ are uniformly
locally connected. The $(h_n(\gamma))_{n=1}^\infty$ being 
uniformly locally connected implies, in turn, that the 
$\Chat\setminus\Omega_n$ are uniformly locally connected 
(see Theorem 2.1 of \cite{MR1217706}).
\end{proof}

\begin{cor}\label{Pomm_cor} 
Let $\Omega$ be a lemniscate domain, $z_0\in\Omega$, set 
$\gamma:=\partial\Omega$ and let $\varepsilon>0$. Then 
there exists a $\delta>0$ so that if 
$h: \gamma \rightarrow h(\gamma)$ is a $\delta$-homeomorphism,
and $\Omega_h$ is the component of $\Chat\setminus h(\gamma)$ 
containing $z_0$, then the Riemann maps 
$\phi_h: \mathbb{D} \rightarrow \Omega_h$, 
$\phi: \mathbb{D} \rightarrow \Omega$ (normalized as
in Lemma \ref{Pomm_lem}) satisfy
$\sup_{z\in\overline{\mathbb{D}}}|\phi(z)-\phi_h(z)|<\varepsilon$.
\end{cor}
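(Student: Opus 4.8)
The plan is to deduce the corollary from Lemma \ref{Pomm_lem} by a standard compactness/contradiction argument, since the lemma already gives convergence along sequences and the corollary merely asks for a uniform $\delta$. Suppose the conclusion fails. Then there is an $\varepsilon_0>0$ and, for every $n\in\N$, a $(1/n)$-homeomorphism $h_n:\gamma\rightarrow h_n(\gamma)$ such that the Riemann map $\phi_n:\mathbb{D}\rightarrow\Omega_{h_n}$ (normalized to send $0$ to $z_0$ with positive derivative there) satisfies $\sup_{z\in\overline{\mathbb{D}}}|\phi(z)-\phi_n(z)|\geq\varepsilon_0$. First I would check that for $n$ large enough each $h_n(\gamma)$ genuinely separates $z_0$ from $\infty$ (or at least that $z_0\notin h_n(\gamma)$), so that $\Omega_{h_n}$ is well-defined and nondegenerate; this is immediate because $\dist(z_0,\gamma)>0$ and $h_n$ moves points by less than $1/n$, so $z_0$ stays in the same complementary component for $n$ large. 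Having arranged this, the sequence $(h_n)$ is exactly of the form considered in Lemma \ref{Pomm_lem} with $\delta_n=1/n\to 0$, so that lemma gives $\phi_n\rightarrow\phi$ uniformly on $\overline{\mathbb{D}}$. This contradicts $\sup_{z\in\overline{\mathbb{D}}}|\phi(z)-\phi_n(z)|\geq\varepsilon_0$ for all $n$, and the corollary follows.

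The one point that needs a little care is that the normalization "$0\mapsto z_0$, positive derivative at $0$" pins down $\phi_n$ uniquely, so there is no ambiguity in the quantity $\sup_{z\in\overline{\mathbb{D}}}|\phi(z)-\phi_n(z)|$ whose negation we are assuming; this is the usual fact that a Riemann map onto a simply connected domain is unique once the image of one interior point and the argument of the derivative there are fixed. I would also note explicitly that $\Omega_{h_n}$ is a lemniscate domain for $n$ large — it is simply connected (being a complementary component of a lemniscate graph $h_n(\gamma)$ that is itself, as a homeomorphic image of $\gamma$, still a lemniscate graph), so Carathéodory's theorem applies and $\phi_n$ extends continuously to $\overline{\mathbb{D}}$, making the supremum over $\overline{\mathbb{D}}$ meaningful. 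None of this is an obstacle; the real content is Lemma \ref{Pomm_lem}, and the corollary is just its "uniform in $h$" repackaging via the sequential characterization.

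The main (mild) obstacle, then, is purely bookkeeping: verifying that the hypotheses of Lemma \ref{Pomm_lem} are met by any contradicting sequence — in particular that $z_0$ lies in a well-defined component $\Omega_{h_n}$ and that the normalizations match — rather than any substantive analytic difficulty. Once those are in place the argument is a one-line application of the lemma.
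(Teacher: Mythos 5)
Your proposal is correct and is essentially identical to the paper's proof: both argue by contradiction, extracting from a hypothetical sequence of counterexamples with $\delta_n = 1/n$ a sequence $(h_n)$ to which Lemma~\ref{Pomm_lem} applies, yielding $\phi_n \to \phi$ uniformly on $\overline{\mathbb{D}}$ and hence a contradiction. The extra bookkeeping you supply (that $z_0$ stays in the same component for $n$ large, that the normalization pins down $\phi_n$ uniquely, and that $\Omega_{h_n}$ is a lemniscate domain so the boundary extension exists) is harmless and makes the one-line argument in the paper fully explicit.
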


\begin{proof} 
If there existed $\Omega$, $z_0$, $\varepsilon$ for which 
the corollary failed, then extracting 
sequences $h_n$, $\phi_n$ from the counterexamples arising 
from $\delta_n:=1/n$ would contradict Lemma \ref{Pomm_lem}.
\end{proof}

\begin{lem}\label{sc_case} 
Let $\Omega$ be a lemniscate domain with $\gamma:=\partial\Omega$,
$\varepsilon>0$ and $U$ a neighborhood of $\gamma$. Then 
there exists $\delta>0$ so that any $\delta$-homeomorphism 
$h: \gamma \rightarrow h(\gamma)$ admits an 
$\varepsilon$-homeomorphic extension 
$h: \overline{\Omega}\rightarrow\overline{\Omega_h}$
satisfying $h(z)=z$ for $z\not\in U$,
where $\Omega_h$ denotes the component of
$\Chat\setminus h(\gamma)$ containing $z_0$.
\end{lem}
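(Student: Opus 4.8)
The plan is to reduce Lemma \ref{sc_case} to the boundary‐convergence of Riemann maps recorded in Corollary \ref{Pomm_cor}, and then to build the extension by transporting a chosen ``collar'' homeomorphism of $\overline{\D}$ through the Riemann maps. Fix $z_0\in\Omega$ and let $\phi\colon\D\to\Omega$ be the Riemann map normalized to send $0$ to $z_0$ with $\phi'(0)>0$; by Carath\'eodory it extends to a homeomorphism $\phi\colon\overline\D\to\overline\Omega$ (the domain is a lemniscate domain, hence its boundary is locally connected). Given a $\delta$‑homeomorphism $h\colon\gamma\to h(\gamma)$, let $\Omega_h$ be the component of $\Chat\setminus h(\gamma)$ containing $z_0$, and let $\phi_h\colon\overline\D\to\overline{\Omega_h}$ be the analogously normalized Riemann map. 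The map $g:=\phi_h^{-1}\circ h\circ\phi\colon\mathbb{T}\to\mathbb{T}$ is an orientation‑preserving homeomorphism of the circle. The idea is to extend $g$ to a homeomorphism $\widehat g\colon\overline\D\to\overline\D$ that is the identity on $\{|z|\le r\}$ for some $r<1$ (e.g. the standard ``radial interpolation'' extension $\widehat g(\rho e^{i\theta}) = \rho\, e^{i\,((1-\lambda(\rho))\theta + \lambda(\rho) \arg g(e^{i\theta}))}$ for a suitable increasing $\lambda$ with $\lambda\equiv 0$ near $0$ and $\lambda\equiv1$ near $1$), and then set
\[
h(w):=\begin{cases}\phi_h\big(\widehat g(\phi^{-1}(w))\big),& w\in\overline\Omega,\\ w,& w\notin\Omega.\end{cases}
\]
This is a homeomorphism of $\overline\Omega$ onto $\overline{\Omega_h}$ extending the original $h$ on $\gamma$, and since $\widehat g$ is the identity near $0$, $h$ is the identity on a fixed compact neighborhood $\phi(\{|z|\le r\})$ of $z_0$ inside $\Omega$ — but that is not quite what is required; I actually want $h=\mathrm{id}$ off $U$, near the boundary, not near $z_0$.

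So the collar should be put at the \emph{boundary} of $\D$ rather than at its center. Concretely, choose $r<1$ close to $1$ so that $\phi(\{r<|z|<1\})\subset U$; this is possible because $\phi$ extends continuously to $\overline\D$ and $U$ is open containing $\gamma=\phi(\mathbb T)$, so for $r$ close enough to $1$ the whole annular shell maps into $U$ (compactness of $\mathbb T$). By Corollary \ref{Pomm_cor}, for $\delta$ small $\phi_h$ is uniformly within $\varepsilon/4$ of $\phi$ on $\overline\D$, hence for $\delta$ small we may also assume $\phi_h(\{r<|z|<1\})\subset U$, and more importantly $\sup_{\overline\D}|\phi-\phi_h|$ is as small as we like. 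Now extend the circle map $g$ to $\widehat g\colon\overline\D\to\overline\D$ supported in the annulus $\{r<|z|\le 1\}$: $\widehat g$ is the identity on $\{|z|\le r\}$ and equals $g$ on $\mathbb T$, interpolating radially in between as above. Define $h$ by the displayed formula. Then $h$ is the identity on $\Omega\setminus\phi(\{r<|z|<1\})$, which contains $\Omega\setminus U$; and on $\Chat\setminus\Omega$ we also set $h=\mathrm{id}$. The two definitions agree on $\gamma$ (where $h$ is the given boundary map, which equals $\phi_h\circ g\circ\phi^{-1}$), so $h$ is a well-defined homeomorphism of $\overline\Omega$ onto $\overline{\Omega_h}$, and globally a homeomorphism $\Chat\to\Chat$ (noting $\Chat\setminus\overline\Omega$ is mapped by the identity, matching the identity on $\gamma$ from the other side only if $h(\gamma)=\gamma$ — here we only claim $h$ is a homeomorphism of $\overline\Omega$ onto $\overline{\Omega_h}$, which is exactly what the lemma states).

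It remains to see that $h$ moves points less than $\varepsilon$. On $\Chat\setminus\Omega$ this is trivial. For $w=\phi(z)\in\overline\Omega$ with $|z|\le r$ we have $h(w)=\phi_h(z)$, so $|h(w)-w|=|\phi_h(z)-\phi(z)|<\varepsilon/4$ by Corollary \ref{Pomm_cor}. For $w=\phi(z)$ with $r<|z|\le1$, write $|h(w)-w|\le|\phi_h(\widehat g(z))-\phi(\widehat g(z))| + |\phi(\widehat g(z))-\phi(z)|$; the first term is $<\varepsilon/4$ again by Corollary \ref{Pomm_cor} (uniform on $\overline\D$), and the second is controlled by the uniform continuity of $\phi$ on $\overline\D$ together with $|\widehat g(z)-z|$ being small — which holds provided $g$ is $C^0$‑close to the identity on $\mathbb T$, i.e. provided $\sup_{\mathbb T}|g-\mathrm{id}|$ is small, and by standard properties of the radial extension this forces $\sup_{\overline\D}|\widehat g-\mathrm{id}|$ small. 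So the main point is to show $g=\phi_h^{-1}\circ h\circ\phi$ is uniformly close to the identity on $\mathbb T$ when $\delta$ is small: this follows by writing $g-\mathrm{id} = \phi_h^{-1}\circ h\circ\phi - \phi^{-1}\circ\phi$ and using that $h$ moves boundary points by $<\delta$, that $\phi_h\to\phi$ uniformly on $\overline\D$ (Corollary \ref{Pomm_cor}), and the uniform continuity of the inverse maps $\phi^{-1}$, $\phi_h^{-1}$ — or more cleanly, by observing $\phi_h\circ g = h\circ\phi$ so $|\phi\circ g - h\circ\phi|=|\phi\circ g-\phi_h\circ g|<\varepsilon$‑type bound on $\mathbb T$, hence $|\phi\circ g-\phi|<\delta+ (\text{small})$ on $\mathbb T$, and then invert using a modulus of continuity for $\phi^{-1}$ on $\gamma$; since $\phi^{-1}\colon\overline\Omega\to\overline\D$ is uniformly continuous, smallness of $|\phi(g(z))-\phi(z)|$ gives smallness of $|g(z)-z|$. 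The main obstacle is precisely this last equicontinuity step: one must be careful that $\phi^{-1}$ has a genuine (uniform) modulus of continuity on the non‑Jordan boundary $\gamma$, so that a $C^0$‑small perturbation upstairs in $\Omega$ corresponds to a $C^0$‑small perturbation of the circle parametrization. This is where local connectivity of $\gamma$ and the machinery behind Lemma \ref{Pomm_lem} are used, and it is worth stating explicitly as the crux of the argument; everything else is the standard ``conjugate a collar homeomorphism by Riemann maps'' construction.
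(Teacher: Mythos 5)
Your overall strategy (conjugate a boundary homeomorphism through Riemann maps, interpolate across a collar annulus, and control everything via Corollary \ref{Pomm_cor}) is exactly the paper's strategy, but your specific construction does not yield the conclusion $h=\mathrm{id}$ off $U$, and in fact your write-up contradicts itself on this point.

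You set $\widehat g$ to be the identity on $\{|z|\le r\}$ and define $h(w)=\phi_h(\widehat g(\phi^{-1}(w)))$ on $\overline\Omega$. You then assert ``$h$ is the identity on $\Omega\setminus\phi(\{r<|z|<1\})$,'' but two paragraphs later you correctly compute that for $w=\phi(z)$ with $|z|\le r$ one has $h(w)=\phi_h(z)\neq w$ in general. These two statements are incompatible: with $\widehat g=\mathrm{id}$ on the inner disk, your $h$ equals $\phi_h\circ\phi^{-1}$ there, which is merely $\varepsilon$-close to the identity, not equal to it. Thus the required property $h(z)=z$ for $z\notin U$ fails, and since this conclusion is used crucially in the proof of Lemma \ref{small move graphs} (where the face-by-face extensions are patched together by being the identity outside disjoint neighborhoods), the gap is not cosmetic. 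The fix is what the paper does: on $r\overline\D$, define the interpolant to be $\phi_h^{-1}\circ\phi$ rather than the identity, so that the composed map $\phi_h\circ(\phi_h^{-1}\circ\phi)\circ\phi^{-1}$ really is the identity there. This in turn requires showing $\phi_h^{-1}\circ\phi$ is well-defined and injective near $r\mathbb T$ (via Cauchy estimates: its derivative is close to $1$ when $\delta$ is small), and that the image $\phi_h^{-1}\circ\phi(r\mathbb T)$ is a starlike-enough curve that a radial straightening $\mathcal R$ exists; the interpolation is then between $f$ on $\mathbb T$ and $\mathcal R\circ\phi_h^{-1}\circ\phi$ on $r\mathbb T$, done linearly in logarithmic coordinates.

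A second, lesser gap: you declare $g=\phi_h^{-1}\circ h\circ\phi$ to be a circle homeomorphism without comment, but $\phi_h$ (and $\phi$) fail to be injective on $\mathbb T$ whenever $\gamma$ has vertices, so $\phi_h^{-1}$ is a priori multi-valued on $h(\gamma)$. The paper devotes a paragraph to showing that for $\delta$ small the vertex/access structure is preserved by $h$ and occurs in the same cyclic order, so that $f$ extends continuously to a genuine homeomorphism of $\mathbb T$. Your argument should address this before treating $g$ as a map; as written, it silently assumes the Jordan case.
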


\begin{proof} 
Fix $z_0\in\Omega$ and $\varepsilon>0$. By the Riemann 
Mapping Theorem, there exists a conformal map 
$\phi: \mathbb{D} \rightarrow \Omega$ satisfying 
$\phi(0)=z_0$ and $\phi'(0)>0$. For a homeomorphism 
$h: \gamma\rightarrow h(\gamma)$, let 
$\phi_h: \mathbb{D}\rightarrow \Omega_h$ denote a
conformal map also normalized so that $\phi_h(0)=z_0$, $\phi_h'(0)>0$.

Consider the following composition (for now defined only in a
	formal sense, not as a mapping):
\begin{equation}\label{formal_comp}
f:=\phi_h^{-1}\circ h\circ\phi. 
\end{equation}
The map $\phi_h$ is not injective on $\mathbb{T}$ whenever 
$\gamma$ (and hence $h(\gamma)$) has at least one vertex; 
thus $\phi_h^{-1}$ may be multi-valued on $h(\gamma)$. We 
claim that the expression (\ref{formal_comp}) nevertheless 
gives a well-defined homeomorphism 
$f: \mathbb{T} \rightarrow \mathbb{T}$ for $\delta$ small 
enough. Indeed, for $\delta$ small, we have that $v$ is a 
vertex of $\gamma$ having $n$ accesses from $z_0$ (within 
$\Omega$) if and only if $h(v)$ is a vertex of $h(\gamma)$ 
having $n$ accesses from $z_0$ (within $\Omega_h$). Moreover,
the degrees of the vertices occur in the same order 
counterclockwise around $\gamma$, $h(\gamma)$ (as seen from 
$z_0$). Thus, for $\delta$ small, the mapping $f$ is a 
homeomorphism off of the $\phi$-images of the vertices of 
$\gamma$, and extends continuously to a homeomorphism 
$f: \mathbb{T} \rightarrow \mathbb{T}$.

For all $\eta>0$, Corollary \ref{Pomm_cor} implies that there
	exists a $\delta>0$ so that if $h$ is a $\delta$-homeomorphism 
	of $\gamma$, then
\begin{equation}\label{unfm} 
\sup_{z\in\overline{\mathbb{D}}}|\phi(z)-\phi_h(z)|<\eta.
\end{equation}
Thus, for all $\eta>0$, there exists $\delta>0$ so that if $h$
	is a $\delta$-homeomorphism of $\gamma$, then the homeomorphism
\[ f:=\phi_h^{-1}\circ h\circ\phi: \mathbb{T} \rightarrow \mathbb{T} \]
is an $\eta$-homeomorphism.

Let $r<1$ satisfy that $\phi(r\mathbb{T})\subset U$. We claim
that $f: \mathbb{T} \rightarrow \mathbb{T}$ extends to a 
homeomorphism 
$f: \overline{\mathbb{D}}\rightarrow\overline{\mathbb{D}}$
satisfying $f(z)=\phi_h^{-1}\circ\phi(z)$ for $z\in r\mathbb{D}$.
In order to verify this, we need to define $f$ on the annulus 
$\{z : r < |z| <1\}$. First note that  $\phi_h^{-1}\circ\phi$ is uniformly
close to the identity and is conformal in a neighborhood of $r\mathbb{T}$.
Thus  the Cauchy estimates imply 
that the derivative of $\phi_h^{-1}\circ \phi$
is as close to $1$ as we wish (by taking $\delta$ small enough)
on a smaller neighborhood of $r \mathbb{T}$. Hence the image 
of this circle is 
close to perpendicular to each radial segment it touches, and 
hence the Jordan curve 
$\phi_h^{-1}\circ\phi(r\mathbb{T})$ intersects any radial 
line $\{ z : \arg(z)=\theta\}$ in at most one point. Thus, we
may define a homeomorphism $\mathcal{R}$ of the annulus with
boundary components $\mathbb{T}$, $\phi_h^{-1}\circ\phi(r\mathbb{T})$
onto the annulus $\{z : r < |r| < 1\}$ by specifying $\mathcal{R}$
preserves (set-wise) radial lines, $\mathcal{R}$ is the identity 
on $\mathbb{T}$, and $\mathcal{R}$ maps each point 
$\zeta\in\phi_h^{-1}\circ\phi(r\mathbb{T})$ to the point on 
$r\mathbb{T}$ having the same argument as $\zeta$. We can 
interpolate between $f: \mathbb{T} \rightarrow \mathbb{T}$ 
and $\mathcal{R}\circ\phi_h^{-1}\circ\phi: r\mathbb{T}\rightarrow 
r\mathbb{T}$ to give a homeomorphism we call 
$g: \{z : r < |z| < 1\} \rightarrow \{z : r < |z| < 1\} $
by using the interpolation which is linear in logarithmic 
coordinates (i.e. when $g$ is lifted by the exponential to 
give a self-map of a horizontal strip, this lift maps straight
line segments to straight line segments). The homeomorphism 
$g$ is close to the identity as long as $h$ is close to the 
identity. Then $f:=\mathcal{R}^{-1}\circ g$ gives the desired 
homeomorphic interpolation between $f: \mathbb{T} \rightarrow 
\mathbb{T}$ and $\phi_h^{-1}\circ\phi: r\mathbb{T} \rightarrow
\phi_h^{-1}\circ\phi(r\mathbb{T})$, and $f$ is close to the 
identity as long as $\delta$ is small.

Thus, for $\delta$ small enough (depending on $\varepsilon$, 
$\phi$), if $h$ is any $\delta$-homeomorphism of $\gamma$, the map
\[ \phi_h \circ f\circ \phi^{-1} :
\overline{\Omega} \rightarrow \overline{\Omega_h} \]
is an $\varepsilon$-homeomorphic extension of 
$h: \gamma \rightarrow h(\gamma)$, which is the 
identity on $\phi(r\mathbb{D})$. By our choice of $r$,
this means $h$ is the identity outside of $U$.
\end{proof}

\begin{proof} [Proof of Lemma \ref{small move graphs}]
For each component $E$ of $G$, take a neighborhood $U_E$ of 
$E$ which is disjoint from all other components of $G$. Then
by Lemma \ref{sc_case}, there exists a 
$\delta >0$ (depending on $E$, $U_E$, and $ \varepsilon$) 
so that if $h|_E$ is a 
$\delta$-homeomorphism, then $h|_E$ 
extends to an $\varepsilon$-homeomorphism of $\Chat$ which 
is the identity outside of $U$. Taking $\delta$ to be the 
smallest value that works for all components 
$G$, we see that the desired extension of any
$\delta$-homeomorphism $h$ may be defined piecewise in a 
neighborhood of each component of $G$ and the identity outside
of these neighborhoods; this gives the desired 
$\varepsilon$-homeomorphic extension of $h$.
\end{proof}

% ------------------------------------------------------
\section{Approximating Graphs Without Vertices}\label{Alexs_proof}

We are now ready to start the proof of our main result, Theorem 
\ref{main_thm_2}.  The results in Section \ref{smoothing sec}
show that it suffices 
to consider lemniscate graphs that have analytic edges, and form equal 
angles at each vertex. 
In this section,  we prove Theorem \ref{main_thm_2} for lemniscate graphs
with no vertices, so it suffices to assume $H$ is a  
union of pairwise disjoint analytic  Jordan curves.
This special  case 
is sufficient for the proofs of Theorems \ref{power_series_cor} 
and \ref{julia_set_cor} (and its proof only uses Theorem \ref{eps homeo curves}, 
not Theorem  \ref{eps homeo graphs}). 

We recall the convention
that general lemniscate graphs (possibly with vertices) 
will be denoted by $G$, and lemniscate graphs without vertices will be 
denoted by $H$.
%Switching between the two cases will be important for the proof of Theorem \ref{main_thm}.
%We will explain how the case of graphs with vertices reduces to the case of graphs without vertices in later sections. 

For any lemniscate graph $G$, the boundary  of each face is a finite union of 
non-trivial continua, so each face is regular for the Dirichlet problem. 
Thus for each face $B$ and each point $p \in B$,  we can define
the harmonic measure  $ \omega_p = \omega(\cdot , p,B)$  with base point $p$. 
This is a  probability measure  on $\partial B$ that  satisfies 
$$
u(p) = \int_{\partial B} f( \zeta) d \omega_p(\zeta),
$$
where $u$ is the harmonic extension of $f \in C(\partial B)$ to $B$
(e.g., on the unit disk harmonic measure is given by the Poisson 
kernel).
For a bounded face $B$  of $G$, 
the Green's function for $B$ with pole at $p$ is defined as 
\begin{eqnarray} \label{Green defn}
	G_B(z,p) = \int \log|\zeta-p| d\omega_z(\zeta) - \log|z-p|, 
	\text{ for } z \in B, 
\end{eqnarray} 
and we set it to zero outside $B$, by convention.
This is the unique harmonic function 
on $B \setminus \{p\}$ that vanishes on $\partial B$ and has a  
logarithmic pole at $p$. 
For the basic properties of harmonic measure and Green's 
function see, e.g., Chapters II and III of \cite{MR2450237}.

\begin{notation}\label{mu_B_notation} 
Suppose that $H$ is a lemniscate graph without vertices, 
and that $P \subset \Chat$ is a finite set that contains 
at least one point in each grey face of $H$. If 
$B$ is a grey face of $H$, we set $P_B := P \cap B$.
For a grey face $B$ of $H$, we define the signed measure
$\mu_B:=\sum_{p\in P_B}\left(\omega_p -\delta_{p}\right)$
%where $\omega(\cdot, p, B)$ is harmonic measure on $\partial B$ as seen from
%$B$ with base point $p$,
where (as above) $\omega_p$ is harmonic measure for $B$ with 
base point $p$ and and $\delta_{p}$ is a unit mass at $p$. 
Note that $\mu_B$ has total mass zero.
\end{notation}

%\begin{rem} 
For lemniscate graphs $H$  without vertices,
the proof of Theorem \ref{main_thm_2} 
will only require considering sets of poles
that have one  element in each grey face of $H$.
However, we will need to consider  multiple poles per face when 
proving Theorem \ref{main_thm_2} for graphs with 
vertices (see Section \ref{graphs_with_verts_sec}), so 
we allow this possibility here.
%\end{rem}
Throughout this section, we fix a lemniscate graph
$H$ without vertices, a $2$-coloring of the faces of $H$, and a (non-empty)
finite set of points $P_B\subset B$ for each grey face $B$ of $H$. We will
assume $\infty$ is contained in a white face of $H$. Set $P:=\cup_BP_B$.
For $p\in P$, let $B(p)$ denote the face of $H$ containing $p$. 

\begin{notation}\label{u_without_verts}
Given $H$ and $P$, define a $[0,\infty]$-valued function 
$$u(z) = u_{H,P}(z) 
:=\sum_{p\in P}G_{B(p)}(z,p)$$
for $z\in\Chat$, where $ G_{B(p)}(z,p)$
denotes the Green's function for $B(p)$ with pole at $p$. As above, 
we  set $G_{B(p)}(z,p)=0$ for $z\not\in B(p)$. 
\end{notation}

The following two formulas are immediate from the
definitions above.

\begin{prop}\label{log_pot_id} 
For each grey face $B$ of $H$, %the sum of Green's functions 
%$\sum_{p\in P_B}G_{B}(z,p)$ satisfies 
and all $z \in \Chat$, 
\begin{equation}  %\label{log_pot_id_eqtn}
\sum_{p\in P_B}G_{B}(z,p) = \int_{\Chat} \log|z-\zeta|d\mu_{B}(\zeta).
\end{equation} 
%\end{prop}
%
%\begin{cor}\label{eqtn_spelled_out}
	For all $z \in \Chat$, 
\begin{equation} \label{eqtn_spelled_out}
u_{H,P}(z) = \sum_{B}\int_{\Chat} \log|z-\zeta|d\mu_{B}(\zeta).
\end{equation} 
%\end{cor}
\end{prop}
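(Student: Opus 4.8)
The statement to prove, Proposition \ref{log_pot_id}, follows directly from unwinding the definitions, so the proof proposal is short. Here is the plan.

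\medskip

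The plan is to verify the first identity and then sum over grey faces to get the second. For the first identity, fix a grey face $B$ of $H$. By Notation \ref{mu_B_notation}, $\mu_B = \sum_{p \in P_B}(\omega_p - \delta_p)$, so by linearity of the integral against a finite signed measure,
\[
\int_{\Chat} \log|z-\zeta|\, d\mu_B(\zeta) = \sum_{p\in P_B}\left( \int_{\partial B} \log|z-\zeta|\, d\omega_p(\zeta) - \log|z-p| \right).
\]
Here I would note that $\omega_p$ is supported on $\partial B$, so the first integral is over $\partial B$. Now compare with the definition \eqref{Green defn} of the Green's function: for $z \in B$, $G_B(z,p) = \int \log|\zeta - p|\, d\omega_z(\zeta) - \log|z-p|$. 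The subtlety is that \eqref{Green defn} integrates the kernel $\log|\zeta - p|$ against harmonic measure with base point $z$, whereas the expression above integrates $\log|z - \zeta|$ against harmonic measure with base point $p$; these agree by the symmetry of the Green's function, $G_B(z,p) = G_B(p,z)$, combined with the fact that $\log|z-\zeta| = \log|\zeta - z|$ and a relabeling of the roles of $z$ and $p$ in \eqref{Green defn}. Concretely, \eqref{Green defn} applied with base point $p$ gives $G_B(p,z) = \int \log|\zeta - z|\, d\omega_p(\zeta) - \log|p-z|$ for $p \in B$, and symmetry of $G_B$ turns the left side into $G_B(z,p)$. Hence each summand equals $G_B(z,p)$ when $z \in B$.

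\medskip

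To finish the first identity I must check it also holds for $z \notin B$. In that case the convention sets $G_B(z,p) = 0$, so the left side should vanish. This is where the only real content lies: for $z$ outside $\overline{B}$ the function $\zeta \mapsto \log|z-\zeta|$ is harmonic on a neighborhood of $\overline{B}$, so by the defining property of harmonic measure $\int_{\partial B}\log|z-\zeta|\,d\omega_p(\zeta) = \log|z-p|$, and the summand is $0$; for $z \in \partial B$ one can argue by continuity of both sides, or again directly since $\log|z - \cdot|$ extends harmonically past $\partial B$ away from $z$ and a limiting/regularity argument handles the singular point, each face being regular for the Dirichlet problem (as noted in the text preceding Notation \ref{mu_B_notation}). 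I expect the main obstacle — such as it is — to be this verification that the logarithmic potential of $\mu_B$ vanishes off $B$, i.e.\ bookkeeping the convention $G_B \equiv 0$ outside $B$ against the harmonicity of the kernel; everything else is pure definition-chasing. Note also $\mu_B$ has total mass zero, which is what makes $\int \log|z-\zeta|\,d\mu_B(\zeta)$ well behaved at $\infty$ (the $\log|z|$ terms cancel), consistent with our standing assumption that $\infty$ lies in a white face.

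\medskip

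For the second identity, I would simply sum the first identity over all grey faces $B$ of $H$ and invoke Notation \ref{u_without_verts}: since $u_{H,P}(z) = \sum_{p \in P} G_{B(p)}(z,p)$ and $P = \bigcup_B P_B$ is a disjoint union over grey faces (every $p \in P$ lies in exactly one grey face $B(p)$), regrouping the sum by faces gives $u_{H,P}(z) = \sum_B \sum_{p \in P_B} G_B(z,p) = \sum_B \int_{\Chat}\log|z-\zeta|\,d\mu_B(\zeta)$, which is \eqref{eqtn_spelled_out}. This holds for all $z \in \Chat$, with the understanding that both sides may take the value $+\infty$ at points of $P$ (where the Green's functions have logarithmic poles), matching the $[0,\infty]$-valued convention in Notation \ref{u_without_verts}.
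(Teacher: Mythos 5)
Your proposal is correct, and it spells out exactly what the paper treats as immediate: the paper offers no proof, remarking only that ``the following two formulas are immediate from the definitions above.'' Your definition chase — split $\mu_B$ by linearity, match against \eqref{Green defn} using the symmetry $G_B(z,p)=G_B(p,z)$, and verify the vanishing off $\overline{B}$ via harmonicity of $\zeta\mapsto\log|z-\zeta|$ — is precisely what makes the statement immediate, so there is nothing to compare against beyond noting that you have filled in what the authors chose to leave implicit.
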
 

Next,  we approximate  harmonic measure 
by $\delta$-masses at points $(\zeta_j^B)_{j=1}^m \in \partial B$.

\begin{definition}\label{zetapointsdefn}
First consider the case that $\partial B$ consists of a 
single Jordan curve. We fix  some large 
	$m \in \naturals$ and a point $\zeta_1^B \in  \partial B$,
	and for $j>1$ we define  points $\{\zeta_j^B\}_2^m$ so that 
the segment $I_j \subset \partial B$ from 
$\zeta_{j-1}^B$ to $\zeta_j^B$  oriented positively with respect to $B$, 
satisfies
\[\sum_{p\in P_B}\omega(I_j, p, B) =\frac{|P_B|}{m},\]
where $|P_B|$ denotes the number of elements in the set $P_B$. 

When $\partial B$ has more than one component, we follow a similar procedure,
now placing on each component $\gamma$ of $\partial B$ either 
$\lfloor m\cdot\sum_{p\in P_B}\omega(\gamma, B, p) \rfloor$ points or
$\lfloor m\cdot\sum_{p\in P_B}\omega(\gamma, B, p) \rfloor+1$ points, 
so that each edge connecting two adjacent points on $\gamma$ has measure
$=|P_B|/m$, except, possibly,  for one edge which has measure $<  |P_B|/m$.
Define  $\{ \omega^B_m\}$ by placing mass $|P_N|/m$ 
at each point constructed above, and 
	define $\mu_m^B = \omega_m^B -\sum_{p\in P_B} \delta_p$.
\end{definition}

We claim that the measures  $\{ \omega^B_m\}$ converge weak-$\ast$ to 
$\sum_{P \in P_B} \omega_p$.
This follows because harmonic measure is non-atomic, i.e.,  
single points always have harmonic measure zero (this is 
true for general domains in $\reals^n$, $n\geq 2$, but we only need 
it for finitely connected domains in the plane). 
Because of this, the maximum size $\delta_m$  of the arcs $I$
connecting adjacent points 
in Definition \ref{zetapointsdefn} tends to zero as 
$m$ tends to infinity. Any continuous function $g$ on the 
graph $H$ is uniformly continuous and hence 
$|x-y|\leq \delta_m$  implies 
$|g(x)-g(y)|\leq \epsilon_m$ for some sequence $\epsilon_n$ tending 
to zero. Thus 
$$ 
\left| \int g  \sum_{P \in P_B} d  \omega_p
-\int g  d  \omega^B_m \right| \leq \epsilon_m $$
for any continuous function $g$, which is the definition 
of weak convergence of measures. 

For future reference, we note that this convergence also 
holds for  $g(z) = \log |z- \zeta|$ when $\zeta \in H$ but 
$z \not \in H$, since 
$g$ is uniformly continuous outside any neighborhood of the pole. 
Moreover, if $z \in K$ for some compact set $K$ disjoint 
from $H$, then we have a uniform modulus  bound  for the
family $\{ \log |z-\zeta|\}_{z \in K, \zeta \in H}$ depending only 
on $\dist(K, H)$. Thus  
\begin{eqnarray*} %\label{unif conv} 
\int  \log |z-\zeta|   d  \omega^B_m(\zeta)  \to 
\int \log|z-\zeta|   \sum_{P \in P_B} d  \omega_p(\zeta) , 
\end{eqnarray*} 
uniformly on $K$.

\begin{definition}\label{defnsofr_mu_m}
Given the points $\{\zeta^B_j\}_{j=1}^m$
from Definition \ref{zetapointsdefn},
define the rational function 
\begin{equation}\label{defnsofur} r_m(z):=\frac{\prod_{j, B} (z-\zeta_j^B)^{|P_B|}}{\prod_{p\in P}(z-p)^m},
\end{equation}
\noindent 
where the product in the numerator is over all grey faces $B$ and 
$1\leq j \leq m$, and the product in the 
	denominator is over $P=\cup_BP_B$. Set
\begin{equation} u_m(z):=\frac{1}{m}\log|r_m(z)|.
\end{equation} 
\end{definition}

\begin{prop}\label{triv_id}
For all $m\in\mathbb{N}$ and all $c \in \reals$,
we have  $r_m^{-1}(e^{mc}\mathbb{T})=u_m^{-1}(c)$. 
\end{prop}

\begin{proof} This is easy since $|r_m| = e^{cm}$ implies 
$u_m = \frac 1m  \log |r_m| = \frac 1m \log e^{cm}  = c$. 
\end{proof}

\begin{thm}\label{conv_thm}
The sequence $(u_m)_{m=1}^\infty$ converges uniformly to $u_{H,P}$ 
on compact subsets
of $\Chat\setminus (H\cup P)$.
\end{thm}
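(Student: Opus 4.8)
The plan is to prove Theorem \ref{conv_thm} by writing both $u_m$ and $u_{H,P}$ as logarithmic potentials and using the weak-$*$ convergence of the discrete measures to the harmonic measures. First I would recall from Definition \ref{defnsofr_mu_m} that
\[
u_m(z) = \frac{1}{m}\log|r_m(z)| = \frac{1}{m}\sum_{B}\sum_{j=1}^{m}|P_B|\log|z-\zeta_j^B| - \frac{1}{m}\sum_{p\in P}m\log|z-p| = \sum_B \int_{\Chat}\log|z-\zeta|\,d\mu_m^B(\zeta),
\]
where $\mu_m^B = \omega_m^B - \sum_{p\in P_B}\delta_p$ as in Definition \ref{zetapointsdefn}. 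On the other hand, Proposition \ref{log_pot_id}, equation (\ref{eqtn_spelled_out}), gives $u_{H,P}(z) = \sum_B \int_{\Chat}\log|z-\zeta|\,d\mu_B(\zeta)$ with $\mu_B = \sum_{p\in P_B}(\omega_p-\delta_p)$. Subtracting, the point masses at $P$ cancel exactly (the coefficient is $|P_B|/m$ times $m$ versus $1$ times $|P_B|$ in the discrete case, and $1$ per $p$ in the continuous case — in both cases summing to the same total over each face), so
\[
u_m(z) - u_{H,P}(z) = \sum_B \int_{\Chat}\log|z-\zeta|\,d\bigl(\omega_m^B - \textstyle\sum_{p\in P_B}\omega_p\bigr)(\zeta).
\]
Since there are finitely many grey faces $B$, it suffices to bound each term.

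Next I would fix a compact set $K \subset \Chat\setminus(H\cup P)$ and invoke the uniform convergence statement already established in the paragraph following Definition \ref{zetapointsdefn}: for $z$ ranging over a compact set $K$ disjoint from $H$, the family $\{\zeta\mapsto\log|z-\zeta|\}_{z\in K}$ restricted to $\partial B\subset H$ is equicontinuous with a uniform modulus of continuity depending only on $\dist(K,H)$, and consequently $\int\log|z-\zeta|\,d\omega_m^B(\zeta) \to \int\log|z-\zeta|\,d(\sum_{p\in P_B}\omega_p)(\zeta)$ uniformly in $z\in K$. (If $\infty\in K$ one works in the spherical metric, or equivalently notes that for $H\subset\C$ bounded the relevant estimates near $\infty$ are trivial; alternatively one may assume after a Möbius change of coordinates that $\infty\notin H$ and $\infty$ lies in a white face, as already stipulated.) Summing the finitely many faces $B$ gives $u_m \to u_{H,P}$ uniformly on $K$, which is exactly the claim.

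The main obstacle — though a mild one — is bookkeeping with the spherical versus Euclidean setup and the behaviour near $\infty$ and near $H\cup P$: one must be careful that $K$ is bounded away from $H$ (so $\log|z-\zeta|$ is a nice bounded equicontinuous function of $\zeta\in\partial B$) and bounded away from $P$ (so the cancelled point-mass terms cause no issue and $u_{H,P}$ is finite and continuous on $K$). This is handled by the hypothesis that $K\subset\Chat\setminus(H\cup P)$ is compact, which forces $\dist(K,H\cup P)>0$. Once that is noted, the proof is essentially the assembly of (i) the potential-theoretic identity from Proposition \ref{log_pot_id}, (ii) the analogous identity for $u_m$ read off from (\ref{defnsofur}), and (iii) the uniform weak-$*$ convergence $\omega_m^B \to \sum_{p\in P_B}\omega_p$ tested against the equicontinuous logarithmic kernels, all of which are already in hand.
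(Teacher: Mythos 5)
Your proposal is correct and follows essentially the same route as the paper: express $u_m$ as a sum over grey faces of logarithmic potentials of the discrete measures $\mu_m^B$, compare with the identity $u_{H,P}(z)=\sum_B\int\log|z-\zeta|\,d\mu_B(\zeta)$ from Proposition \ref{log_pot_id}, observe that the point masses at $P$ cancel exactly, and then invoke the uniform weak-$*$ convergence $\omega_m^B\to\sum_{p\in P_B}\omega_p$ against the equicontinuous kernels $\{\log|z-\zeta|\}_{z\in K}$, as noted in the remarks following Definition \ref{zetapointsdefn}. The only difference is presentational: you spell out the cancellation of the $\delta_p$ masses and the equicontinuity argument a bit more explicitly than the paper does, but the underlying argument is the same.
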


\begin{proof} 
Recall from Notation \ref{u_without_verts} that $u_{H,P}$
denotes the sum over $P$ of the Green's functions $G_{B(p)}(x,p)$. 
Also recall that we defined 
$\mu_m^B = \omega^B_m - \sum_{p\in P_B} \delta_p$; 
this is the discrete measure with mass
$-1$ at each $p\in P_B$, 
and mass $|P_B|/m$ at each point $\zeta^B_j$ which lies on $\partial B$.
A computation using this definition shows that 
\begin{eqnarray*} %\label{firstline} 
u_m(z)
& =&
\sum_B\left(  \sum_{p\in P_B}\log\left|\frac{1}{z-p}\right| + 
\sum_{j=1}^m \frac{|P_B|}{m} \log\left|z-\zeta^B_j\right|  \right) \\
&=& 
\sum_B\left(  \int_{\Chat} \log|z-\zeta|d\mu_{m}^B(\zeta) \right).
\end{eqnarray*} 
Our remarks following   Definition \ref{zetapointsdefn} imply 
the measures $(\omega_m^B)_{m=1}^\infty$ converge weak-$\ast$ 
to the measure $\mu_B$ for each $B$. This fact 
	and  Equation (\ref{eqtn_spelled_out}) imply the theorem. 
\end{proof}

\begin{proof}[Proof of Theorem \ref{main_thm_2} for lemniscate 
	graphs with no vertices]
Theorem \ref{eps homeo curves} says that for any $\varepsilon >0$,  $H$ 
is $\varepsilon$-homeomorphic to a union of analytic closed 
curves, so without loss of generality, we may assume $H$ has this form. 
%This reduction is not strictly necessary for the argument that 
%follows, but it does simplify a few steps. For example, 
Under this assumption, the function  $u= u_{H,p}$ 
has an analytic extension across the boundary of each 
face, and these extensions have non-zero gradients 
on the boundary, since  $u$ is a sum of Green's functions,
each of which have positive inward pointing normal 
derivative. Hence the gradient of $u$  is non-zero 
in a neighborhood of the boundary of each face.
(But note that the analytic extension of $u$ across 
the boundary of a face does not equal $u$ on the 
adjacent face; $u$ is always 
non-negative, but the analytic extension  
becomes negative when we cross the boundary.) 

Thus for $c>0$ small enough, 
the level set $H_c= \{ z: u(z)   =c\}$ has $n$ components that 
are Jordan curves approximating the $n$ components of $H$.
For such $c$, there is a homeomorphism from  $H_c$ to $H$
given by the steepest descent curves 
of $u$ (i.e., following the vector field $ -\nabla u$),
and we may assume  maximum  diameter of these
connecting curves is as small as we wish, by taking
$c$ small enough. Thus 
by Corollary \ref{small move graphs}, the level set $H_c$  and $H$ are 
$\varepsilon$-homeomorphic if $c>0$ is small enough.  

For $ 0 < s< t$ let 
	$H_{s,t}= \{ z: s \leq u_{H,P}(z)  \leq t \}$.
As $m \nearrow  \infty$, the functions $u_m$ uniformly 
approximate $u$ on  the compact set $H_{c/4,4c}$. 
Hence for $m$ large enough, the level set
$H^m_c = \{z: u_m (z) =c\}$ lies inside $H_{c/2,2c}$. 
Suppose $\delta$ is the distance from $H_{c/2,2c}$ to  
the complement of $H_{c/4,4c}$. 
Since the functions $u_m$  are harmonic,  the uniform 
convergence of $u_m \to u$ on a $\delta$-neighborhood 
of $H_{c/2,2c}$ implies 
the gradients of $u_m$ converge uniformly to the gradient of 
$u$ on $H_{c/2,2c}$. Hence for large enough $m$, we have  
$$  \sup |\nabla u_m  - \nabla u| \leq 
 \frac 12 \inf |\nabla u|,$$
where both the supremum and infimum are take over 
the set $H_{c/2,2c}$.   This
inequality implies $\nabla u_m$ is never zero
and is never perpendicular to $\nabla u$ anywhere on $H_{c/2,2c}$. 
Thus following the gradient line of 
$u$   through a point $z \in H_c$ will reach a unique 
point of $H^m_{c} $ before leaving $H_{c/2,2c}$. 
This defines a homeomorphism $H_c \to H_c^m$.
Hence by Corollary \ref{small move graphs},
$H_c$ and $H_c^m$ are $\varepsilon$-homeomorphic 
if $m$ is large enough, and hence $H^m_c$ is 
$2\varepsilon$-homeomorphic to $H$. 
\end{proof}

% ---------------------------------------------
%\section{Approximation With Approximate Pole Placement}
\section{Graphs With Vertices: Approximate Pole Placement}
\label{graphs_with_verts_sec}

In this section, we will 
show that any lemniscate graph is $\varepsilon$-homeomorphic 
to a rational lemniscate whose poles can be prescribed with 
error at most $\varepsilon$. 
The fixed point argument that proves we can place the 
poles exactly is given in the next section.

The proof for lemniscate graphs with no vertices (given in 
the previous section) 
is easier than the general case because we 
were free to choose any small enough value $c>0$ to define 
a level line. All small enough  choices 
gave level sets whose components  were Jordan curves, and thus they
automatically have the same topology as the components of $H$.

In the general case,  the union of disjoint curves $H$ is 
replaced by a lemniscate graph $G$ that can have vertices,
and to mimic this graph with level 
sets of  a harmonic function $u$ requires the level sets to run through 
critical values of $u$, of which there are only finitely many. Moreover, 
we need to use the same critical value in every  face. 
Thus only very particular values of $c$ will work, and only after 
we have made delicate adjustments to the shape of the lemniscate $G$.
The description of these adjustments is delayed to 
%Appendix \ref{harmonic_app_appendix}. 
Section \ref{harmonic_app_appendix}, 
 but we now state the needed result and finish the 
proof of Theorem \ref{main_thm_2} using it. 
Recall   from Notation \ref{u_without_verts} that 
the function  $u_{H, P}$ is a sum of Green's functions 
with  poles in the set $P$ inside the
grey faces (with possibly more than one pole per face).

\begin{thm}\label{whatweneedfromappendix} 
Let $G$ be a lemniscate graph, and fix $\varepsilon>0$.
Then there exists a lemniscate graph $H$ without vertices and 
$\delta>0$ so that each grey face of $G$ is contained 
in a grey face of $H$, and so that $H_\delta = u_{H, P}^{-1}(\delta)$ 
and $G$ are $\varepsilon$-homeomorphic to each other. 
\end{thm}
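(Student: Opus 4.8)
The plan is to build the graph $H$ by "fattening" each vertex of $G$ into a small Jordan curve and then re-closing the resulting arcs, so that $H$ has no vertices while enclosing $G$ in its grey faces, and then to show that a suitable level set of $u_{H,P}$ recovers the topology of $G$. By Theorem \ref{eps homeo graphs} I may assume $G$ has analytic edges meeting at equal angles at each vertex, and by Lemma \ref{small move graphs} it suffices to produce $H$ and $\delta>0$ so that $H_\delta = u_{H,P}^{-1}(\delta)$ is $\delta'$-homeomorphic to $G$ in the weak (move-points-a-little) sense, for $\delta'$ as small as we wish. The first step is purely topological: near a vertex $v$ of degree $2n$, the $2n$ edge-ends alternate between bordering a grey face and bordering a white face (this alternation is exactly the $2$-coloring condition, and it is where I expect to lean on the combinatorics of Euler graphs and the coloring). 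I replace $v$ together with a tiny disk $D(v,\rho)$ by a configuration that splits the single vertex into $n$ "non-crossings": the $2n$ arc-ends get reconnected in pairs so that each grey sector of $G$ at $v$ is joined to the opposite grey sector through a thin channel, while the white sectors are pinched off — the effect is that $G\cap D(v,\rho)$ is replaced by $n$ disjoint smooth arcs, producing a graph $H$ with no vertices whose grey faces contain the grey faces of $G$. Doing this at every vertex yields $H$, a finite union of disjoint analytic Jordan curves (after also smoothing via Theorem \ref{eps homeo curves}), and by construction each grey face of $G$ sits inside a grey face of $H$; I place $P$ with one point per grey face of $H$ (consistently with the poles $G$ will eventually need — this is why Notation \ref{u_without_verts} allows several poles per face).

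The analytic heart of the argument is then to understand the level sets $u_{H,P}^{-1}(c)$ for small $c>0$ and to show that for a carefully chosen $c=\delta$ they reconstruct the vertex structure of $G$. Away from the channels introduced at the (former) vertices, $H$ closely follows $G$ and $u_{H,P}$ behaves like the sum of Green's functions as in Section \ref{Alexs_proof}, so there $H_c$ tracks $G$ exactly as in the no-vertex case. Inside a channel near an old vertex $v$, the domain looks like a thin strip separating two grey components; the Green's-function sum $u_{H,P}$ has a unique critical point $z_v$ in that strip (a saddle, since $u$ is harmonic and positive with zero boundary values on the two sides), and the level set through $z_v$ — i.e. the set $u_{H,P}^{-1}(u_{H,P}(z_v))$ — crosses itself at $z_v$, producing exactly a degree-$2n$ vertex locally. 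The delicate point, and the main obstacle, is that this requires the critical value $u_{H,P}(z_v)$ to be the same value $\delta$ at every former vertex $v$ simultaneously: a generic configuration gives distinct saddle values at distinct vertices. I resolve this by making fine geometric adjustments to $H$ — shrinking or enlarging the channel widths, or equivalently perturbing the positions of the pinch-points and the poles in $P$ — and invoking a continuity/intermediate-value argument to tune all the saddle values of $u_{H,P}$ to a common level $\delta$; this is precisely the "delicate adjustments" alluded to before the statement, and its careful execution is deferred to Section \ref{harmonic_app_appendix}.

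Granting this tuning, the final step assembles the homeomorphism. On the complement of small neighborhoods of the vertices, the steepest-descent flow of $u_{H,P}$ gives a homeomorphism from $H_\delta$ onto $G$ exactly as in the proof of Theorem \ref{main_thm_2} for graphs without vertices (using that $\nabla u_{H,P}$ is nonvanishing there and transverse to both $H_\delta$ and $G$); near each former vertex $v$, the local normal form of the harmonic function $u_{H,P}$ at its saddle $z_v$ shows $H_\delta$ near $z_v$ is a topological "$2n$-pronged star" matching the $2n$ edge-ends of $G$ at $v$, and the two descriptions patch together into a global homeomorphism moving points by $O(\rho + \text{channel width} + c)$, which we can make smaller than any prescribed $\delta'$. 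Then Lemma \ref{small move graphs} upgrades this weak homeomorphism to an $\varepsilon$-homeomorphism of $\Chat$, completing the proof.
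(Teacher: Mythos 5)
Your high-level plan (replace each vertex of $G$ by a vertex-free configuration so the grey faces of $G$ lie inside grey faces of $H$, then argue that a suitable level set of $u_{H,P}$ reproduces the vertex structure of $G$, then upgrade via Lemma \ref{small move graphs}) matches the outline of the paper's proof, and the reduction to analytic edges at equal angles via Theorem \ref{eps homeo graphs} is also what the paper does. But the central analytic step contains a gap that the paper itself flags and spends most of Section \ref{harmonic_app_appendix} working around.

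You assert that near each former vertex $v$ of degree $2n$, the function $u_{H,P}$ has ``a unique critical point $z_v$'' and that the level set through $z_v$ ``crosses itself at $z_v$, producing exactly a degree-$2n$ vertex locally.'' For $n=2$ (a simple saddle) this is fine, and the paper explicitly remarks that the naive approach works when all vertices have degree four. But for $n>2$, what one can show is that $u_{H,P}$ has $n-1$ critical points near $v$ counted with multiplicity; there is no reason for them to coalesce into a single critical point of order $n-1$, and the paper says ``probably they do not.'' If they are distinct simple saddles, the level sets through them produce several degree-four vertices, not one degree-$2n$ vertex, so the topology of $G$ is not recovered no matter how you tune parameters. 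Your proposal gives no mechanism to force coalescence, and a generic intermediate-value/continuity argument cannot create a higher-order degeneracy out of simple critical points. Moreover, the ``careful execution'' you point to is deferred to Section \ref{harmonic_app_appendix}, which is the very section whose content you are being asked to supply, so the argument is circular at exactly its hardest step.

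What the paper actually does to close this gap is qualitatively different from tuning $u_{H,P}$ directly: it first equalizes Green's-function values at ``base points'' around each vertex via the simplex lemma (Corollary \ref{main_top_thm}); it then constructs an explicit model harmonic function $w_v = c_v\,\re(f_d(\lambda_v(z-v)/\delta_v))$ (with $f_d$ a concrete conformal map of $\Omega_d$ to a half-plane) that has by design a single critical point of order $d-1$ at $v$; it glues $w_v$ to $u_{G,P}$ on the annuli $A(v,\eta)$ via a partition of unity, producing a function that is ``nearly harmonic'' but not harmonic; it restores harmonicity by post-composing with a quasiconformal map from the measurable Riemann mapping theorem, which is possible because the dilatation of the glued function is $O(\varepsilon)$; and finally it runs a Brouwer fixed-point argument in $\complex^n$ to move the resulting poles exactly onto $P$. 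None of the partition-of-unity merge, the $w_v$ construction, the MRMT step, nor the fixed-point pole placement appear in your proposal, and they are precisely what makes the degree-$>4$ case go through. Your write-up is an accurate sketch of what would suffice for $4$-regular graphs, but it does not prove the theorem in general.
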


See Figure \ref{from_G_to_H}. 
%In this figure, we show $H$ as disjoint 
%from $G$ for clarity, but in our construction 
%Appendix \ref{harmonic_app_appendix}. 
%we only need to modify $G$ near its vertices, so $H$ and $G$ 
%will overlap, except near the vertices of $G$.
%(see Appendix \ref{harmonic_app_appendix} for details).
In Section \ref{harmonic_app_appendix}, $H$ is constructed 
by modifying $G$ in a small neighborhood of each vertex, and 
then taking a quasiconformal image of the result.
Assuming Theorem \ref{whatweneedfromappendix} for now, 
we continue with  the proof of Theorem \ref{main_thm_2}.

\begin{figure}[htb]
\centerline{
\includegraphics[height=1.1in]{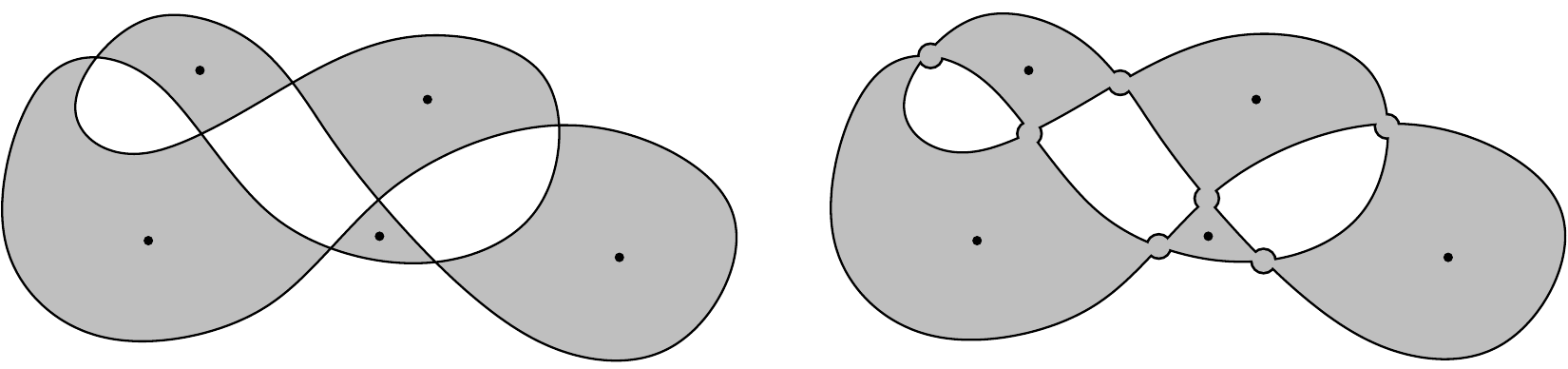}
}
\caption{On the left is a 2-colored lemniscate  graph $G$
and on the right is a vertex-free graph $H$ that approximates $G$. 
Note that the grey face of $H$ contains every grey face of $G$. Here
the grey face of $H$  equals the grey faces of $G$, together with 
disks centered at the vertices of $G$. The actual construction 
involves other steps, and is explained in 
Section \ref{harmonic_app_appendix}. 
}
\label{from_G_to_H}
\end{figure}

\begin{notation}\label{fixingugp}
Throughout the remainder of this section, we will fix $\varepsilon>0$ 
and a lemniscate graph $G$ (perhaps with vertices).
We also fix a $2$-coloring 
of the faces of $G$, and a point $p_B$ in each grey face $B$ of $\Chat\setminus G$. 
Set $P:=\cup_B p_B$. 
%\begin{definition}\label{fixingugp} 
Apply  Theorem \ref{whatweneedfromappendix}
to $G$ and  $\varepsilon$ to obtain $\delta>0$, $H$ and $H_\delta$. 
Set $u:=u_{H, P}$. Since $H$ has no vertices, 
the results and definitions of Section \ref{Alexs_proof} apply 
%verbatim 
to $u$.
%Let $H_{\delta}:=u^{-1}(\delta)$, so that $H_{\delta}$ and $G$ 
%are $\varepsilon$-homeomorphic by Theorem \ref{whatweneedfromappendix}.
Note  that several distinct grey faces of $G$ may be contained in a
single grey of $H$, so there may be several points in $P$ contained in a
single grey face of $H$.
%\begin{definition}\label{X_notation}
Let $X$ denote the set of vertices of $H_\delta$. For each $x\in X$ we
denote by $D_x:=D(x,r_x)$ a Euclidean disc centered at $x$ of sufficiently
small radius $r_x>0$ so that $D(x,2r_x)\cap (H\cup P)=\emptyset$, and 
the collection $\{D(x,2r_x)\}_{x\in X}$ are pairwise disjoint. 
\end{notation}

Let $\{r_m\}_{m=1}^\infty$ be as in 
Definition \ref{defnsofr_mu_m}, and let 
$(u_m)_{m=1}^\infty = (\frac 1m \log |r_m|)_{m=1}^\infty$. 
By Theorem \ref{conv_thm}, $u_m \to u$
uniformly on compact subsets of $\Chat\setminus (H\cup P)$. 

\begin{prop}\label{exist_of_limiting_map}
For each $m\in\mathbb{N}$ and $x\in X$, there exists a branch of the
logarithm $\log(r_m)$ in $D_x$ so that $ h_m :=(1/m)\log(r_m)$ converges 
uniformly to an analytic  function $h$ in $D_x$, as $m\rightarrow\infty$. 
\end{prop}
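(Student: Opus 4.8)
The plan is to trade the multi-valuedness of $\log(r_m)$ on $D_x$ for a convergence statement about a single-valued quantity, namely the derivative $(r_m'/r_m)/m$, and then recover $h$ by choosing primitives consistently. First I would observe that since $D(x,2r_x)\cap(H\cup P)=\emptyset$, the rational function $r_m$ has no zeros or poles in a neighborhood of $\overline{D_x}$: its zeros lie on $H$ (they are among the $\zeta_j^B\in\partial B\subset H$) and its poles lie in $P$. Hence $r_m'/r_m$ is holomorphic on a neighborhood of $\overline{D_x}$, and $u_m=\frac1m\log|r_m|$ is harmonic there. From $u_m\to u$ uniformly on compact subsets of $\Chat\setminus(H\cup P)$ (Theorem \ref{conv_thm}), and since $u$ itself is harmonic near $\overline{D_x}$ (it is a finite sum of Green's functions, each harmonic off its pole and off $H$), the standard interior estimates for harmonic functions give $\nabla u_m\to\nabla u$ uniformly on $\overline{D_x}$. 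In complex notation, $\partial u_m=\tfrac12 (r_m'/r_m)/m$, so this says $(1/m)(r_m'/r_m)$ converges uniformly on $\overline{D_x}$ to $2\,\partial u=:H'$, an analytic function on $D_x$ (indeed on a neighborhood).

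Next I would fix the branch. Pick any base point $z_0\in D_x$ and, for each $m$, pick a value of $\log r_m(z_0)$; then define $h_m(z):=\frac1m\log r_m(z_0)+\frac1m\int_{z_0}^z \frac{r_m'(\zeta)}{r_m(\zeta)}\,d\zeta$, the integral taken along any path in the simply connected disc $D_x$ (well-defined and independent of path since $r_m'/r_m$ is holomorphic there, so $h_m$ is a genuine analytic branch of $\frac1m\log r_m$). By construction $h_m' = \frac1m\, r_m'/r_m \to H'$ uniformly on $\overline{D_x}$, so by integrating along radial segments from $z_0$ the functions $h_m(z)-\frac1m\log r_m(z_0)$ converge uniformly on $D_x$. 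It remains to pin down the additive constants: $\mathrm{Re}\,h_m = \frac1m\log|r_m| = u_m \to u$ uniformly on $\overline{D_x}$, so $\mathrm{Re}\,\frac1m\log r_m(z_0) = u_m(z_0)\to u(z_0)$ converges. Choosing the imaginary part (i.e. the argument) of $\frac1m\log r_m(z_0)$ to lie in $(-\pi,\pi]$ for definiteness, one may not immediately get convergence of that constant — but the convergence of $h_m'$ plus the convergence of $\mathrm{Re}\,h_m$ already forces $h_m$ itself to converge: writing $h_m = u_m + i v_m$ with $v_m$ a harmonic conjugate of $u_m$ on $D_x$, the $v_m$ are determined up to an additive real constant, and that constant can be fixed (say by requiring $v_m(z_0)=0$, i.e. replacing the chosen branch), after which $v_m\to v$ uniformly where $v$ is the conjugate of $u$ normalized the same way, because $\nabla v_m = (-\partial_y u_m,\partial_x u_m)\to\nabla v$ uniformly and $v_m(z_0)=v(z_0)=0$. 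Then $h:=u+iv$ is the desired analytic limit, and the branch of $\log r_m$ to use is $m h_m$.

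The only genuinely delicate point is the \emph{consistency/normalization of the branch}: a priori $\frac1m\log r_m$ is defined only modulo $2\pi i/m$, and one must make a coherent choice of the additive constant for each $m$ so that the $h_m$ converge, rather than merely being uniformly close to $h$ up to an $m$-dependent constant. As explained above, the resolution is that we are free to \emph{define} the branch by fixing the value of the imaginary part at $z_0$ (this is allowed: any translate by an integer multiple of $2\pi i/m$ of a valid branch is again a valid branch of $\log r_m$), and once that is done the uniform convergence of the real parts ($u_m\to u$) and of the derivatives ($\frac1m r_m'/r_m\to H'$) yields uniform convergence of $h_m$ on $D_x$ with the explicit analytic limit $h=u+iv$. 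I do not expect any obstacle beyond bookkeeping: all the analytic input — holomorphy of $r_m'/r_m$ near $\overline{D_x}$, harmonicity of $u$ there, and interior gradient estimates promoting $C^0$ convergence of harmonic functions to $C^1$ convergence — is standard and already available from Theorem \ref{conv_thm} together with the disjointness condition $D(x,2r_x)\cap(H\cup P)=\emptyset$ built into Notation \ref{fixingugp}.
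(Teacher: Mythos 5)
Your approach is sound in its main lines and is actually close in spirit to the paper's: both arguments hinge on the observation that once the real parts $u_m$ converge uniformly and one controls a single additive constant, the Cauchy/interior estimates upgrade this to full uniform convergence of $h_m$. The difference is in how the branch constant is pinned down. The paper fixes the branch \emph{explicitly} by writing $\arg r_m(z)=\sum_{B,p,j}\arg\bigl((z-\zeta_j^B)/(z-p)\bigr)$ factorwise, recognizes this as $\sum_B\int\arg(z-\zeta)\,d\mu_m^B(\zeta)$, and uses weak-$\ast$ convergence $\mu_m^B\to\mu_B$ to get convergence of $\mathrm{Im}\,h_m$ at the single point $x$; you instead try to normalize the constant at a base point. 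Your route would give an equally clean proof, and even sidesteps the need to re-derive the weak-$\ast$ argument for $\arg$ that was already done for $\log|\cdot|$.

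However, there is a small but real slip in the normalization step. You write that, since branches of $\frac1m\log r_m$ differ by integer multiples of $2\pi i/m$, one may ``fix the value of the imaginary part at $z_0$'' and in particular take $v_m(z_0)=0$. That is not achievable: $v_m(z_0)$ is constrained to a coset of $(2\pi/m)\mathbb Z$, so you can choose it to lie in, say, $(-\pi/m,\pi/m]$, but not to equal $0$ unless $r_m(z_0)$ happens to be a positive real. (You are implicitly conflating the free real constant in a harmonic conjugate of $u_m$ with the quantized $2\pi i k/m$ freedom in a branch of $\frac1m\log r_m$; setting $v_m(z_0)=0$ in the former sense generally produces a function that is not a branch of $\frac1m\log r_m$.) The fix is exactly the normalization you proposed first and then abandoned: take the principal branch value $\arg r_m(z_0)\in(-\pi,\pi]$, so that $\mathrm{Im}\,h_m(z_0)=\frac1m\arg r_m(z_0)\in(-\pi/m,\pi/m]\to 0$. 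Contrary to your worry, this constant \emph{does} converge, and then $\nabla v_m\to\nabla v$ plus $v_m(z_0)\to 0$ gives $v_m\to v$ (with $v$ normalized by $v(z_0)=0$), closing the argument. So the gap is genuine as written but is a one-line repair.
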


\begin{proof} 
Given a sequence of holomorphic functions 
$ f_n= u_n +i v_n$, it is a general fact that 
that if the real parts $u_n$ converge uniformly on a 
closed disk, then so do the imaginary parts $v_n$, assuming they 
converge at  the center of the disk. This holds because the 
partial derivatives  of $u_n$ converge by the Cauchy estimates, 
and hence so do the partials of $v_n$ by the Cauchy-Riemann 
equations. Thus to prove the proposition, 
it is enough to verify that $\{v_n\}$ converges at $x$.

For large $m$, the map $r_m$ does not have any zeros in $D_x$ since 
for $z \in D_x$, 
\begin{equation*}
	(1/m)\log|r_m(z)|=u_m(z)\rightarrow u(z)>0.
\end{equation*}
Thus for $m$ large enough, a branch of $\log(r_m)$ exists in $D_x$.
Next, for $z\in D_x$  we have 
\begin{align}\label{arg_comp} 
\textrm{Im}\left(\frac{1}{m}\log(r_m(z))\right)=\frac{1}{m}\arg(r_m(z))
 = \sum_{B}\sum_{p\in P_B}\sum_{j=1}^m\frac{1}{m}\arg\left(\frac{z-\zeta_j^B}{z-p}\right),
\end{align}
\noindent
where the first sum $\sum_B$ is over all grey components 
$B$ of $\Chat\setminus H$.
As in the proof of Theorem \ref{conv_thm}, consider the discrete measure
$\mu_m^B$ defined by having mass $-1$ at each $p\in P_B$, and mass $|P_B|/m$ 
at each of the points $(\zeta^B_j)_{j=1}^m$. Thus, the right-hand side 
of (\ref{arg_comp}) can be rewritten as 
\begin{equation} \sum_B\int_{\Chat} \arg\left(z-\zeta\right)d\mu_m^B(\zeta).
\end{equation}
Let  $\mu_B= \sum_{p\in P_B}\left(\omega_p -\delta_{p}\right)$ 
be the measure  from Notation \ref{mu_B_notation}, and recall 
the points $\zeta_j^B$ were chosen in Definition \ref{zetapointsdefn}
so that $\mu_m^B \to \mu_B$
weak-$\ast$. Thus  
\begin{equation}\label{arg_comp3}
\sum_B\int_{\Chat} \arg\left(z-\zeta\right)d\mu_m^B(\zeta) 
%\xrightarrow{m\rightarrow\infty}
\to \sum_B\int_{\Chat} \arg(z-\zeta)d\mu_B(\zeta).
\end{equation}
Combining (\ref{arg_comp})-(\ref{arg_comp3}), we see that the imaginary
part of $(1/m)\log(r_m)$ converges uniformly in $D_x$, 
as desired. 
\end{proof}

%\begin{notation} 
For each $x\in X$ and large $m\in\mathbb{N}$, we define a set $X_m(x)$ 
as follows. Note that since $x\in X$ is a vertex of $H_{\delta}$, it 
follows that $x$ is a critical point of $u$ of degree $\textrm{deg}(x)/2-1$;
here we are using $\textrm{deg}(x)$ to denote the degree of $x$ as a 
vertex of $H_{\delta}$ (see Definition \ref{embeddedgraph}). So, for 
instance, degree $4$ vertices are simple ($=$ degree $1$) critical points
of $u$. Since $(u_m)_{m=1}^\infty$ converges to $u$ uniformly on compact
subsets of $\Chat\setminus (H\cup P)$, we have that for each $x\in X$, 
there is a set $X_m(x)$ consisting of $\textrm{deg}(x)/2-1$
many critical points (counted with  multiplicity) of $u_m$, so that each
$x_m\in X_m(x)$ converges to $x$ as $m\rightarrow\infty$. 
%\end{notation}

%\begin{notation}
	\begin{definition}
We set $h_m:=(1/m)\log(r_m)$ and $h:=\lim_{m\rightarrow\infty}h_m$ in 
$D_x$ (see Proposition \ref{exist_of_limiting_map}). 
	As usual, let $D(h(x),s) $
denote the Euclidean disc centered at $h(x)$ of radius $s>0$.
Fix $s$ sufficiently small so that for all sufficiently large $m$, 
\begin{enumerate}
\item $D(h(x),4s) \subset h(D_x)$, 
\item $h_m^{-1}(D(h(x),2s)) \cap D_x$ 
	is a Jordan domain containing $X_m(x)$,
	and 
%\item $h_m^{-1}(D(h(x),2s))\subset D_x$, and 
\item $h_m: h_m^{-1}(D(h(x),2s)\cap D_x) 
	\rightarrow D(h(x),2s)$ is a proper map.
\end{enumerate}
By taking $s$ smaller, if necessary, we can ensure
(1)-(3) also hold  when $h$ replaces $h_m$ (and $X$ replacing $X_m(x)$
in (1)). For large $m$, let $A^x_m$ be the topological annulus with outer
boundary $h_m^{-1}(\partial D(h(x), 2s))$ 
and inner boundary $h^{-1}(\partial D(h(x),s))$. 
Note $A^x_m \subset D_x$, and 
that there will be corresponding annuli around each point of $X$.
\end{definition} 
%\end{notation}

We have used $h$ to define the inner boundary of $A^x_m$ and
have used $h_m$ to define its outer boundary.
Our next proposition will show that  $A^x_m$  is
``very close''  to the annulus obtained when we use  $h$ to define 
both boundaries. The measure of closeness is given by quasiconformal maps. 
For the definitions and basic
results about quasiconformal mappings see.e.g., \cite{Ahlfors-QCbook} or 
\cite{MR344463}. 
The dilatation of a quasiregular 
map $g$ is $\mu_g = g_{\overline{z}}/g_z$ and 
we set  $k = \| \mu_f\|_\infty <1$. 
The quasiconformal constant of $g$ is  $K= K(g)  = (k+1)/(k-1) \geq 1$.
Geometrically, $|\mu_g(z)|$ bounds the eccentricity of the ellipse that 
is the image of a circle centered at zero 
under the tangent map of $g$, at almost every point $z$.
When $K(g)$ is close to $1$, then $g$ is close to holomorphic. 

\begin{prop}\label{defnofpsi_m} 
For all large $m$, there exists a quasiconformal mapping 
\[ \psi_m: A^x_m \rightarrow h^{-1}(\{z : s<|z-h(x)|<2s\})\]
so that $\psi_m(z)=z$ on the inner boundary of
$A^x_m$, and $h\circ\psi_m(z)=h_m(z)$ 
on the outer boundary of $A^x_m$.
Moreover, $K(\psi_m) \to 1$ as $m \nearrow \infty$. 
\end{prop}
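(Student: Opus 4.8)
The plan is to build $\psi_m$ by interpolating, in suitable conformal coordinates, between the identity on the inner boundary and the boundary correspondence forced by $h_m$ on the outer boundary, and then to quantify how close this interpolation is to conformal in terms of how close $h_m$ is to $h$. First I would pass to the model annulus: since $h$ is conformal on $D_x$ (it is a nonconstant holomorphic limit of the $h_m$, and $h'$ is nonzero near $x$ once $s$ is small — or one works on $D_x \setminus \{x\}$ if $x$ is a critical point and instead uses $h^{1/k}$ coordinates to uniformize, but for the annulus $A^x_m$, which avoids $x$, $h$ is a conformal bijection onto $\{s < |w - h(x)| < 2s\}$ after shrinking), the map $w = h(z)$ carries the inner boundary $h^{-1}(\partial D(h(x),s))$ to the round circle $|w-h(x)| = s$ and carries the outer boundary $h_m^{-1}(\partial D(h(x),2s))$ to the curve $\Gamma_m := h(h_m^{-1}(\partial D(h(x),2s)))$, which is a Jordan curve close to $|w-h(x)| = 2s$ since $h_m \to h$ uniformly. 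So it suffices to construct a quasiconformal map $\Psi_m$ of the region between $|w - h(x)| = s$ and $\Gamma_m$ onto the genuine annulus $\{s < |w-h(x)| < 2s\}$, equal to the identity on $|w-h(x)|=s$, with boundary values $h \circ h_m^{-1}$ on $\Gamma_m$, and with $K(\Psi_m) \to 1$; then $\psi_m := h^{-1} \circ \Psi_m \circ h$ is the desired map, since $h \circ \psi_m = \Psi_m \circ h = h_m$ on the outer boundary by construction, and $\psi_m = \mathrm{id}$ on the inner boundary.

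Next I would construct $\Psi_m$ explicitly. Normalize so $h(x) = 0$. Because $h_m \to h$ uniformly and $h$ is bi-Lipschitz on a neighborhood of the relevant compact set, the curve $\Gamma_m = h \circ h_m^{-1}(\{|w|=2s\})$ is a Jordan curve lying in a thin annular neighborhood $\{2s - \eta_m < |w| < 2s + \eta_m\}$ with $\eta_m \to 0$, and moreover the radial parametrization map $e^{i\theta} \mapsto$ (the point of $\Gamma_m$ at argument $\theta$) is well defined and $C^1$-close to $2s\, e^{i\theta}$ (this uses that $h_m^{-1}$ is $C^1$-close to $h^{-1}$ on the relevant compact set, which follows from uniform convergence of holomorphic functions plus Cauchy estimates, exactly as in Proposition~\ref{exist_of_limiting_map}). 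Define $\Psi_m$ on $\{s \le |w| \le 2s\} \to \{$region bounded by $|w|=s$ and $\Gamma_m\}$ first as a map interpolating linearly in $\log|w|$ between the identity on $|w|=s$ and the radial parametrization of $\Gamma_m$ on $|w|=2s$ (i.e. in logarithmic coordinates $w = e^{\zeta}$ it sends horizontal segments to horizontal segments, adjusting the argument and modulus affinely), then take $\Psi_m$ to be its inverse with the prescribed boundary behavior, and finally post-compose on the outer boundary with the correction $h \circ h_m^{-1}$; since $h \circ h_m^{-1}$ is itself $C^1$-close to the identity on $|w|=2s$, this correction can be absorbed into the same type of logarithmic interpolation. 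A direct computation of the Beltrami coefficient of such a "shear + radial stretch" interpolation shows $\|\mu_{\Psi_m}\|_\infty$ is controlled by the $C^1$-distance of the boundary data from the identity (more precisely by $\eta_m/s$ together with the size of the derivative of the boundary parametrization minus $2s$), hence tends to $0$, so $K(\Psi_m) \to 1$.

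The main obstacle I anticipate is not the existence of the interpolation but getting the quantitative estimate $K(\psi_m) \to 1$ cleanly: one must verify that the boundary correspondences on the outer circle, namely $h \circ h_m^{-1}$ restricted to $|w| = 2s$, converge to the identity \emph{in $C^1$}, not merely $C^0$ — otherwise a $C^0$-small but $C^1$-wild boundary map could force large dilatation. This is handled by the standard fact that uniform convergence $h_m \to h$ of holomorphic functions on $D_x$ forces locally uniform convergence of all derivatives (Cauchy's integral formula for derivatives), so on the compact subset of $D_x$ where the annuli $A^x_m$ live one gets $h_m \to h$ in $C^\infty$, hence $h_m^{-1} \to h^{-1}$ in $C^1$ on a neighborhood of $\{|w|=2s\}$ (inverse function theorem, with uniformly bounded-below derivatives since $h' \neq 0$ there), hence $h \circ h_m^{-1} \to \mathrm{id}$ in $C^1$. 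Granting this, the Beltrami-coefficient bound for the explicit interpolation is a routine computation, and letting $m \to \infty$ gives $K(\psi_m) \to 1$. One should also note that the inner boundary of $A^x_m$ is $h^{-1}(\partial D(h(x),s))$ for \emph{every} $m$ — it does not move — so the requirement $\psi_m = \mathrm{id}$ there is automatically compatible across $m$, and the only $m$-dependence is in the outer boundary, which is exactly where the smallness is extracted.
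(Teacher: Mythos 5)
There is a genuine gap at the very first step: you treat $h$ (or its restriction to a region containing $A^x_m$) as a conformal \emph{bijection} onto the round annulus $\{s<|w-h(x)|<2s\}$, and you define $\psi_m = h^{-1}\circ\Psi_m\circ h$ and boundary data $h\circ h_m^{-1}$ as if $h^{-1}$ were single-valued. But by construction $x$ is a vertex of $H_\delta$, hence a critical point of $u=\re(h)$ of order $\deg(x)/2-1\geq 1$; so $h'(x)=0$ and $h$ behaves like $z\mapsto h(x)+c(z-x)^k$ with $k=\deg(x)/2\geq 2$. Therefore $h$ restricted to $h^{-1}(\{s<|w-h(x)|<2s\})$ is a $k$-to-one unbranched covering of the round annulus, never a bijection; shrinking $s$ does not help, and deleting the single point $x$ from $D_x$ does not restore injectivity on the annulus. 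Your parenthetical about ``$h^{1/k}$ coordinates'' points in the right direction, but you then drop it and assert injectivity anyway, so $h^{-1}\circ\Psi_m\circ h$ and $h\circ h_m^{-1}$ are not well-defined as you use them. (The $C^1$-convergence issue you flag as the ``main obstacle'' is real but secondary, and your Cauchy-estimate treatment of it is fine.)

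The paper's proof addresses exactly this point: it notes that ``$h$ is not injective near $x$, so $h^{-1}\circ h_m$ is not a well-defined mapping,'' and fixes it by lifting $h$ and $h_m$ through the exponential covering of the round annulus onto $2\pi i$-periodic vertical strips, where the lifts $\widehat h, \widehat h_m$ are genuine homeomorphisms and $\widehat h^{-1}\circ\widehat h_m$ makes sense. The interpolation (which your ``linear in $\log|w|$'' shear essentially reproduces) is then performed on the strip and pushed back down; periodicity of the interpolant is needed so that it descends. To repair your argument you would have to carry out the lift (or equivalently the $h^{1/k}$-uniformization) consistently: choose compatible branches for the lifts so that $\widehat h^{-1}\circ\widehat h_m\to\mathrm{id}$, construct a periodic quasiconformal interpolant on the strip, and only then deduce the dilatation bound $K(\psi_m)\to1$.
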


\begin{proof} One would like to define $\psi_m:=h^{-1}\circ h_m$ on the outer boundary of $A^x_m$, but since $h$ is not injective near $x$, the expression $h^{-1}\circ h_m$ is not a well-defined mapping. We will address this issue by lifting the relevant maps to vertical strips in what follows.

Consider the sets $A^x_m$, 
\[ U:=h^{-1}(\{z : s<|z-h(x)|<2s\}), \,\]
and 
\[ A(s,2s):=\{z : s<|z-h(x)|<2s\}.\]
The sets $U$ and $A(s,2s)$ are topological annuli, and $A^x_m$ 
is also a topological annulus for large enough $m$. 
Without loss of generality  we may assume that $x=h(x)=0$. 
Then the exponential map is a covering from the vertical strip 
 \[  H:=\{z :  \log(s) < \textrm{Re}(z) < \log(2s)\}\]
onto the round annulus $A(s, 2s)$.
Similarly, there are ``curved'', but $2\pi i$-periodic vertical 
strips  $H_A$ and $H_U$ so that $\exp$ is a covering map  
	from these domains to  $A^x_m$ and  $U$, respectively.
Although the boundary  components of $H_A$ and  $H_U$ need not be straight
lines, the left boundary component of $H_A$ coincides with 
the left boundary component of $H_U$ (since the inner boundary of $A^x_m$
coincides with the inner boundary of $U$). 

Let $R_A$ denote the right boundary component of $H_A$.
	By the lifting property, 
%\[ h_m: h_m^{-1}(\{z : |z-h(x)|=2s\}) \rightarrow \{z : |z-h(x)|=2s\} \] 
	\[ h_m: h_m^{-1}(\partial D(h(x),2s)) 
	\rightarrow  \partial D(h(x),2s)  \] 
	lifts to a periodic homeomorphism 
\[ \widehat{h}_m: R_A \rightarrow \{z : \textrm{Re}(z)=\log(2s)\},\] and 
$h: U \rightarrow A(s,2s)$ lifts to a  periodic homeomorphism 
$\widehat{h}: H_U \rightarrow H$. 
	Since $h_m\rightarrow h$ as $m\rightarrow\infty$,
the lifts may be chosen so that the map $\widehat{h}^{-1}\circ\widehat{h}_m$ 
(defined on $R_A$) converges to the identity as $m\rightarrow\infty$. 
Thus there exists a periodic quasiregular interpolation 
$f_m: H_A\rightarrow H_U$ so that $f_m$ interpolates between 
$\widehat{h}^{-1}\circ\widehat{h}_m$ on $R_A$ and the identity on the left
boundary component of $H_A$, and satisfies $K(f_m)\rightarrow1$ as
$m\rightarrow\infty$. The map
\begin{equation} 
\psi_m:=\exp \circ f_m\circ\log: A^x_m \rightarrow U
\end{equation}
is well-defined and satisfies the conclusions of the proposition.
\end{proof}

\begin{rem} \label{cts dependence} 
In Section \ref{proof_of_main_thm}, we will need a little extra information 
about $\psi_m$. In the construction above, we can take the 
quasiregular interpolating map $f_m$ to be smooth and to equal 
the identity in a neighborhood of the left boundary of $H_A$, 
and to equal the analytic map 
$\widehat{h}^{-1}\circ\widehat{h}_m$ on the right boundary,  $R_A$.
This implies that the dilatation of $\psi_m$ is continuous in $A^x_m$ and 
vanishes near the boundary of $A^x_m$, and thus the dilatation extends to 
be uniformly continuous on the whole plane.
In Section \ref{proof_of_main_thm}, we will 
replace the holomorphic function $h$
by a parameterized family of holomorphic  functions $h^{\myvec{q}}$ 
that depend analytically on a vector $\myvec{q}$ inside 
some open set in $\complex^n$ ($n$ is the number of poles in $P$). The annulus
$A^x_m$ is replaced by a family of annuli $A^{x,\myvec{q}}_m$ that 
move analytically with $q$, and the map $\psi_m$ becomes 
a parameterized family $\psi^{\myvec{q}}_m$. 
In the proof of Theorem \ref{exist_of_fixedpoint}, we will use the fact
that the uniform continuity of the dilatation of 
$\psi_m^{\myvec{q}}$ implies 
that the dilatations of these maps move continuously in   
the supremum norm metric as functions of $\myvec{q}$, i.e., 
that $\myvec{q} \to \mu_{\psi^{\myvec{q}}_m}$ is a 
continuous map from a neighborhood in $\complex^n$ into 
the unit ball of $L^\infty(\complex)$. 
\end{rem}

\begin{definition}\label{g_m_defn}
We define a map $g_m: \Chat\rightarrow\Chat$ as follows.
\begin{equation}\label{g_m_formula} 
g_m(z):= \begin{cases} 
r_m(z) & \textrm{for } z
\textrm{ in the unbounded component of } \Chat\setminus\cup_x A^x_m,\\
	\exp( m\cdot h(z)) & 
\textrm{for } z\textrm{ in a bounded component of } \Chat\setminus\cup_x A^x_m, \\	 
	\exp ( m \cdot h(\psi_m(z))) & \textrm{for } z \in A^x_m. \\	 
  \end{cases} 
\end{equation}
\end{definition}

\begin{prop}\label{props_of_gm}
The map $g_m: \Chat\rightarrow\Chat$ is quasiregular, satisfies 
$K(g_m) \to 1$ as $m \nearrow \infty$, and has a degree
$\textrm{deg}(x)/2-1$ critical point at each $x \in X$ satisfying 
$|g_m(x)|=e^{m\delta}$. % over all $x\in X$.
\end{prop}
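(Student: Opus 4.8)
The plan is to verify the three assertions of Proposition \ref{props_of_gm} one at a time, working from the piecewise definition \eqref{g_m_formula} of $g_m$.

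\emph{Quasiregularity.} First I would check that $g_m$ is well-defined and continuous across the boundaries of the annuli $A^x_m$. On the outer boundary of $A^x_m$ we have $\psi_m(z)=h_m(z)$ composed appropriately — more precisely, $h\circ\psi_m(z)=h_m(z)$ by Proposition \ref{defnofpsi_m}, so $\exp(m\cdot h(\psi_m(z)))=\exp(m\cdot h_m(z))=\exp(\log r_m(z))=r_m(z)$ (for the correct branch of $\log$, which exists near $D_x$ by Proposition \ref{exist_of_limiting_map}); thus $g_m$ matches the unbounded-component formula there. On the inner boundary $\psi_m(z)=z$, so $\exp(m\cdot h(\psi_m(z)))=\exp(m\cdot h(z))$, matching the bounded-component formula. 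Hence $g_m$ is continuous. On the unbounded component $g_m=r_m$ is rational, hence holomorphic; on each bounded component $g_m=\exp(m\cdot h)$ is holomorphic (here $h$ is the analytic limit from Proposition \ref{exist_of_limiting_map}, defined on $D_x\supset$ the bounded components near $x$); on $A^x_m$, $g_m=\exp(m\cdot h)\circ\psi_m$ is a composition of a holomorphic map with the quasiconformal map $\psi_m$, hence quasiregular with the same dilatation as $\psi_m$. A continuous map that is quasiregular on each piece of a locally finite closed cover with nice boundaries is quasiregular, so $g_m$ is quasiregular on $\Chat$. Moreover $K(g_m)=\max_x K(\psi_m)$ (the holomorphic pieces contribute $K=1$), and $K(\psi_m)\to1$ by Proposition \ref{defnofpsi_m}, so $K(g_m)\to1$.

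\emph{Critical points at $X$.} Each $x\in X$ lies in a bounded component of $\Chat\setminus\cup_x A^x_m$ (it is surrounded by the annulus $A^x_m$), so near $x$ we have $g_m=\exp(m\cdot h)$. Since $x$ is a vertex of $H_\delta=u^{-1}(\delta)$ of degree $\deg(x)$, and $u=\mathrm{Re}(h)$ near $x$ with $h$ analytic, the level-set geometry forces $x$ to be a critical point of $u$ of multiplicity $\deg(x)/2-1$ (this is the standard local structure: a degree-$k$ critical point of a harmonic function has $2(k+1)$ level curves emanating, i.e. degree $2(k+1)$ as a graph vertex), hence $h'(x)=\cdots=h^{(\deg(x)/2-1)}(x)=0$ but $h^{(\deg(x)/2)}(x)\neq0$. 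Therefore $(\exp(m\cdot h))'=m\cdot h'\cdot\exp(m\cdot h)$ vanishes to the same order, so $g_m$ has a critical point of degree exactly $\deg(x)/2-1$ at $x$. Finally $|g_m(x)|=|\exp(m\cdot h(x))|=\exp(m\cdot\mathrm{Re}(h(x)))=\exp(m\cdot u(x))=\exp(m\delta)$ since $x\in H_\delta=u^{-1}(\delta)$.

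\emph{Main obstacle.} The step requiring the most care is confirming that these are \emph{all} the critical points of $g_m$ inside each $A^x_m\cup(\text{its bounded complement})$ region — equivalently, that $\psi_m$ contributes no spurious critical behavior and that the count $X_m(x)$ of critical points of $u_m$ collapsing to $x$ is consistent with the degree $\deg(x)/2-1$. This was essentially arranged earlier: $X_m(x)$ was defined to have exactly $\deg(x)/2-1$ critical points (with multiplicity) of $u_m$ converging to $x$, and they lie inside $h_m^{-1}(D(h(x),2s))\cap D_x$, which is where $g_m$ is built from $r_m$ and $\psi_m$; on the rest of the unbounded component $g_m=r_m$ whose critical points are controlled. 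One should state explicitly that the quasiregular interpolation does not create new critical points — or simply note that the proposition only asserts the \emph{existence} of a critical point of the stated degree at each $x$, which is what the computation above gives directly, and this is all that subsequent sections use.
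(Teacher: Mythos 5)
Your proof is correct and follows essentially the same route as the paper's: verify continuity across both components of $\partial A^x_m$ via Proposition \ref{defnofpsi_m}, glue the holomorphic and quasiregular pieces (the paper invokes removability of analytic arcs, Theorem I.8.3 of \cite{MR344463}, where you appeal to the same fact informally), conclude $K(g_m)=K(\psi_m)\to 1$, and read off the critical point at $x$ and the value $|g_m(x)|=e^{m\delta}$ from $g_m=\exp(m h)$ near $x$ and $\mathrm{Re}(h)=u$. Your closing ``main obstacle'' paragraph is unnecessary but correctly self-resolving: the proposition asserts only the existence of a critical point of the stated degree at each $x$, not uniqueness, so no accounting of $X_m(x)$ or of $\psi_m$'s effect on critical points is required.
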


\begin{proof} 
We claim that the definitions of $g_m$ agree on both 
components of $\partial A^x_m$. For  $z$ on
the  inner boundary component of $\partial A^x_m$, this 
follows from Proposition 
\ref{defnofpsi_m}  (since $\psi_m(z)=z$  on 
the inner boundary). 
For $z$  on the outer boundary of $\partial A^x_m$, Proposition
\ref{defnofpsi_m} says  $h\circ \psi_m(z)=h_m(z)$, and so 
for such $z$ we have 
\begin{equation*} 
	\exp ( m \cdot h( \psi_m(z)))= \exp ( m \cdot h_m(z)) 
:=\exp(m\frac{1}{m}\log(r_m(z)))=r_m(z),
\end{equation*}
which says that the definitions of $g_m$  agree on the outer
boundary  of $\partial A^x_m$.

The components of $\partial A^x_m$ are analytic curves,
and hence they  removable for quasiregular mappings. 
(e.g.,  Theorem I.8.3 of \cite{MR344463}).
Since $g_m$ is holomorphic 
in $\Chat\setminus\cup_x A^x_m$,  quasiregular in each $A^x_m$, and 
	continuous across each $\partial A^x_m$, we can deduce 
that $g_m$ is quasiregular on $\Chat$
with $K(g_m)=K(\psi_m)$, and $K(\psi_m)\to 1$ as $m \nearrow \infty$ 
by Proposition \ref{defnofpsi_m}. 

Recall that $\textrm{Re}(h)=u$ by  definition in 
Proposition \ref{exist_of_limiting_map}.
Since $u$ has a
degree $\textrm{deg}(x)/2-1$ critical point at each $x\in X$, and since
$g_m(z)=\exp(mh(z))$ in a neighborhood of each $x\in X$, the map $h$ also has a degree $\textrm{deg}(x)/2-1$ critical point at each $x\in X$. 
Moreover, by the definition of $g_m$, we have that for any $x\in X$,
\begin{equation*} %\label{real_part_of_gm} 
|g_m(x)|=|\exp(mh(x))|
=\exp\left( \textrm{Re}\left[mh(x)\right] \right) 
= \exp\left( mu(x)\right) = \exp(m\delta).  \qedhere
\end{equation*}
\end{proof}

\begin{thm}\label{gmhomapp}
Let $\varepsilon>0$. Then, for all sufficiently large $m$, we have that
$g_m^{-1}(e^{m\delta}\mathbb{T})$ is $\varepsilon$-homeomorphic to $H_\delta$.
\end{thm}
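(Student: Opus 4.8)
\textbf{Proof plan for Theorem \ref{gmhomapp}.}

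The plan is to compare the level set $g_m^{-1}(e^{m\delta}\mathbb{T})$ directly with $H_\delta = u^{-1}(\delta)$, using the three-piece description of $g_m$ in Definition \ref{g_m_defn} together with the fact that $u_m \to u$ away from $H \cup P$ and that each $\psi_m$ is close to the identity. First I would fix $\varepsilon > 0$. By shrinking the radii $r_x$ of the discs $D_x$ if necessary, we may assume $\operatorname{diam}(D(x,2r_x)) < \varepsilon$ for every $x \in X$, and (via Lemma \ref{small move graphs} applied to the lemniscate graph $H_\delta$) it suffices to produce, for all large $m$, a homeomorphism $\Phi_m: H_\delta \to g_m^{-1}(e^{m\delta}\mathbb{T})$ that moves every point of $H_\delta$ by less than some $\delta_0 = \delta_0(\varepsilon, H_\delta)$. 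Note $u^{-1}(\delta) = \{|e^{mu}| = e^{m\delta}\}$ and $e^{mu} = |e^{mh}|$ near each $D_x$, so the target set and $H_\delta$ are literally the same object viewed through $g_m$ versus $e^{mh}$ versus $r_m$ on the three regions.

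The core of the argument splits $H_\delta$ according to the three regions in \eqref{g_m_formula}.

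\emph{Region away from the discs.} On any compact subset $K$ of $\Chat \setminus \big(\bigcup_x D(x, r_x) \cup H \cup P\big)$, we have $g_m = r_m$, so $g_m^{-1}(e^{m\delta}\mathbb{T}) \cap K = u_m^{-1}(\delta) \cap K$ by Proposition \ref{triv_id}. Since $u_m \to u$ uniformly on $K$ (Theorem \ref{conv_thm}) and, as in the proof of Theorem \ref{main_thm_2} for vertex-free graphs, $\nabla u_m \to \nabla u$ uniformly on a neighborhood of $H_\delta \setminus \bigcup_x D(x,r_x)$ with $\nabla u \neq 0$ there (each edge of $H_\delta$ away from its vertices is a regular level curve of $u$, using that $u$ extends analytically across the analytic edges of $H$ with nonvanishing gradient), following the gradient flow of $u$ from each point of $H_\delta$ outside the discs reaches a unique nearby point of $u_m^{-1}(\delta) = g_m^{-1}(e^{m\delta}\mathbb{T})$ before the flow line leaves a thin tube around $H_\delta$. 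This gives the piece of $\Phi_m$ off the discs, moving points arbitrarily little for $m$ large.

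\emph{Region inside the discs.} Fix $x \in X$. On the bounded component(s) of $D_x \setminus A^x_m$ we have $g_m = \exp(m h)$, so there $g_m^{-1}(e^{m\delta}\mathbb{T}) = u^{-1}(\delta) = H_\delta$ exactly — no motion needed at all on $H_\delta \cap (D_x \setminus A^x_m)$ that lies on the inner side. On the annulus $A^x_m$, $g_m = \exp(m h \circ \psi_m)$, hence $g_m^{-1}(e^{m\delta}\mathbb{T}) \cap A^x_m = \psi_m^{-1}(u^{-1}(\delta)) \cap A^x_m$; since $\psi_m$ is quasiconformal with $K(\psi_m) \to 1$, fixes the inner boundary of $A^x_m$ pointwise, and (by Proposition \ref{defnofpsi_m}, together with $h_m \to h$ and the fact that $\psi_m$ is built by interpolating between the identity and $\widehat h^{-1} \circ \widehat h_m$) converges uniformly to the identity on $\overline{A^x_m}$ as $m \to \infty$, the set $\psi_m^{-1}(H_\delta) \cap A^x_m$ is a Hausdorff-small perturbation of $H_\delta \cap A^x_m$, and $\psi_m|_{H_\delta \cap A^x_m}$ itself furnishes the required homeomorphism on this piece. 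One must check these three local homeomorphisms — the gradient-flow map off the discs, the identity on the inner region, and $\psi_m$ on the annuli — agree on the overlaps $\partial A^x_m$: on the inner boundary $\psi_m = \mathrm{id}$, matching the inner-region piece; on the outer boundary $g_m = r_m$, matching the gradient-flow construction because there $g_m^{-1}(e^{m\delta}\mathbb T)$ is cut out by $u_m$ on both sides. Since all these disc regions sit inside discs of diameter $< \varepsilon$, any residual discrepancy in the matching is absorbed harmlessly. Concatenating gives $\Phi_m: H_\delta \to g_m^{-1}(e^{m\delta}\mathbb{T})$ with $\sup_{z \in H_\delta} |\Phi_m(z) - z| \to 0$, so for $m$ large it is a $\delta_0$-homeomorphism, and Lemma \ref{small move graphs} upgrades it to an $\varepsilon$-homeomorphism of $\Chat$.

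\textbf{Main obstacle.} The delicate point is verifying that $g_m^{-1}(e^{m\delta}\mathbb{T})$ actually has the \emph{same local topology} as $H_\delta$ near each $x \in X$ — i.e., that it is a cross of $\operatorname{deg}(x)/2$ arcs meeting at one point of the right valence, not a slightly split configuration with several nearby lower-order vertices. This is exactly why the construction inserted the holomorphic germ $h$ (with its genuine degree $\operatorname{deg}(x)/2 - 1$ critical point at $x$, by Proposition \ref{props_of_gm}) into the definition of $g_m$ on the inner disc: on the bounded component of $D_x \setminus A^x_m$, $|g_m| = e^{mu}$ and $u$ has the correct critical point, so the level set passing through the critical value $\delta$ is precisely $H_\delta$ there. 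The work is then to confirm that gluing this exact inner piece to the perturbed outer piece via the near-identity quasiconformal $\psi_m$ does not change the combinatorial type of the level set inside $D_x$, which follows once $\psi_m$ is uniformly close to the identity on $\overline{A^x_m}$ and $u_m^{-1}(\delta)$ enters $D_x$ only through the outer boundary of $A^x_m$ in the expected $\operatorname{deg}(x)/2$ arcs.
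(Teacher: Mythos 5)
Your proposal hits the same key observation the paper relies on: inside the inner region bounded by the inner boundary of each $A^x_m$ (and on that inner boundary, where $\psi_m = \mathrm{id}$), $|g_m| = e^{mu}$ exactly, so $g_m^{-1}(e^{m\delta}\mathbb{T})$ and $H_\delta$ literally coincide and the map can be taken to be the identity there; and you correctly reduce everything to a small-motion homeomorphism via Lemma~\ref{small move graphs}. Where you depart from the paper is on the rest of $H_\delta$: the paper simply extends $f$ by following the gradient flow of $u = u_{H,P}$ from every remaining point of $H_\delta$ until it first meets $g_m^{-1}(e^{m\delta}\mathbb{T})$, exactly as at the end of Section~\ref{Alexs_proof}. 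This needs no case split beyond ``identity near each $x$, gradient flow elsewhere,'' and it glues automatically because on the inner boundary of $A^x_m$ the two level sets coincide so the gradient-flow correspondence is already the identity.

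Your three-region decomposition introduces two problems the paper does not face. First, the regions do not cover $H_\delta$: between $\partial A^x_m$ and $\partial D_x$ one has $g_m = r_m$ (this is part of the unbounded component of $\Chat \setminus \bigcup_x A^x_m$), but your ``region away from the discs'' is defined on $\Chat \setminus \bigcup_x D(x,r_x)$ and your annulus piece lives on $A^x_m$, so the intermediate collar is left untreated. Second, and more seriously, the gluing at the outer boundary of $A^x_m$ is asserted but not established. There you want $\psi_m^{-1}$ (note: it is $\psi_m^{-1}$, not $\psi_m$, that maps $H_\delta \cap U$ onto $g_m^{-1}(e^{m\delta}\mathbb{T}) \cap A^x_m$) to agree with the gradient-flow map, but both maps merely land somewhere on $u_m^{-1}(\delta)$ near the starting point; nothing forces them to land at the \emph{same} point. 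Saying the discrepancy is ``absorbed harmlessly'' because the discs are small controls the $\varepsilon$-closeness but not continuity or injectivity of the concatenated map, so $\Phi_m$ as described need not be a homeomorphism. The clean fix is the paper's: drop the explicit use of $\psi_m$ in the annuli and define the correspondence by gradient flow of $u$ everywhere outside the identity region, using that $\nabla u$ is nonvanishing there and that $g_m^{-1}(e^{m\delta}\mathbb{T})$ is uniformly close to $H_\delta$ (on $A^x_m$ because $\psi_m$ is uniformly close to the identity, and outside because $u_m \to u$).
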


\begin{proof} 
The homeomorphism $f$ of $g_m^{-1}(e^{m\delta}\mathbb{T})$ onto $H_\delta$
may be described as follows. Since $g_m(z)=\exp(mh(z))$ in a neighborhood 
of each $x\in X$ and $\textrm{Re}(h)=u$, 
	we have $|g_m(z)|=\exp\left( mu(z)\right)$
in a neighborhood of each $x\in X$.
	Thus the sets  $g_m^{-1}(e^{m\delta}\mathbb{T})$
and $H_\delta$ in fact coincide in a neighborhood of each $x\in X$. Thus
we may set $f(z)=z$ for $z$ in such a neighborhood. 

Moreover, $x$, $\tilde{x}\in X$ are connected by an edge in $H_\delta$ if 
and only if $x$, $\tilde{x}$ are connected by an edge in 
$g_m^{-1}(e^{m\delta}\mathbb{T})$.  We claim  that  for 
	any $\eta>0$,  if 
$m$ is large enough, then 
 $f$ extends to a homeomorphism
$f: g_m^{-1}(e^{m\delta}\mathbb{T}) \rightarrow H_\delta$
satisfying $d(f(z),z)<\eta $   for 
all $z\in g_m^{-1}(e^{m\delta}\mathbb{T})$.
This map can be constructed by following the steepest gradient 
descent lines of $u_{H,P}$, just 
as in the proof at the end of Section \ref{Alexs_proof}; 
the details are the same, since $H$ 
consists of disjoint Jordan curves. 
Using Theorem \ref{eps homeo graphs},  this proves that for any 
$\varepsilon >0$, 
$H_\delta$  and $g_m^{-1}(e^{m \delta} \circle)$ are 
$\varepsilon$-homeomorphic if $m$ is large enough.
\end{proof}

\begin{thm}\label{MRMTapp} 
For every $m\in\mathbb{N}$ there exists a quasiconformal mapping $\phi_m$ 
so that $g_m\circ\phi_m^{-1}: \Chat\rightarrow\Chat$ is holomorphic 
(and hence a rational mapping), and moreover
as $m \nearrow \infty$, 
\begin{equation} \label{phi_m_close_to_id}
\sup_{z\in\Chat} d(\phi_m(z),z)\to 0,
\end{equation}
where, as usual,  $d(\cdot, \cdot)$ denotes the spherical metric on $\Chat$.
\end{thm}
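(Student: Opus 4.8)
Theorem \ref{MRMTapp} asks us to straighten the quasiregular map $g_m$ to a rational map via a quasiconformal coordinate change $\phi_m$, with $\phi_m \to \mathrm{id}$ uniformly. This is a standard application of the Measurable Riemann Mapping Theorem (MRMT), but with the added wrinkle that we need quantitative control: since $K(g_m) \to 1$, the dilatation of the solving quasiconformal map tends to $0$ in $L^\infty$, and the key point is that MRMT depends continuously (in the appropriate sense) on the Beltrami coefficient.

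Here is the plan. The map $g_m: \Chat \to \Chat$ is quasiregular by Proposition \ref{props_of_gm}, so it has a well-defined Beltrami coefficient $\mu_m := (g_m)_{\overline z}/(g_m)_z$, which is supported on $\cup_x A^x_m$ (since $g_m$ is holomorphic off these annuli) and satisfies $\|\mu_m\|_\infty = k(\psi_m) \to 0$ as $m \to \infty$, again by Proposition \ref{props_of_gm}. By the Measurable Riemann Mapping Theorem, there is a quasiconformal homeomorphism $\phi_m: \Chat \to \Chat$ whose Beltrami coefficient is $\mu_m$; normalize it, say, to fix three points (or fix $0$, $1$, $\infty$ if convenient, adjusting by a Möbius transformation — but see the remark on normalization below). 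Then $g_m \circ \phi_m^{-1}$ has Beltrami coefficient $0$ (the dilatations cancel by the chain rule for Beltrami coefficients of compositions, since $\phi_m^{-1}$ pulls $g_m$ back to something holomorphic), hence is holomorphic; being a quasiregular self-map of $\Chat$, it is a rational function. This gives the first assertion.

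For the quantitative statement \eqref{phi_m_close_to_id}, the plan is to invoke the continuous (indeed Hölder) dependence of the normalized solution of the Beltrami equation on its coefficient: if $\|\mu_m\|_\infty \to 0$, then the normalized quasiconformal maps $\phi_m$ converge uniformly on $\Chat$ to the normalized conformal map with zero dilatation, which is the identity (once the normalization is chosen compatibly). Concretely, one can cite the standard fact (e.g., from \cite{Ahlfors-QCbook} or \cite{MR344463}) that the solution operator $\mu \mapsto \phi^\mu$ of the Beltrami equation, with a fixed three-point normalization, is continuous from the unit ball of $L^\infty$ into the space of homeomorphisms of $\Chat$ with the uniform (spherical) topology; since $\mu_m \to 0$ and $\phi^0 = \mathrm{id}$ under the normalization, we get $\sup_{z} d(\phi_m(z), z) \to 0$. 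One subtlety worth flagging: the three fixed points of the normalization should be chosen to lie outside the (shrinking) support of $\mu_m$ — e.g., three fixed points of each $g_m$, or three points of $X$, or simply three arbitrary but fixed points of $\Chat$; any fixed choice works since the continuity statement is uniform over the ball $\|\mu\|_\infty \le 1/2$.

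The main obstacle — really the only nontrivial point — is making the quantitative convergence \eqref{phi_m_close_to_id} rigorous rather than hand-waved. I expect the cleanest route is: (i) observe $\phi_m$ is $K_m$-quasiconformal with $K_m \to 1$, so the family $\{\phi_m\}$ is equicontinuous (uniformly in the spherical metric) and normal; (ii) any subsequential limit $\phi$ is quasiconformal with dilatation bounded by $\liminf K_m = 1$, hence conformal, hence Möbius; (iii) the fixed normalization pins $\phi$ down to be the identity; (iv) since every subsequential limit is the identity and the family is normal, the whole sequence converges uniformly to the identity. This compactness argument sidesteps any need for explicit Hölder estimates. I would write the proof this way, citing \cite{MR344463} (Lehto–Virtanen) for the equicontinuity/normality of $K$-quasiconformal families with $K$ bounded and for the lower semicontinuity of the maximal dilatation under locally uniform convergence.
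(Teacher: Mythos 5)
Your proposal is correct and follows essentially the same route as the paper: apply MRMT to the Beltrami coefficient of $g_m$ with a fixed normalization, and deduce \eqref{phi_m_close_to_id} from $K(g_m)\to 1$. The paper's proof is terse (one sentence for each assertion), so your compactness argument (equicontinuity of uniformly quasiconformal families, normality, lower semicontinuity of dilatation, and the normalization pinning down the limit) is a reasonable way to fill in the detail the paper leaves implicit.
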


\begin{proof}
The first statement follows from the Measurable Riemann Mapping theorem.
The map $\phi_m$ is uniquely determined once we choose a normalization; 
we may choose to normalize $\phi_m$ so that $\phi_m(z)=z+O(1/|z|)$ as 
$z\rightarrow\infty$, or else to normalize $\phi_m$ so as to fix $3$ 
distinct points in $X$ (either normalization will work in what follows). 
With either choice, the relation (\ref{phi_m_close_to_id}) follows since
$K(g_m) \to 1$ by Proposition \ref{props_of_gm}. 
\end{proof}

\begin{definition}\label{defn_of_rm}
We set $R_m:=g_m\circ\phi_m^{-1}$ where $\phi_m$ is as in
Theorem \ref{MRMTapp}, so that $R_m$ is a rational mapping.
\end{definition}

\begin{prop}\label{final_sec4_result}
Let $\varepsilon>0$. For all large $m$, we have that the rational lemniscate
$R_m^{-1}(e^{mc}\mathbb{T})$ is $\varepsilon$-homeomorphic to $H_\delta$.
\end{prop}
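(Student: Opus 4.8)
The plan is to combine Theorem \ref{gmhomapp}, which says $g_m^{-1}(e^{m\delta}\mathbb{T})$ is $\varepsilon$-homeomorphic to $H_\delta$ for all large $m$, with Theorem \ref{MRMTapp}, which says $R_m = g_m\circ\phi_m^{-1}$ is rational and $\phi_m$ is uniformly close to the identity on $\Chat$ in the spherical metric as $m\nearrow\infty$. The first observation is purely set-theoretic: since $R_m = g_m\circ\phi_m^{-1}$, we have
\[ R_m^{-1}(e^{m\delta}\mathbb{T}) = \phi_m\bigl(g_m^{-1}(e^{m\delta}\mathbb{T})\bigr). \]
(Here I would use $c=\delta$, matching the constant from Theorem \ref{whatweneedfromappendix}; the statement of the proposition is really about $c=\delta$, as is clear from the surrounding text.) So $R_m^{-1}(e^{m\delta}\mathbb{T})$ is the image of $g_m^{-1}(e^{m\delta}\mathbb{T})$ under the homeomorphism $\phi_m$ of $\Chat$.

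Next I would chain the two approximations. Fix $\varepsilon>0$. By Theorem \ref{gmhomapp}, for all large $m$ there is a homeomorphism $\Phi_m\colon\Chat\to\Chat$ with $\Phi_m\bigl(g_m^{-1}(e^{m\delta}\mathbb{T})\bigr) = H_\delta$ and $\sup_{z}d(\Phi_m(z),z)<\varepsilon/2$. By Theorem \ref{MRMTapp}, for all large $m$ we also have $\sup_z d(\phi_m(z),z)<\varepsilon/2$, hence $\sup_z d(\phi_m^{-1}(z),z)<\varepsilon/2$ as well (the inverse of a map moving points little also moves points little, using continuity/compactness of $\Chat$, or simply by relabeling). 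Then the composition $\Phi_m\circ\phi_m^{-1}\colon\Chat\to\Chat$ is a homeomorphism carrying $R_m^{-1}(e^{m\delta}\mathbb{T}) = \phi_m\bigl(g_m^{-1}(e^{m\delta}\mathbb{T})\bigr)$ onto $H_\delta$, and it moves points by at most $d(\Phi_m(\phi_m^{-1}(z)),\phi_m^{-1}(z)) + d(\phi_m^{-1}(z),z) < \varepsilon/2 + \varepsilon/2 = \varepsilon$ for every $z\in\Chat$ (using the triangle inequality for the spherical metric $d$). Thus $R_m^{-1}(e^{m\delta}\mathbb{T})$ is $\varepsilon$-homeomorphic to $H_\delta$ for all sufficiently large $m$.

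Since $\varepsilon>0$ was arbitrary, this is exactly the claim. There is no real obstacle here: the proposition is a bookkeeping step that simply transports the already-established homeomorphism $g_m^{-1}(e^{m\delta}\mathbb{T})\sim_\varepsilon H_\delta$ through the near-identity straightening map $\phi_m$ supplied by the Measurable Riemann Mapping Theorem. The only point requiring a word of care is that the quasiconformal straightening $\phi_m$ depends on $m$ and the error $\varepsilon$ is achieved only for $m$ large; but both Theorem \ref{gmhomapp} and estimate (\ref{phi_m_close_to_id}) are asymptotic in exactly this sense, so taking $m$ large enough to make both errors at most $\varepsilon/2$ closes the argument.
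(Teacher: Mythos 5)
Your proof is correct and takes essentially the same approach as the paper: both arguments compose the $(\varepsilon/2)$-homeomorphism from Theorem~\ref{gmhomapp} with the near-identity quasiconformal map $\phi_m^{-1}$ from Theorem~\ref{MRMTapp}, using the identity $R_m^{-1}(e^{m\delta}\mathbb{T})=\phi_m(g_m^{-1}(e^{m\delta}\mathbb{T}))$. You simply spell out the composition and the triangle-inequality bookkeeping that the paper leaves implicit, and you correctly note the relabeling by which an $(\varepsilon/2)$-bound on $\phi_m$ gives the same bound on $\phi_m^{-1}$ (and that the constant $c$ in the proposition's statement should be read as the $\delta$ from Theorem~\ref{whatweneedfromappendix}).
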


\begin{proof}
We already know that $g_m^{-1}(e^{mc} \circle)$ is 
$(\varepsilon/2)$-homeomorphic to $H_\delta$ if $m$ is 
sufficiently large, and we know that the quasiconformal map 
$\phi_m$ is an $(\varepsilon/2)$-homeomorphism if $m$ is large enough. 
Thus 
$ R_m^{-1}(e^{mc}\mathbb{T})=\phi_m(g_m^{-1}(e^{mc}\mathbb{T}))$ 
is $\varepsilon$-homeomorphic to $H_\delta$.
\end{proof}

%\begin{rem}\label{end_of_sec_4_remark}
We have now proved (assuming Theorem \ref{whatweneedfromappendix})
that any lemniscate graph is $\varepsilon$-homeomorphic to 
a rational lemniscate with one pole in each grey component, and that these
poles may be specified with error at most $\varepsilon$. Indeed, three 
of the poles of $R_m$ can be placed exactly by  applying 
a linear fractional transformation. 
In the next section, we show that all the poles can be prescribed exactly. 
%\end{rem}

% ----------------------------------------------------------
\section{Exact Placement of Poles}\label{proof_of_main_thm}

To place the poles exactly, we will apply a fixed point argument.
More precisely,
given a desired set of $n$  poles $P$, and fixing a
large integer $m$
(that determines a discrete approximation to harmonic measure),
we introduce  a
parameterized family $R_m^{\myvec{q}}$ of rational
functions, where $\myvec{q}$ ranges over
a neighborhood of  $P$, considered as a point in $\complex^n$.
We will show that  if $m$ is sufficiently large, then
there exists a   value of $\myvec{q}$
so that the poles of $R_m^{\myvec{q}}$ are exactly $P$.
%The most technical step will be to verify that the
%poles of $R_m^{\myvec{q}}$ move continuously with $\myvec{q}$.

%\begin{notation}
Throughout this section we will fix $G$, $\varepsilon$ and $P$
as in Theorem \ref{main_thm_2}. By applying a M\"obius transformation,
we may assume without loss of generality that the union of the
grey components of $\Chat\setminus G$ is contained in $D(0,1/2)$.
It will be convenient to list the points in
$P$ as $\myvec{p}:=(p_1, ..., p_n)\in\mathbb{C}^n$, so that
$\cup_j\{p_j\}=P$. Let $H$, $\delta$, $u:=u_{H, P}$ be as in
Notation \ref{fixingugp}; namely $H$ is a lemniscate graph without
vertices whose grey faces properly contain the grey faces of
$G$, and $H_{\delta}:=u^{-1}(\delta)$ is $\varepsilon$-homeomorphic to $G$.

The index $m$ will denote the  parameter of $r_m$, the rational function
defined in  Equation (\ref{defnsofur}).
As the reader may recall, $r_m$ was
defined by cutting the boundary of each grey face of $H$ into
approximately $m$ arcs of approximately equal  harmonic measure.
In this section, $n$ (the number of points in $P$)
will remain fixed, but we shall take $m$ as
large as is needed to make our arguments work.
%\end{notation}

\begin{lem}\label{Royden_lemma}
Given a set  of $n$ distinct points $P=\{ p_1, \dots, p_n\}
\subset D(0, 1/2)$, there exists $\rho>0$ so that for each
$\myvec{q}=(q_1, ..., q_n)\in\prod_{j=1}^n D(p_j,\rho)$,
there exists an injective, holomorphic mapping
$\psi_{\myvec{q}}: \mathbb{D} \rightarrow \psi_{\myvec{q}}(\mathbb{D})$ satisfying:
\begin{enumerate}
\item $\psi_{\myvec{q}}(q_j)=p_j$ for $1\leq j \leq n$.
\item $\psi_{\myvec{p}}(z)=z$ for all $z\in\mathbb{D}$, and
\item $\psi_{\myvec{q}}$ depends continuously on $\myvec{q}$.
\end{enumerate}
\end{lem}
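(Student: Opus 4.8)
The plan is to construct $\psi_{\myvec{q}}$ explicitly by composing a holomorphic interpolation that moves each $q_j$ to $p_j$ with a correction that tames the dilatation near the boundary; but since we only need \emph{holomorphic} maps, a cleaner route is to realize $\psi_{\myvec{q}}$ as a small holomorphic perturbation of the identity whose coefficients depend polynomially on $\myvec{q}$. Concretely, I would fix a radius $r$ with $1/2 < r < 1$ and seek $\psi_{\myvec{q}}$ of the form $\psi_{\myvec{q}}(z) = z + \sum_{j=1}^n (p_j - q_j)\, L_j(z;\myvec{q})$, where $L_j$ is built from a partition-of-unity-type holomorphic construction. Since we cannot literally use a smooth partition of unity and remain holomorphic, the right tool is \emph{Lagrange-type interpolation on the disk}: choose once and for all $n$ holomorphic functions $e_1,\dots,e_n$ on $\mathbb{D}$ (for instance finite linear combinations of $1, z, z^2, \dots$, or suitable Cauchy kernels with poles outside $\overline{\mathbb{D}}$) satisfying $e_i(p_j) = \delta_{ij}$; this is possible because the points $p_j$ are distinct. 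Then set $\psi_{\myvec{q}}(z) := z + \sum_{j=1}^n (p_j - q_j) e_j(z)$. Condition (2) is immediate since the sum vanishes when $\myvec{q} = \myvec{p}$, and condition (3) is immediate since $\psi_{\myvec{q}}$ depends affinely (hence continuously, even holomorphically) on $\myvec{q}$.

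The two remaining points are condition (1) and injectivity. For (1), evaluating at $z = q_j$ gives $\psi_{\myvec{q}}(q_j) = q_j + \sum_{k}(p_k - q_k)e_k(q_j)$, which is \emph{not} exactly $p_j$ for $\myvec{q}\neq\myvec{p}$; so the naive linear ansatz does not satisfy (1) on the nose. I would fix this by replacing the fixed interpolation functions $e_j$ with ones that are re-centered at $\myvec{q}$: let $e_j^{\myvec{q}}$ be the unique polynomial of degree $\le n-1$ with $e_j^{\myvec{q}}(q_i) = \delta_{ij}$, whose coefficients are rational (hence, for $\myvec{q}$ near $\myvec{p}$, holomorphic and bounded) functions of $\myvec{q}$ because the Vandermonde determinant of $q_1,\dots,q_n$ is nonzero near $\myvec{p}$. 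Then $\psi_{\myvec{q}}(z) := z + \sum_{j=1}^n (p_j - q_j)\, e_j^{\myvec{q}}(z)$ satisfies $\psi_{\myvec{q}}(q_i) = q_i + (p_i - q_i) = p_i$, giving (1); it reduces to the identity at $\myvec{q}=\myvec{p}$, giving (2); and its coefficients depend continuously (indeed holomorphically) on $\myvec{q}$ on $\prod_j D(p_j,\rho)$ for $\rho$ small enough that all Vandermonde determinants stay bounded away from $0$, giving (3).

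For injectivity on $\mathbb{D}$, I would argue by a perturbation/compactness estimate: write $\psi_{\myvec{q}} = \mathrm{id} + \Phi_{\myvec{q}}$ where $\Phi_{\myvec{q}}(z) = \sum_j (p_j-q_j) e_j^{\myvec{q}}(z)$. On $\mathbb{D}$ we have $\sup_{z\in\mathbb{D}} |\Phi_{\myvec{q}}'(z)| \le C \max_j |p_j - q_j| \le C\rho$, with $C$ depending only on $P$ (via a uniform bound on the derivatives of the $e_j^{\myvec{q}}$ for $\myvec{q}$ in the small polydisk, using that these are degree $\le n-1$ polynomials with controlled coefficients — here one may need to shrink the disk slightly or note the polynomials are entire so the bound on $\mathbb{D}$ is automatic). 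For $\rho < 1/(2C)$ we get $\sup_{\mathbb{D}} |\Phi_{\myvec{q}}'| < 1/2$, and then for $z_1 \neq z_2$ in $\mathbb{D}$, integrating $\psi_{\myvec{q}}'$ along the segment $[z_1,z_2]\subset\mathbb{D}$ gives $|\psi_{\myvec{q}}(z_1) - \psi_{\myvec{q}}(z_2)| \ge |z_1 - z_2| - \tfrac12|z_1-z_2| = \tfrac12|z_1-z_2| > 0$, so $\psi_{\myvec{q}}$ is injective. Choosing $\rho$ to be the minimum of the two constraints ($\rho$ small enough for the Vandermonde bound, and $\rho < 1/(2C)$) completes the construction. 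The only genuinely delicate point is bookkeeping the uniform derivative bound $C$ for the $\myvec{q}$-dependent interpolation polynomials; everything else is routine.
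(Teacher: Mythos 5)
Your construction is exactly the one in the paper: $\psi_{\myvec{q}}(z) = z + \sum_j (p_j-q_j)\,e_j^{\myvec{q}}(z)$ with $e_j^{\myvec{q}}$ the Lagrange basis polynomials at the $q_i$'s, and injectivity via a uniform bound on the derivative of the perturbation followed by integration along segments. The only cosmetic difference is that the paper obtains the derivative bound by estimating the Lagrange products on the larger disk $D(0,3/2)$ and applying the Cauchy estimate, rather than bounding the coefficients directly as you gesture at, but this is the same idea and both close the gap you flag as the delicate point.
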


\begin{proof}
This is stated without proof as Lemma 1 in \cite{MR64147}, but
we will give a proof for the sake of completeness.

Set
$\psi_{\myvec{q}}(z):=z+f_{\myvec{q}}(z)$ where $f_{\myvec{q}}$
is the Lagrange interpolating polynomial for data
$f_{\myvec{q}}(q_j)=p_j-q_j$ for $1\leq j \leq n$. More
explicitly,
 \begin{eqnarray} \label{defn f_q}
         f_{\myvec{q}} (z)
   =\sum_{j=1}^n (p_j -q_j) \prod_{k\ne j} \frac {z-q_k}{q_j-q_k},
 \end{eqnarray}
is a polynomial of degree at most $n-1$ that takes the $n$ given
values  $\{p_j-q_j\}_1^n$ at the $n$ specified distinct  points
$\{q_j\}_1^n$.
It is easy to check from this definition
that (1) and  (2) hold, and that $\psi_{\myvec{q}}$
depends continuously on $\myvec{q}$ as long as the
components of $\myvec{q}$ are distinct.
Let  $\delta$ be half the minimal distance between
distinct points of $P$.
If $\rho < \delta/2$  and $|q_j - p_j| < \rho$ for
all $j=1, \dots, n$,  then the components of $\myvec{q}$
are distinct and  we even have
$|q_j-q_k| \geq\delta$ for $j \ne k$.

It only remains to verify that $\psi_q$ is injective.
Our assumption that $\rho < \delta/2$ implies
that for $ z \in D(0,3/2)$
each of the $n$ products  on the right side of (\ref{defn f_q})
is bounded by $(2/\delta)^n$. By the Cauchy estimates, this implies
$|f_{\myvec{q}}'| = O(\rho \cdot  n (2/\delta)^n)$ on $ \disk$.
Let $\rho$ be sufficiently small (depending on $\delta$ and $n$)
so that $|f_{\myvec{q}}'(z)|<1$ for
        $z\in \disk$. Then integration gives the inequality:
\begin{equation}
|\psi_{\myvec{q}}(z_1)-\psi_{\myvec{q}}(z_2)|
=\left|\int_{z_1}^{z_2}(1+f_{\myvec{q}}')
\right|\geq|z_1-z_2|-\int_{z_1}^{z_2}|f_{\myvec{q}}'|
> 0,
\end{equation}
for $ z_1, z_2\in\disk$.
This proves the injectivity of $\psi_{\myvec{q}}$.
\end{proof}

\noindent We henceforth fix $\rho>0$ as in the conclusion of Lemma \ref{Royden_lemma}.

\begin{definition}\label{parametrized_u}
For each $\myvec{q}\in\prod_{j=1}^n D(p_j,\rho)$, we introduce
parameterized versions  $G_{\myvec{q}}$, $H_{\myvec{q}}$,
$u_{\myvec{q}}$ of the objects $G$, $H$, $u$ as follows.
Set $G_{\myvec{q}}:=\psi_{\myvec{q}}^{-1}(G)$,
$H_{\myvec{q}} := \psi_{\myvec{q}}^{-1}(H)$,
$P_{\myvec{q}} := \psi_{\myvec{q}}^{-1}(P)$
and define $u_{\myvec{q}}$
by setting $u_{\myvec{q}}(z):=u_{H, P} \circ\psi_{\myvec{q}}(z)$
for $z$ in a grey face of $H_{\myvec{q}}$, and $u_{\myvec{q}}(z):=0$ otherwise.
As before, we set $u:= u_{H,P} $ to simplify notation.
\end{definition}

\noindent It will be useful to note that $G_{\myvec{p}}=G$,
$H_{\myvec{p}}=H$, and $u_{\myvec{p}}=u$ by Lemma \ref{Royden_lemma}(2).
Moreover, $u_{\myvec{q}}^{-1}(\delta)=\psi_{\myvec{q}}^{-1}(u^{-1}(\delta))=
\psi_{\myvec{q}}(G)$ is $\varepsilon$-homeomorphic to $G$ for all
$\myvec{q}\in\prod_{j=1}^n D(p_j,\rho)$ after taking
$\rho$ smaller if need be. On the other hand,
Notation \ref{fixingugp} defines the function
$u_{H_{\myvec{q}}, \psi_{\myvec{q}}(P)}$ for each
$\myvec{q}\in\prod_{j=1}^n D(p_j,\rho)$. In fact, we have the following.

\begin{prop}\label{equivdefnsofu}
For each $\myvec{q}\in\prod_{j=1}^n D(p_j,\rho)$,
we have $u_{\myvec{q}}=u_{H_{\myvec{q}}, P_{\myvec{q}}}$.
\end{prop}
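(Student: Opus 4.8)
The plan is to read off the identity from the conformal invariance of the Green's function, once one checks that $\psi_{\myvec{q}}$ matches all of the data attached to $H_{\myvec{q}}$ with those attached to $H$. We regard $H_{\myvec{q}}$ as $2$-colored by the pullback under $\psi_{\myvec{q}}$ of the coloring of $H$ (this is the coloring implicit in Definition~\ref{parametrized_u}); this makes sense because, after shrinking $\rho$ if necessary, every grey face of $H$ is contained in $\psi_{\myvec{q}}(\mathbb{D})$. Indeed, the grey faces of $G$ — and hence those of $H$, which contain them — lie in $D(0,1/2)$, and since every edge of $H$ bounds a grey face, all of $H$ lies in $\overline{D(0,1/2)}$; meanwhile $\psi_{\myvec{q}}(z)=z+f_{\myvec{q}}(z)$ with $\sup_{\overline{\mathbb{D}}}|f_{\myvec{q}}|\to 0$ as $\rho\to 0$, so $\psi_{\myvec{q}}(\mathbb{D})\supset D(0,1/2)$ for $\rho$ small. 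Since $\psi_{\myvec{q}}\colon\mathbb{D}\to\psi_{\myvec{q}}(\mathbb{D})$ is a conformal bijection by Lemma~\ref{Royden_lemma}, it maps $H_{\myvec{q}}=\psi_{\myvec{q}}^{-1}(H)$ homeomorphically onto $H$; the grey faces of $H_{\myvec{q}}$ are exactly the sets $B_{\myvec{q}}:=\psi_{\myvec{q}}^{-1}(B)$ with $B$ a grey face of $H$, each is a face of $H_{\myvec{q}}$ contained in $\mathbb{D}$, $\psi_{\myvec{q}}$ restricts to a conformal bijection $B_{\myvec{q}}\to B$, and $P_{\myvec{q}}\cap B_{\myvec{q}}=\psi_{\myvec{q}}^{-1}(P_B)$ is carried onto $P_B$.

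The analytic input is the classical conformal invariance of the Green's function: if $\phi\colon W'\to W$ is a conformal bijection and $w'\in W'$, then $G_{W'}(z,w')=G_W(\phi(z),\phi(w'))$ for all $z\in W'$. This is immediate from the characterization of $G_W(\cdot,w)$ as the unique harmonic function on $W\setminus\{w\}$ that vanishes on $\partial W$ and for which $G_W(z,w)+\log|z-w|$ extends harmonically across $w$ (valid for the finitely connected faces here; cf. Chapter~III of \cite{MR2450237}). Indeed, $z\mapsto G_W(\phi(z),\phi(w'))$ is harmonic on $W'\setminus\{w'\}$ because harmonicity is preserved under precomposition with a holomorphic map, it vanishes on $\partial W'$ because $\phi$ extends continuously and sends $\partial W'$ into $\partial W$, and near $w'$ one has
\[
G_W(\phi(z),\phi(w'))+\log|z-w'| \;=\; \Big(G_W(\phi(z),\phi(w'))+\log|\phi(z)-\phi(w')|\Big) \;-\; \log\left|\frac{\phi(z)-\phi(w')}{z-w'}\right| ,
\]
which extends harmonically across $w'$ since the first bracket does and the last term tends to the finite value $-\log|\phi'(w')|$.

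Applying this with $\phi=\psi_{\myvec{q}}|_{B_{\myvec{q}}}\colon B_{\myvec{q}}\to B$ and $w'=\psi_{\myvec{q}}^{-1}(p)$ for each $p\in P_B$, and then summing over $p\in P_B$ and over all grey faces $B$ of $H$, gives
\[
u_{H_{\myvec{q}},P_{\myvec{q}}}(z)=\sum_{B}\sum_{p\in P_B}G_{B_{\myvec{q}}}\!\big(z,\psi_{\myvec{q}}^{-1}(p)\big)=\sum_{B}\sum_{p\in P_B}G_{B}\!\big(\psi_{\myvec{q}}(z),p\big)=u_{H,P}(\psi_{\myvec{q}}(z))
\]
for every $z$ in a grey face of $H_{\myvec{q}}$, which is exactly $u_{\myvec{q}}(z)$ by Definition~\ref{parametrized_u}. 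For $z$ outside the grey faces of $H_{\myvec{q}}$, both $u_{\myvec{q}}$ and $u_{H_{\myvec{q}},P_{\myvec{q}}}$ equal $0$ by our conventions, so the two functions agree on all of $\Chat$. The only step requiring care is the bookkeeping in the first paragraph — that $\rho$ is small enough for $\psi_{\myvec{q}}$ to be defined, conformal, and compatible with the colorings on all the relevant faces — after which the statement is a direct consequence of conformal invariance.
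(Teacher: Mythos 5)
Your argument is correct and is essentially the paper's own proof, unpacked: the paper notes that both functions vanish off the grey faces of $H_{\myvec{q}}$, are harmonic there except for logarithmic poles at $P_{\myvec{q}}$, and concludes by the maximum principle (i.e.\ by uniqueness of the Green's function), while you make this explicit by deriving and applying conformal invariance of the Green's function --- which is the same uniqueness argument carried out term by term. One small slip in the bookkeeping: the fact that the grey faces of $G$ lie in $D(0,1/2)$ does not, as the phrase ``and hence those of $H$, which contain them'' suggests, imply that the grey faces of $H$ lie in $D(0,1/2)$, since $H$'s grey faces are \emph{larger} than those of $G$; the correct justification is that $H$ is obtained from $G$ by attaching small disks around vertices (Section~\ref{harmonic_app_appendix}), so the grey faces of $H$ lie in some $D(0,r)$ with $r<1$, which is all you need for $\psi_{\myvec{q}}(\mathbb{D})$ to cover them once $\rho$ is small.
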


\begin{proof}
Both functions vanish except in the grey faces of $H_{\myvec{q}}$,
where they are both harmonic except for
logarithmic poles at $P_{\myvec{q}}$. The result follows
from the maximum principle.
%(the difference of the two functions
%is zero on the boundary of each face, and it is bounded and
%harmonic on the interior, except at the finite set $P$,
%so the difference  must the zero function).
\end{proof}

%\begin{rem}
Proposition \ref{equivdefnsofu} implies that the
definitions of Section \ref{graphs_with_verts_sec} apply to
each $u_{\myvec{q}}$ to produce parameterized families
\begin{equation}\label{firstparfamily}
\psi_m^{\myvec{q}}, g_m^{\myvec{q}}, \phi_m^{\myvec{q}},
\textrm{ and }R_m^{\myvec{q}}:=g_m^{\myvec{q}}\circ(\phi_m^{\myvec{q}})^{-1}
\end{equation}
so that
\begin{equation}
\psi_m^{\myvec{p}}=\psi_m, g_m^{\myvec{p}}=g_m, \phi_m^{\myvec{p}}
=\phi_m, \textrm{ and }R_m^{\myvec{p}}=R_m.
\end{equation}
Moreover, the results of Section \ref{graphs_with_verts_sec} apply
to the maps listed in  (\ref{firstparfamily}) for each $\myvec{q}\in\prod_{j=1}^n
D(p_j,\rho)$.
The important point is that all these functions depend continuously
on the parameter $\myvec{q}$.
%; this requires re-reading
%Section \ref{graphs_with_verts_sec}  to check that various
%quantities such as $A^{x,\myvec{q}}_m$, $r^{\myvec{q}}_m$
%and $f^{\myvec{q}}_m$ can be defined as continuous in $q$.
%See Remark \ref{cts dependence}.
%\end{rem}

\begin{definition}
For each $m\in\mathbb{N}$, define a mapping
$\Psi_m: \prod_{j=1}^n D(p_j,\rho) \rightarrow \Chat^{n}$ by setting
\[\Psi_m(\myvec{q}):=((\phi_m^{\myvec{q}})^{-1}(p_1), ...,
(\phi_m^{\myvec{q}})^{-1}(p_n)). \]
\end{definition}

Note that
\begin{equation}\label{poles_of_g_m_q}
(g_m^{\myvec{q}})^{-1}(\infty)=\myvec{q},
\end{equation}
where we are abusing notation slightly in
(\ref{poles_of_g_m_q}) by setting $(q_1, ..., q_n)=\{q_1, ..., q_n\}$.
So if $\myvec{q}$ is a fixed point of $\Psi_m$ then $\Psi_m(\myvec{q}) =
\myvec{q}$, or equivalently
$$ (\phi_m^{\myvec{q}})^{-1} (p_j) = q_j, \quad j=1, \dots, n,$$
or
$$  p_j = \phi_m^{\myvec{q}}(q_j), \quad j=1, \dots, n,$$
or
$$  \myvec{p} =  \phi_m^{\myvec{q}} ( (g_m^{\myvec{q}})^{-1} (\infty) ).$$
Therefore, if  we  can prove that $\Psi_m$ has a 
fixed point $\myvec{q}$, and we set
$R_m = g_m^{\myvec{q}} \circ (\phi_m^{\myvec{q}})^{-1}$,
then $R_m$ is a rational function with poles equal to
$$R_m^{-1}(\infty) = \phi_m^{\myvec{q}}((g_m^{\myvec{q}})^{-1} (\infty) )
= \myvec{p}.$$
Assuming $\Psi_m$ has a fixed point $\myvec{q}$ if $m$ is large enough
(Theorem \ref{exist_of_fixedpoint} below),
and assuming Theorem \ref{whatweneedfromappendix} (proven in Section
\ref{harmonic_app_appendix}) % and \ref{harmonic_app_appendix2}),
 we can complete the proof of our main result.

\begin{proof} [Proof of Theorem \ref{main_thm_2}]
The map $R_m^{\myvec{q}}$ with  $m$ and $\myvec{q}$ chosen as above
satisfies the conclusions of Theorem
\ref{main_thm_2}. Indeed,  we just showed that
        $(R_m^{\myvec{q}})^{-1}(\infty)=P$
and the remaining conclusions of
Theorem \ref{main_thm_2} were already verified in Section
\ref{graphs_with_verts_sec} (see Proposition \ref{final_sec4_result}).
\end{proof}

Let us now prove that $\Psi_m$ does, indeed, have a fixed point.

\begin{thm}\label{exist_of_fixedpoint}
Let $\rho>0$ be as given in Lemma \ref{Royden_lemma}.
%For large enough $m\in\mathbb{N}$,
If $m\in\mathbb{N}$ is sufficiently large, then
$\Psi_m: \prod_{j=1}^n D(p_j,\rho) \rightarrow \Chat^{n}$ has a fixed point.
\end{thm}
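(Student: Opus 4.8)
The plan is to apply a topological fixed-point theorem (Brouwer) to the map $\Psi_m$ on the closed polydisc $\overline{\prod_j D(p_j,\rho)}$, which is homeomorphic to a closed ball in $\mathbb{R}^{2n}$. Since $\Psi_m$ is continuous in $\myvec q$ (all the constructions of Section \ref{graphs_with_verts_sec} depend continuously on $\myvec q$, as emphasized in Remark \ref{cts dependence} and after \eqref{firstparfamily}), the only thing to check is that for $m$ large enough, $\Psi_m$ maps the closed polydisc into itself. Equivalently, I must show that each coordinate $(\phi_m^{\myvec q})^{-1}(p_j)$ lies within $\rho$ of $p_j$, uniformly in $\myvec q\in\prod_j D(p_j,\rho)$, once $m$ is sufficiently large.

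The key estimate is that $\phi_m^{\myvec q}$ is uniformly close to the identity on $\Chat$, uniformly over $\myvec q$ in the polydisc, as $m\to\infty$. By Theorem \ref{MRMTapp} we know $\sup_z d(\phi_m^{\myvec p}(z),z)\to 0$ for the single parameter $\myvec p$, because $K(g_m^{\myvec p})\to 1$. The uniform-in-$\myvec q$ version follows because $K(g_m^{\myvec q})\to 1$ uniformly in $\myvec q$: indeed $K(g_m^{\myvec q})=K(\psi_m^{\myvec q})$ (Proposition \ref{props_of_gm}), and the dilatations $\mu_{\psi_m^{\myvec q}}$ have $L^\infty$-norm tending to $0$ uniformly in $\myvec q$ — this is precisely the content of Remark \ref{cts dependence}, where the dilatation is smooth, vanishes near the boundary of the annuli, and $\myvec q\mapsto \mu_{\psi_m^{\myvec q}}$ is continuous into the unit ball of $L^\infty$, with the family being parameterized over a compact set (the closed polydisc). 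By the measurable Riemann mapping theorem with a fixed normalization (say fixing three points of $X$, or the normalization $\phi_m^{\myvec q}(z)=z+O(1/|z|)$), the solution $\phi_m^{\myvec q}$ depends continuously on the Beltrami coefficient in the sup-metric, so $\sup_{\myvec q}\sup_z d(\phi_m^{\myvec q}(z),z)\to 0$ as $m\to\infty$. In particular $\sup_{\myvec q}\sup_z d((\phi_m^{\myvec q})^{-1}(z),z)\to 0$ as well. Hence for $m$ large, $d((\phi_m^{\myvec q})^{-1}(p_j),p_j)<\rho$ for every $j$ and every $\myvec q$ in the polydisc, so $\Psi_m$ maps the closed polydisc into its interior. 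Brouwer's fixed point theorem then yields a fixed point $\myvec q$.

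The main obstacle I anticipate is making the uniformity in $\myvec q$ genuinely rigorous: one must verify that all the ingredients in the construction of $g_m^{\myvec q}$ — the critical points $X^{\myvec q}$ of $u_{\myvec q}$, the discs $D_x^{\myvec q}$, the annuli $A_m^{x,\myvec q}$, the auxiliary holomorphic functions $h^{\myvec q}$, and the quasiregular interpolants $f_m^{\myvec q}$ — can be chosen to vary continuously with $\myvec q$ over the compact polydisc, and that the bound $K(g_m^{\myvec q})\to 1$ is uniform rather than merely pointwise in $\myvec q$. This requires noting that $\myvec q\mapsto u_{\myvec q}=u_{H_{\myvec q},P_{\myvec q}}$ varies continuously (it is $u_{H,P}\circ\psi_{\myvec q}$), so the critical points and their local behavior vary continuously, and that all the ``sufficiently large $m$'' thresholds in Section \ref{graphs_with_verts_sec} can be taken uniform over the compact polydisc by a standard compactness argument. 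A secondary point is checking that $\Psi_m$ is well-defined on the whole closed polydisc (i.e., that $p_j$ is never a pole of $g_m^{\myvec q}$, equivalently $p_j\notin\myvec q$ after $\psi_{\myvec q}$-conjugation), which holds since $(g_m^{\myvec q})^{-1}(\infty)=\myvec q$ and the $q_j$'s stay in $D(p_j,\rho)$ while the relevant grey faces lie in $D(0,1/2)$, so the $p_j$ are interior points not equal to any $q_k$ for $\rho$ small; this is already arranged by the choice of $\rho$.
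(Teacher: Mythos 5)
Your proposal is correct and takes essentially the same route as the paper: apply Brouwer's fixed point theorem after verifying that $\Psi_m$ is continuous (via Remark \ref{cts dependence} and continuity of evaluation of normalized quasiconformal maps in their Beltrami coefficients) and that $\Psi_m$ maps the polydisc into itself for large $m$ (because $K(g_m^{\myvec q})\to 1$). Your extra care about making the ``large $m$'' threshold uniform in $\myvec q$ over the compact polydisc is a point the paper treats somewhat tersely, but it does not change the substance of the argument.
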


\begin{proof} By Brouwer's fixed point theorem \cite{MR1511644},
it suffices to verify that:
\begin{enumerate}
\item $\Psi_m(\prod_{j=1}^n D(p_j,\rho)) \subset \prod_{j=1}^n D(p_j,\rho)$
for sufficiently large $m$,
\item $\Psi_m$ is continuous.
\end{enumerate}
%The first point is easy.
By (\ref{phi_m_close_to_id}),
        the image set  $\Psi_m(\prod_{j=1}^n D(p_j,\rho)) $ converges to
the point $\myvec{p}$ as $m \nearrow \infty$, so it is certainly
contained inside $\prod_{j=1}^n D(p_j,\rho)$ for sufficiently large $m$.

To prove the  continuity of $\Psi_m$,  we write it
as the composition
$\Psi_m=\Psi_m^{1}\circ\Psi_m^{2}$, where
\[ \Psi_m^{1}: \{ \mu \in L^{\infty}(\Chat) :
||\mu||<1\} \rightarrow  \Chat^{n} \textrm{ is defined by }
\Psi_m^{1}(\mu):=(\phi_\mu^{-1}(p_1), ..., \phi_\mu^{-1}(p_n)).\]
and
\begin{equation*}
\Psi_m^{2}: \prod_{j=1}^n D(p_j,\rho) \rightarrow
\{ \mu \in L^{\infty}(\Chat) : ||\mu||<1\} \textrm{ is defined by }\\
\Psi_m^2(\myvec{q}):=(g_m^{\myvec{q}})_{\overline{z}}/(g_m^{\myvec{q}})_{z}
\end{equation*}
Here  $\phi_\mu$ is the unique quasiconformal mapping
(normalized so $\phi(z)=z+O(1/|z|)$ as $z\rightarrow\infty$)
so that $(\phi_\mu)_{\overline{z}}/(\phi_\mu)_{z}=\mu$.
The map $\Psi_m^{1}$ is continuous by a standard result
saying that pointwise evaluation of
quasiconformal maps depends continuously
%(even analytically)
on the dilatation, e.g.,   Theorem I.7.5 of \cite{MR1230383}.

The continuity of $\Psi_m^2$ is the claim that the
 complex dilatation  of $g_m^{\myvec{q}}$  moves
continuously  in the supremum norm as a function
of $\myvec{q}$. By  Definition \ref{g_m_defn},
$g_m^{\myvec{q}}$ is holomorphic except where  it equals
$g_m^{\myvec{q}} = \exp \circ( m\cdot h^{\myvec{q}}) \circ \psi_m^{\myvec{q}}$.
Since post-composing
by holomorphic maps does not change the dilatation of
a quasiregular map, the dilatation  of
$g_m^{\myvec{q}}$ is the same as  the dilatation of
$\psi_m^{\myvec{q}}$, and the latter  dilatation
moves continuously in $L^\infty$ as a function
of $\myvec{q}$; this was explained in   Remark \ref{cts dependence}.
%as a consequence of the construction of $\psi_m$.
\end{proof}

\begin{comment} % #######################################
\begin{cor}\label{corollary_of_fixed_point_thm}
For all sufficiently large $m\in\mathbb{N}$, 
	there exists $\myvec{q}\in\prod_{j=1}^nD(p_j,\rho)$
so that $(R_m^{\myvec{q}})^{-1}(\infty)=P$. 
\end{cor}

\begin{proof} 
Let $m$ be as in Theorem \ref{exist_of_fixedpoint}, and set 
	$\myvec{q}$ to be the fixed point whose existence is asserted in Theorem
	\ref{exist_of_fixedpoint}. Then by (\ref{poles_of_g_m_q}) and
	Theorem \ref{exist_of_fixedpoint}, we have that
\[ R_m^{-1}(\infty)=\phi_m^{\myvec{q}}( (g_m^{\myvec{q}})^{-1}(\infty))
	=\phi_m^{\myvec{q}}(\myvec{q})=\myvec{p}, \]
where we are abusing notation slightly by setting 
$\phi_m^{\myvec{q}}(\myvec{q})
	:=(\phi_m^{\myvec{q}}(q_1), ..., \phi_m^{\myvec{q}}(q_n))$. 
\end{proof}

%Recall that 
%Theorems \ref{equiv_of_defns} says that a topological 
%lemniscate is a lemniscate graph, and 
Theorem  \ref{whatweneedfromappendix} 
says that there is a vertex-free lemniscate graph $H$ so that a corresponding 
level set of $u_{H,P}$ is $\varepsilon$-homeomorphic to $G$. Both these
results will be proven later, but using them we can now complete the 
proof of Theorem \ref{main_thm_2}. 
\end{comment} % ##################################

%--------------------------------------------------------
\section{Proof of Theorem \ref{power_series_cor}: 
quantitative Runge's theorem}\label{sec_cor_proof}    

%In this section, we fix  a compact set $K$,  a  set $P$, and 
%an analytic function $f$ as in Theorem \ref{power_series_cor}. 
The proof of Theorem \ref{power_series_cor}
roughly follows the proof of Theorem I in \cite{MR1501732} 
(which assumes the set  $K$ is connected,
and $P=\{\infty\}\subset\Chat\setminus K$), 
except that we replace their application of a weaker polynomial 
lemniscate result with our Theorem \ref{main_thm_2}. There are also
some other non-trivial adjustments to handle the case of disconnected $K$. 

Let $K$, $P$ and $f$ be as in the statement of 
Theorem \ref{power_series_cor}.
Let $U$ be the neighborhood of $K$ in which $f$ is holomorphic. 

\begin{lem} 
Given $K$, $U$ and $P$ as above, we 
can find  $K' \supset K$, 
$ U' \subset U$, and $P' \subset P$, 
so that  both $K'$ and  $U'$ are bounded by finitely many pairwise disjoint
smooth Jordan curves, and that $P'$ contains exactly one 
point in each connected component of $\Chat \setminus K'$. 
\end{lem}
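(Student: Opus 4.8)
The plan is to construct $K'$, $U'$, $P'$ by a thickening/shrinking argument using standard facts about open subsets of $\Chat$. The idea is that $K$ is compact and $U$ is open with $K \subset U$, so there is room to fatten $K$ slightly to a set $K'$ with smooth boundary while still staying inside a slightly shrunken $U'$ that also has smooth boundary; the only subtlety is matching up the components of the complements so that $P$ still meets each component of $\Chat \setminus K'$ exactly once, which forces us to also pass to a subset $P' \subset P$.

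\medskip

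\noindent\textbf{Step 1: Produce $K'$ with smooth boundary between $K$ and $U$.} Since $K \subset U$ with $K$ compact and $U$ open, the function $z \mapsto \dist(z, \partial U)$ (spherical distance) is positive on $K$, so $\eta := \tfrac{1}{2}\dist(K, \partial U) > 0$. Let $W$ be the (open) $\eta$-neighborhood of $K$, so $K \subset W \subset \overline{W} \subset U$. Mollifying the distance function $z \mapsto \dist(z, K)$ and taking a regular value slightly above $0$ (Sard's theorem), we obtain a compact set $K'$ with $K \subset \interior(K')$ and $\overline{W}\supset K' \supset K$, whose boundary $\partial K'$ consists of finitely many pairwise disjoint smooth Jordan curves (finitely many because $\partial K'$ is a compact $1$-manifold). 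We may also arrange $K' \subset W$, hence $K' \subset U$.

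\medskip

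\noindent\textbf{Step 2: Produce $U'$ with smooth boundary between $K'$ and $U$.} Apply the same mollification procedure to the pair $K' \subset U$: choosing a regular value of a smoothed version of $\dist(\cdot, K')$ that is small enough, we get an open set $U'$ with $K' \subset U' \subset \overline{U'} \subset U$ and $\partial U'$ a finite union of disjoint smooth Jordan curves. (Here we use that $\partial U$ has positive distance from $K'$, which holds since $K'$ is compact and contained in the open set $U$.) Since $f$ is holomorphic on $U \supset U'$, it is in particular holomorphic on a neighborhood of $\overline{U'} \supset K'$.

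\medskip

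\noindent\textbf{Step 3: Arrange $P' \subset P$ to meet each component of $\Chat \setminus K'$ exactly once.} This is the step requiring the most care, and is the \emph{main obstacle}. Each component $V$ of $\Chat \setminus K'$ is open and connected, with $V \subset \Chat \setminus K$, so $V$ is contained in a unique component $V^*$ of $\Chat \setminus K$. By hypothesis $P$ contains exactly one point $p(V^*)$ of each component $V^*$ of $\Chat \setminus K$. The worry is twofold: (a) some component $V$ of $\Chat \setminus K'$ might contain \emph{no} point of $P$ (if $p(V^*)$ landed in a different component of $\Chat\setminus K'$, or got swallowed by the thickening $K'$); (b) two different components $V_1, V_2$ of $\Chat \setminus K'$ might contain the \emph{same} point of $P$ — impossible, since distinct components are disjoint — or more precisely, two components $V_1,V_2$ sitting inside the same $V^*$ would each want the single point $p(V^*)$. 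To handle (a), note that we are free to shrink the thickening in Step 1: since $\Chat\setminus K$ has at most countably many components and $P$ is \emph{finite}, only finitely many components $V^*$ of $\Chat\setminus K$ contain a point of $P$; choose $\eta$ in Step 1 small enough that $K'$ is disjoint from the finite set $P$ and that each such $p(V^*)$ still lies in $\Chat\setminus K'$. Then $p(V^*)$ lies in some component $V$ of $\Chat\setminus K'$; declare $p(V^*)\in P'$. For (b): we simply do \emph{not} require $P'$ to meet \emph{every} component of $\Chat\setminus K'$ — wait, the lemma does require that. So instead, for each component $V$ of $\Chat\setminus K'$, if $V$ contains a point of $P$ we keep exactly one such point in $P'$ (there is at most one, since $V\subset V^*$ and $V^*$ contributes only $p(V^*)$); if $V$ contains no point of $P$, pick any point $q_V \in V$ and adjoin it — but this would violate $P'\subset P$. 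Hence the correct fix is to further enlarge $K'$: absorb into $K'$ any "extra" component of its complement that fails to contain a point of $P$. Concretely: let $V_1,\dots,V_k$ be the components of $\Chat\setminus K'$ containing no point of $P$; each $\overline{V_i}$ is a compact subset of $\Chat\setminus K$ (it cannot meet $K$ since $K\subset\interior K'$), so $K'' := K' \cup \overline{V_1}\cup\dots\cup\overline{V_k}$ is compact, contained in $U$ after possibly a further small adjustment, and $\Chat\setminus K''$ has strictly fewer components, each containing a point of $P$. Iterating finitely often (the number of complementary components strictly decreases and the bounded ones are finite in number once $\eta$ is small — actually $\Chat\setminus K'$ has finitely many components since $\partial K'$ is a finite union of smooth curves), we arrive at a compact set — rename it $K'$ — whose complement has \emph{every} component containing a point of $P$, while $P' := P \cap (\Chat\setminus K')$ contains exactly one point of each. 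Finally re-run Steps 1–2 to smooth this new $K'$ and produce the corresponding $U'$ (smoothing can only be needed once at the end, since absorbing whole closed complementary components of a smooth-boundary set again yields a set with smooth boundary — its boundary is a subcollection of $\partial K'$). This gives $K' \supset K$, $U' \subset U$, $P' \subset P$ with all the required properties.

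\medskip

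\noindent The routine verifications I would not belabor: that mollified distance functions have regular values arbitrarily close to $0$; that a compact $1$-manifold is a finite disjoint union of smooth Jordan curves; and the bookkeeping that the complement of a set bounded by finitely many smooth Jordan curves has finitely many components.
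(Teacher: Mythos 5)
Your Steps 1 and 2 are routine and fine, and your high-level plan (thicken $K$, then absorb into $K'$ any complementary components that miss $P$) can in principle be made to work, but Step 3 as written contains a genuine gap at exactly the point you flag as the main obstacle. First, you assert that $P$ is finite and choose $\eta$ accordingly, but $P$ need not be finite: $\Chat\setminus K$ can have infinitely many components (take $K$ to be a union of circles accumulating at a point), and by hypothesis $P$ meets every component, so $P$ is then infinite. The sentence that ``only finitely many components $V^*$ of $\Chat\setminus K$ contain a point of $P$'' is also in tension with the hypothesis that every component contains one. Second, and more seriously, the assertion that $K'':=K'\cup\overline{V_1}\cup\dots\cup\overline{V_k}$ is ``contained in $U$ after possibly a further small adjustment'' is stated without proof. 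A component $V_i$ of $\Chat\setminus K'$ missing $P$ can a priori be a large pocket of some $V_i^*$, cut off from $p(V_i^*)$ when the $\eta$-collar of $K$ pinches a narrow neck; such a pocket need not lie anywhere near $K$ or inside $U$. What is actually needed is a lemma: for $\eta$ small enough (depending on $K$, $U$, and $P$), every component of $\Chat\setminus K_\eta$ either contains a point of $P$ or has closure inside $U$. This is true and can be proved by a compactness argument --- if not, pick $z_n\notin U$ in a bad component of $\Chat\setminus K_{1/n}$, extract a limit $z_0$ lying in some component $V^*$ of $\Chat\setminus K$, join $z_0$ to $p(V^*)$ by a compact path in $V^*$ at positive distance from $K$, and obtain a contradiction for $n$ large --- and it requires no finiteness of $P$. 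But this argument is absent from your write-up, and it is precisely the heart of the matter.

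The paper's proof takes a different route that sidesteps the whole issue: it builds $K'$ from the $U$ side rather than by thickening $K$. One first shrinks $U$ to a smoothly bounded $U'$ still covering $K$, notes that $U'$ together with the open components of $\Chat\setminus K$ form an open cover of the compact sphere, and extracts a finite subcover $U',\Omega_1,\dots,\Omega_n$; the corresponding $n$ points of $P$ form $P'$, and this compactness step is what produces the finite $P'$ with no finiteness hypothesis on $P$. Each $\Omega_j\setminus U'$ is a compact subset of the open $\Omega_j$, so one chooses a connected, smoothly bounded $\Omega_j'\subset\Omega_j$ containing $(\Omega_j\setminus U')\cup\{p_j\}$ and sets $K':=\Chat\setminus\bigcup_j\Omega_j'$. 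By construction $\Chat\setminus K'$ is the disjoint union of the $n$ connected sets $\Omega_j'$, hence has exactly $n$ components each meeting $P'$ in exactly one point, and one also gets $K\subset K'\subset U'\subset U$ automatically. No pocket can form and nothing needs to be absorbed, so the delicate small-$\eta$ analysis your approach requires never arises.
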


\begin{proof} 
First replace $U$ by a subset $U'$ that 
still covers $K$ and that is bounded 
by finitely many disjoint smooth curves.
For example, take a union of sufficiently small 
grid squares and round the corners of the resulting 
polygon, as in Figure \ref{Rounding}. 
Then $U'$ and  the connected components of 
$\Chat \setminus K$ cover $\Chat$.  This is an open 
cover of a compact space,  
so  $U'$ and a finite subcollection $\{\Omega_k\}_1^n$ of 
the components of $\Chat \setminus K$ also cover $\Chat$. 
Let $P' \subset P $ be the $n$ points contained in these 
finitely many open sets. Since $K$ has positive distance 
from $ \Chat \setminus U'$, we see that $\Omega_j \setminus U'$
is a compact subset of $\Omega_j$. Replace each $\Omega_j$ by a 
connected subset $\Omega_j'$ that is bounded by 
finitely many smooth curves, and that contains the 
compact set $P \cup (\Omega_j \setminus U')$ (again 
we may take a union of small squares and round 
the remaining corners).  
Let $K' = \Chat \setminus  \cup_{j=1}^n \Omega_j'$.
Then $K \subset K' \subset U' \subset U$, and $P'$ 
contains exactly one point in each complementary component 
of $K'$.  Thus the claim is  verified.
\end{proof}

%\begin{comment} % ###################################
\begin{figure}[htb]
\centerline{
\includegraphics[height=1.75in]{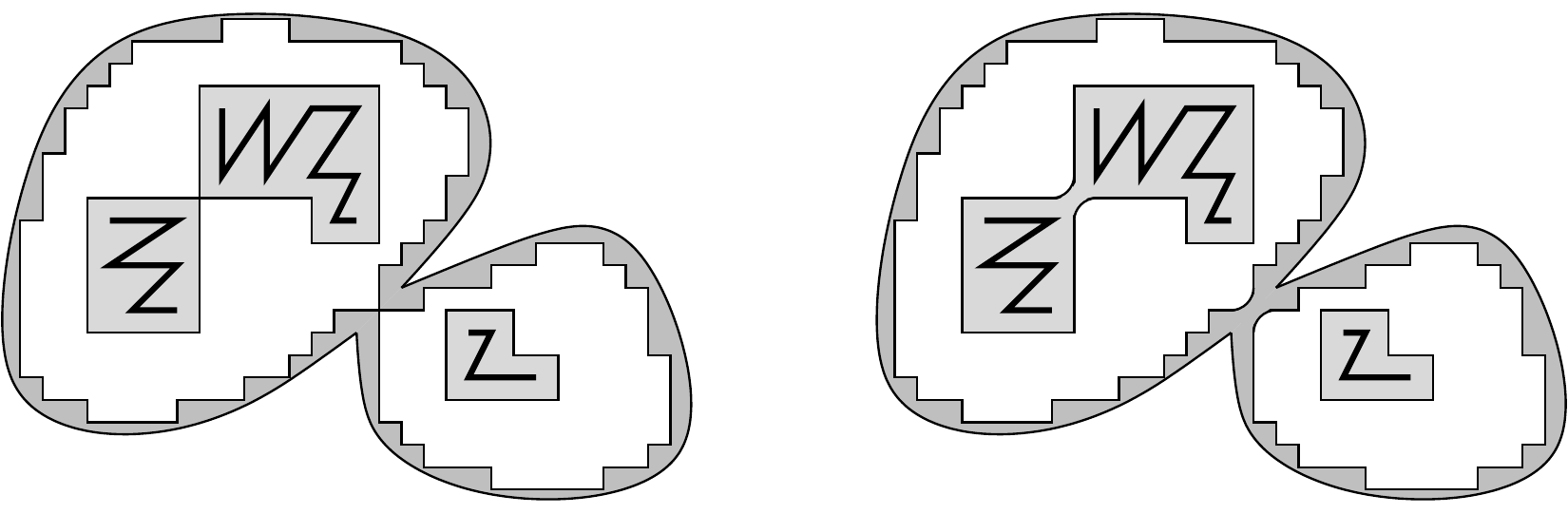}
}
\caption{
Replacing $K$ and $U$ by regions with smooth  Jordan curve boundaries.
If $\delta <\ll \dist(K, \partial U)$ we can take all squares
from a $\delta$-grid that lie inside $U$, and then round 
the corners of the resulting polygon to get  a 
an open set $U' \subset U$ that covers $K$ and has 
smooth boundaries. A similar construction gives 
a smooth set $K'$ containing $K$. 
%First cover $K$ by squares and then remove any degree four vertices
%by rounding. Similarly, fill $U$ with squares and round any 
%self-intersections.
}
\label{Rounding}
\end{figure}
%\end{comment} % ###############################

It suffices to prove Theorem \ref{power_series_cor} for these new 
sets $K',U', P'$; this immediately implies the same result for 
the original sets $K,U,P$.
To simplify notation, we henceforth 
refer to the new sets simply as $K$,  $U$ and $P$ (dropping the 
prime notation).

Considering $\partial K$ 
as a lemniscate graph, each of its faces  either contains points of
$K$ or a single point of $P$, and we color these faces  white and  grey
respectively. 
Consider the function 
$u_{\partial K, P}$ as in  Notation \ref{u_without_verts}. 
Recall that inside each grey face of $\partial K$, this
is the Green's function for that  face  with pole at the 
corresponding point of $P$ (there is one such point per grey face), 
and $u_{\partial K,P}$  is zero outside all the grey faces.
If $S >1$,  set 
$C_S:=\{z : u_{\partial K, P}(z)=\log S\}$. Then fix some $R>1$
so that $C_R$ separates $K$ from $\partial U$.
When $R$ is close to one, $C_R$ is a union of  level lines
of Green's functions for all the grey faces, and this  union
is as close to  $\partial K$ as we wish, so such a choice is possible.

\begin{lem}\label{first_app_th_lem} 
For every $\rho\in(1,R)$, there exists a rational mapping $r$ 
so that the lemniscates $L_r(1)=\{z: |r(z)|=1\}$ and
$L_r((R/\rho)^d) = \{z: |r(z)|=(R/\rho)^d\}$ 
both separate $K$ from $C_R$, where $d=\textrm{deg}(r)$.
\end{lem}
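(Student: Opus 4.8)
The plan is to produce $r$ by approximating a level curve of $u_{\partial K,P}$ with the vertex-free case of Theorem \ref{main_thm_2} (proved in Section \ref{Alexs_proof}), and then to locate the two lemniscates $L_r(1)$ and $L_r((R/\rho)^{d})$ from the potential-theoretic description of $\tfrac1d\log|r|$ supplied there. First I would fix $\sigma$ with $1<\sigma<\rho$ and consider $C_\sigma=u_{\partial K,P}^{-1}(\log\sigma)$. Since $\sigma>1$, this is a lemniscate graph with no vertices: each component is an analytic level line of one of the Green's functions $g_{\Omega_j}(\cdot,p_j)$, and $C_\sigma$ bounds one white face $\{u_{\partial K,P}<\log\sigma\}\supset K$ together with one grey face $\{g_{\Omega_j}(\cdot,p_j)>\log\sigma\}$ around each pole $p_j\in\Omega_j$. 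With this $2$-coloring, $P$ meets each grey face in exactly one point, so the vertex-free case of Theorem \ref{main_thm_2} supplies a rational map $r$ with $r^{-1}(\infty)=P$ whose lemniscate $L_r$ is $\varepsilon'$-homeomorphic to $C_\sigma$, for any prescribed $\varepsilon'>0$.

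Because $\sigma<R$, the curve $C_\sigma$ lies well inside the region $A$ bounded by $\partial K$ and $C_R$, at positive distance from both boundary pieces; so choosing $\varepsilon'$ small forces $L_r=L_r(1)$ to lie in $A$ as well, and hence $L_r(1)$ separates $K$ from $C_R$. For the outer lemniscate I would track how the level sets $\{|r|=c\}$ move as $c$ increases from $1$: they sweep from $L_r\approx C_\sigma$ toward the poles $P$, and since each $p_j$ lies strictly beyond $C_R$ (as $u_{\partial K,P}(p_j)=\infty>\log R$), these level sets stay inside $A$ — hence separate $K$ from $C_R$ — for all $c$ up to the first value $c^\ast$ at which some component of $\{|r|=c\}$ touches $C_R$. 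The remaining point is that $(R/\rho)^{d}<c^\ast$, where $d=\deg r$.

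This is where Section \ref{Alexs_proof} enters: away from $\partial K\cup P$, the function $\tfrac1d\log|r|$ is, up to an error tending to $0$, a fixed positive multiple of $u_{\partial K,P}-\log\sigma$ — this is exactly Theorem \ref{conv_thm} combined with the degree count for the approximating rational functions $r_m$ of Section \ref{Alexs_proof}. Substituting $u_{\partial K,P}=\log R$ into this correspondence identifies $c^\ast$ as $R/\sigma$ raised to the same multiple of $d$; since $\sigma<\rho$ gives $R/\rho<R/\sigma$, the inequality $(R/\rho)^{d}<c^\ast$ follows after taking $\sigma$ (and $\varepsilon'$) small enough, and then $L_r((R/\rho)^{d})$ also separates $K$ from $C_R$, completing the proof.

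I expect the main obstacle to be precisely this last estimate: making the rate at which $|r|$ grows between $C_\sigma$ and $C_R$ sharp enough to trap $L_r((R/\rho)^{d})$ inside $A$. A crude bound on $|r|$ over $C_R$ is useless here, since $(R/\rho)^{d}$ is exponentially large in $d$; one genuinely needs the limiting identity $\tfrac1d\log|r|\approx\mathrm{const}\cdot\bigl(u_{\partial K,P}-\log\sigma\bigr)$ rather than any soft approximation, together with enough care about the proportionality constant (coming from the number of poles) to see that the choice $1<\sigma<\rho$ buys exactly the room required. Everything else — the analyticity and nesting of the level curves, the existence of the rational model with prescribed poles, and the separation assertions once the positions are known — is routine given the results of Sections \ref{smoothing sec} and \ref{Alexs_proof}.
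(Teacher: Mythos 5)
Your approach is broadly parallel to the paper's but structurally different: you pass to an auxiliary level $C_\sigma$ with $1<\sigma<\rho$ and invoke the asymptotic Theorem \ref{conv_thm}, whereas the paper applies Theorem \ref{main_thm_2} to $C_\rho$ itself, writes down the exact potential identities (\ref{form_of_Greens_func}) and (\ref{form_of_Greens_func2}), and finishes with a single maximum-principle comparison (\ref{needed_ext_ineq}). The extra parameter $\sigma$ and the passage to a limit are not needed in the paper's version.

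The genuine problem with your argument is the final exponent estimate, precisely at the step you flag as the obstacle. The constant $\lambda$ in $\tfrac{1}{d}\log|r|\approx\lambda\,(u_{\partial K,P}-\log\sigma)$ equals $1/|P|$: the approximants $r_m$ of Section \ref{Alexs_proof} have $|P|$ poles of order $m$ each, with total degree $d=m|P|$, so $\tfrac{1}{d}\log|r_m|=\tfrac{1}{|P|}\cdot\tfrac1m\log|r_m|\to\tfrac{1}{|P|}\,u_{H,P}$. Hence $c^\ast\approx(R/\sigma)^{d/|P|}$, and the required bound $(R/\rho)^{d}<c^\ast$ is equivalent to $(R/\rho)^{|P|}<R/\sigma$, i.e.\ $\sigma<R^{1-|P|}\rho^{|P|}$; together with $\sigma>1$ this forces $\rho>R^{(|P|-1)/|P|}$. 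So when $|P|\geq 2$ and $\rho$ is close to $1$, no choice of $\sigma\in(1,\rho)$ (nor any $\varepsilon'$) closes the gap, and the step ``the inequality follows after taking $\sigma$ small enough'' fails. This is not repairable by a cleverer choice of $r$: if $|r|\leq 1$ on $\partial K$ and $r$ has a pole of order $\mu_j$ in the face $\Omega_j$, the maximum principle gives $\log|r|\leq \mu_j\,G_{\Omega_j}$ on $\Omega_j$, hence $|r|\leq R^{\mu_j}$ on $C_R\cap\Omega_j$; with $d=\sum_k\mu_k\geq\mu_j+|P|-1$, this again rules out $|r|\geq(R/\rho)^{d}$ on all of $C_R$ once $\rho<R^{(|P|-1)/|P|}$. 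Your instinct that the proportionality constant from the pole count is the heart of the matter is right, but it cannot be bought back by shrinking $\sigma$: the exponent that the potential-theoretic estimate actually produces is $d/|P|$, not $d$, a normalization that is also worth re-examining in the paper's identity (\ref{form_of_Greens_func}) when $|P|>1$.
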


\begin{proof}
Fix $ \rho \in (1, R)$ and 
consider $C_{\rho} =\{ z: u_{\partial K,P}(z) = \log \rho\}$.
By Theorem \ref{main_thm_2}, there exists 
a rational mapping $r$ with $r^{-1}(\infty)=P$ so that the 
lemniscate $L_r(1) $ separates $K$ from $C_{\rho}$. Note that 
\begin{equation}\label{form_of_Greens_func} 
u_{L_r, P}(z) = \frac{1}{d}\log|r(z)|
\end{equation}
and 
\begin{equation} \label{form_of_Greens_func2} 
u_{C_{\rho}, P}(z)=u_{\partial K,P}(z)-\log(\rho) 
\end{equation}
for all $z$. % so that  the left side is non-zero.
Since $L_r$ separates $K$ from $C_{\rho}$, we have that 
$u_{C_{\rho}, P}-u_{L_r, P}<0$ on $C_\rho$ and hence by the maximum principle
\begin{equation}\label{needed_ext_ineq}
u_{C_{\rho}, P}-u_{L_r, P}<0 \textrm{ on } \{z : 0 < u_{C_{\rho}, P}(z) < \infty\}.
\end{equation}
Now assume  $z \in  L_r((R/\rho)^d)$, i.e., 
%= \{z: |r(z)|=(R/\rho)^d\}$, 
assume that  $|r(z)|=(R/\rho)^d$. Then by (\ref{form_of_Greens_func})
we have that $u_{L_r, P}(z)=\log(R/\rho)$, and so by (\ref{needed_ext_ineq}) 
we conclude $u_{C_{\rho}, P}(z) < \log(R/\rho)$. 
Thus (\ref{form_of_Greens_func2}) implies that 
for $z \in L_r((R/\rho)^d)$ 
%$L_r((R/\rho)^d)$ separates $K$ from $C_R$. Since we
$$u_{\partial K,P}(z) = u_{C_{\rho}, P}(z)+ \log(\rho)<
\log(R/\rho) + \log \rho  =\log(R). $$
Hence the lemniscate 
$L_r((R/\rho)^d)$ separates $K$ from $C_R$. Since we
already arranged for the lemniscate $ L_r(1)$ to separate
$K$ from $C_{\rho}$ (and it separates $K$ from $C_R$ since 
$1 < \rho < R$), the proof of the lemma is finished.
\end{proof}

Recall that we have fixed $R>1$ so that $C_R$ separates $K$ 
from $\partial U$. 
For the remainder of this section, we fix some $\rho\in(1,R)$
and fix  a rational map $r$ by applying Lemma \ref{first_app_th_lem} 
using this $\rho$. % Let $d:=\textrm{deg}(r)$. 
After applying a M\"obius
transformation, if necessary, we may assume %without loss of generality, 
that $\infty\not\in K$ and $\infty\in P$. 
Thus if  $r(z)=p(z)/q(z)$, where $p$ and $q$ are polynomials, 
%the first line of
%the proof of Lemma  \ref{first_app_th_lem} implies 
then  $r$ has a pole at $\infty$ and hence 
$ \deg(r) = \deg(p) > \deg(q)$.

We will refer to the \emph{interior} 
of $C_R$ as the union of the components of $\Chat\setminus C_R$
which do not contain a point of $P$.  
Equivalently, this is the union of components where $u_{\partial K,P} < R$, 
and hence these are the components that  contain points of $K$.
If we orient the individual curves in 
$C_R$ so the interior regions  (shown as white in Figure \ref{Winding})
are on the left, and note that 
the unbounded component is not in the interior of $C_R$, then any white point 
is surrounded by an odd number of nested  boundary curves, with alternating 
orientations. %, and it is not surrounded by any other components of $C_R$. 
The outermost curve is oriented in the counterclockwise 
direction, so the total winding number of $C_R$ around any white point 
must equal one. Similarly, the winding number around any grey point is zero.

\begin{figure}[htb]
\centerline{
\includegraphics[height=2.0in]{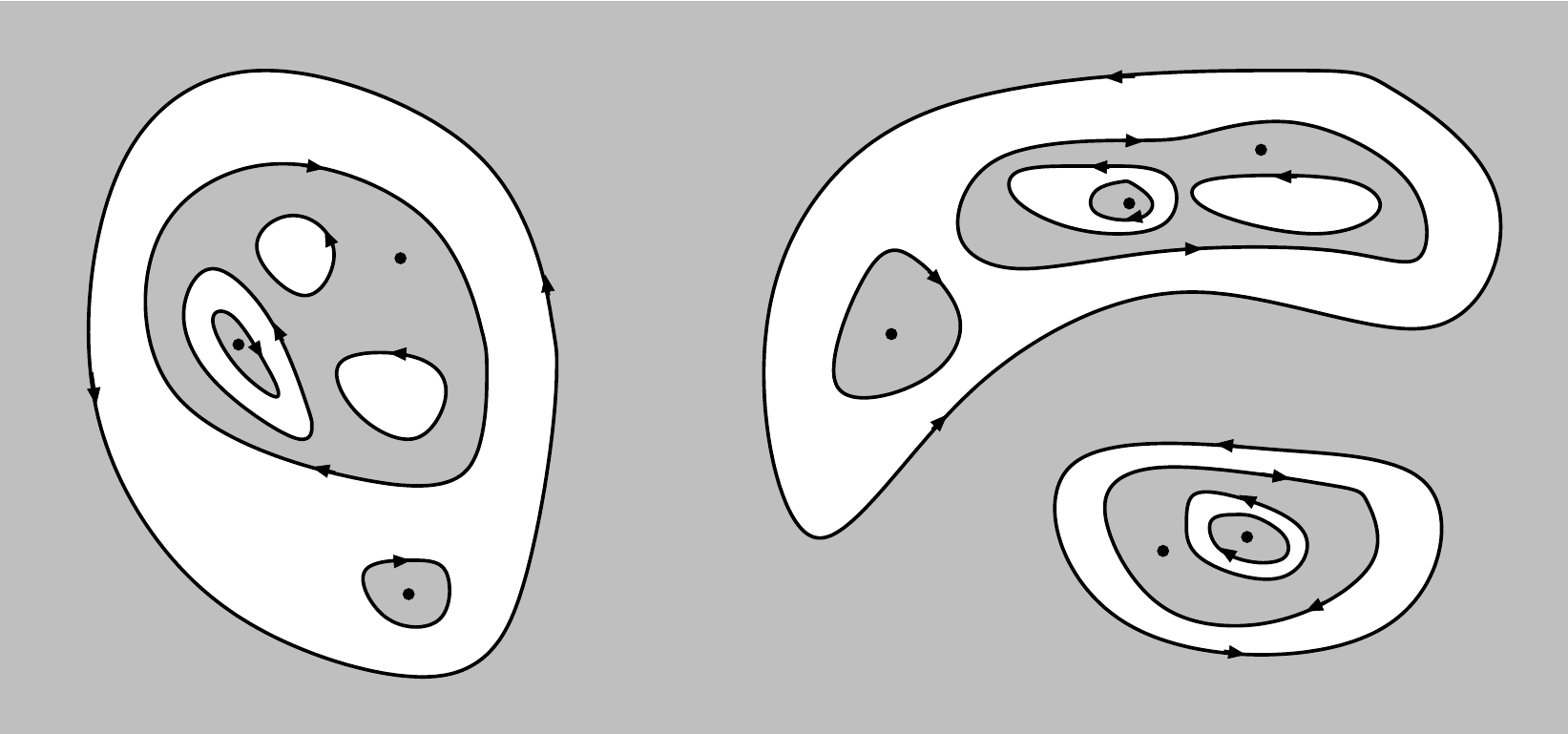}
}
\caption{
The white regions are the interior of $C_R$, and the gray regions each 
contain a point of $P$ (black dots, plus $\infty$). Each component 
of $C_R$ is oriented with the interior on the left, and the 
total winding number of $C_R$ around any white point is one.
}
\label{Winding}
\end{figure}

We can now define the rational maps that will prove 
Theorem \ref{power_series_cor}. 

\begin{definition} 
	For $n\geq1$,   $d = \deg(r)$, and $z$ in the interior 
	of $C_R$, define
\begin{equation}\label{Rdn-1defn}
R_{dn-1}(z):=f(z)-\frac{1}{2\pi i}\int_{C_{R}} \frac{f(\zeta)}{\zeta-z}
\left[ \frac{r(z)}{r(\zeta)}\right]^nd\zeta .
\end{equation}
\end{definition}

\begin{lem}\label{isarationalmap} 
For each $n$, $R_{dn-1}$ is a rational map of degree $\leq dn-1$
with $R_{dn-1}^{-1}(\infty)\subseteq P$.
\end{lem}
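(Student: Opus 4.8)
The plan is to use the Cauchy integral formula to eliminate the term $f(z)$ in the defining formula (\ref{Rdn-1defn}), and then to perform an algebraic simplification of the resulting integrand which exhibits $R_{dn-1}$ explicitly as a rational function of $z$ with the claimed degree and poles. Write $r=p/q$ with $p,q$ polynomials having no common factor. After the M\"obius normalization made above, $r$ has a pole at $\infty$, so $\deg p=d$ and $\deg q\le d-1$, and the zeros of $q$ are exactly the finite poles of $r$; since $r^{-1}(\infty)=P$ (from Lemma \ref{first_app_th_lem}, via Theorem \ref{main_thm_2}), these all lie in $P$, and $\infty\in P$ as well. Moreover, combining (\ref{needed_ext_ineq}) and (\ref{form_of_Greens_func2}) gives $u_{L_r,P}>\log(R/\rho)$ on $C_R$, so by (\ref{form_of_Greens_func}) we have $|r|>(R/\rho)^d>1$ on $C_R$; in particular $r$ has neither a zero nor a pole on $C_R$, so both $p$ and $q$ are nonvanishing there.

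First I would apply the Cauchy integral formula. The curve $C_R$ and its interior lie in $U$, where $f$ is holomorphic, and (as explained in the text) the total winding number of the oriented curve $C_R$ about any point of its interior equals $1$; hence $f(z)=\frac{1}{2\pi i}\int_{C_R}\frac{f(\zeta)}{\zeta-z}\,d\zeta$ for $z$ in the interior of $C_R$. Substituting this into (\ref{Rdn-1defn}) gives
\[
R_{dn-1}(z)=\frac{1}{2\pi i}\int_{C_R}\frac{f(\zeta)}{\zeta-z}\Bigl(1-\Bigl[\tfrac{r(z)}{r(\zeta)}\Bigr]^n\Bigr)\,d\zeta .
\]
Next I would clear denominators. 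Since $1-[r(z)/r(\zeta)]^n=\bigl(p(\zeta)^nq(z)^n-p(z)^nq(\zeta)^n\bigr)/\bigl(p(\zeta)^nq(z)^n\bigr)$ and the numerator vanishes when $z=\zeta$, the quotient
\[
Q(z,\zeta):=\frac{p(\zeta)^nq(z)^n-p(z)^nq(\zeta)^n}{\zeta-z}
\]
is a polynomial in $z$ and $\zeta$. Viewed as a polynomial in $z$, the numerator has degree exactly $nd$ (from the term $-p(z)^nq(\zeta)^n$, since $p(\zeta)^nq(z)^n$ has $z$-degree $n\deg q<nd$), so $Q$ has $z$-degree at most $nd-1$. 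Therefore, for $z$ in the interior of $C_R$ (where $q(z)\neq 0$),
\[
R_{dn-1}(z)=\frac{1}{q(z)^n}\cdot\frac{1}{2\pi i}\int_{C_R}\frac{f(\zeta)\,Q(z,\zeta)}{p(\zeta)^n}\,d\zeta .
\]

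To conclude, write $Q(z,\zeta)=\sum_{j=0}^{nd-1}b_j(\zeta)z^j$ with each $b_j\in\C[\zeta]$. Since $p$ is nonvanishing on $C_R$, the constants $c_j:=\frac{1}{2\pi i}\int_{C_R}f(\zeta)b_j(\zeta)/p(\zeta)^n\,d\zeta$ are well defined, and $R_{dn-1}(z)=N(z)/q(z)^n$ with $N(z):=\sum_{j=0}^{nd-1}c_jz^j$ a polynomial of degree at most $nd-1$. The right-hand side is a rational function on $\Chat$, and since it agrees with (\ref{Rdn-1defn}) throughout the nonempty open interior of $C_R$, the map $R_{dn-1}$ extends to this rational function. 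Its degree is at most $\max(\deg N,\,n\deg q)\le nd-1$ (using $\deg q\le d-1$ and $n\ge 1$), with any common factor only lowering it; and its poles lie among the zeros of $q$, which are finite poles of $r$ and hence in $P$, together with possibly $\infty\in P$. Thus $R_{dn-1}$ is a rational map of degree $\le dn-1$ with $R_{dn-1}^{-1}(\infty)\subseteq P$, as required.

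I do not expect any real obstacle here: the argument is essentially bookkeeping once the Cauchy integral formula is invoked. The one point that requires attention is the degree count — using $\deg p=d>\deg q$ to see that the numerator $p(\zeta)^nq(z)^n-p(z)^nq(\zeta)^n$ has $z$-degree exactly $nd$, so that after dividing by $\zeta-z$ and then by $q(z)^n$ one lands at degree $\le nd-1$ — together with the observation that the only poles that can appear are those of $r$ (all in $P$) and possibly $\infty\in P$.
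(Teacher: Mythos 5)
Your proof is correct and follows essentially the same route as the paper's: apply the Cauchy integral formula to fold $f(z)$ into the integral, rewrite the integrand via $r=p/q$ so that the $z$-dependence is a polynomial of degree at most $dn-1$ divided by $q(z)^n$, and read off the degree and pole locations. Your write-up is slightly more explicit in a few harmless spots (checking $p,q$ nonvanish on $C_R$, expanding $Q(z,\zeta)=\sum b_j(\zeta)z^j$, and noting the possible pole at $\infty\in P$), but the idea and the degree bookkeeping are the same as in the paper.
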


\begin{proof} 
Assume $z$ is interior to $C_R$. By the remarks above, 
$C_R$ has winding number 
one around $z$.  By the homology version of 
Cauchy's integral theorem (e.g.,  Theorem 2.41
of \cite{MR4422101}), we have that
\begin{equation}\label{int_rep} 
f(z)=\frac{1}{2\pi i}\int_{C_{R}} \frac{f(\zeta)}{\zeta-z}d\zeta.
\end{equation}
Thus, (\ref{Rdn-1defn}) and (\ref{int_rep}) combine to give
\begin{equation*}
R_{dn-1}(z)=\frac{1}{2\pi i}\int_{C_{R}}f(\zeta)\left[\frac{1}{\zeta-z}-
	           \frac{r^n(z)}{(\zeta-z)r^n(\zeta)}\right]d\zeta.
\end{equation*}
Recall that $r(z)=p(z)/q(z)$, where $p$ and $q$ are polynomials 
with $\deg(p) > \deg(q)$.   Observe
\begin{equation*} 
\frac{1}{\zeta-z}-\frac{r^n(z)}{(\zeta-z)r^n(\zeta)} =
\frac{1}{q^n(z)}\frac{p^n(\zeta)q^n(z)-p^n(z)q^n(\zeta)}{(\zeta-z)p^n(\zeta)},
\end{equation*}
and hence 
\begin{equation}\label{RNlastexp}
R_{dn-1}(z)=\frac{1}{2\pi i}\frac{1}{q^n(z)}\int_{C_{R}}f(\zeta)
\left[\frac{p^n(\zeta)q^n(z)-p^n(z)q^n(\zeta)}
	{(\zeta-z)p^n(\zeta)}\right]d\zeta.
\end{equation}
Since $p^n(\zeta)q^n(z)-p^n(z)q^n(\zeta)$ 
	vanishes when $\zeta=z$, we see that 
	for fixed $\zeta$, 
\[ \frac{p^n(\zeta)q^n(z)-p^n(z)q^n(\zeta)}{(\zeta-z)p^n(\zeta)}\]
is a polynomial in $z$ of degree  at most  $dn-1$.
Therefore 
\begin{equation}\label{isapol}
	\int_{C_{R}}f(\zeta)\left[\frac{p^n(\zeta)q^n(z)-p^n(z)q^n(\zeta)}
	{(\zeta-z)p^n(\zeta)}\right]d\zeta
\end{equation} 
is a linear combination of polynomials with degrees at most $dn-1$, 
and hence is 
a polynomial of degree  at most $ dn-1$.
From (\ref{RNlastexp}) and (\ref{isapol}) we conclude that $R_{dn-1}$ 
is a rational map, whose numerator has degree at most $dn-1$, and 
whose denominator has degree $n \deg(q)  < n \deg(p) \leq dn-1$. 
Thus the degree of $R_{dn-1}$ is $\leq dn-1$. Moreover,
from (\ref{RNlastexp}) and (\ref{isapol}) we also see that the 
poles of $R_{dn-1}$ are a subset of the poles of $r$ (the zeros of $q$). 
\end{proof}

\noindent \emph{Proof of Theorem \ref{power_series_cor}}.
Let $z\in K$. From (\ref{Rdn-1defn}), we see that
\begin{equation}\label{orig_est}
|R_{dn-1}(z)-f(z)| \leq \frac{1}{2\pi}\int_{C_R} \frac{|f(\zeta)|}
	{|\zeta-z|}\left| \frac{r(z)}{r(\zeta)}\right|^n|d\zeta|
\end{equation}
Note that 
\begin{equation}\label{next_est}
\int_{C_R} \frac{|d\zeta|}{|\zeta-z|} < \frac{\textrm{length}(C_R)}{\textrm{dist}(K, C_R)}.
\end{equation}
Furthermore, Lemma \ref{first_app_th_lem} implies that
for all $z \in K$ and $ \zeta \in C_R$ we have 
\begin{equation}\label{next_est2}
\left| \frac{r(z)}{r(\zeta)}\right|^n < \frac{1}{(R/\rho)^{dn}} .
\end{equation}
Combining (\ref{orig_est}) with (\ref{next_est}) and (\ref{next_est2}), we see that 
\begin{equation*}   %\label{orig_est2}
|R_{dn-1}(z)-f(z)| \leq \frac{\sup_{w\in K}|f(w)|\cdot\textrm{length}(C_R)}
	{2\pi\cdot\textrm{dist}(K, C_R)}\cdot\frac{1}{(R/\rho)^{dn}}.
\end{equation*}
Thus, setting
\begin{equation*}
A:=\frac{\sup_{w\in K}|f(w)|\cdot\textrm{length}(C_R)}
{2\pi\cdot\textrm{dist}(K, C_R)\cdot(R/\rho)} 
	\quad \textrm{ and }  \quad 
	B:=(R/\rho),
\end{equation*}
we see that 
\begin{equation}\label{final_est_need} 
|R_{dn-1}(z)-f(z)| \leq \frac{A}{B^{dn-1}}
\end{equation}
as desired. Since $R_{dn-1}$ is a rational map of degree $\leq dn-1$
with poles at $P$ by Lemma \ref{isarationalmap}, 
the inequality (\ref{final_est_need}) proves Theorem \ref{power_series_cor} 
for the subsequence $(R_{dn-1})_{n=1}^\infty$. Setting 
\[ R_m:=R_{dn-1} \textrm{ for } dn-1\leq m < d(n+1)-1 \]
and increasing $A$ by a factor of $B^d$ in (\ref{final_est_need}), 
we readily deduce the desired estimate for all $m\in\mathbb{N}$. 
\qed

\begin{rem}
It is worth noting that in the special  case when $f$ is analytic on a disk 
$U$ centered at $p$, we may take   $r(z) = (z-p)$,   and take
$C_R \subset U$ to be a circle centered at $p$. 
Then the residue theorem shows that 
(\ref{RNlastexp}) computes the degree $n$  truncation of the power series 
for $f$ around $p$. Thus our calculation mimics 
the well known fact that these truncations converge 
geometrically fast to $f$ on compact subsets of the disk of convergence.
\end{rem}

The following fact is not needed for the proof of Theorem \ref{power_series_cor},
but it illustrates the structure of the approximants $R_{dn-1}$ quite 
clearly, and explains how they generalize Taylor series approximations.  
We let $R^{(k)}$ denote the $k$th derivative of $R$.

\begin{prop} 
The map $R_{dn-1}$ is a rational map of degree $\leq dn-1$ 
satisfying $R_{dn-1}^{-1}(\infty)\subseteq P$ and 
\begin{equation}\label{reltn1} 
\textrm{ if } r(a)=0 \textrm{ then } R_{dn-1}^{(k)}(a)=f^{(k)}(a)
\textrm{ for each } 0\leq k \leq n-1. 
\end{equation}
Moreover, if $R$ is another rational map of degree $\leq dn-1$ with 
the same poles of the same order as $R_{dn-1}$ so that $R$ 
also satisfies (\ref{reltn1}), then $R\equiv R_{dn-1}$. 
\end{prop}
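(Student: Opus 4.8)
The first two assertions---that $R_{dn-1}$ is a rational map of degree $\le dn-1$ with $R_{dn-1}^{-1}(\infty)\subseteq P$---are already the content of Lemma \ref{isarationalmap}, so the plan is to prove only the interpolation identity (\ref{reltn1}) and the uniqueness claim.

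For (\ref{reltn1}), the key step is to pull the factor $r(z)^n$ out of the integral in (\ref{Rdn-1defn}) and write, for $z$ in the interior of $C_R$,
\[ R_{dn-1}(z)=f(z)-r(z)^n\,h(z),\qquad h(z):=\frac{1}{2\pi i}\int_{C_R}\frac{f(\zeta)}{(\zeta-z)\,r(\zeta)^n}\,d\zeta, \]
which is just a rearrangement of (\ref{Rdn-1defn}) (and is consistent with the splitting already implicit in (\ref{RNlastexp}), using $q^n/p^n=1/r^n$). First I would record three facts about the interior of $C_R$: it lies in $U$, since each of its components meets the open set $U$ through $K$ but avoids $\partial U$ and is therefore contained in $U$, so $f$ is holomorphic there; it contains every zero of $r$, since the zeros of $r$ are among the ``white'' points for $L_r$ and $L_r(1)$ separates $K$ from $C_R$ (cf.\ Lemma \ref{first_app_th_lem}); and it contains no pole of $r$. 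Granting these, the integrand defining $h$ is holomorphic in $z$ throughout the interior of $C_R$, hence $h$ is holomorphic there, and $r^n$ is holomorphic near each zero $a$ of $r$ and vanishes there to order $n\cdot\textrm{mult}_a(r)\ge n$, where $\textrm{mult}_a(r)$ is the multiplicity of $a$. Therefore $R_{dn-1}-f=-r^nh$ vanishes at $a$ to order $\ge n$, which is exactly (\ref{reltn1}).

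For uniqueness, I would set $D:=R-R_{dn-1}$. Because $R$ and $R_{dn-1}$ have the same poles of the same orders, $D$ is a rational function whose poles---finite ones as well as a possible pole at $\infty$---occur only among the poles of $R_{dn-1}$ and to no larger order. Letting $M$ be the monic polynomial whose zeros are the finite poles of $R_{dn-1}$ to their orders, I would write $R=N'/M$ and $R_{dn-1}=N/M$ in lowest terms; since the degree of a rational map is the larger of its numerator and denominator degrees, $\deg N',\deg N\le dn-1$, so $DM=N'-N$ is a polynomial of degree $\le dn-1$. On the other hand $D$ is holomorphic at each zero $a$ of $r$ and, by the hypothesis that $R$ also satisfies (\ref{reltn1}), vanishes there to order $\ge n$; arguing exactly as in the interpolation step, both $R$ and $R_{dn-1}$ in fact agree with $f$ to order $n\cdot\textrm{mult}_a(r)$ at $a$, so (using $M(a)\ne 0$) the polynomial $DM$ vanishes at the zeros of $r$ to total order $\ge\sum_a n\cdot\textrm{mult}_a(r)=n\deg(p)=dn$. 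A polynomial of degree $\le dn-1$ with at least $dn$ zeros counted with multiplicity is identically zero, so $DM\equiv 0$, whence $D\equiv 0$ and $R\equiv R_{dn-1}$.

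The step I expect to be the main obstacle is making the zero count come out to exactly $dn$: this forces one to count the zeros of $r$ with multiplicity and to invoke the interpolation property in the sharp form (order $n\cdot\textrm{mult}_a(r)$ at $a$) that $R_{dn-1}$ actually enjoys; when the zeros of $r$ are simple the bare statement (\ref{reltn1}) already suffices, and in any case $r$ may be taken with only simple zeros without affecting the application in Theorem \ref{power_series_cor}. The remaining steps---the factoring of $r^n$ out of the integral, and the bookkeeping of degrees and pole orders---are routine.
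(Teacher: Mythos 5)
Your argument follows the same route as the paper's proof: factor $r(z)^n$ out of the integral to get (\ref{reltn1}), and count zeros of the difference $R - R_{dn-1}$ against its degree for uniqueness. Your explicit clearing of denominators (writing $DM = N' - N$ as a polynomial of degree $\leq dn - 1$) is a useful expansion of what the paper compresses into ``since (\ref{reltn1}) has $dn$ equations.''

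The multiplicity issue you flag is, however, a genuine gap, and it affects the paper's proof equally. If $a$ is a zero of $r$ of multiplicity $m_a > 1$, the hypothesis that $R$ satisfies (\ref{reltn1}) only forces $R - f$ to vanish to order $n$ at $a$. Your sentence asserting that ``both $R$ and $R_{dn-1}$ in fact agree with $f$ to order $n\cdot\textrm{mult}_a(r)$'' is unjustified for $R$: the stronger vanishing for $R_{dn-1}$ comes from the factorization $R_{dn-1} - f = -r^n h$, which is specific to the integral formula (\ref{Rdn-1defn}) and is not available for an arbitrary competitor. Thus the total zero count is $n$ times the number of \emph{distinct} zeros of $r$, which is strictly less than $dn$ whenever $r$ has a multiple zero, and the uniqueness claim as stated actually fails in that case: take $r(z) = z^2$, $n = 1$, $f(z) = e^z$; then $d = 2$, $dn-1 = 1$, the formula gives $R_{dn-1}(z) = 1 + z$, but $R(z) = 1 + 2z$ also has degree $1$, a single pole of order one at $\infty$, and satisfies (\ref{reltn1}), yet $R \ne R_{dn-1}$. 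The correct hypothesis for uniqueness is the sharper one your interpolation step naturally produces for $R_{dn-1}$, namely that $R - f$ vanishes to order $n m_a$ at each zero $a$ (equivalently, $r^n$ divides $R - f$ in the local ring at each zero); under that hypothesis your polynomial zero count does reach $dn$ and the argument closes. In the lemniscate application the relevant $r$ can be taken with simple zeros, so the gap is invisible there, but the proposition as a stand-alone statement needs the stronger hypothesis.
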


\begin{proof} 
The fact that $R_{dn-1}$ satisfies (\ref{reltn1}) follows from
(\ref{Rdn-1defn}) and the quotient rule (it shows the 
first $n-1$ derivatives of the integral in (\ref{Rdn-1defn}) are 
all zero). The other conclusions 
about $R_{dn-1}$ were proven in Lemma \ref{isarationalmap}. 
If $R(z)$ is another rational map satisfying 
(\ref{reltn1}), then the difference $R_{dn-1}(z)-R(z)$ 
is a rational function of  degree $\leq dn-1$ that has  $dn$ zeros
counted with multiplicity 
(since (\ref{reltn1}) has $dn$ equations).
Hence $R_{dn-1}(z)-R(z)$ must be identically $0$. \end{proof}

%--------------------------------------------------------
\section{Geometric decay in Theorem \ref{power_series_cor} is sharp}
\label{sharp_approx_sec}

For finite sets $K$, we can interpolate any
function on $K$ exactly by a finite degree rational function, 
but for larger sets the geometric decay rate  in 
Theorem \ref{power_series_cor} is sharp. 
%Recall that  a compact  planar set $K$ is regular for the Dirichlet problem if 
%the Green's function for each of its complementary components 
%exists and vanishes continuously on the boundary; see Chapter III 
%of \cite{MR2450237}. This happens if every connected component of $K$ 
%is non-trivial, but also for other sets, e.g.,  any
%Julia set of  a rational map is regular \cite{MR194595}.

\begin{thm}\label{rate_of_conv} 
Suppose $K \subset \complex$ is a compact set with positive 
logarithmic capacity,  and that  $P$ and  $f$  
are as in Theorem \ref{power_series_cor}.
Assume $f$ does not admit a holomorphic extension to 
	$\Chat\setminus P$. Then there exists a constant $D \in (1, \infty)$
with the following property. For any ${C}\in(0,\infty)$ 
and any sequence of rational maps ${R}_n$ of degree 
$\leq n$ satisfying ${R}_n^{-1}(\infty)\subseteq P$,
we have that
\begin{equation*} 
\sup_{z\in K}|f(z)-{R}_n(z)|>\frac{{C}}{D^n}
\end{equation*}
for all sufficiently large $n$.
\end{thm}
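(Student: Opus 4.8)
The plan is to prove a Bernstein--Walsh type estimate for rational functions whose poles lie in $P$, and then to telescope any hypothetically fast sequence of such functions into a holomorphic continuation of $f$ that reaches beyond the true domain of holomorphy of $f$. For the setup, let $\{\Omega_j\}$ be the connected components of $\Chat\setminus K$, let $p_j\in\Omega_j\cap P$, and let $u:=u_{\partial K,P}=\sum_j G_{\Omega_j}(\,\cdot\,,p_j)$ be defined as in Notation~\ref{u_without_verts} (extended by $0$ off $\bigcup_j\Omega_j$). Since $\Chat\setminus\Omega_j\supseteq K$ has positive capacity, every $G_{\Omega_j}(\,\cdot\,,p_j)$ exists; at each point at most one summand is nonzero, so $u$ is well defined, $u\ge 0$, $u\equiv 0$ on $K$, $u$ is harmonic on each $\Omega_j$ away from $p_j$, $u\to+\infty$ at the points of $P$, and $\{u<+\infty\}=\Chat\setminus P$. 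As $f$ is holomorphic on a neighbourhood of $K$ but does not continue holomorphically to all of $\Chat\setminus P$, the number
\[ s_0:=\sup\{s>0:\ f\text{ continues holomorphically to }\{u<s\}\} \]
lies in $(0,\infty)$. Fix any constant $D>e^{s_0}$; we will show this $D$ works, i.e., that no $C$ and no sequence $(R_n)$ as in the statement can satisfy $\sup_{z\in K}|f(z)-R_n(z)|\le C/D^n$ for all large $n$.

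The core ingredient is the following \emph{Bernstein--Walsh inequality}: if $S$ is a rational function whose poles all lie in $P$, and the order of the pole of $S$ at each point of $P$ is at most $n$, then $|S(z)|\le\bigl(\sup_{w\in K}|S(w)|\bigr)\,e^{\,n\,u(z)}$ for all $z\in\Chat$. To prove this, fix a face $\Omega_j$; the only pole of $S$ in $\Omega_j$ is $p_j$, of some order $d_j\le n$. The function $v:=\tfrac1n\log|S|$ is subharmonic on $\Omega_j\setminus\{p_j\}$; the difference $v-\tfrac{d_j}{n}G_{\Omega_j}(\,\cdot\,,p_j)$ has a bounded singularity at $p_j$, is subharmonic and bounded above on $\Omega_j$, and has upper limit at most $\tfrac1n\log\sup_K|S|$ at every regular point of $\partial\Omega_j$ (using that $S$ is pole-free and continuous on $K\supseteq\partial\Omega_j$ and that $G_{\Omega_j}\to0$ there). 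Since the irregular points of $\partial\Omega_j$ form a polar set (Kellogg), the extended maximum principle for subharmonic functions gives $v\le\tfrac1n\log\sup_K|S|+\tfrac{d_j}{n}G_{\Omega_j}(\,\cdot\,,p_j)\le\tfrac1n\log\sup_K|S|+u$ on $\Omega_j$, and the inequality is trivial on $K$ since $u=0$. Exponentiating yields the estimate, which applies in particular to any rational map of degree $\le n$ with poles in $P$.

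Now assume, for contradiction, that $\sup_{z\in K}|f(z)-R_n(z)|\le C/D^n$ for all $n\ge N_0$. For $n\ge N_0$ put $g_n:=R_{n+1}-R_n$: its poles lie in $P$, its pole order at each point of $P$ is $\le n+1$, and $\sup_K|g_n|\le CD^{-n-1}+CD^{-n}\le 2C\,D^{-n}$. By the estimate, $|g_n(z)|\le 2C\,D^{-n}e^{(n+1)u(z)}$, so on $\{u\le t\}$ with $t<\log D$ we get $|g_n|\le 2C\,e^{t}(e^{t}/D)^{n}$, which is summable in $n$. Hence $\sum_{n\ge N_0}g_n$ converges uniformly on $\{u\le t\}$ for every $t<\log D$, so $R_n$ converges locally uniformly on $\{u<\log D\}$ to a function $F$ holomorphic there with $F=f$ on $K$ (as $R_n\to f$ uniformly on $K$ and $u=0<\log D$ there). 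Every component $W$ of $\{u<\log D\}$ meets $K$: otherwise $W$ lies in a single $\Omega_j$ and is a component of $\{G_{\Omega_j}(\,\cdot\,,p_j)<\log D\}$ whose boundary avoids $\partial\Omega_j$, which is impossible since $G_{\Omega_j}(\,\cdot\,,p_j)$ has no interior minimum and tends to $+\infty$ at $p_j$. On $W$, $F$ and $f$ are holomorphic and agree on $W\cap K$, which is non-polar because $K$ has positive capacity (the sole exception being a ``bubble'' $W$ about an isolated point of $K$, where $f$ is already holomorphic), so the identity theorem gives $F=f$ on $W$. Thus $f$ continues holomorphically to $\{u<\log D\}$, a set strictly larger than $\{u<s_0\}$ since $\log D>s_0$, contradicting the definition of $s_0$. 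Hence $\sup_{z\in K}|f-R_n|>C/D^n$ for infinitely many $n$, which is the desired conclusion.

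The main obstacle is the Bernstein--Walsh estimate together with its potential-theoretic prerequisites for an \emph{arbitrary} compact set of positive capacity: one must verify the Green's functions $G_{\Omega_j}(\,\cdot\,,p_j)$ exist, apply the maximum principle correctly across the polar set of irregular boundary points, and confirm that every sublevel set $\{u<s\}$ has all its components abutting $K$ (and is open away from the polar set of irregular points), so that locally uniform convergence of the approximants genuinely continues $f$ rather than producing an a priori unrelated holomorphic limit.
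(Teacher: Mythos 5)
Your approach mirrors the paper's: you prove a Bernstein--Walsh inequality for rational functions with poles in $P$ (this is the paper's Lemma \ref{opt_conv_lemma}, phrased as $|Q| \leq (\sup_K |Q|)\, e^{d\, u}$), take $\log D$ past the critical level $s_0$ of $u_{\partial K, P}$ beyond which $f$ cannot continue, and telescope the differences $R_{n+1} - R_n$ to produce a holomorphic continuation of $f$ to $\{u < \log D\}$, a contradiction. The additional care you take over potential-theoretic prerequisites---existence of the Green's functions, the extended maximum principle across the polar set of irregular boundary points, verifying that every component of $\{u < \log D\}$ abuts $K$, and invoking the identity theorem only on non-polar subsets of $K$---goes beyond what the paper spells out, and all of it is genuinely needed.

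There is, however, a mismatch with the statement being proved. You negate ``for all sufficiently large $n$'' as ``for all $n \geq N_0$'', so what you actually establish is ``$\sup_K|f-R_n| > C/D^n$ for infinitely many $n$'' (you correctly note this at the end), but that is weaker than the theorem's ``for all sufficiently large $n$''. The true negation is ``$\sup_K|f-R_n| \leq C/D^n$ along some subsequence $n_k$'', and then the telescoping bound becomes $|R_{n_{k+1}}-R_{n_k}| = O(D^{-n_k} D_1^{n_{k+1}})$ on $\{u\leq\log D_1\}$, which need not be summable when $n_{k+1}/n_k$ is unbounded. The paper dismisses this case by saying to replace $n$ by $n_k$, but that replacement produces exactly this non-summable bound. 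In fact the ``for all sufficiently large $n$'' conclusion appears to be too strong: take $K=\overline{\mathbb{D}}$, $P=\{\infty\}$, and a lacunary $f(z)=\sum_k 2^{-m_k} z^{m_k}$ with $m_{k+1}/m_k\to\infty$ (so $|z|=2$ is a natural boundary); the partial sums $R_n$ satisfy $\sup_K|f-R_{m_k}|\leq 2\cdot 2^{-m_{k+1}} = o(D^{-m_k})$ for every fixed $D$, so no choice of $D$ makes the claimed lower bound hold at all large $n$. The robust conclusion---and what both your argument and the paper's actually establish---is the ``infinitely many $n$'' statement, so either the theorem needs to be weakened to that or an essentially different argument is required.
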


%We recall the following notation from the proof of 
%Theorem \ref{power_series_cor}. 
%Let $U$ be the neighborhood of $K$ in which $f$ is holomorphic. 
%After taking   a subset, if necessary, 
%we may assume that  $U$ is  bounded by finitely many, pairwise disjoint
%Jordan curves, and $U\cap P=\emptyset$.
%Consider $\partial K$ as 
%a lemniscate graph, with a face of $\partial K$ colored grey if
%and only if the face contains a point in $P$. 
Consider the function $u_{\partial K, P}$, defined 
as  follows. In any face $F$ of $K$ (i.e., a  complementary 
component of $K$)
that contains a point in $P$   we set $u_{\partial K, P}$ 
to be Green's function for $F$  with pole at that point in $P$, 
and we  set $u_{\partial K, P}=0$ otherwise.  
Set $C_S:=\{z : u_{\partial K, P}(z)=\log S\}$ for $S>1$.
As in the previous section, the \emph{interior} of $C_S$,
denoted $\textrm{int}(C_S)$,  refers to the  union of the 
components of $\Chat\setminus C_S$ which do not contain a point of $P$.
Although $u_{\partial K,P}$ need not be continuous at 
irregular points of $K$, it only blows up at points of $P$, 
and so for $S$ sufficiently large, $C_S$ consists of finitely 
many closed loops, all disjoint from $K$.
After applying a M\"obius transformation,
if need be, we may assume %without loss of generality 
that $\infty\not\in K$ and $\infty\in P$.

%The proof of Theorem \ref{rate_of_conv} relies on the following lemma.

\begin{lem}\label{opt_conv_lemma}
	Suppose $Q$ is a rational map satisfying
$Q^{-1}(\infty)\subseteq P$
and $\sup_{z\in K}|Q(z)| \leq M$ for some constant $M<\infty$.
Let $p\in P$, let $d$ denote the local degree of $Q$ at the
pole $p$, and let $F$ denote the face of $\partial K$ 
containing $p$. Then for any $S>1$ we have 
$|Q(z)| \leq M\cdot S^d$ for $z\in\emph{int}(C_S)\cap F$.
\end{lem}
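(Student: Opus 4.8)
The plan is to first establish the pointwise comparison (a Bernstein--Walsh-type inequality for rational functions with poles at $P$)
\[ \log|Q(z)| \;\le\; \log M + d\,G_F(z,p), \qquad z\in F, \]
where $G_F(\cdot,p)$ is the Green's function of $F$ with pole at $p$, so that $u_{\partial K,P}=G_F(\cdot,p)$ on $F$, and then to deduce the lemma from the fact that $z\in\textrm{int}(C_S)\cap F$ forces $G_F(z,p)<\log S$ (immediate from the definition of $\textrm{int}(C_S)$ together with $u_{\partial K,P}=G_F(\cdot,p)$ on $F$, as in Section~\ref{sec_cor_proof}). Two preliminary remarks are worth recording. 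First, $\partial F$ is non-polar: a polar set cannot disconnect $\Chat$, so if $\partial F$ were polar then $\overline{F}=\Chat$ and $K\subseteq\partial F$ would be polar, contradicting $\operatorname{cap}(K)>0$; hence $G_F(\cdot,p)$ is a genuine Green's function. Second, since $Q^{-1}(\infty)\subseteq P$ and $P$ meets each component of $\Chat\setminus K$ in exactly one point, the only pole of $Q$ in $F$ is the one at $p$, of order $d$ (and if $d=0$, $Q$ has no pole in $F$); in particular $Q$ is holomorphic on $\partial F\subseteq K$.

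The main step is to introduce $v:=\log|Q|-d\,G_F(\cdot,p)$, which is subharmonic on $F\setminus\{p\}$ (a sum of $\log|Q|$, subharmonic because $Q$ is holomorphic and $\not\equiv 0$, and $-d\,G_F(\cdot,p)$, which is harmonic there), and to check that $v$ extends across $p$. For this I would compare singularities: for a finite $p$, the function $Q(z)(z-p)^d$ is holomorphic and nonvanishing at $p$, so $\log|Q(z)|=-d\log|z-p|+(\textrm{harmonic near }p)$, while $G_F(z,p)=-\log|z-p|+(\textrm{harmonic near }p)$; subtracting $d$ times the second from the first cancels the logarithmic pole, so $v$ is harmonic in a punctured neighbourhood of $p$ and bounded there, hence extends harmonically across $p$ (the case $p=\infty$ is handled identically in the coordinate $1/z$). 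Thus $v$ is subharmonic on all of $F$. I would then apply the maximum principle: since $G_F(\cdot,p)\ge 0$ we have $v\le\log|Q|$ on $F$, and since $Q$ is continuous on the compact set $\partial F\subseteq K$ with $|Q|\le M$ there, $\limsup_{F\ni z\to\zeta}v(z)\le\log|Q(\zeta)|\le\log M$ for every $\zeta\in\partial F$; as $F\subsetneq\Chat$ is a domain, the maximum principle for subharmonic functions yields $v\le\log M$ on $F$, which is the displayed comparison.

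Finally, for $z\in\textrm{int}(C_S)\cap F$ we have $u_{\partial K,P}(z)<\log S$, i.e.\ $G_F(z,p)<\log S$, so the comparison gives $\log|Q(z)|\le\log M+d\log S$, that is, $|Q(z)|\le M S^d$ (the case $Q\equiv 0$ being trivial). I expect the only step requiring genuine care to be the local analysis at $p$ that allows $v$ to be continued across the pole; the remainder is a standard application of the maximum principle, and the identification of $\textrm{int}(C_S)\cap F$ with the sublevel set $\{G_F(\cdot,p)<\log S\}$ is bookkeeping of the kind already carried out in Section~\ref{sec_cor_proof}.
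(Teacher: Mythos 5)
Your proof is correct and takes essentially the same route as the paper's: both show that $\log|Q| - d\,u_{\partial K,P}$ (equivalently $\log|Q| - d\,G_F(\cdot,p)$) is subharmonic on $F$ once the logarithmic singularities at $p$ cancel, apply the maximum principle using $|Q|\le M$ on $K$ and $u\ge 0$ to get $\log|Q|\le\log M+d\,G_F(\cdot,p)$ on $F$, and then restrict to $\mathrm{int}(C_S)\cap F$ where $G_F<\log S$. The only differences are improvements in bookkeeping rather than a different argument: you spell out the local cancellation of singularities at $p$ and the non-polarity of $\partial F$, which the paper leaves tacit, and you pass directly from the global bound on $F$ to the sublevel set $\mathrm{int}(C_S)\cap F$, whereas the paper first restricts to the level curve $C_S\cap F$ and then invokes the maximum principle a second time to reach $\mathrm{int}(C_S)\cap F$ — a step your version shows is unnecessary.
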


\begin{proof} Set $u:=u_{\partial K, P}$. We have that
	$ (1/d)  {\log|Q|}-u  $  % |_F
%\begin{equation}\label{eqtnref} \frac{\log|Q|}{d}-u    % |_F
%\end{equation}
is subharmonic on the face $F$ (including at the point $p$), 
and so by the maximum principle, 
	$$\max_{z \in F}  \left( \frac {\log |Q(z)|}{d} - u(z) \right)  \leq 
	\limsup_{z \in F, z \to K}  \left(\frac {\log |Q(z)|}{d} - u(z)  \right).$$
	%(\ref{eqtnref}) takes 
%its maximum over $F$  on $\partial K$.
Recalling that $u \geq 0 $ and that $|Q(z)|\leq M$ for $z\in K$ by assumption,
we conclude that
\begin{equation*} \frac{\log|Q(z)|}{d}-u(z) 
	\leq \frac{\log(M)}{d} \textrm{ for } z \in F.
\end{equation*}
In particular, we have that
\begin{equation*} 
	\frac{\log|Q(z)|}{d}-\log(S) \leq \frac{\log(M)}{d}
\end{equation*}
for $ z \in C_S \cap F$,
and hence by the maximum principle, for $S >  1$  we have that
\begin{equation}\label{int_to} 
	\frac{\log|Q(z)|}{d}
	\leq \log(S) + \frac{\log(M)}{d} 
\end{equation}
	 for $ z\in \textrm{int}(C_S) \cap F$.
Inequality (\ref{int_to})  implies the lemma.
\end{proof}

\begin{proof}[Proof of Theorem \ref{rate_of_conv}]
Since we have assumed that $f$ does not admit a holomorphic 
extension to $\Chat\setminus P$, there exists some $S<\infty$
so that $f$ does not admit a holomorphic extension to the interior
of $C_S$. We set $D:=S$. Now suppose there were 
a ${C}\in(0,\infty)$ and a sequence of 
rational maps ${R}_n$ of degree $\leq n$ 
with poles only in $P$, so that
\begin{equation}\label{assume_cont}
\sup_{z\in K}|f(z)-{R}_{n}(z)|
	\leq\frac{{C}}{D^{n}}
\end{equation}
for some subsequence in $n$. To simplify notation,
assume  that (\ref{assume_cont}) 
holds for all $n$, and not just along a subsequence 
$\{n_k\}_1^\infty$ (in general, just replace
$n$ by $n_k$ in what follows).

We will show that the sequence ${R}_n(z)$ 
converges uniformly on compact subsets of the interior of 
$C_{D}$; this will be a contradiction since such
a limit would necessarily constitute a holomorphic 
extension of $f$ to the interior of $C_{D}$.
Note that
\begin{equation}\nonumber
	|{R}_{n+1}(z)-{R}_n(z)|
	\leq |{R}_{n+1}(z)-f(z)|
	+ |f(z)-{R}_{n}(z)| \
	\leq \frac{{C}}{D^{n+1}}+
	 \frac{{C}}{D^{n}}
\end{equation}
for all $z \in K$ and $n\in\mathbb{N}$.
Let $F$ be a face of $\partial K$ containing a point $p\in P$.
The local degree of ${R}_{n+1}-{R}_{n}$ at $p$ 
is $\leq n+1$. 
Thus Lemma \ref{opt_conv_lemma} implies that for 
any $D_1<D$, we have
\begin{equation}\nonumber 
|{R}_{n+1}(z)-{R}_n(z)|
\leq D_1^{n+1}\left( \frac{{C}}{D^n}+
\frac{{C}}{D^{n+1}} \right) 
	= \left(\frac{D_1}{D}\right)^n{C}
D_1\left(1+\frac{1}{D}\right) 
\end{equation} 
 for $z\in\textrm{int}(C_{D_1})\cap F$.
As $F$ was arbitrary, it follows that
\begin{equation}\nonumber |
{R}_{n+1}(z)-{R}_n(z)|
\leq  \left(\frac{D_1}{D}\right)^n{C}
D_1\left(1+\frac{1}{D }\right) 
\end{equation} 
for $ z \in\textrm{int}(C_{D_1})$.
The right-hand side is summable in $n$, 
and so the sequence $({R}_{n})_{n=1}^\infty$ 
is uniformly Cauchy and hence uniformly convergent 
on $\textrm{int}(C_{D_1})$, as desired.
\end{proof}

%--------------------------------------------------------------
\section{Proof of Theorem \ref{julia_set_cor}: approximation by 
Julia sets}\label{first_cor_proof} 

Our proof of Theorem \ref{julia_set_cor} follows the  second of
the two proofs  that Lindsey and Younsi give for their
Theorem 1.2 in  \cite{MR3955554}, 
 except that we replace their application of Hilbert's
lemniscate theorem with an application of our Theorem \ref{main_thm_2}.
We provide the details here for the convenience of the reader. 
As with Theorem \ref{power_series_cor}, we only need to 
use  Theorem \ref{main_thm_2} in the easier case of lemniscate 
graphs without vertices.

\begin{proof} [Proof of Theorem \ref{julia_set_cor}]
For $E\subset\Chat$, denote the $\varepsilon$-neighborhood of $E$ by 
\[N_{\varepsilon}(E):=\{z \in \Chat : d(z, E)<\varepsilon\}.\]
As usual, $d$ denotes the spherical metric. 
Note that for any $\varepsilon>0$,  the open set $N_{\varepsilon}(E)$
has finitely many connected  components, even if $E$ has infinitely many.
This holds since each such component contains a distinct point of any 
$(\varepsilon/2)$-dense set in $\Chat$ and such a set can be finite. 

After applying a M\"obius transformation, we may assume without 
loss of generality that $p=\infty\in A_1$ and $0\in A_2$. By Theorem 
\ref{main_thm_2}, there exists a lemniscate $L_r\subset N_{\varepsilon}(A_2)$
so that $L_r$ consists of a union of pairwise disjoint Jordan curves, and
$L_r$ separates $\partial N_{\varepsilon}(A_2)$ from $A_2$. 
Since each component is a Jordan curve, we know that $r$ has no critical 
points on $L_r$. 

We may color white exactly those faces of $L_r$ which contain at least one
component of $A_2$, and arrange for $r(\infty)=\infty$. Furthermore, if 
$A_1$ has finitely many components, and $P$ contains $\infty$ as well as
one point from each bounded component of $A_1$, we may  arrange 
(by taking $\varepsilon$ smaller) for $r$ to satisfy  $r^{-1}(\infty)=P$. 

Consider the map $r^n$, where $r^n$ denotes the 
product $r$ with itself $n$ times, 
not the $n$th iteration of $r$. 
Since $r$ has a pole at $\infty$,  we know $r^n$ has a super-attracting fixed 
point at $\infty$. Since  $0\in A_2$, we must have $|r|< 1$
on some disk $D(0,\delta)$, 
and so $|r|^n <\delta/2$ on $D(0, \delta)$ 
if  $n$ is large enough. This implies  $r_n$ maps $D(0, \delta)$ 
into $D(0,\delta/2)$ and hence $r^n$ has 
an attracting fixed point in $ D(0, \delta) \subset A_2$. 

Denote the corresponding basins of attraction
by $\mathcal{A}_n(\infty)$ and  $\mathcal{A}_n(0)$, where we emphasize the 
dependence on $n$. Since $A_2$ is compactly contained in $r^{-1}(\mathbb{D})$
(the union of the white faces), we have that $A_2\subset \mathcal{A}_n(0)$
for large $n$. Similarly, any compact subset of
$r^{-1}(\Chat\setminus\overline{\disk})$
is contained in $\mathcal{A}_n(\infty)$ for large enough $n$. 
As a consequence we have $\mathcal{A}_n(0)\subset N_{\varepsilon}(A_2)$ 
for large $n$. This proves 
\[ d_H(A_2, \mathcal{A}_n(0))<\varepsilon, \] 
and the other two inequalities in Theorem \ref{julia_set_cor}
follow by similar
considerations. Thus, $r^n$ and $\mathcal{A}_1:=\mathcal{A}_n(\infty)$ and
$\mathcal{A}_2:=\mathcal{A}_n(0)$ satisfy the conclusions of the theorem
for all large enough $n$. 

We note the fact that $\mathcal{F}(r^n)=
\mathcal{A}_1\sqcup\mathcal{A}_2$ for large $n$ follows since all critical
values of $r^n$ lie in $\mathcal{A}_1\sqcup\mathcal{A}_2$ for large enough $n$
(since $r$ has no critical points on $L_r := \{|r|=1\}$, all the critical 
values of $r^n$ have very large or very small absolute value if $n$ is large).
This observation uses some standard but deep facts:  by Sullivan's 
``no wandering domains'' theorem 
(e.g., \cite{MR819553}, Theorem IV.1.3 of 
\cite{MR1230383} or Theorem F.1 of \cite{MilnorCDBook}), 
every Fatou component 
is eventually periodic, and it is known that 
every periodic Fatou component has 
an associated critical orbit  in the domain or that  accumulates
on the  boundary of the component 
(e.g. Theorems 8.6 and 11.17, Corollary 10.11 and Lemma 15.7 of 
\cite{MilnorCDBook}). In our case, every critical orbit is attracted to 
fixed points in ${\mathcal A}_1$ or ${\mathcal A}_2$, so there 
can be no other Fatou components except those that eventually land
on one of these two. 
\end{proof}

% Should these sections be called sections or appendices? Sections for now.
%\appendix

%\begin{comment} % ###################################
%=--------------------------------------------------------
\section{A Topological Lemma}\label{topological_section_app}

Recall that the proof of our main result (Theorem \ref{main_thm_2}) 
relied on Theorem \ref{whatweneedfromappendix}, but that
Theorem \ref{whatweneedfromappendix} has not yet been proven 
in general (only in the case when there are no vertices).
%Appendices \ref{topological_section_app} and \ref{harmonic_app_appendix}
Section \ref{harmonic_app_appendix}
is devoted to establishing this result. In this section, we  give 
a topological result that will be used in 
Section \ref{harmonic_app_appendix}.
%the proof of Theorem \ref{whatweneedfromappendix}.

The $n$-dimensional simplex $\Delta_n$  is the convex hull
of the standard unit vectors $\{ e_k\}_1^{n+1} \subset  \reals^{n+1}$;
$e_k$ is $1$ in the $k$th coordinate and zero elsewhere. 
These  points are called the vertices of the simplex.
A face of $\Delta_n$ is a convex hull of some
non-empty subset of its vertices. A facet is a face 
of dimension $n-1$ (the convex combination of all but 
one vertex).  Every face is the intersection of facets
that contain it, so a continuous  map $f: \Delta_n \to \Delta_n$ 
that maps each facet into itself must also map each face 
into itself. Every point $x \in \Delta_n$ is in the interior 
of some face, where interior means that $x$ does not lie in 
any strictly lower dimensional face (the vertices are interior points 
of themselves).

The following is Lemma 2.1 of 
\cite{MR435791}, but was probably known much earlier.

\begin{thm}\label{simplex map} 
Suppose $f: \Delta_n \to \Delta_n$ is a continuous map 
that sends each facet  of $\Delta_n$ into itself. 
Then $f$ is surjective.
\end{thm}

\begin{proof}
For completeness, we recall the proof from 
\cite{MR435791}. Since $n$ is fixed, we write 
$\Delta := \Delta_n$, to simplify notation.
Since $f(\Delta)$ is compact and the 
interior of $\Delta$ is dense in $\Delta$, it 
suffices to show that every interior point is in  $f(\Delta)$.
By way of contradiction, suppose $p$ is an interior point that 
is not in the image, and let $g$  be the 
radial projection of $\Delta\setminus \{p\}$ onto
$\partial \Delta$. This  map is continuous and 
fixes each point of the boundary. Let $h:
\Delta \to \Delta$ be the linear map 
extending the cyclic permutation 
$$ (x_1, x_2, \dots, x_n ) \to 
(x_2, \dots, x_n, x_1 ) $$
of the vertices. 
Then  $\phi :=h \circ g \circ f: \Delta  \to \partial \Delta$ 
is clearly continuous.  

We will show  $\phi$  has no fixed point, contradicting 
Brouwer's theorem  \cite{MR1511644}
(that a continuous self-map 
of a compact, convex set in $\reals^n$ must have a 
fixed point). This proves  
that $ p \in f(\Delta)$,  and  hence that  $f$ is surjective. 
Since $\phi(\Delta) \subset \partial \Delta$, any fixed 
point $x$ must be in $\partial \Delta$, and hence in the interior of
some face $F_1$.  By assumption $f(x)$ is also in $F_1$, 
and so $g(f(x)) = f(x) \in F_1$, since $g$ is 
the identity on $\partial Q$.
But $h$ maps each face of $\Delta$ to a distinct face  
(every subset of vertices is sent to a 
different subset by the cyclic permutation).
Hence  $h$ maps the interior of
$F_1$ to the interior of some 
different face $F_2$,  and hence 
$\phi(x) = h(f(x)) \ne x $. 
Thus $\phi $ has no fixed points. 
\end{proof} 

By projecting $e_{n+1}$ to zero, the simplex $\Delta_n \subset \reals^{n+1}$  
can identified with $X_n \subset \reals^n$, 
the  convex hull of $\{0\}$ and the 
$n$ unit vectors in $\reals^n$. 
Note that $\Delta_{n-1} \subset X_n$ is the unique facet of 
$X_n$ that does not contain the origin.
Let us denote projection  from $\reals^n$ onto the
$j$th coordinate by $\pi_j(x_1, ..., x_n):=x_j$ for $1 \leq j \leq n$,
and denote the vector $(x_1..., x_n)\in\mathbb{R}^n$ by $\myvec{x}$.

\begin{thm}\label{surjective}
Let  $F: X_n \rightarrow [0,\infty)^n$ be a continuous mapping satisfying
\begin{equation}\label{surj 1} F_j(\myvec{x})=0 
\textrm{ iff } x_j=0. \end{equation}
Then  $ \varepsilon X_n \subset F(X_n) $ for some $\varepsilon>0$.
\end{thm}

\begin{proof} 
The condition  in the theorem 
says that each facet of $X_n$, except for $\Delta_{n-1}$,
is mapped into the hyperplane containing that facet.
Because our assumptions imply  $F^{-1}(0)=0$, 
$F(\Delta_{n-1})$ is  a compact set  that does  not contain 
zero, and so it has a positive distance $\varepsilon$ 
from the origin. Let $R$ denote the radial retraction 
of $[0,\infty)^n$ onto $\epsilon  \cdot X_n$, i.e., 
$$ R(\myvec{x}) :=  \varepsilon \cdot 
 \frac {(x_1, \dots, x_n)}{ x_1 + \dots+ x_n}.$$
if  $x_1+ \dots + x_n \geq \varepsilon$, and $R(\myvec{x}) 
= \myvec{x}$   if $x_1+ \dots + x_n  \leq \varepsilon$. 
Then $  \myvec{x} \to (R\circ F(\myvec{x}))/\varepsilon$ 
is a continuous map of the simplex $X_n$ into itself, and 
every facet of $X_n$ maps into itself.
Hence $(R \circ F)/\varepsilon$ is surjective 
by Theorem \ref{simplex map}, or equivalently, 
$\varepsilon X_n   =  R(F (X_n)) $.
Since $R$ maps
$[0,\infty)^n \setminus \varepsilon  X_n$ onto 
$\varepsilon \Delta_{n-1} \subset \partial (\varepsilon X_n)$, 
an interior point of $\varepsilon X_n$ can't be in
$R(F(X_n))$ unless it is in $F(X_n)$.
Thus $F(X_n)$ contains the interior of 
$\varepsilon X_n$. Since  $F(X_n)$  is compact, it 
must contain all of $\varepsilon X_n$, as desired. 
\end{proof} 

\begin{cor}\label{main_top_thm}
Let  $F: [0, 1]^n \rightarrow [0,\infty)^n$ 
	be a continuous mapping satisfying
\begin{equation}\label{iff_cond} 
F_j(\myvec{x})=0 \textrm{ iff } x_j=0. \end{equation}
Then there exists $\delta>0$ so that
$\{\myvec{y}\in\mathbb{R}^n :
y_1 = ... = y_n \textrm{ and }
0\leq y_1<\delta\}\subset F([0,1]^n)$.
\end{cor}

\begin{proof} 
Restrict $F$ to $X_n \subset [0,1]^n$ and 
apply Theorem \ref{surjective}. 
If $\delta < \varepsilon/n$, then the diagonal segment 
is inside $\varepsilon X_n \subset F(X_n) \subset F([0,1]^n)$,
and so the corollary follows. 
\end{proof} 
%\end{comment} % #########################################

%------------------------------------------------------------------
%\section{Approximating by Harmonic Level Sets} \label{harmonic_app_appendix} 
\section{Proof of Theorem \ref{whatweneedfromappendix}: harmonic level sets}
\label{harmonic_app_appendix}

In this section, we prove Theorem \ref{whatweneedfromappendix}.
Recall that if $H$ is a  colored lemniscate graph with 
no vertices, and if $P$ is a finite set of points 
from the grey faces, then the function $u_{H, P}$ was defined 
in Notation \ref{u_without_verts}  as 
the  sum of Green's functions for faces of $H$ 
with the poles in the set $P$, i.e.,  
$$ u_{H,P}(z) := \sum_{p \in P} G_{B(p)} (z, p) ,$$
where  $B(p)$ denotes the face of $H$ containing $p$
(we define the Green's function of a domain to be zero off that 
domain).
In this section, we will say that two  positive functions $f,g$ are 
relatively close if $|f-g|/g $ is close to zero. In other words, 
$f/g$ is uniformly close to $1$. 

For the convenience of the reader, we re-state the desired result.

\begin{thm}\label{harmonic_app} 
Let $G$ be a 2-colored lemniscate graph,
$\varepsilon>0$, and $P$ a set of points
consisting of one point  in each grey face of $G$.
Then, for all $\delta>0$ sufficiently small,
there exists a lemniscate graph $H$ without vertices so that each grey 
face of $G$ is contained in a grey face of $H$, and
so that $u_{H, P}^{-1}(\delta)$ and $G$ are $\varepsilon$-homeomorphic. 
\end{thm}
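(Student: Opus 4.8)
# Proof Proposal for Theorem \ref{harmonic_app}

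The plan is to build $H$ from $G$ by a two-stage procedure: first a \emph{combinatorial/topological} modification near each vertex of $G$, and then a \emph{quasiconformal adjustment} to correct the harmonic data so that a single level set $u_{H,P}^{-1}(\delta)$ recaptures all the vertices of $G$ simultaneously at the \emph{same} height $\delta$. By Theorem \ref{eps homeo graphs} we may assume at the outset that $G$ has analytic edges meeting at equal angles at each vertex. The issue to overcome is that $u_{H,P}$, being a sum of Green's functions, has only finitely many critical values, and we need one fixed value $\delta$ whose level set passes through critical points sitting exactly where the vertices of $G$ are, one critical point of the right multiplicity per vertex.

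First I would describe the vertex surgery. At a vertex $v$ of $G$ of degree $2k$, the $2k$ analytic edges alternately bound grey and white faces. Near $v$, replace a small disk $D(v,r_v)$ by a configuration in which the $k$ grey sectors are joined through $v$: concretely, delete the portion of $G$ inside $D(v, r_v)$ and add $k$ disjoint smooth arcs crossing near $v$, so that the local picture has no vertex but the union of the (former) grey sectors together with $D(v,r_v)$ becomes a single grey region — this is exactly the picture in Figure \ref{from_G_to_H}. Doing this at every vertex produces a vertex-free lemniscate graph $H_0$ (a union of disjoint analytic Jordan curves) whose grey faces each contain one or more grey faces of $G$, hence contain the corresponding points of $P$. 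Now consider $u_0 := u_{H_0,P}$. Near each former vertex $v$, since $H_0$ was obtained by merging $k$ grey channels through $D(v,r_v)$, the function $u_0$ is smooth and positive there, but in general $\nabla u_0(v) \neq 0$ and, more importantly, even if we could arrange $\nabla u_0(v)=0$, the critical values $u_0(v)$ over the various vertices $v$ would not coincide.

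The key step — and the main obstacle — is to correct both the \emph{location} of the critical points and the \emph{value} $u_0$ takes at them. Here I would use the topological Corollary \ref{main_top_thm}. Let $v_1,\dots,v_n$ be the vertices of $G$ (so $n$ is their number). For a vector $\myvec{x} = (x_1,\dots,x_n) \in [0,1]^n$, let $H_{\myvec{x}}$ be the quasiconformal deformation of $H_0$ obtained by, near each $v_j$, applying a qc map supported on a small neighborhood of $v_j$ that ``pinches'' the grey channels together by an amount governed by $x_j$ (with $x_j = 0$ giving the unpinched picture, and larger $x_j$ forcing a genuine critical point of the resulting harmonic function to emerge near $v_j$ with the correct multiplicity $\deg(v_j)/2 - 1$). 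Define $F_j(\myvec{x})$ to be the value of $u_{H_{\myvec{x}},P}$ at the critical point near $v_j$ when $x_j>0$, and $F_j(\myvec{x}) = 0$ when $x_j = 0$ (i.e., no critical point there). One checks $F$ is continuous and satisfies $F_j(\myvec{x}) = 0 \iff x_j = 0$: this is the analytic heart, requiring that (i) for $x_j > 0$ a critical point genuinely persists near $v_j$ with positive critical value depending continuously on all parameters, and (ii) the pinching at $v_j$ does not destroy the critical points near the other $v_i$. Granting this, Corollary \ref{main_top_thm} gives a $\delta_0 > 0$ so that the diagonal segment $\{(t,\dots,t): 0 \le t < \delta_0\}$ lies in $F([0,1]^n)$. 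Hence for every $\delta \in (0,\delta_0)$ there is $\myvec{x}$ with $F_j(\myvec{x}) = \delta$ for all $j$; set $H := H_{\myvec{x}}$. Then $u_{H,P}$ has a critical point of multiplicity $\deg(v_j)/2 - 1$ at (approximately) $v_j$ with critical value exactly $\delta$, for every $j$ at once.

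Finally I would verify that $u_{H,P}^{-1}(\delta)$ is $\varepsilon$-homeomorphic to $G$. Away from the vertices, the level set $u_{H,P}^{-1}(\delta)$ follows the edges of $G$ closely (taking $r_{v_j}$ and $\delta$ small, and using that $u_{H,P}$ has nonvanishing gradient along the analytic edges, as in the no-vertex proof at the end of Section \ref{Alexs_proof}); near each $v_j$, the local normal form of a harmonic function at a critical point of multiplicity $\deg(v_j)/2 - 1$ shows the level set $\{u_{H,P} = \delta\}$ looks like $\deg(v_j)$ arcs emanating from the critical point at equal angles — matching the local picture of $G$ near $v_j$ after the Section \ref{smoothing sec} normalization. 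Stitching the steepest-descent homeomorphism on the edge parts together with a local homeomorphism near each critical point gives a homeomorphism $u_{H,P}^{-1}(\delta) \to G$ moving points less than any prescribed $\delta' = \delta'(\varepsilon, G)$; then Lemma \ref{small move graphs} upgrades this to an $\varepsilon$-homeomorphism of $\Chat$. The grey-face containment is immediate from the construction, since the qc deformations only enlarge the grey regions near the $v_j$. I expect the delicate point throughout to be (ii) above: controlling the interaction of the $n$ simultaneous pinchings so that $F$ has the clean zero-set $\{x_j = 0\}$ needed to invoke the topological lemma.
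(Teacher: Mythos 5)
Your overall architecture — surgery near each vertex to produce a vertex-free $H$, a parameter per vertex controlling the harmonic data there, and an appeal to Corollary \ref{main_top_thm} to equalize values simultaneously — is the right shape, and it is essentially what the paper does \emph{for degree-four vertices}. However, there is a genuine gap at the step you phrase as ``larger $x_j$ forcing a genuine critical point of the resulting harmonic function to emerge near $v_j$ with the correct multiplicity $\deg(v_j)/2 - 1$.'' When $\deg(v_j) = 2d \ge 6$, nothing forces the $d-1$ critical points of $u_{H_{\myvec{x}},P}$ (counted with multiplicity) that appear near $v_j$ to coalesce into a \emph{single} critical point of multiplicity $d-1$; generically they will be $d-1$ distinct simple critical points with $d-1$ distinct critical values. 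The paper explicitly flags this (``it is not obvious whether these points form a single critical point of order $d-1$; probably they do not''). If the critical points near $v_j$ split, then your $F_j(\myvec{x})$ is not well defined (which critical value do you take?), and even if some choice is made, no single level set of $u_{H_{\myvec{x}},P}$ passes through all of them, so the level set near $v_j$ will have several low-degree vertices rather than the one degree-$2d$ vertex you need.

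The paper's workaround is precisely to sidestep this coalescence problem: it does not try to locate a high-order critical point of a genuine Green's-function sum. Instead, after the pinching step (your Step 2, which the paper uses to equalize $u_{G,P}$ at finitely many base points via Corollary \ref{main_top_thm}), it defines an \emph{explicit} positive harmonic model function $w_v = c_v\, w_d(\lambda_v(z-v)/\delta_v)$ near each vertex that by construction has a single critical point of order $d-1$ at $v$, glues $w_v$ to $u_{G,P}$ by a partition of unity on the annulus $A(v,\eta)$ (where both functions are relatively close), and then applies the measurable Riemann mapping theorem to absorb the small $\bar\partial$-error of the glued function into a quasiconformal change of coordinates $\psi$. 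The resulting function $V = (\text{glued function})\circ\psi^{-1}$ \emph{is} exactly $u_{H',P'}$ for $H' = \psi(H)$ and $P' = \psi(P)$, and has genuine critical points of the right order with equal critical values because the $w_v$'s were built that way. The moved poles $P'$ are then pushed back onto $P$ by a Brouwer fixed-point argument. So the surgery/topological-lemma skeleton of your proposal is shared with the paper, but the analytic core — producing critical points of the prescribed multiplicity — requires the $w_v$ + partition-of-unity + MRMT mechanism, which your proposal omits.
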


The proof breaks into several steps: 
\begin{enumerate}
\item Modify $G$ near  
each vertex $v$ to consist of  straight segments meeting at $v$ 
and making equals angles there.
\item We further modify $G$  so that 
the Green's function for each grey face of $G$ 
takes  the same value at every point 
of a certain  finite set ($d$ points are 
chosen  a fixed distance from every vertex of degree $2d$).
\item Define  a vertex-free lemniscate graph $H$ 
by adding disks around each vertex $v$ of $G$.
		See Figure \ref{Pinching2}.
\item Define a harmonic function $w_v$  on the faces of $H$
near each vertex $v$, so  that $w_v$ has a single 
critical point of order $d-1$ at $v$, where $2d$ is the 
degree of  $v$ as a vertex of $G$.
\item Use a partition of unity to combine $w_v$ and $u_{G,P}$ 
in an annulus around each vertex $v$.  We make this
new function harmonic using  the measurable Riemann mapping
theorem, but this  correction causes the poles to move slightly.
\item Use a fixed point argument to show that the poles 
can be placed precisely on $P$.
This gives the desired  harmonic  function with 
%$u_{H,P}$ and we will prove it 
 poles in $P$, and with a critical level set 
that is $\varepsilon$-homeomorphic to $G$.
\end{enumerate} 

%\begin{proof}
\vskip.2in
\noindent
{\bf Step 1: modifying $G$  to be straight near its vertices.}
By Theorem \ref{eps homeo graphs} we can assume that $G$ has 
analytic edges that form equal angles  at each vertex. 
It is easy to modify such a graph  $G$  so that its edges are smooth arcs
and are  straight line segments near each vertex.
In other words, we may assume there is a 
$\rho_0>0$ so that  
for every vertex $v$ of $G$,  $G \cap D(v, 2\rho_0)$ consists
of line segments making equal angles at $v$. 
Thus $\rho_0$ represents a  scale below which $G$ looks 
like line segments around each vertex. Note that $\rho_0$ 
is the same value at every vertex.
We also assume that $\rho_0$ is so small  that no 
point of $P$ lies inside any of the disks $D(v, 2\rho_0)$.

A sector $S$ with vertex $v\in \complex$, 
radius $r \in (0, \infty]$ and angle $\theta \in (0, 2 \pi]$
is a set of the form 
$$ S := v+ \{z: 0< |z| <r, |\arg z - \theta_0| \leq  \theta/2\}$$
for some $\theta_0\in [0, 2 \pi)$ and a truncated sector 
is a set of the form 
$$  v+ \{z: s< |z| <r, |\arg z - \theta_0| \leq  \theta/2\}$$
for some $0< s< r< \infty$.

By our choice of $G$ and $\rho_0$, for each 
grey face $B$ of $G$ and vertex $v \in \partial B$, 
 $B \cap D(v, \rho_0)$ 
is a union of sectors of radius $\rho_0$. If $v$ has 
degree $2d$ in $G$, then each sector at $v$  has  angle $\pi/d$.
%the harmonic function $u_{G,P}$ equals  the Green's function 
%$G(z,p)$, for $p = P \cap B$. 
Each such  sector can be conformally  mapped to the half-disk
$W := \disk \cap  \uhp  =\{ z: |z|<1,\im(z) >0\}$  
by a power map $  \tau(z) := a (z-v)^d$, for some
$a \in \complex \setminus\{ 0\} $.
	Then $ U(z)  := u_{G,P} \circ  \tau^{-1}(z) $ is harmonic 
on the half-disk $W$ and it vanishes on  $I :=[-1,1]$, 
so   $U$  extends 
harmonically across $I$ by the Schwarz reflection principle. Thus 
$$ U(z) = U(x+iy)= by + O(y^2) + O(xy) 
	= b \cdot \re(z) +O(|z|\cdot \re(z)),$$
as $z$ approaches zero, where $b>0$ is the normal derivative of $U$ 
at zero. The normal derivative is positive since $U$
positive on $W$, for if  
%$b = \partial U /\partial n (0) \leq 0$, 
$\frac{ \partial U }{\partial n} (0) \leq 0$, 
then $U$ would take negative values somewhere in $W$.
This implies that $u_{G,P}$ is asymptotically equal to 
$c \cdot \re((\lambda (z-v) )^d)$ as $z \to v$ through the sector, 
for some $|\lambda|=1$ and $c>0$.
%($\lambda$ represents rotation
%and $c$ dilation, though we could combine them into 
%a single constant). 
More precisely, we have 
\begin{eqnarray} \label{sector limit 1}
c_S := \lim_{z \to v, z \in S} \frac {u_{G,P}(z)}{\re(\lambda(z-v)^d)} 
	\in (0, \infty).
\end{eqnarray}
%Next, we modify $G$ so that these constants $c_S$ all become 
%the same number. 
%becomes smaller. 
%We shall see that this decrease is continuous, and 
%can be done almost independently
%within each sector, and thus we can make all the different 
%sector limits take the same value, as long as this common 
%value is sufficiently small (i.e., make $c_S$ independent of $S$).

\vskip.2in 
\noindent
{\bf Step 2: modifying $G$ to equalize the Green's function.}
%We warn the reader that 
In this step we use the topological fact recorded in Corollary 
\ref{main_top_thm}.
For the remainder of this section 
we use a plain $G$ to denote a lemniscate graph and a 
$G$ with a subscript, e.g., $G_W$, to denote Green's function 
for a domain $W$. 

We will choose $\eta,\rho>0$ 
so that $\eta \ll \rho \ll \rho_0$.  Given a grey 
face $B$ of $G$,  let $v$ be  a vertex on the 
boundary of $B$.  Let $2d$ be the degree of $v$. 
Then $\partial D(v, \eta) \cap B$ consists
of $d$ arcs  each of angle measure $\pi/d$. We call the center
of each such arc a ``base point'' $b$.  
See Figure \ref{Pinching}.
We claim that we can modify $G$ to obtain a new graph $G'$ 
so that $u_{G',P}(b)$ 
has the same  positive value at every base point $b$. 
%(At each vertex, half the adjacent sectors are in grey faces, 
%so the total number base points is half the sum of the 
%vertex degrees.)

Figure \ref{Pinching}
shows the case where the vertex $v$ has degree eight, so $d=4$ and 
$ B \cap D(v, \rho_0)$ consists of four sectors. 
Let $S$ be one of these sectors. We modify its boundary 
	inside the annulus $\{\rho/2 \leq |z| \leq  \rho\}$.
%To simplify notation,  assume $v=0$ and $\rho=1$. 
Let $\gamma$ be one of the radial sides of the  sector $S$.
We modify $\gamma$ as illustrated in Figure \ref{Pinching}.
The part of $\gamma$  between the circles $\{|z|=\rho\}$ and 
$\{|z|= \rho/2\}$ is replaced by a new arc consisting of three 
parts: one subarc on each of these two circles, and a radial segment 
joining them. The circular arcs lie inside $S$ and 
both have angle measure $\theta \in (0, \pi/2d$). 
The  second radial  segment in $ \partial S$  is modified
symmetrically. Doing this for every sector associated to every 
vertex of $G$ gives a new lemniscate graph $G'$ that 
has the same vertices as $G$. 

\begin{figure}[htb]
\centerline{
\includegraphics[height=2.25in]{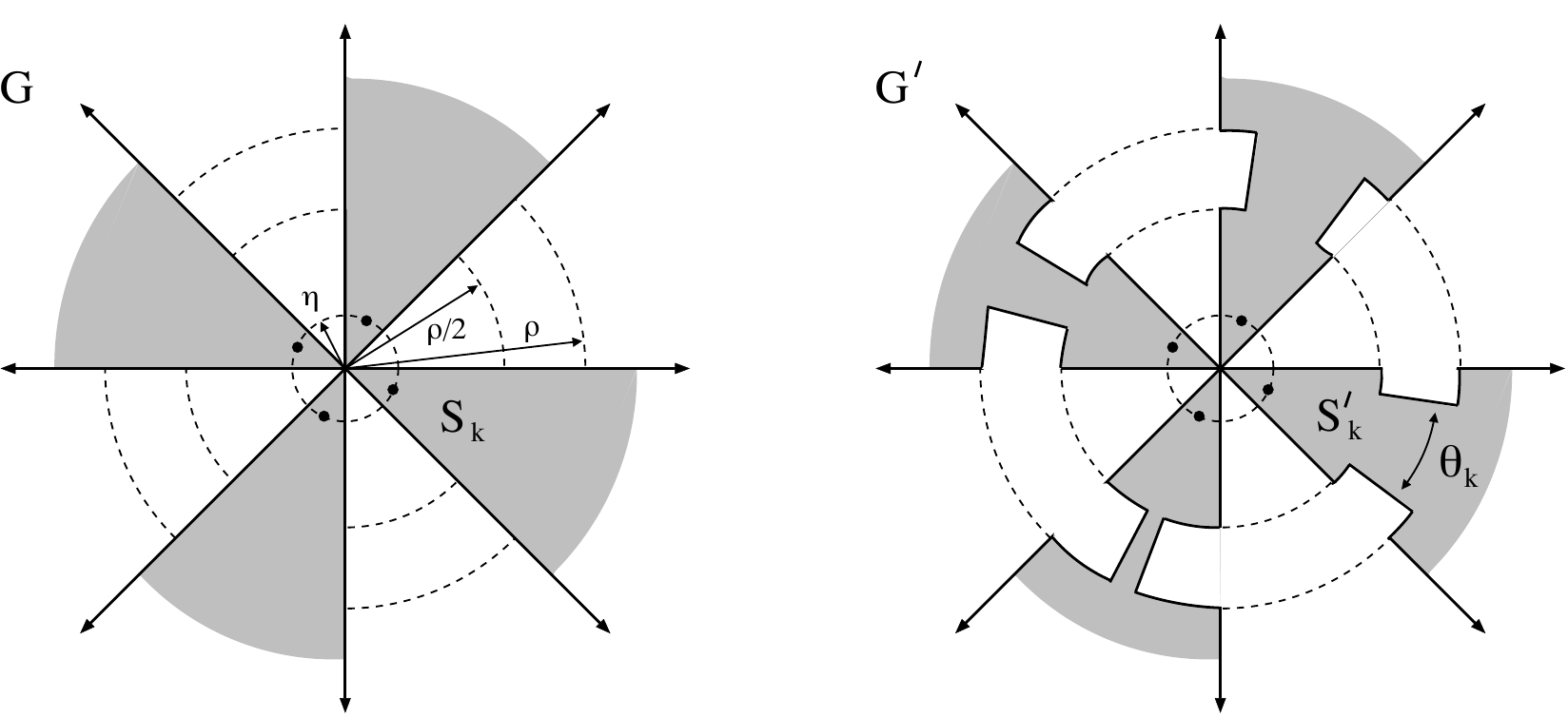}
}
\caption{
By ``pinching'' the sectors that lead to a vertex $v$, we 
can decrease the harmonic measure near $v$. 
Hence we can make Green's function 
as small as we wish at the base points (black dots), and  
make the values at all the base points as close to 
each other as we wish. 
%Corollary \ref{main_top_thm}, we can make  the values at these 
%points all the same. 
}
\label{Pinching}
\end{figure}

Let $\{S_k\}_1^m$ be a listing of all the sectors 
of radius $\rho_0$ associated to $G$,
and let $S_k' \subset S_k$ be
the subregion obtained after the modification. 
This list is over  all sectors of all grey faces of $G$; 
since half the sectors at any vertex $v$ are grey, 
the number of sectors is $\frac 12\sum_v \deg(v)$. 
Let $\theta_k$ be the angular width of the channel 
in $S_k'$.  We let $v_k$ denote the vertex of $S_k$; 
observe that we can have $v_k =v_j$ even if $j \ne k$ 
(different sectors can share a vertex).
Also, $S_k' \cap D(v, \rho/4)$ is still a sector 
and it contains exactly one base point, which we denote  $b_k$.

As we continuously change the value of $\theta_k$, 
the value of $u_{G',P}(b_k)$ changes continuously, and 
it tends to zero as $\theta_k$ tends to zero 
(this is intuitively clear, and it is an
immediate consequence of  the Ahlfors distortion 
theorem, e.g., Theorem IV.6.2 of  \cite{MR2450237}).
%Thus by selecting $\theta_k$ correctly and leaving all
%the other $\theta_j$'s fixed, we can make $u_{G',P}(b_k)$
%take any  positive value that is  smaller 
%than the original value for $u_{G,P}(b_k)$.
%When $\theta_j=0$  $u_{G',P}(b_k)=0$, so  
By Corollary \ref{main_top_thm} there is a choice of 
$\myvec{\theta}=(\theta_1, \dots,\theta_m)$ so that 
$u_{G',P}$ takes the same value at every base point.

% #######################################

%(Intuitively, difference is due to Brownian paths starting at
%$P$ that enter  $S_j \cap D(v_j, \rho)$ before entering $S_k
%\cap D(v_k, \rho)$. Such a path has to pass through  
%$S_j \cap \{  \rho < |z-v_j| < \rho_0\}$ twice 
%before entering $S_k \cap D(v_k, \rho)$, 
%and the probability of doing this is  much smaller 
%than the probability of entering $S_k$ directly.)

\vskip.2in
\noindent
{\bf Step 3: constructing the vertex-free approximation.}
To simplify notation we refer to $G'$ as just $G$;
we will have no further  need to refer to the previous 
versions of the graph.

We want to modify $G$ inside even smaller neighborhoods 
of each vertex $v$  to obtain a vertex free graph $H$. 
For each $v$ we will choose a value  $\delta_v>0$,
%Again we assume $v=0$ to simplify notation. 
remove  the segments  $G \cap D(v, \delta_v)$, 
and add the $d$  arcs of  $\partial D(v, \delta_v)$
that are outside the grey faces of $G$. 
This construction is illustrated in 
Figure \ref{Pinching2}.  The new lemniscate graph $H$ has 
no vertices (we just removed all of them) and $H$ equals $G$ outside 
the $\delta_v$-disks around each vertex of $G$. 
We shall see later that $\delta_v$ is the same for 
vertices with the same degree, although this is not 
crucial to the argument. More important is that 
we will be able to choose every $\delta_v$ to be 
as small as we wish, say all 
smaller than some $\delta_0 \ll \eta$. 

\begin{figure}[htb]
\centerline{
\includegraphics[height=2.3in]{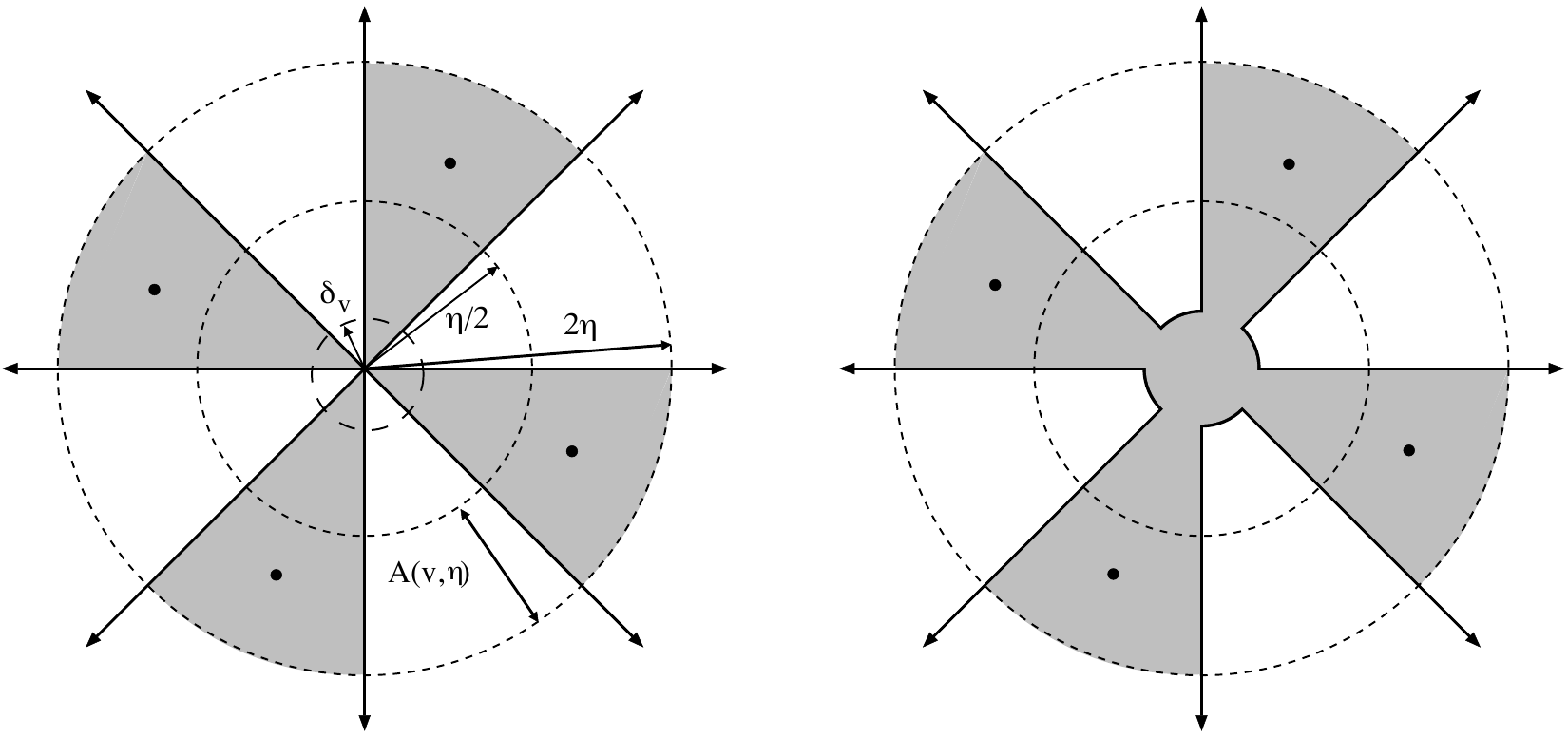}
}
\caption{
We modify $G$ near each vertex $v$ by removing radial 
segments and adding circular arcs, so that $d$ different 
sectors are now joined by a disk centered at $v$.
We let $A(v, \eta):=\{z: \eta/2 < |z-v|< 2 \eta\}$.
}
\label{Pinching2}
\end{figure} 

\begin{rem}
Figure \ref{Pinching2} also shows that each grey face 
of $H$ is  the union of some  grey faces of $G$, together 
with disks centered at all the vertices on the boundaries 
of these faces. 
If $G$ is connected, then so is the closure of it grey faces, 
and hence $H$ has only one grey face (although $H$ itself
may be disconnected).
Similarly, if $G$ is disconnected, but has no multiply connected 
white faces, then the closure of its grey faces is still connected, 
and hence $H$  has a single grey face in this case too. 
\end{rem}

\begin{rem}
The construction in the following Steps 4-6 is not needed
if all the vertices of $G$ have degree four. In that 
case, one can show that if the $\delta_v$'s are 
all small enough, then $u_{H,P}$ must have a simple 
critical point near each of the vertices $v$ of $G$. 
Then  Corollary \ref{main_top_thm}
lets us choose the $\delta_v$'s  so that these critical values 
are all the same.  One can then prove that the level set 
of $u_{H,P}$ passing through these critical points
is $\varepsilon$-homeomorphic to $G$.

However, a more complicated argument seems 
necessary for higher degree vertices.
Although it is not too hard to show that $u_{H,P}$ has $d-1$ critical 
points (counted with multiplicities) near each vertex $v$ of  
degree $2d$, 
it is not obvious whether these points 
form a single critical point of order $d-1$; 
probably they do  not. 
Instead, we will construct a function that has most of 
the necessary properties: it is zero on $H$, has logarithmic poles 
at $P$, has critical points of the correct orders 
at the vertices of $G$, and it is ``nearly harmonic'', i.e.,  
it fails to be harmonic only in the annuli $A(v,  \eta)
:=\{z: \eta/2 <|z-v|< 2\eta\}$. Using 
the measurable Riemann mapping theorem, we will
obtain  a quasiconformal map $\psi:\complex \to \complex$ 
that is close to the identity, and so that pre-composing
our nearly harmonic function with $\psi$
gives a function  $V$  that  is harmonic 
on the faces of $H' :=\psi^{-1}(H)$ and has its poles at 
$P' := \psi^{-1}(P)$.  The final step will be to show the poles can 
be placed exactly at $P$.
\end{rem}

\vskip.2in 
\noindent
{\bf Step 4: adding high degree critical points.}
To start the construction described in the previous paragraph, 
let $\Omega_d := D(0, 1) \cup \{ \im(z^d) < 0\}$. 
This is 
an infinite domain that is symmetric under rotation by $2\pi/d$. 
(It looks like the right side of Figure  \ref{Pinching2}: a 
union of the unit disk and $d$ evenly spaced  infinite sectors
of angle $\pi/d$.)
Let $f_d$ be  the holomorphic map from $\Omega_d$
to the right half-plane that is given by  
the following  composition  of four maps
(see Figure \ref{ComposeMaps}): 
\begin{enumerate}
\item  the power map $z^d$ sending $\Omega_d$ to  $W_1:=\{z: \im(z) <0\} 
	\cup \{z: |z|< 1\}$,
\item the linear  fractional map $(1+z)/(1-z)$ 
	sending $W_1$ to $ W_2 :=\{ -\pi < \arg z < \pi/2\}$, 
\item the power map $ e^{i\pi/6} z^{2/3}$ sending  $W_2$
	to the right half-plane, 
\item  the linear fractional map  $(4/3)/(z+i)$ (this preserves the right
	half-plane).
\end{enumerate} 
The pole of the final map is chosen so that $f_d(\infty) = \infty$.
The last three maps  define 
a conformal map from a perturbed version of the lower 
half-plane to the right half-plane that fixes $\infty$, 
and it is easy to check that this map  is asymptotically linear 
near infinity, and the ``4/3'' is chosen so that this map (the composition of the last three maps in Figure \ref{ComposeMaps}) is asymptotic 
to $iz$. Thus  $f_d$ is asymptotic to $iz^d$ 
near infinity (for $ z \in \Omega_d$).

\begin{figure}[htb]
\centerline{
\includegraphics[height=1.3in]{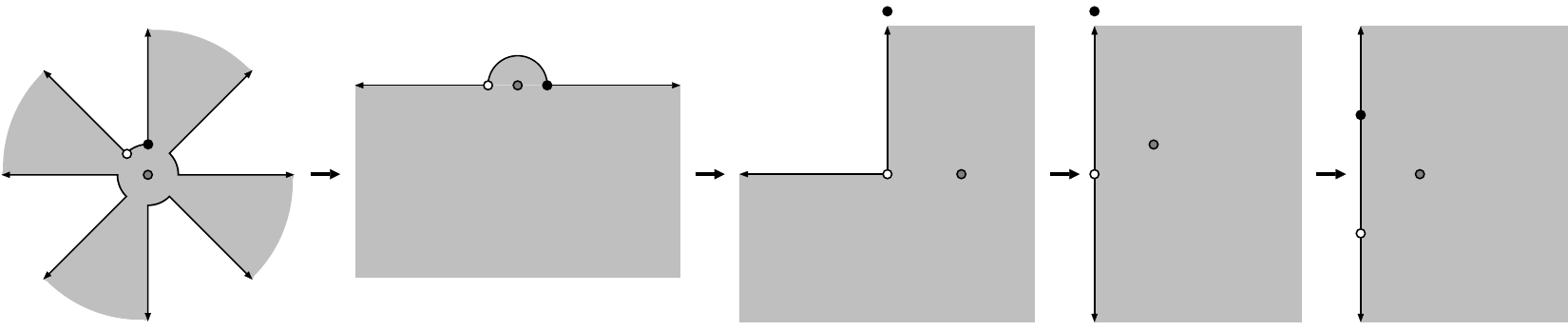}
}
\caption{
The domain $\Omega_d$ can be mapped to a half-plane by 
a composition  $f_d$ of explicit maps: the power map $z^d$, 
the linear fractional transformation $(1+z)/(1-z)$,
	the power map $e^{i\pi/6} z^{2/3}$, 
	and another linear fractional transformation  $(4/3)/(1+z)$
(chosen so that $f_d(\infty)   =\infty$).
}
\label{ComposeMaps}
\end{figure}

Set  $w_d (z) := \re(f_d(z))$.
This is a  positive harmonic
function  on $\Omega_d$ that is zero on $\partial \Omega_d$, 
and   is asymptotic to  $ \re (i{z^d})$ as $z$ tends to infinity
in $\Omega_d$.  More precisely, 
\begin{eqnarray} \label{sector limit 2}
\lim_{z \to \infty, z \in \Omega_d}
	\frac {w_d(z)}{\re(i(z-v)^d)}  =1.
\end{eqnarray} 
It is easy to check that 
 $w_d$  has a critical 
point of order $d-1$ at the origin and no other critical points. 
Figure \ref{plot_v} shows contour plots of $w_d$ for 
$d=2,3,4,5, 6, 10$.

\begin{figure}[htb]
\centerline{
\includegraphics[height=4.3in]{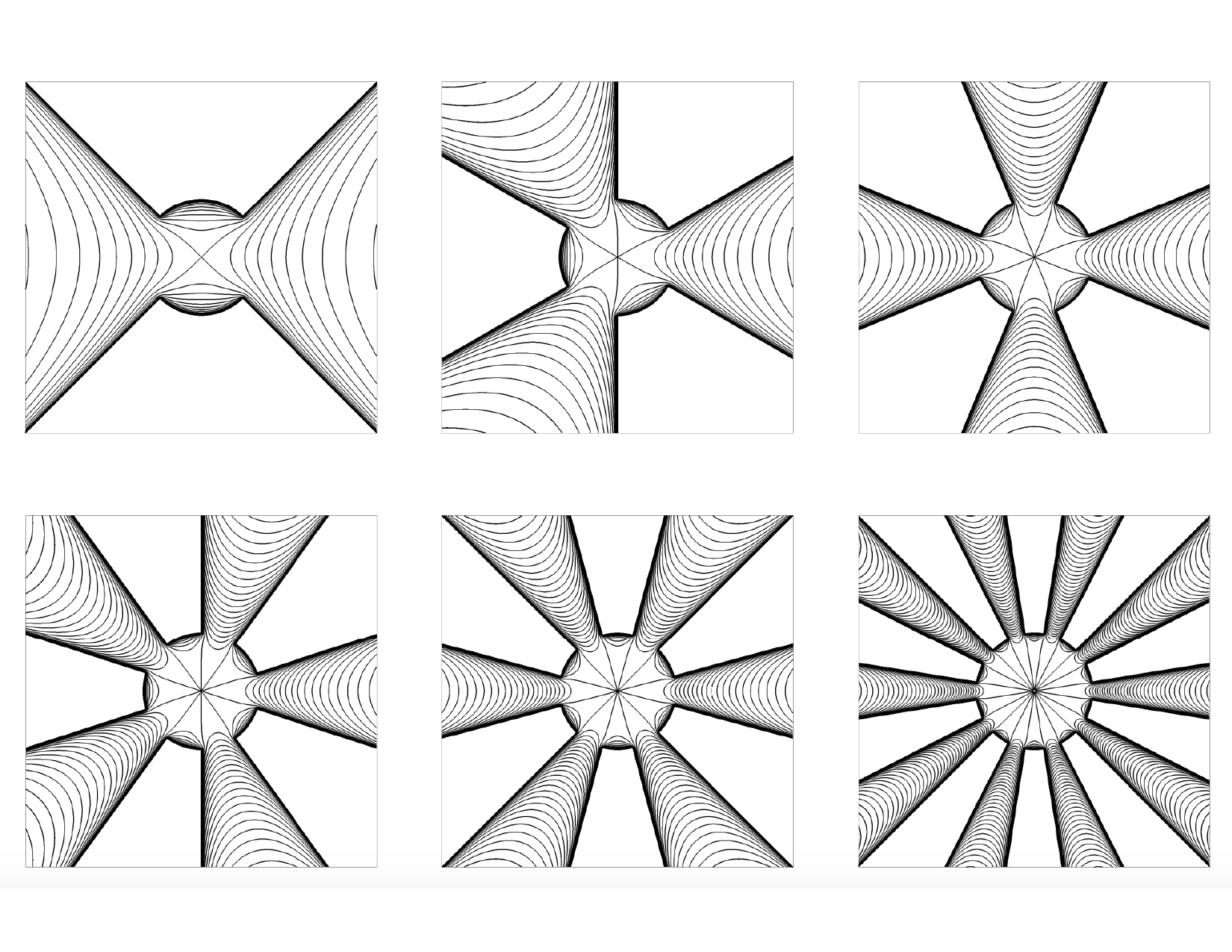}}
\vspace{-4mm}\caption{ Contour plots of the function $w_d$ when $d=2,3,4,5,6$ and $10$. 
These were computed as $ w_d=\re(f_d(z))$, where $f_d$ is 
the holomorphic map  shown in 
Figure \ref{ComposeMaps} sending $\Omega_d$ to the 
right half-plane. Since $f_d'$ only has a zero at $0$, 
the only critical point of $w_d$ is at the origin.
}
\label{plot_v}
\end{figure}

%\begin{figure}[htb]
%\centerline{
%\includegraphics[trim= 300 0 300 0, clip, height=2.0in]{}
%\includegraphics[trim = 300 0 300 0 , clip, height=2.0in]{}
%\includegraphics[trim= 300 0 300 0, clip, height=2.0in]{}
%}
%\centerline{
%\includegraphics[trim = 300 0 300 0 , clip, height=2.0in]{}
%\includegraphics[trim= 300 0 300 0, clip, height=2.0in]{}
%\includegraphics[trim = 300 0 300 0 , clip, height=2.0in]{}
%}
%\caption{
%Contour plots of the function $w_d$ when $d=2,3,4,5,6$ and $10$. 
%These were computed as $ w_d=\re(f_d(z))$, where $f_d$ is 
%the holomorphic map  shown in 
%Figure \ref{ComposeMaps} sending $\Omega_d$ to the 
%right half-plane. Since $f_d'$ only has a zero at $0$, 
%the only critical point of $w_d$ is at the origin.
%}
%\label{plot_v}
%\end{figure}

Suppose $B$ is a grey face of $H$. 
For each vertex $v$ of $G$ that is in $B$, we define a function 
by translating, rotating and rescaling $w_d$, i.e., 
on $B \cap D(v, 2 \eta)$  
we set  
\begin{eqnarray} \label {w_v eqn}
	w_v(z) := c_v \cdot w_d(\lambda_v(z-v)/\delta_v)
\end{eqnarray} 
where $|\lambda_v|=1$ is chosen to rotate the 
sectors of $B$ at $v$ to match the ``arms'' of $\Omega_d$,
and  the constant $c_v$ is chosen so that $w_v(b) = 
u_{G, P}(b) $ at each of the base points $b$ surrounding $v$
(recall $u_{G,P}$ has the same value at each of these points). 

Then $w_v$ is a harmonic function that has a critical point
of the correct degree located at the correct point. However, 
this function is  only defined near each vertex, not on 
entire faces of $H$.
On the other hand, the function $u_{G,P}$ is not harmonic 
on $B \cap D(v, \delta_v) = D(v,\delta_v)$ for each vertex $v$
(inside this disk, $u_{G,P}$ is positive and harmonic on half 
of the sectors touching $v$ and vanishes on the other sectors). 
Thus neither $w_v$ nor $u_{G,P}$  can be  $u_{H,P}$, 
but we will construct $u_{H',P}$  for some 
$H' \approx H$ by combining these two functions 
with a partition of unity,  and then  applying the measurable Riemann 
mapping theorem. 
%In the next step we use a partition of unity and the measurable 
%Riemann mapping theorem to combine these 
%functions into a single function that has all the 
%properties we want.

\vskip.2in 
\noindent
{\bf Step 5: merging two harmonic functions.}
%The values of $u_{G',P}$ at the various base points might be different,
%but we already showed they are all as close as we wish to some common value.
By our earlier remarks,  
(\ref{sector limit 1})  holds if $\eta \ll \rho $
and (\ref{sector limit 2}) holds if $\delta_v \ll \eta$. 
If both these conditions hold then 
$ {u_{G,P}(z)}$ and ${w_v(z) } $ are both relatively close 
to  multiples of the same function in $B \cap A(v, \eta)$,
and hence are relatively close to each other. 
In other words, $ {u_{G,P}(z)}/{w_v(z) } $
is uniformly close to a constant  on $B \cap A(v,\eta)$.
%, if $\eta$ is small 
%enough compared to $\rho$, and $\delta_v$ is small enough compared 
%to $\eta$. 
Previously, we had multiplied  $w_v$ by a constant  to make
it agree with $u_{G,P}$ at the basepoints around each vertex, 
so we must have $ {u_{G,P}(z)}/{w_v(z) } \approx 1 $.
That is, for any $\varepsilon >0$ we can  ensure
$$   1- \varepsilon \leq \frac 
{u_{G,P}(z) }{  w_v(z) }  \leq 1+ \varepsilon $$
on $B \cap A(v, \eta)$
by  choosing $\rho$, $\eta$ and $\delta_0$ all 
sufficiently small (and each sufficiently smaller than the previous one).
%Since $u_{G,P}$ has the same value  at every basepoint, the 
%normalizing factor we multiply $w_v$ by only depends on the 
%choice of $\delta_v$ and $d$.

Note that when we decrease $\delta_v$ by a factor of $t$,
the constant $c_v$ in 
(\ref{w_v eqn}) has to decrease by a factor of approximately $t^d$, 
in  order to maintain the equality $w_v(b) = u_{G,P}(b)$. 
Thus as $\delta_v$ tends to zero, so does $c_v$, and to make
the values of $w_v(v)$ the same for every $v$,
we must take $c_v$ to have the same value  for all vertices 
with the same degree. Since each point $v$ is a critical 
point of $w_v$, this implies that all the critical values 
are the same. 

Suppose $S = S_k$ is one of the sectors we are considering, and 
suppose $f_1$ is a holomorphic function in $ W :=S \cap \{ \eta/3
< |z-v| < 3 \eta\}$ that has real  part $w_v$; this  truncated
sector is simply connected, so $w_v$ has a harmonic conjugate 
on $W$. Similarly suppose  $f_2$ is holomorphic on $W$
with real part $u_{G,P}$.
By Schwarz reflection, both these functions extend analytically 
across both radial sides of $W$
 and define a holomorphic function  
on the union $W'$ of $W$ and its reflections.  Now 
let $\Omega :=  S \cap \{ \eta/2 < |z-v| < 2 \eta\} \subset W$. 
This is compactly contained in $W'$ and 
$\dist(\partial W', \Omega) \simeq \eta$. See 
Figure \ref{CauchyEst}.

\begin{figure}[htb]
\centerline{
\includegraphics[height=2.5in]{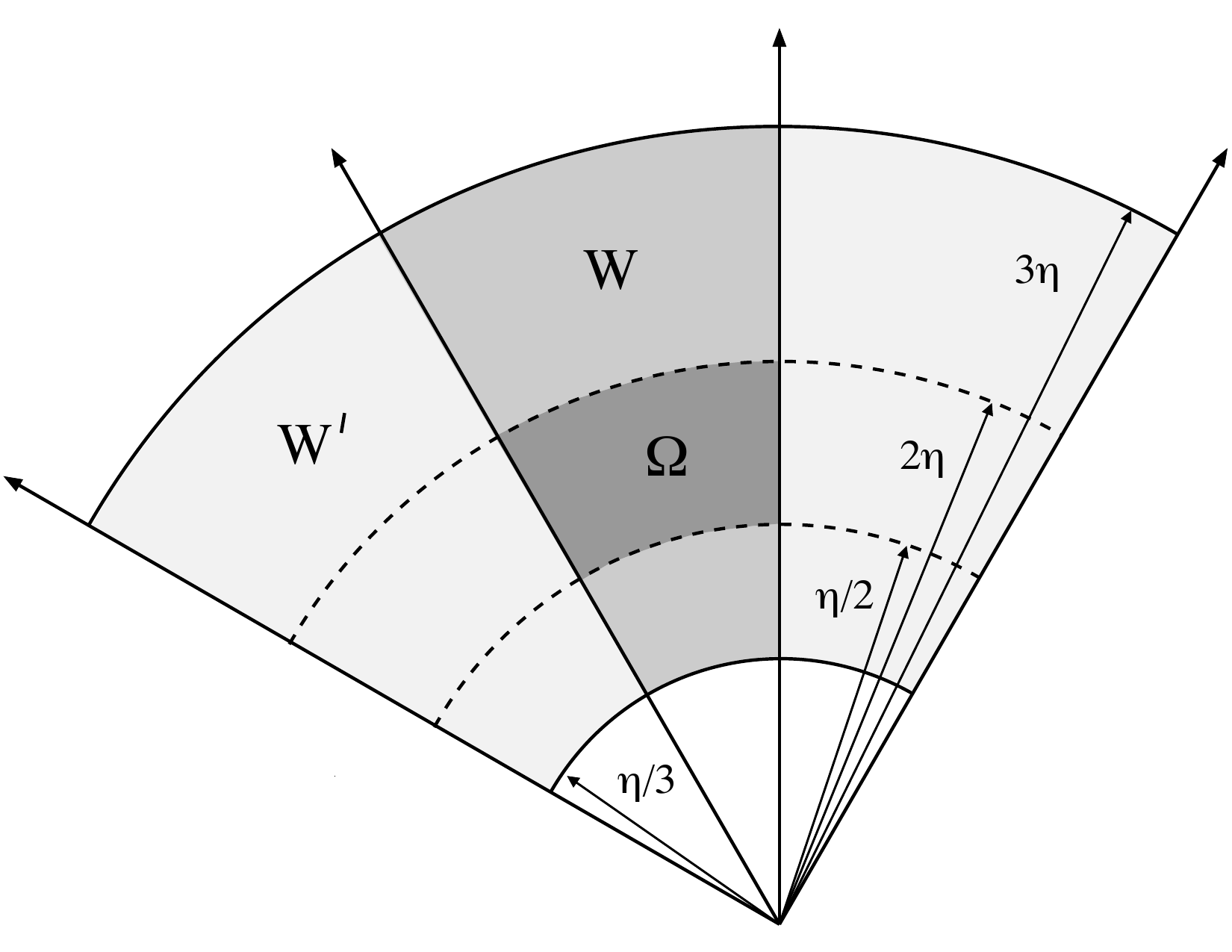}
}
\caption{
The functions $w_v$ and $u_{G,P}$ are harmonic 
on different parts of the  grey faces of $H$, 
but they are both defined on the intersection 
of these faces with the annuli $A(v, \eta)$ 
around each vertex.  On $\Omega$, a connected component
of this intersection,  we combine them
using a partition of unity.
Since both  functions extend analytically to a 
neighborhood $W'$ of $\overline{\Omega}$,
and  are very close there, 
the Cauchy estimates imply their   partial derivatives
are close in $\Omega$, and thus the merged function is 
``nearly'' harmonic.
}
\label{CauchyEst}
\end{figure}

To simplify notation, assume $v=0$.
By our remarks above, both the functions $f_1, f_2$
are very close to a multiple of 
$z^d$, and hence we can write 
$$ f_1(z) = cz^d(1+ \varepsilon_1(z)), 
 \quad f_2(z) = cz^d(1+ \varepsilon_2(z)),$$
where $\varepsilon_1$ and $\varepsilon_2$ are holomorphic functions 
on  $W'$ that are both as small 
as we wish,  say $|\varepsilon_1|, |\varepsilon_2| < \varepsilon$. 
%and both are real on the radial sides of $W$. 
By the Cauchy estimates, $|\varepsilon_1'|,|\varepsilon_2'|
=O(\varepsilon/\eta)$ 
on $\Omega$. 
Take a smooth, decreasing function $\varphi$ on $[0,\infty) \to [0,1]$
so that $\varphi =1$ on $[0, \eta/2]$ and $\varphi=0$ on $[2 \eta, \infty)$ 
and extend it to the plane by $\varphi(z) := \varphi(|z|)$.
We can choose $\varphi$ so that its partial derivatives are bounded 
by $O(\eta^{-1})$.
Define 
\begin{eqnarray*} 
F(z) 
&=&  \varphi(z) f_1(z)  + (1-\varphi(z)) f_2(z) \\
&=&  \varphi(z) c  z^d (1+\varepsilon_1(z))  
	+ (1-\varphi(z))  c z^d (1+\varepsilon_2(z)) \\
	&=&   c \cdot  z^d [ 1+\varepsilon_2(z)  +
	\varphi(z)(\varepsilon_1(z) -\varepsilon_2(z))].
\end{eqnarray*}
Since $|z| \simeq \eta$ on $A(0, \eta)$ we have 
\begin{eqnarray*} 
\partial_{\zbar} F(z) 
	&=&    c z^d \left(\partial_\zbar  \varphi(z)
		(\varepsilon_1(z) -\varepsilon_2(z)) \right)
	=    c z^d O(\eta^{-1} \varepsilon)
	=    c z^{d-1}  O( \varepsilon)
\end{eqnarray*}
and 
\begin{eqnarray*} 
\partial_{z} F(z) 
&=& c\cdot d \cdot z^{d-1} 
	[1+ \varepsilon_2(z) + 
	\varphi(z)(\varepsilon_1(z) -\varepsilon_2(z))] \\
&& \qquad +  c z^d  
 [\partial_z \varepsilon_2(z) +
	\partial_z \varphi(z)(\varepsilon_1(z) -\varepsilon_2(z))
      + \varphi(z) (\partial_z \varepsilon_1(z) -\partial_z \varepsilon_2(z)) \\
&=& c\cdot d \cdot z^{d-1}[1+   O(\varepsilon  ) 
	+ O( |z| ( \eta^{-1} \varepsilon)) ] \\
&=& c \cdot  d \cdot  z^{d-1}[1+   O(\varepsilon  ) ],
\end{eqnarray*}
so 
\begin{eqnarray*} 
%\frac {\partial_{\zbar} U(z)} { \partial_{z} U(z) }
	\frac {\partial_{\zbar} F} { \partial_{z} F }(z) 
&=&  \frac{  c \cdot  z^{d-1} O( \varepsilon)}
{ c \cdot d z^{d-1}[1+   O(\varepsilon  ) ]}  
=  O( \varepsilon).
\end{eqnarray*}
Next we use the measurable Riemann mapping theorem to find a 
quasiconformal map $\psi$ of the plane to itself whose dilatation 
is $F_{\zbar}/{F_z} $ on $\Omega$ and zero elsewhere. Then 
$\psi$ is conformal off $\Omega$ and   $F\circ \psi^{-1}$ is 
holomorphic on $\Omega$.
The dilatation of $\psi$ is
%uniformly bounded  below $1$, indeed,
bounded  $O(\varepsilon)$, which is as small as we wish. 
%Moreover,    the dilatation is supported on the union 
%of annuli $A(\eta)$ around each vertex of $G$. These can 
%also be chosen  as small as we wish, so 
Thus we can take  $\psi$ to be an $\varepsilon$-homeomorphism 
for any $\varepsilon >0$ that we want.

Now define 
$$
  V = \begin{cases}
  \re(F \circ \psi^{-1} ), & \text{ on } \Omega, \\
  w_v \circ \psi^{-1},  & \text{ on }  B \cap \cup_v D(v, \eta/2), \\
   u_{G,P} \circ \psi^{-1},  & \text{ on }  B \setminus \cup_v D(v, 3\eta).
  \end{cases} 
  $$
  Then $V$ is positive and harmonic on the
grey faces of  $H' :=\psi(H)$ with  logarithmic poles 
at $P' := \psi(P)$, and it vanishes on $H'$
and on the white faces of $H'$ 
($V$ is harmonic in the regions of the form  $\Omega$ because 
the quasiconformal map $\psi$  was chosen to make this happen; 
elsewhere $\psi$ is conformal so  $V$ harmonic since  $\re(F)$  
is harmonic.) Thus $V = u_{H',P'}$. 
Also, $V$ has critical points at the images under  $\psi$ 
of the vertices of $G$, and the critical 
values of $V$ at these points are not 
changed by pre-composition with $\psi^{-1}$.
Thus $V $ has the same critical values as $\re(F)$, and  
so the degree of the critical level set (as a graph) at
its vertex $\psi(v) $ equals the degree of $v$ in $G$. Using this and 
Lemma \ref{small move graphs}, it is
easy to prove that the level set is $\varepsilon$-homeomorphic to $G$.

\vskip.2in
\noindent
{\bf Step 6: placing the poles precisely.}
The one problem that remains is that the new poles $P'=\psi(P)$ are 
not the same as $P$.  However, this can be dealt with as follows. 
Suppose $P$ has $n$ points and we think of this as 
a vector in $\complex^n$. Let $B(P,\sigma)$ be
a small ball of radius $\sigma>0$
around $P$ in $\complex^n$,
and for $Q \in B(P, \sigma)$
we repeat the construction above, obtaining a harmonic 
function $V_Q$ that has logarithmic poles at $Q':=\psi_Q (Q)$. In the 
construction, when we move $P$ continuously, the value of $u_{G,P}$
at each base point changes continuously, so our choices of the 
channel angles $(\theta_1, \dots, \theta_m)$ also change continuously. 
Thus the values of $u_{G, P}$ and its partial derivatives on 
each $A(v, \eta)$ 
change continuously with $P$, which implies the same for  
$F$  ($w_d$ and $\varphi$ do not change), 
and hence so does our solution of the Beltrami 
equation $\psi$ (properly normalized). 
Thus the dependence of $Q'$ on $Q$ is continuous,  and 
we can make  $|Q -Q'|$ as small as we wish, say $ <  \sigma/2$, 
by choosing $\varepsilon$ to be small enough.
Then the following lemma shows that  there exists a $Q$ so that $Q' = P$.

\begin{lem}
Let $ {\mathbb B}^n$  denote the unit ball  
in $\complex^n$ and suppose that   
$F: {\mathbb B}^n  \to \complex^n$ is a 
continuous map with the property that $|F(\myvec{x}) -
\myvec{x}| < 1/2$. Then $0 \in F({\mathbb B}^n )$. 
\end{lem}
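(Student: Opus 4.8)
The plan is to derive this from the Brouwer fixed point theorem via a simple change of variables. Define $g: \overline{\mathbb B^n} \to \complex^n$ by $g(\myvec{x}) := \myvec{x} - F(\myvec{x})$; by hypothesis, $|g(\myvec{x})| < 1/2$ for every $\myvec{x} \in \overline{\mathbb B^n}$, so in particular $g$ maps $\overline{\mathbb B^n}$ into $\overline{\mathbb B^n}$ (indeed into the half-radius ball). Identifying $\complex^n$ with $\reals^{2n}$, the set $\overline{\mathbb B^n}$ is a compact convex set, and $g$ is continuous, so Brouwer's theorem gives a fixed point $\myvec{x}_0$ with $g(\myvec{x}_0) = \myvec{x}_0$. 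That is, $\myvec{x}_0 - F(\myvec{x}_0) = \myvec{x}_0$, hence $F(\myvec{x}_0) = 0$, so $0 \in F(\mathbb B^n)$.

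Alternatively, and perhaps cleaner since it avoids the assertion that a fixed point of $g$ lies in the open ball, one can argue by contradiction in the style of the degree/retraction arguments already used in the paper (cf.\ the proof of Theorem \ref{simplex map}). Suppose $0 \notin F(\overline{\mathbb B^n})$. Then $t \mapsto F_t := (1-t)F + t\,\mathrm{id}$ is a homotopy from $F$ to the identity on $\overline{\mathbb B^n}$, and for $\myvec{x} \in \partial\mathbb B^n = S^{2n-1}$ we have $|F_t(\myvec{x}) - \myvec{x}| = (1-t)|F(\myvec{x}) - \myvec{x}| \le |F(\myvec{x}) - \myvec{x}| < 1/2 < 1 = |\myvec{x}|$, so $F_t(\myvec{x}) \neq 0$ throughout the homotopy; by homotopy invariance of the winding number (Brouwer degree) of a map $S^{2n-1} \to \reals^{2n}\setminus\{0\}$, the degree of $F|_{S^{2n-1}}$ about $0$ equals that of the identity, namely $1 \neq 0$, contradicting the assumption that $F$ extends continuously to $\overline{\mathbb B^n}$ with $0$ not in its image.

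The argument is entirely routine and there is no real obstacle; the only point requiring a word of care in the first approach is why the Brouwer fixed point $\myvec{x}_0$ of $g$ yields $0 \in F(\mathbb B^n)$ rather than merely $0 \in F(\overline{\mathbb B^n})$ --- but since $|g(\myvec{x}_0)| < 1/2$ we get $|\myvec{x}_0| < 1/2 < 1$, so $\myvec{x}_0$ is automatically an interior point, and thus $0 = F(\myvec{x}_0)$ with $\myvec{x}_0 \in \mathbb B^n$. I would present the first (Brouwer fixed point) version as the main proof, as it is the shortest and matches the tone of the surrounding fixed-point arguments.
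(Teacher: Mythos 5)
Your first proof is correct and noticeably more direct than the one in the paper. The paper argues by contradiction: assuming $0\notin F(\mathbb{B}^n)$, it builds a cutoff interpolation $G(\myvec{x})=(1-\phi(|\myvec{x}|))F(\myvec{x})+\phi(|\myvec{x}|)\myvec{x}$ (equal to $F$ near the center and to the identity near the boundary), checks $G$ never vanishes, then follows $R\circ G$ (radial projection) by the antipodal map to produce a fixed-point-free self-map of $\overline{\mathbb{B}^n}$, contradicting Brouwer. You instead apply Brouwer directly to $g=\mathrm{id}-F$, observe $g$ maps into the ball of radius $1/2$, and read off $F(\myvec{x}_0)=0$ at the fixed point; this avoids the cutoff function and the retraction/antipodal machinery entirely, and it also handles the open-versus-closed-ball point cleanly since the fixed point automatically satisfies $|\myvec{x}_0|<1/2$ (one small caveat: if $\mathbb{B}^n$ is the open ball you should apply Brouwer on a slightly smaller closed ball, e.g.\ $\overline{B(0,1/2)}$, rather than on $\overline{\mathbb{B}^n}$ where $g$ may not be defined). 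Your second, degree-theoretic argument is essentially the same idea as the paper's proof expressed in the language of Brouwer degree: the paper's retraction-plus-antipodal-map construction is exactly an elementary encoding of the fact that a nonvanishing extension to the ball of a boundary map of nonzero degree is impossible. So the first proof is a genuine simplification worth preferring, while the second is a reformulation of the paper's route.
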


\begin{proof}
Suppose $ 0 \not \in F({\mathbb B}^n)$. 
Choose a continuous, increasing function 
	$\phi:[0,1]\to [0,1]$ so that $\phi(t)=0 $ 
if $t \in [0, 1/2]$ and $\phi(1) =1$. Then 
$G(\myvec{x}) := (1-\phi(|\myvec{x}|)) F(\myvec{x}) 
+\phi(|\myvec{x}|) \myvec{x}$
is continuous on $\overline{\mathbb B^n}$, 
equals $F$ on $ \frac 12 {\mathbb B}^n$,
and is the identity on $\partial {\mathbb B}^n $. 
	If $|\myvec{x}| \leq  1/2$, then  
$G(\myvec{x}) = F(\myvec{x})\ne 0$. 
	If $|\myvec{x}| > 1/2$, then the ball $B(\myvec{x} , 1/2)$
does not contain $0$, but it does contain 
both $\myvec{x}$ and $F(\myvec{x})$, and 
hence it contains $G(\myvec{x})$, which is 
on the line segment from  $\myvec{x}$ to $F(\myvec{x})$.
Therefore $G$ is never zero on ${\mathbb B}^n$. 
Taking  $R(\myvec{x}) := \myvec{x}/|\myvec{x}|$ to be the 
radial projection of $\complex^n\setminus \{0\}$ 
	onto $\partial {\mathbb B}^n$,
we see that  $R \circ G: \overline{\mathbb B^n}
\to \partial {\mathbb B}^n$
is continuous and equals the identity on $\partial 
{\mathbb B}^n$, i.e., it is a retraction of ${\mathbb B}^n$
onto $\partial {\mathbb B}^n$. 
But following such a retraction by the 
antipodal map  $\myvec{x} \to -\myvec{x}$ on
$\partial {\mathbb B}^n$ gives a continuous map 
of the closed ball into itself with no 
fixed point, contradicting Brouwer's 
theorem.  Therefore  we must have $0 \in F({\mathbb B}^n)$.
%(For non-existence of retractions see, e.g., 
%Theorem 6.37 of \cite{MR1016814};
\end{proof} 

This completes the proof of Theorem \ref{harmonic_app}, and 
hence the proof of Theorem \ref{main_thm_2}.
%\end{proof} 

\end{document}